\documentclass[11pt, a4paper,reqno]{amsart}

\usepackage[T1]{fontenc}
\usepackage[utf8]{inputenc}
\usepackage{lmodern}
\usepackage{amsmath}
\usepackage{amssymb}
\usepackage{amsthm}
\usepackage{mathtools}
\usepackage{graphicx}
\usepackage[shortlabels]{enumitem}
\usepackage[british]{babel}
\usepackage{booktabs,array}
\usepackage{microtype}
\usepackage{xcolor}
\usepackage{tikz}
\usetikzlibrary{arrows.meta,decorations.pathreplacing}
\usepackage{hyperref}
\usepackage{dsfont}

\calclayout
\hypersetup{
 colorlinks=true,
 linkcolor=blue,
 citecolor=blue,
 urlcolor=blue
}

\allowdisplaybreaks[2]
\numberwithin{equation}{section}

\definecolor{hsInk}{gray}{0.00}
\definecolor{hsBlue}{gray}{0.00}
\definecolor{hsOrange}{gray}{0.35}
\definecolor{hsMagenta}{gray}{0.48}
\definecolor{hsGreen}{gray}{0.18}
\definecolor{hsGrey}{gray}{0.48}

\tikzset{
  hs axis/.style={
    draw=hsInk,
    line width=.65pt,
    -{Latex[length=2mm,width=1.3mm]}
  },
  hs guide/.style={
    draw=hsGrey,
    densely dashed,
    line width=.6pt
  },
  hs anchor/.style={
    draw=hsOrange!70!black,
    line width=.75pt,
    fill=hsOrange!35
  },
  hs error/.style={
    draw=hsMagenta!75!black,
    densely dashed,
    line width=.7pt,
    fill=hsMagenta!25
  },
  hs vector/.style={
    draw=hsInk,
    line width=.8pt,
    -{Stealth[length=2.2mm,width=1.4mm]}
  },
  hs note/.style={
    font=\footnotesize,
    text=hsInk,
    align=center
  },
  hs title/.style={
    font=\bfseries\small,
    text=hsInk,
    align=center
  }
}

\newtheorem{theorem}{Theorem}[section]
\newtheorem{proposition}[theorem]{Proposition}
\newtheorem{lemma}[theorem]{Lemma}
\newtheorem{corollary}[theorem]{Corollary}
\theoremstyle{definition}
\newtheorem{remark}[theorem]{Remark}

\makeatletter
\renewcommand{\l@section}{%
  \@tocline{1}{0pt}{1pc}{1.65pc}{}%
}
\makeatother

\newcommand{\norm}[1]{\left\|#1\right\|}
\newcommand{\R}{\mathbb R}

\renewcommand{\epsilon}{\varepsilon}
\newcommand{\Sph}{\mathbb S}
\newcommand{\dd}{\, \mathrm d}
\newcommand{\one}{\mathds 1}
\DeclareMathOperator{\supp}{supp}
\DeclareMathOperator{\sgn}{sgn}
\renewcommand{\L}{\operatorname{L}}
\newcommand{\C}{\operatorname{C}}
\renewcommand{\H}{\operatorname{H}}
\newcommand{\W}{\operatorname{W}}

\newcommand{\Sp}{\mathrm S}

\newcommand{\pair}[2]{\left\langle #1,#2\right\rangle}

\renewcommand{\Subset}{\subset\!\subset}

\allowdisplaybreaks

\title[Sharp kinetic trace theory]
{Sharp kinetic trace theory}

\author{Lukas Niebel}
\address[Lukas Niebel]{Institut f\"ur Analysis und Numerik, Universit\"at M\"unster\\
Orl\'eans-Ring 10, 48149 M\"unster, Germany.}
\email{lukas.niebel@uni-muenster.de}

\author{Lisa Valentini}
\address[Lisa Valentini]{
Department of Pure Mathematics and Mathematical Statistics
\\ University of Cambridge \\
Wilberforce Road, Cambridge CB3 0WA, United Kingdom}
\email{lv390@cam.ac.uk}

\date{27th July 2026}

\subjclass[2020]{46E35, 35A30 (Primary) 35A23, 35B33, 35H10, 35Q49 (Secondary)}
\keywords{kinetic trace estimates, kinetic Sobolev spaces, transport equations,
kinetic Fokker--Planck equations, grazing set, boundary regularity thresholds,
Green's formula}

\begin{document}

\begin{abstract}
  We establish sharp kinetic trace estimates and counterexamples across several velocity models. For half-space position domains, without any
  common bound on velocity support, we prove the natural trace estimate for both Lebesgue and standard Gaussian velocity measures. Density yields
  natural trace operators and
  Green's formula on the corresponding kinetic energy spaces. For bounded spatial domains in \(d\ge2\), in the bounded-support Euclidean velocity model
  and in the spherical velocity model, we identify the sharp boundary regularity threshold for the trace weights
  \(\min\{|v \cdot n|,|v \cdot n|^p\}\),
  \(1\le p<\infty\). Writing \(\alpha_p=1/(p+1)\), the estimate holds on every bounded \(\mathrm{C}^{1,\alpha}\) domain with
  \(\alpha\ge\alpha_p\), and
  it fails for every \(0<\alpha<\alpha_p\) on some strictly convex bounded domain of exact regularity \(\mathrm{C}^{1,\alpha}\). In particular,
  the
  natural trace (\(p=1\)) has the regularity threshold \(\mathrm{C}^{1,1/2}\). On bounded \(\mathrm{C}^{1,1/2}\) domains in \(d\ge2\), density
  yields natural
  trace operators and Green's formula in the bounded-support Euclidean velocity model with either Lebesgue or standard Gaussian measure, and in
  the
  spherical velocity model. Norm-preserving velocity translation rules out the unrestricted Lebesgue trace estimate on every bounded
  \(\mathrm{C}^1\)
  domain. In the unrestricted Gaussian model, for each \(1\le p<2\), we construct counterexamples on every bounded \(\mathrm{C}^{1,1}\) domain in
  dimension \(d\ge2\), answering Question~1.8 of Albritton, Armstrong, Mourrat, and Novack (2024) negatively. For \(2\le p<\infty\), the Gaussian
  \(\omega_2\) estimate and density instead yield \(\omega_p\)-trace operators.
\end{abstract}

\maketitle

\tableofcontents

\section{Introduction}

\subsection{The kinetic trace problem}

Let $\Omega\subseteq \R^d$ be the position space, with Lebesgue measure. Throughout, unless a stronger regularity hypothesis is stated, every
physical spatial domain with boundary is assumed to have at least a \(\C^1\) boundary. Let \(V \subseteq \R^d\) denote the velocity space and
\(\mu\) its measure. Boundary conditions---prescribed inflow, absorption, and diffuse or specular reflection---are imposed on the phase boundary
\[
  \Gamma=\partial\Omega\times V,
  \qquad
  \Gamma_\pm=\{(x,v)\in\Gamma:\ \pm v\cdot n_\Omega(x)>0\}
  ,
\]
where $n_\Omega$ is the outward unit normal vector field. For smooth functions with the necessary compact support, integration by parts for the
transport field gives
\begin{equation}\label{eq:intro-green}
  \int_{\Omega\times V}
  \bigl((v\cdot\nabla_xf)g+f(v\cdot\nabla_xg)\bigr)
  \dd\mu(v)\dd x
  =
  \int_{\Gamma}fg\,v\cdot n_\Omega\dd\mu(v)\dd S(x).
\end{equation}
Thus the natural boundary measure is \(|v\cdot n_\Omega|\dd S(x)\dd\mu(v)\). Its density vanishes on the grazing set
\[
  \Gamma_0=\{(x,v)\in\Gamma:\ v\cdot n_\Omega(x)=0\},
\]
where the characteristics are tangent to the boundary.

For kinetic Fokker--Planck and Kolmogorov equations, the natural kinetic Sobolev space at the weak/energy level does not control a full
derivative
in \(x\). Related
scale-invariant kinetic spaces and weak-solution theory are developed in \cite{AuscherImbertNiebel2024}, while homogeneous kinetic Sobolev spaces
adapted to the Kolmogorov equation are studied in \cite{AuscherNiebel2026}. In the stationary Gaussian setting of \cite{AAMN2024}, where $V=\R^d$
and $\mu= \gamma_d$ for
\[\dd\gamma_d(v)=(2\pi)^{-d/2}e^{-|v|^2/2}\dd v,\]
the relevant kinetic Sobolev space is
\begin{equation}\label{eq:intro-hkin}
  \H^1_{\mathrm{hyp}}(\Omega)
  :=
  \left\{
  f\in \L^2(\Omega;\H^1_\gamma(\R^d)):
  v\cdot\nabla_xf\in \L^2(\Omega;\H^{-1}_\gamma(\R^d))
  \right\},
\end{equation}
where
\[
  \norm{g}_{\H^1_\gamma}^2
  =\int_{\R^d}(|g|^2+|\nabla_vg|^2)\dd\gamma_d, \qquad \H^{-1}_\gamma=(\H^1_\gamma)'.
\]
In the notation introduced below, \(\H^1_{\mathrm{hyp}}(\Omega)=\mathcal X_\gamma(\Omega)\) with equal graph norms. For a bounded \(\C^1\) domain
$\Omega$, the trace question is whether
\begin{equation}\label{eq:intro-full-trace}
  \int_{\partial\Omega}\int_{\R^d}
  |f(x,v)|^2|v\cdot n_\Omega(x)|
  \dd\gamma_d(v)\dd S(x)
  \le C_\Omega\norm{f}_{\H^1_{\mathrm{hyp}}(\Omega)}^2
\end{equation}
holds for smooth \(f\). This is not covered by standard Sobolev trace theory. The formal multiplier \(\sgn(v\cdot n_\Omega)\), which would turn
the signed flux in \eqref{eq:intro-green} into its absolute value, is discontinuous in \(v\) at \(\Gamma_0\).
Multiplication by this sign does not preserve \(\H^1_\gamma\), so the
resulting product is not an admissible test function in Green's identity
\eqref{eq:intro-green} when the transport derivative is only
\(\H^{-1}_\gamma\)-valued.
When the normal
varies with \(x\), differentiating a regularised multiplier also couples the tangential velocity to the boundary geometry.

A bounded trace operator would extend Green's formula continuously to the kinetic energy space, provide well-defined inflow and outflow traces,
and justify the energy estimates used for uniqueness and stability.

\subsection{Classification}
\label{sec:classification}

We use the following terminology for the velocity models.
\begin{itemize}[leftmargin=*]
  \item In the \emph{unrestricted Lebesgue model}, \(V=\R^d\), \(\dd\mu=\dd v\), and no common support bound in the velocity variable is imposed.
  \item In the \emph{unrestricted Gaussian model}, \(V=\R^d\), \(\dd\mu=\dd\gamma_d\), and no common support bound in the velocity variable is
        imposed.
  \item In the \emph{bounded-support Euclidean model}, \(V=\R^d\) with either Lebesgue or Gaussian measure, but we restrict each estimate to
        functions satisfying \(\supp_v f\subset\overline B_L\) for a prescribed \(L<\infty\). Thus the velocity space remains \(\R^d\), and the
        constant in the trace estimate may depend on \(L\).
  \item In the \emph{spherical velocity model}, \(V=\Sph^{d-1}\) with surface measure $\dd\mu=\dd\sigma$, and $\nabla_v$ denotes the spherical
        gradient $\nabla_{\Sph^{d-1}}$.
\end{itemize}

We call the first two cases the \emph{unrestricted models} and the last two cases the \emph{bounded-velocity models}.

\medskip

All spaces and functions are real. For \((V,\mu)\) belonging to either unrestricted model or to the spherical velocity model, we write
\(\H^1_\mu=\H^1(V,\mu)\) for the inhomogeneous Sobolev space with norm
\[
  \norm{g}_{\H^1_\mu}^2
  :=\int_{V}(|g|^2+|\nabla_vg|^2)\dd\mu,
\]
and define \(\H^{-1}_\mu=(\H^1_\mu)'\) using the \(\L^2(\mu)\) pivot. Thus duality is bilinear and an \(\L^2(\mu)\) function \(q\) acts by
\(\pair{q}{\phi}=\int q\phi\dd\mu\). Set
\[
  \mathcal X_\mu(\Omega)
  :=\{f\in \L^2(\Omega;\H^1_\mu):
  v\cdot\nabla_xf\in \L^2(\Omega;\H^{-1}_\mu)\},
\]
where the transport derivative is distributional, and use the graph norm
\begin{equation}\label{eq:graph-norm}
  \norm{f}_{\mathcal X_\mu(\Omega)}^2
  :=\norm{f}_{\L^2(\Omega;\H^1_\mu)}^2
  +\norm{v\cdot\nabla_xf}_{\L^2(\Omega;\H^{-1}_\mu)}^2.
\end{equation}
The distributional transport operator is closed from \(\L^2_x\H^1_\mu\) to \(\L^2_x\H^{-1}_\mu\), so these graph spaces are complete. We call
\(\mathcal X_\mu(\Omega)\) the kinetic energy space; it is the natural graph space for weak solutions whose transport derivative is
\(\H^{-1}_\mu\)-valued.

For every domain and velocity model considered below, \(\mathcal X_\mu(\Omega)\) has a dense subspace of functions that are smooth up to the
physical boundary. Until the precise smooth classes are specified in Section~\ref{sec:energy}, we denote the relevant one by \(\mathcal D\).
Density is proved for the half-space in Proposition~\ref{prop:flat-intrinsic-density}; the bounded-domain statements are
Propositions~\ref{prop:bounded-gaussian-density}, \ref{prop:bounded-support-density}, \ref{prop:bounded-lebesgue-density},
and~\ref{prop:spherical-density}.

For \(1\le p<\infty\), define $\alpha_p:=\frac{1}{p+1}$ and
\[
  \omega_p \colon [0,\infty) \to [0,\infty), \qquad \omega_p(s):=\min\{s,s^p\}, \quad s\ge0.
\]
We also write
\[
  \mathcal E^\mu_\Omega(f):=\norm{f}_{\mathcal X_\mu(\Omega)}^2,
\]
and
\begin{equation}\label{eq:trace-functional}
  \mathcal T^\mu_{p,\Omega}(f)
  :=\int_{\partial\Omega}\int_V
  |f(x,v)|^2\omega_p(|v\cdot n_\Omega(x)|)\dd\mu(v)\dd S(x)
\end{equation}
for \(f\in\mathcal D\). We call \(\mathcal T^\mu_{p,\Omega}\) the weighted boundary functional.

We say that for some $1 \le p < \infty$ the \emph{\(\omega_p\)-trace estimate} holds if there exists $C_\Omega>0$ such that
\begin{equation}
  \label{eq:trace_ineq}
  \mathcal T^\mu_{p,\Omega}(f) \le C_\Omega \, \mathcal E^\mu_\Omega(f)
\end{equation}
for every \(f\in\mathcal D\). By the density statements mentioned above and Proposition~\ref{prop:trace-completion}, whenever the
\(\omega_p\)-trace estimate holds, classical boundary restriction on \(\mathcal D\) extends uniquely to a bounded linear operator
\[
  \operatorname{Tr}^{(p)}_\mu\colon
  \mathcal X_\mu(\Omega)\longrightarrow
  \L^2\!\left(\partial\Omega\times V,
  \omega_p(|v\cdot n_\Omega(x)|)\dd S(x)\dd\mu(v)\right).
\]
We call this the unique continuous \(\omega_p\)-trace operator associated with the stated dense core and weighted boundary space. Conversely,
failure of \eqref{eq:trace_ineq} on \(\mathcal D\) rules out a bounded operator between these spaces that agrees with classical boundary
restriction on \(\mathcal D\).

``Unrestricted velocities'' means that no common velocity-support bound is imposed. In the bounded-support Euclidean model, for either Lebesgue
or Gaussian measure and a prescribed velocity-support radius,
\[
  B_L:=\{v\in\R^d:|v|<L\},
\]
define
\begin{equation*}
  \mathcal X_{\mu,L}(\Omega)
  :=\left\{f\in\mathcal X_\mu(\Omega):
  \operatorname*{ess\,supp}_v f\subset\overline B_L\right\},
\end{equation*}
endowed with the graph norm \(\norm{\cdot}_{\mathcal X_\mu(\Omega)}\). Essential velocity support is preserved under \(\L^2\)-convergence, so
\(\mathcal X_{\mu,L}(\Omega)\) is closed in \(\mathcal X_\mu(\Omega)\) and is therefore complete.

For every fixed \(L>0\), \(\mathcal X_{\mu,L}(\Omega)\) has a dense subspace of smooth functions whose velocity support is compactly contained in
\(B_L\). When working in \(\mathcal X_{\mu,L}(\Omega)\), we again denote this subspace by \(\mathcal D\) until its precise definition in
Section~\ref{sec:energy}. Its density is proved in Proposition~\ref{prop:bounded-support-density}.

In this context, we say that for some \(1\le p<\infty\) the \emph{\(\omega_p\)-trace estimate} holds if there exists \(C_{\Omega,L}>0\) such that
\[
  \mathcal T^\mu_{p,\Omega}(f)
  \le C_{\Omega,L}\,\mathcal E^\mu_\Omega(f)
\]
for every \(f\in\mathcal D\). The implicit constant may depend on \(L\). By density and Proposition~\ref{prop:trace-completion}, whenever this
estimate holds, classical boundary restriction on \(\mathcal D\) extends uniquely to a bounded linear operator
\[
  \operatorname{Tr}^{(p)}_{\mu,L}\colon
  \mathcal X_{\mu,L}(\Omega)\longrightarrow
  \L^2\!\left(\partial\Omega\times\R^d,
  \omega_p(|v\cdot n_\Omega(x)|)\dd S(x)\dd\mu(v)\right).
\]
This unique continuous extension is the corresponding \(\omega_p\)-trace operator on \(\mathcal X_{\mu,L}(\Omega)\). Conversely, failure of this
estimate on \(\mathcal D\) rules out a bounded operator between these spaces that agrees with classical boundary restriction on \(\mathcal D\).

The \emph{natural trace} corresponds to \(p=1\). For \(p>1\), the weight is weaker near grazing velocities, since \(\omega_p(s)=s^p<s\) for
\(0<s<1\). Thus the corresponding estimates need not imply a natural trace or Green's formula. All weighted boundary functionals below are taken
over the full phase boundary \(\Gamma\).

Smooth functions on \(\overline\Omega\) are restrictions of ambient smooth functions. Unless indicated otherwise, implicit constants may depend
on fixed profiles and cut-offs.

\begin{theorem}
  \label{thm:intro-classification}
  Let \(1\le p<\infty\).
  \begin{enumerate}[label=\textup{(\roman*)},leftmargin=*]
    \item On a half-space, the \(\omega_p\)-trace estimate holds for both the unrestricted Lebesgue and unrestricted Gaussian models in every
          dimension \(d\ge1\).
    \item On every bounded \(\C^1\) domain, the unrestricted Lebesgue \(\omega_p\)-trace estimate fails in every
          dimension \(d\ge1\).
    \item If \(d\ge2\), then on every bounded \(\C^{1,1}\) domain the unrestricted Gaussian \(\omega_p\)-trace estimate fails for \(1\le p<2\)
          and holds for \(2\le p<\infty\).
    \item If \(d\ge2\), then in the bounded-support Euclidean model with prescribed ball \(\overline B_L\), the \(\omega_p\)-trace estimate holds
          on every bounded \(\C^{1,\alpha}\) domain with \(\alpha\ge\alpha_p\); the constant may depend on \(L\). For each \(0<\alpha<\alpha_p\),
          it fails on a strictly convex bounded domain of exact regularity \(\C^{1,\alpha}\).
    \item If \(d\ge2\), then in the spherical velocity model the \(\omega_p\)-trace estimate holds on every bounded \(\C^{1,\alpha}\) domain with
          \(\alpha\ge\alpha_p\). For each \(0<\alpha<\alpha_p\), it fails on a strictly convex bounded domain of exact regularity
          \(\C^{1,\alpha}\).
    \item For $d=1$, the unrestricted Gaussian \(\omega_p\)-trace estimate and the Lebesgue \(\omega_p\)-trace estimate in the bounded-support
          Euclidean model hold on every bounded interval. The unrestricted Lebesgue estimate fails. For \(\Sph^0=\{-1,+1\}\), the result reduces
          to the ordinary Sobolev trace theorem.
  \end{enumerate}
  For \(d\ge2\), \(\alpha_p\) is therefore the sharp boundary regularity threshold for the class of bounded \(\C^{1,\alpha}\) domains in the
  bounded-velocity models. Failure below the threshold is existential, not universal---see Table~\ref{tab:main-results}.
\end{theorem}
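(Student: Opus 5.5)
The plan is to prove the positive statements with a single multiplier/flattening argument and the negative statements with explicit concentrating families. For \textup{(i)} I work on \(\{x_d>0\}\), where the normal \(n=-e_d\) is constant. I test the transport derivative against \(g=f\,\chi(x_d)\,m(v_d)\), where \(\chi\) is a cut-off with \(\chi(0)=1\) and \(m\) is an odd regularisation of \(\sgn\). Because \(m\) depends only on \(v_d\), the pairing \(\pair{v\cdot\nabla_xf}{g}\) produces three pieces: the boundary flux \(\int f^2|v_d|\,m(v_d)\sgn(v_d)\), an interior term \(\int f^2 v_d\chi' m\) controlled by the \(\L^2_x\H^1_\mu\) part, and the duality term, bounded by \(\norm{v\cdot\nabla_xf}_{\L^2\H^{-1}}\norm{g}_{\L^2\H^1}\).

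With \(m\) of fixed width one this already gives the \(\omega_2\) weight, since \(|v_d|\,m(v_d)\sgn(v_d)\simeq\omega_2(|v_d|)\). To reach \(\omega_1\), and hence every \(\omega_p\), I would decompose dyadically in \(|v_d|\sim2^{-k}\) and rescale each band by \((x,v_d)\mapsto(2^{-k}x_d,2^{-k}v_d)\). The aim is to show that the multiplier at width \(2^{-k}\) costs \(\H^1_v\)-norm \(2^{k}\) only on a set whose \(\L^2\) mass is controlled via a one-dimensional Hardy inequality in \(v_d\). Equivalently, I want \(\sgn(v_d)\) to act boundedly between the \(\H^{1/2}_v\)-type interpolation spaces of \(\H^1_\mu\) and \(\H^{-1}_\mu\), which the kinetic space controls by averaging.

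\textbf{This flat \(p=1\) step is where I expect the main difficulty, because \(\sgn\) sits exactly at the \(\H^{1/2}\) endpoint.} My first attempt would be to transfer the dyadic bands to a Fourier multiplier in \(x_d\) and apply the transport equation band by band. The Gaussian case follows from the Lebesgue case by conjugating with \(e^{-|v|^2/4}\). The resulting lower-order drift terms are harmless because \(m\) does not depend on \(|v|\). Density (Proposition~\ref{prop:flat-intrinsic-density}) and Proposition~\ref{prop:trace-completion} then give the operator.

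For the bounded-domain positive results in \textup{(iii)}--\textup{(vi)}, I localise with a partition of unity and flatten each chart by \(y_d=x_d-\psi(x')\) with \(\psi\in\C^{1,\alpha}\). In the flattened chart I use the multiplier \(m(v\cdot n(x))\), which depends on \(x\), and this creates the geometric error
\[
v\cdot\nabla_x\bigl(v\cdot n(x)\bigr)\,m'(v\cdot n)\,f^2 .
\]
Here \(|v\cdot\nabla_xn|\) involves the tangential velocity \(|v'|\) times the modulus of continuity of \(\nabla\psi\). At a grazing band of width \(\delta\), I take the physical localisation scale \(r\) with \(|v'|\,r^\alpha\lesssim\delta\) so that the error is absorbed. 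Balancing this against the weight \(\delta^p\) and the \(\L^2\) cost \(r^{-1}\) of the cut-off gives exactly the condition \(\alpha\ge1/(p+1)=\alpha_p\), provided \(|v'|\le L\); in the spherical model \(|v'|\le1\). In the unrestricted Gaussian model with \(\C^{1,1}\) boundary, unbounded \(|v'|\) must instead be paid with Gaussian moments. The \(\omega_2\) weight \(\delta^2\) absorbs one factor of \(|v'|\) linearly, which gives the positive statement for \(p\ge2\).

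For the failures I build explicit families. For \textup{(ii)} I use Galilean translation \(v\mapsto v+a\) combined with a matching spatial shear, so that the energy stays fixed while the boundary flux weight \(|(v+a)\cdot n|\) grows without bound, using that a bounded domain has normals in every direction. For the sub-threshold cases in \textup{(iv)}--\textup{(v)} I take a strictly convex domain whose boundary is locally \(x_d=|x'|^{1+\alpha}\) near one point, which has exact regularity \(\C^{1,\alpha}\). I then concentrate \(f\) on a boundary patch of size \(r\) and a grazing band of normal velocity \(\delta\sim r^\alpha\) along the characteristics tangent there. Choosing \(r\to0\) should make \(\mathcal T_p\) blow up relative to \(\mathcal E\) exactly when \(\alpha<\alpha_p\). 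For \textup{(iii)} with \(1\le p<2\), I run the same construction on an arbitrary \(\C^{1,1}\) domain but let \(|v'|\sim R\to\infty\). The curvature \(R^2\kappa\) then plays the role of the Hölder defect, and the Gaussian weight loses only polynomially against \(\delta^p\) for \(p<2\). Finally, the case \(d=1\) in \textup{(vi)} has no tangential velocity, so it reduces to the flat argument.
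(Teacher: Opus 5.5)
\textbf{Gap 1: the flat natural trace.} The central step, the natural ($p=1$) half-space estimate in (i), is not established, and the route you sketch cannot establish it.

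A multiplier of product form $\chi(x_d)m(v_d)$ has transport derivative $v_d\chi'(x_d)m(v_d)$. This term is bounded and carries no coercivity, so narrowing the transition of $m$ only creates the velocity cost $|m'|^2|f|^2$ with nothing to pay for it. Under your equal rescaling (bands $|v_d|\sim2^{-k}$ on layers $x_d\lesssim2^{-k}$), this cost behaves like $\int v_d^{-2}\one_{\{x_d\lesssim|v_d|\}}|f|^2$. That integral is already infinite for a smooth $f$ that does not vanish at a grazing boundary point.

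The correct kinetic scaling is $x_d\sim|v_d|^3$, and then the cost is the grazing-cone mass $\int_0^1\!\int y^{-2/3}\one_{\{|w|<2y^{1/3}\}}|f|^2$. What is missing is a kinetic Hardy inequality bounding this mass by the energy. The paper obtains it by letting the transition width depend on the height:
\begin{itemize}
\item For $m=F_\varepsilon(wy^{-1/3})$ one has $w\partial_ym=-\tfrac13y^{-2/3}r^2F_\varepsilon'(r)$.
\item An odd profile that overshoots to $\pm2$ before returning to $\pm1$ makes this weight positive on the anchor band $\tfrac54<|r|<\tfrac32$, with only an $O(\varepsilon)$ defect on $|r|<2\varepsilon$.
\item An anchored Poincar\'e inequality in $r$ spreads this control across the whole window.
\item A single Green identity, whose boundary factor $wF_\varepsilon(w\delta^{-1/3})$ is nonnegative and tends to $|w|$, then yields first the Hardy bound and then the trace.
\end{itemize}

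The $\H^{1/2}$/averaging alternative fails at the endpoint, as you note. On a half-space it would also require extending $f$ across $\{x_d=0\}$, which generates precisely the boundary term to be controlled.

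A minor point: conjugation by $e^{-|v|^2/4}$ is not a black-box reduction. It maps $\H^1_\gamma$ onto $\{g:\ g,\nabla g,|v|g\in\L^2\}$, whose dual is strictly larger than $\H^{-1}(\R^d)$. A multiplier depending only on $(y,w)$ handles both measures directly. Part (vi), which you reduce to the flat estimate, inherits this gap.

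\textbf{Gap 2: the curved positive results in (iv)--(v).} At $\alpha=\alpha_p$ the normal is only $\C^{0,\alpha}$. So the term $v\cdot\nabla_x(v\cdot n)$ produced by $m(v\cdot n(x))$ does not exist, and a modulus of continuity cannot bound a derivative. The slope must be mollified at a height-dependent scale, and the resulting singular bulk terms must again be paid for by a Hardy estimate, now at the critical weight.

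In the paper this goes as follows:
\begin{itemize}
\item Take $b_y=\eta_{\ell(y)}*b$ with $\rho(y)=\Lambda y^{1/(p+2)}$ and $\ell(y)=\Lambda^{-1}y^{(p+1)/(p+2)}$, so that $\rho\ell=y$ and $\ell^{\alpha_p}\sim\rho$.
\item This gives the Riccati law $X_br=-\beta\frac{\rho}{y}r^2+E$ for $r=(w-u\cdot b_y)/\rho$.
\item A profile $G$ for which $\mathfrak L_pG=(p-1)rG-r^2G'$ is coercive yields the Hardy bound with weight $y^{-2/(p+2)}$.
\item Only then does a separate dyadic multiplier recover $|c|^p$ at the boundary. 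It uses $y$-independent slopes $\eta_{\ell_j}*b$ on layers $y\lesssim\kappa a_j^{p+2}$, and its bulk costs $a_j^{-2}\lesssim y^{-2/(p+2)}$ are absorbed by the Hardy bound.
\end{itemize}
Your balance of $\delta^p$, $r^{-1}$ and $|v'|r^\alpha\lesssim\delta$ reproduces the heuristic exponent but supplies no such mechanism.

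\textbf{The counterexamples.} Your sketches point in the right direction but omit the devices that make them work.
\begin{itemize}
\item Sub-threshold family: a tangential spatial cut-off at scale $r$ would create a transport term of size $|u|/r$ that the energy normalisation cannot absorb. The paper instead cuts off in the exactly transport-invariant phase $s=y-\frac{w}{u_1}z_1$. Convexity then confines the support to $|z|\lesssim\ell$, $y\lesssim\ell^{1+\alpha}$, and the secant--tangent slope gap bounds the flux below.
\item Gaussian family: a given boundary point of a $\C^{1,1}$ domain may have a flat neighbourhood. You need a point with a quadratic supporting cap, such as the farthest point from an interior point. The balance $\delta=h^{1/3}$, $R=h^{-1/3}$ also rests on the Cameron--Martin cost $R^2\delta$ and on the odd cancellation $\norm{w\psi_\delta}_{\H^{-1}_\gamma}\lesssim\delta^{5/2}$.
\end{itemize}
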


\begin{table}[htbp]
  \centering
  \small
  \caption{Summary of the kinetic trace estimates.
    \textsc{Universal} means that
    the estimate
    fails on every domain in the stated class;
    \textsc{existential} means that, for each indicated \(\alpha\), at least
    one counterexample domain exists.}
  \label{tab:main-results}
  \begin{tabular}{@{}
    >{\raggedright\arraybackslash}p{0.16\textwidth}
    >{\raggedright\arraybackslash}p{0.12\textwidth}
    >{\raggedright\arraybackslash}p{0.27\textwidth}
    >{\raggedright\arraybackslash}p{0.22\textwidth}
    >{\raggedright\arraybackslash}p{0.13\textwidth}@{}}
    \toprule
    Velocity model
     & Weight
     & Spatial assumption
     & Conclusion
     & Reference                                                                                       \\
    \midrule
    Unrestricted Lebesgue or Gaussian
     & all \(p\)
     & \(d\ge1\); half-space
     & Holds; natural for \(p=1\)
     & Thm.~\ref{thm:flat}, Prop.~\ref{cor:flat-p}                                                     \\
    \midrule
    Unrestricted Lebesgue
     & all \(p\)
     & \(d\ge1\); bounded \(\C^1\) domains
     & Fails (\textsc{Universal})
     & Prop.~\ref{prop:interval-lebesgue-failure}                                                      \\
    \midrule
    Unrestricted Gaussian
     & all \(p\)
     & \(d=1\); bounded intervals
     & Holds; natural for \(p=1\)
     & Cor.~\ref{cor:interval-intrinsic}                                                               \\
    \midrule
    Unrestricted Gaussian
     & \(1\le p<2\)
     & \(d\ge2\); bounded \(\C^{1,1}\) domains
     & Fails (\textsc{Universal})
     & Thm.~\ref{thm:unbounded}                                                                        \\
    \midrule
    Unrestricted Gaussian
     & \(2\le p<\infty\)
     & \(d\ge2\); bounded \(\C^{1,1}\) domains
     & Holds
     & Prop.~\ref{prop:quadratic-gaussian},
    Prop.~\ref{cor:unrestricted-gaussian-p-trace}                                                      \\
    \midrule
    Bounded-support Euclidean,
    \(\mu=\mathcal L^d\) or \(\gamma_d\)
     & all \(p\)
     & \(d\ge2\); \(\alpha\ge\alpha_p\), bounded \(\C^{1,\alpha}\) domains
     & Holds; natural for \(p=1\), $\alpha \ge 1/2$
     & Thm.~\ref{thm:bounded-positive},
    Prop.~\ref{cor:intrinsic-p-trace}                                                                  \\
    \midrule
    Bounded-support Euclidean,
    \(\mu=\mathcal L^d\) or \(\gamma_d\)
     & all \(p\)
     & \(d\ge2\); \(0<\alpha<\alpha_p\), strictly convex domains of exact regularity \(\C^{1,\alpha}\)
     & Fails (\textsc{Existential})
     & Thm.~\ref{thm:bounded-euclidean}                                                                \\
    \midrule
    Spherical
     & all \(p\)
     & \(d\ge2\); \(\alpha\ge\alpha_p\), bounded \(\C^{1,\alpha}\) domains
     & Holds; natural for \(p=1\), $\alpha \ge 1/2$
     & Cor.~\ref{cor:spherical-positive},
    Prop.~\ref{cor:intrinsic-p-trace}                                                                  \\
    \midrule
    Spherical
     & all \(p\)
     & \(d\ge2\); \(0<\alpha<\alpha_p\), strictly convex domains of exact regularity \(\C^{1,\alpha}\)
     & Fails (\textsc{Existential})
     & Thm.~\ref{thm:spherical}                                                                        \\
    \midrule
    Bounded-support Euclidean,
    \(\mu=\mathcal L^1\) or \(\gamma_1\);
    spherical, \(\Sph^0=\{-1,+1\}\)
     & all \(p\)
     & \(d=1\); bounded intervals
     & Holds
     & Cor.~\ref{cor:interval-intrinsic},
    Sec.~\ref{sec:dimension-one}                                                                       \\
    \bottomrule
  \end{tabular}
\end{table}

\begin{remark}[Relation to Question~1.8]
  The natural kinetic trace problem was explicitly posed as open by Armstrong and Mourrat \cite[Question~4.2]{ArmstrongMourrat2019} and was
  retained, in the formulation relevant here, as \cite[Question~1.8]{AAMN2024}. The problem, or closely related local and model-specific
  formulations of it, was also recorded as open by Garain and Nystr\"om \cite[Section~1.6 and Problem~4]{GarainNystrom2023}, Silvestre
  \cite[Remark~4.4]{Silvestre2022}, Avelin and Hou \cite[Section~1.1]{AvelinHou2026}, Hou \cite[Introduction, p.~2]{Hou2026}, the second author
  \cite[Section~2.3.1 and equation~(2.14)]{Valentini2026}, and Kim and Weidner \cite[Section~1.3.1, p.~6]{KimWeidnerMaxwell2026}.

  In the unrestricted Gaussian model, Theorem~\ref{thm:flat} gives a positive answer to the half-space analogue of \cite[Question~1.8]{AAMN2024}.
  In contrast, the answer to the general bounded-domain question is negative in dimensions \(d\ge2\): Theorem~\ref{thm:unbounded} shows that the
  natural trace estimate fails on every bounded \(\C^{1,1}\) domain.
\end{remark}

\begin{remark}[Optimality of Silvestre's grazing exponent]
  \label{rem:silvestre-sharp}
  On \(\C^{1,1}\) domains, Silvestre's local, time-dependent Lebesgue-velocity trace estimate uses the quadratic grazing weight
  \(\omega_2(s)=\min\{s,s^2\}\), and the optimality of this weight was left open \cite[Proposition~4.3 and Remark~4.4]{Silvestre2022}. In the
  stationary unrestricted Gaussian model considered here, Proposition~\ref{prop:quadratic-gaussian} proves the corresponding \(\omega_2\)-trace
  estimate, whereas Theorem~\ref{thm:unbounded} rules out every \(\omega_p\)-trace estimate with \(1\le p<2\) on every bounded \(\C^{1,1}\)
  domain in dimensions \(d\ge2\). Since \(\omega_p\le\omega_2\) for \(p\ge2\), the estimate also holds for every finite \(p\ge2\). Thus \(p=2\)
  is the smallest admissible exponent in the scale \(\omega_p\). In this sense, the quadratic grazing exponent is optimal for the stationary
  unrestricted Gaussian analogue of Silvestre's estimate.
\end{remark}

\begin{remark}[Time-dependent analogue]
  \label{rem:time-dependent}
  At the level of smooth trace estimates, the preceding classification has a direct time-dependent analogue on every fixed cylinder
  \(I\times\Omega\), with \(v\cdot\nabla_x\) replaced by
  \[
    \partial_t+v\cdot\nabla_x.
  \]
  In every case above where the estimate holds, functions in the relevant smooth class that are compactly supported in time satisfy
  \[
    \int_I \mathcal T^\mu_{p,\Omega}(f(t))\dd t
    \le C\left(
    \norm{f}_{\L^2(I\times\Omega;\H^1_\mu)}^2
    +\norm{(\partial_t+v\cdot\nabla_x)f}
    _{\L^2(I\times\Omega;\H^{-1}_\mu)}^2
    \right),
  \]
  with the same spatial assumptions, grazing weights, and velocity-support conditions. Indeed, the multiplier proofs may be rerun using
  space--time Green identities. All multipliers are independent of \(t\), so that
  \[
    (\partial_t+v\cdot\nabla_x)m=v\cdot\nabla_xm.
  \]
  In the spherical velocity model, the bounded-support Euclidean multiplier argument is applied to the lifted operator
  \(r\partial_t+v\cdot\nabla_x\), where \(v=r\theta\).

  For an arbitrary function in the relevant smooth class, smooth up to the endpoints of a finite interval \(I=(t_0,t_1)\), the corresponding
  global estimate is
  \begin{align*}
     & \int_I \mathcal T^\mu_{p,\Omega}(f(t))\dd t \\
     & \le C\left(
    \norm{f}_{\L^2(I\times\Omega;\H^1_\mu)}^2
    +\norm{(\partial_t+v\cdot\nabla_x)f}
    _{\L^2(I\times\Omega;\H^{-1}_\mu)}^2  \right.  \\
     & \quad+ \left. \norm{f(t_0)}
      _{\L^2(\Omega\times V;\dd x\dd\mu(v))}^2
    +\norm{f(t_1)}
      _{\L^2(\Omega\times V;\dd x\dd\mu(v))}^2
    \right).
  \end{align*}
  The signed temporal boundary contributions vanish for functions compactly supported in time and cancel for time-periodic functions. All
  counterexamples transfer by taking \(F(t,x,v)=\eta(t)f(x,v)\) with a fixed nonzero \(\eta\in\C_c^\infty(I)\). Consequently, the sharp exponents
  and boundary regularity thresholds in Theorem~\ref{thm:intro-classification} are unchanged in the time-dependent setting.

  In the regimes where the natural trace exists, the smooth space--time Green's formula contains, on the same side as the spatial boundary term,
  the additional temporal boundary term
  \[
    (f(t_1),g(t_1))_{\L^2(\Omega\times V;\,\dd x\dd\mu(v))}
    -(f(t_0),g(t_0))_{\L^2(\Omega\times V;\,\dd x\dd\mu(v))}.
  \]
\end{remark}

\bigskip
Let us now explain the ideas of the proofs.

\subsection{The half-space proof}
On a half-space the geometry reduces to the normal variables. Write \(x=(z,y)\) and \(v=(u,w)\), where \(y>0\) is the distance to the boundary
and \(w\) is the fixed signed normal-velocity component. The starting point is Green's identity on the truncated half-space \(\mathbb
H^d_\delta=\{y>\delta\}\). For a multiplier \(m=m(y,w)\),
\begin{align*}
  -\int_{\R^{d-1}}\int_{\R^d}
  w\,m(\delta,w)|f(z,\delta,v)|^2\dd\mu(v)\dd z
   & =
  \int_{\mathbb H^d_\delta}\int_{\R^d}
  (v\cdot\nabla_xm)|f|^2\dd\mu(v)\dd x \\
   & \quad+
  2\int_{\mathbb H^d_\delta}
  \pair{v\cdot\nabla_xf}{mf}_{\H^{-1}_\mu,\H^1_\mu}\dd x.
\end{align*}
The boundary term suggests choosing \(m\) so that \(m(\delta,w)\to\sgn w\) as \(\delta\downarrow0\), since then \(w\,m(\delta,w)\to|w|\), the
natural boundary weight.

The exact choice \(m=\sgn w\) cannot be used. Its jump at the grazing velocity \(w=0\) means that \((\sgn w)f\) need not belong to
\(\H^1_\mu\), so the last term in Green's identity is not defined for an \(\H^{-1}_\mu\)-valued transport derivative. We therefore seek a
smooth multiplier which agrees with the sign away from \(w=0\), while its transition region collapses as the boundary is approached.

Let \(a(y)\) denote the width of this transition. A very narrow transition is expensive when the multiplier is differentiated in velocity,
whereas a wide transition is expensive when it is differentiated along transport. Balancing these effects leads to the cubic grazing scale
\begin{equation}\label{eq:cubic_scale}
  a(y)=y^{1/3}.
\end{equation}
Accordingly, we introduce \(r=wy^{-1/3}\) and take
\[
  m(y,w)=F_\varepsilon(r),
\]
where \(F_\varepsilon\) is a smooth odd profile equal to \(\sgn r\) outside a bounded interval. This cubic relation also appears in kinetic
boundary analysis \cite{HwangJangVelazquez2014,HendersonLucertiniWang2024,RosOtonWeidner2025,KimWeidner2026}.

The profile is allowed to overshoot \(\pm1\) before returning to them. This
return produces a favourable transport contribution slightly away from
\(w=0\). The contribution near \(w=0\) has the opposite sign, but it is
weakened by the small normal-velocity component. A one-dimensional
Poincar\'e estimate then transfers the control from the favourable region
across the whole transition window.
Combined with Green's identity, this gives the key kinetic grazing-cone
Hardy inequality, which controls the \(y^{-2/3}\)-weighted integral of
\(|f|^2\) over the cubic grazing cone \(\{|w|<2y^{1/3}\}\) by the kinetic
energy norm.

With this Hardy inequality in hand, we return to the boundary term in
Green's identity and let \(\delta\downarrow0\). The multiplier converges to
\(\sgn w\), and the boundary factor converges to \(|w|\), giving the natural
trace estimate.
Since the construction depends only on \((y,w)\), no restriction on the tangential velocity is needed, and the argument
applies to both the unrestricted Lebesgue and unrestricted Gaussian models.

\subsection{Three sources of failure}
\label{sec:counterexamples}

To disprove the trace estimate \eqref{eq:trace_ineq}, it is enough to find smooth functions whose bulk energies stay bounded while their
boundary functionals tend to infinity. All counterexamples below follow this plan. We call a smooth test function concentrated near chosen
positions and velocities a \emph{packet}.

\begin{enumerate}[label=\textup{\Alph*.},leftmargin=*]
  \item \emph{Unrestricted Lebesgue model: norm-preserving velocity translations.} Lebesgue measure is unchanged by translations in \(v\).
        Moving a velocity packet from the origin to a large velocity \(Re\) therefore leaves its velocity \(\H^1\)-norm unchanged. On a bounded
        domain, we may also take the packet independent of \(x\), so its transport derivative vanishes and its bulk energy is independent of
        \(R\). On a boundary patch where \(e\cdot n_\Omega\) is bounded below, \(|v\cdot n_\Omega|\) is of order \(R\). Since \(\omega_p(s)=s\)
        for \(s\ge1\), the weighted boundary functional grows linearly in \(R\) for every \(p\ge1\).

  \item \emph{Unrestricted Gaussian model: curvature-induced normal-flux amplification.} Gaussian measure makes a shift to large velocity
        expensive, so the preceding construction no longer works. Instead, we centre a packet at a large velocity that is tangent to the
        boundary at one point. The boundary normal changes across a nearby patch, so the same velocity acquires a normal component there. The
        packet widths and spatial support can be balanced so that the bulk energy stays bounded while
        \[
          \mathcal T^\gamma_{p,\Omega}(f_h)
          \asymp h^{(p-2)/6},
        \]
        where \(h\downarrow0\) is the thickness of the spatial layer. The weighted boundary functional therefore diverges for \(p<2\). This
        mechanism requires \(d\ge2\), and the needed supporting patch exists on every bounded \(\C^{1,1}\) domain; no positive lower bound on
        curvature is assumed.

  \item \emph{Bounded-velocity models: rapid normal variation.} Here the packet cannot be sent to arbitrarily large velocity. We instead choose
        a domain of exact regularity \(\C^{1,\alpha}\) whose normal changes by an amount of order \(\ell^\alpha\) across a boundary patch of size
        \(\ell\).
        A velocity that is nearly tangent at one point then has a normal component of order \(\ell^\alpha\) at a nearby point, without any
        large-velocity effect. The packet can be arranged to follow transport characteristics and to have bounded bulk energy, while
        \[
          \mathcal T^\mu_{p,\Omega}(f_\ell)
          \asymp \ell^{\alpha(p+1)-1}.
        \]
        This diverges when \(\alpha<1/(p+1)=\alpha_p\). The construction applies to both the bounded-support Euclidean and spherical velocity
        models. It gives a counterexample domain for each \(\alpha<\alpha_p\), not failure on every \(\C^{1,\alpha}\) domain.
\end{enumerate}

The first two mechanisms require velocities whose size tends to infinity and are therefore blocked in the bounded-velocity models. The last two
require a changing boundary normal and disappear on a half-space. The first mechanism also uses the boundedness of \(\Omega\): an
\(x\)-independent packet has infinite bulk energy on a half-space, while localising it in \(x\) creates a transport cost that grows with \(R\).

At the critical exponent \(\alpha=\alpha_p\), the third family has a bounded, non-vanishing weighted boundary functional rather than a
divergent one, so it is no longer a counterexample.

\subsection{A scaling heuristic for the boundary threshold}

Assume \(d\ge2\). We briefly explain the origin of the exponent \(\alpha_p=1/(p+1)\) and the geometry behind the bounded-velocity
counterexamples. The discussion is only heuristic.

The characteristics of the free transport operator \(v\cdot\nabla_x\) are the straight lines \(x(s)=x_0+sv\), along which \(v\) is constant. Such
a characteristic is grazing at \(x_0\in\partial\Omega\) when \(v\cdot n_\Omega(x_0)=0\). This is the limiting picture. At a finite tangential
scale, the counterexample packet is centred on a nearby, nearly tangent secant characteristic. At the reference point \(x_0\), its normal
velocities satisfy
\[
  |v\cdot n_\Omega(x_0)|\sim a\ll1,
\]
and their scalar normal components occupy a range of width comparable to \(a\). Thus \(a\) measures both the distance of the packet from the
grazing set and its thickness in the normal-velocity direction.

Let \(h\) be the thickness of the spatial boundary layer and let \(\ell\) be the tangential size of the packet. We take the tangential part of
the central velocity to have fixed nonzero size. A characteristic in the packet then takes time of order \(h/a\) to cross the layer and travels a
comparable tangential distance. Hence
\begin{equation}\label{eq:intro-characteristic-scaling}
  \ell\sim\frac{h}{a},
  \qquad\text{or equivalently}\qquad
  h\sim a\ell.
\end{equation}

Suppose that the packet has amplitude \(A\), while the remaining velocity directions are kept at fixed scale. We arrange it to be approximately
constant along transport characteristics, so that the transport term does not determine the scaling. In the later construction, a cut-off in a
phase that is exactly constant along transport, together with the convex boundary, also confines the tangential support without creating an
additional transport cost. Cutting the packet off across the normal-velocity interval costs a velocity derivative of size \(A/a\). Its dominant
bulk-energy contribution therefore has scale
\[
  \mathcal E^\mu_\Omega(f)
  \sim
  A^2\ell^{d-1}\frac{h}{a}.
\]
Using \(h\sim a\ell\), this becomes
\[
  \mathcal E^\mu_\Omega(f)\sim A^2\ell^d.
\]
We may therefore choose \(A^2\sim\ell^{-d}\) so that the bulk energy remains of order one.

The boundary geometry determines the remaining scale \(a\). If \(\partial\Omega\) is \(\C^{1,\alpha}\), then its unit normal is
\(\alpha\)-H\"older continuous, and across a boundary patch of tangential size \(\ell\),
\begin{equation}\label{eq:heuristic_ub}
  |n_\Omega(x)-n_\Omega(x_0)|
  \lesssim \ell^\alpha.
\end{equation}
This is only an upper bound on a general domain. The counterexamples are constructed on strictly convex domains of exact regularity
\(\C^{1,\alpha}\), for which the H\"older estimate \eqref{eq:heuristic_ub} is attained along suitable boundary patches. A suitable velocity
tangent to the boundary at one point therefore acquires, relative to the changing boundary normal, a normal component of order \(\ell^\alpha\) at
nearby points, providing the geometric mechanism underlying the sharpness result.

To turn this tangent picture into a packet of characteristics that crosses the thin boundary layer, we use nearby, nearly tangent secants. On the
selected patches, the gap between the secant direction and the local tangent has the same order \(\ell^\alpha\). We therefore choose the packet's
normal-velocity scale to match this geometric scale:
\[
  a\sim\ell^\alpha,
\]
hence $h\sim a\ell\sim\ell^{1+\alpha}$.

On the boundary, the packet occupies an area of size \(\ell^{d-1}\) and a normal-velocity interval of width \(a\). Throughout its boundary
support, \(|v\cdot n_\Omega|\lesssim a<1\), and it is comparable to \(a\) on a fixed positive fraction of the rescaled packet. Thus the upper and
lower bounds have the same scale: the boundary weight contributes \(a^p\), and the energy normalisation above gives
\[
  \mathcal T^\mu_{p,\Omega}(f)
  \sim
  A^2\ell^{d-1}a^{p+1}
  \sim
  \frac{a^{p+1}}{\ell}
  \sim
  \ell^{\alpha(p+1)-1}.
\]
For this packet family, the boundary functional therefore diverges when \(\alpha<1/(p+1)\), remains of order one when \(\alpha=1/(p+1)\), and
decays when \(\alpha>1/(p+1)\). This predicts the critical boundary exponent
\[
  \alpha_p=\frac1{p+1}.
\]
The failure below the threshold is existential: it occurs on the model domains just described, not necessarily on every \(\C^{1,\alpha}\) domain.
These same scales also guide the positive argument, but curvature makes their analytic use more delicate.

\subsection{How the scaling enters the curved-boundary proof}

On a half-space the normal direction is fixed. The signed normal velocity is simply \(w\), the grazing set is always \(w=0\), and the regularised
multiplier used in the proof depends only on the height \(y\) and on \(w\). On a curved boundary, by contrast, the normal changes from point to
point, so the set of grazing velocities moves along the boundary.

To see the resulting difficulty, rotate coordinates so that locally \(\Omega=\{x_d>\Phi(z)\}\), set \(y=x_d-\Phi(z)>0\) and \(b=\nabla\Phi\), and
split the velocity as \(v=(u,w)\). After flattening the graph, the coefficient of \(\partial_y\) and the transport field become
\[
  c(z,v)=w-u\cdot b(z),
  \qquad
  X_b=u\cdot\nabla_z+c(z,v)\partial_y.
\]
On this boundary graph, \(v\cdot n_\Omega\dd S=-c\dd z\), so \(c\) is the unnormalised flattened normal velocity entering the physical boundary
weight. It is tempting to insert \(c\) directly into the flat multiplier. However, differentiating such a multiplier along \(X_b\) would also
differentiate \(b\). At the critical regularity, \(b\) is only H\"older continuous, so these derivatives are not available. This is the main new
obstruction in the curved case.

The first remedy is to smooth the boundary slope in the tangential variables. At height \(y\), let \(b_y\) be a tangential mollification of
\(b\), and introduce the approximate normal velocity
\[
  \bar c=w-u\cdot b_y(z).
\]
The smoothing scale must shrink as the boundary is approached. Too little smoothing makes derivatives of \(b_y\) too large, whereas too much
smoothing makes \(\bar c\) a poor approximation of the true normal velocity \(c\).

The scaling from the preceding subsection prescribes the balance. At \(\alpha=\alpha_p\), the relation \(a\sim\ell^{\alpha_p}\) gives \(\ell\sim
a^{p+1}\), while \(h\sim a\ell\) gives \(h\sim a^{p+2}\). Identifying the physical height \(h\) with \(y\) and the grazing width \(a\) with
\(\rho(y)\), and denoting the tangential smoothing scale by \(\ell(y)\), gives
\[
  \rho(y)\sim y^{1/(p+2)},
  \qquad
  \ell(y)\sim y^{(p+1)/(p+2)}.
\]
Thus
\[
  \rho(y)\ell(y)\sim y,
  \qquad
  \ell(y)^{\alpha_p}\sim\rho(y),
  \qquad
  \alpha_p=\frac1{p+1}.
\]
The first relation is the characteristic balance \(h\sim a\ell\), while the second says that, at the critical regularity, the mollification error
and the grazing width have the same power of \(y\). This comparability alone does not provide smallness. The constants in the two scale choices
are selected so that the approximation error and the terms produced when \(b_y\) is differentiated can be absorbed; here the boundedness of the
velocities is essential.

A first multiplier is then built from the normalised variable \(\bar c/\rho(y)\). It is designed to give bulk control near the boundary rather
than to produce the boundary trace directly. The favourable transport contribution has the critical scale
\[
  \frac{\rho(y)^p}{y}
  \sim y^{-2/(p+2)}.
\]
The squared velocity-gradient cost is bounded by the same Hardy weight. A suitable one-dimensional multiplier profile turns this balance into a
Hardy estimate controlling the bulk mass with weight \(y^{-2/(p+2)}\). The boundary contribution of this first multiplier vanishes, so a separate
argument is required to recover the desired boundary weight.

For that purpose, the proof uses a second, dyadic multiplier. Formally, one would like to use
\[
  -\operatorname{sgn}(c)|c|^{p-1},
\]
whose boundary flux is \(|c|^p\). Depending on \(p\), this expression may fail to be smooth at \(c=0\), and in every case it still contains the
rough slope \(b\). The proof therefore decomposes the normal-velocity region into dyadic bands. On each band, it mollifies the slope at a
band-dependent tangential scale and restricts the multiplier to a matching height layer. These choices turn the formal expression into an
admissible multiplier whose interior costs are controlled by the Hardy estimate. Uniform finite overlap keeps the multiplier bounded, including
when \(p=1\).

Green's identity then yields the local boundary estimate. Rotations, localisation, and a finite collection of boundary charts globalise it on the
domain. This proves the bounded-support Euclidean estimate; the spherical estimate is subsequently obtained by radially thickening a function on
the sphere and applying the Euclidean result.

\subsection{Previous literature}

\subsubsection*{Stronger transport graph norms}

The classical transport trace theorems use stronger graph norms than the kinetic energy space considered here. In Bardos's \(\L^2\) setting, the
first-order transport expression itself belongs to \(\L^2\) \cite{Bardos1970}. Cessenat's neutron-transport spaces analogously require both
\(f\in\L^p\) and \(v\cdot\nabla_xf\in\L^p\) \cite{Cessenat1984,Cessenat1985}. Their approximation and weighted trace results therefore do not
cover an \(\H^{-1}_v\)-valued transport derivative.

Manteuffel, Ressel, and Starke likewise worked with the strong \(\L^2\) graph norm, which controls both \(f\) and \(v\cdot\nabla_xf\) in
\(\L^2\). They proved that the incoming trace is a bounded surjection onto an exit-time-weighted boundary space. Their example also shows that a
natural-flux \(\L^2\) trace need not exist on the entire graph space \cite[Theorem~2.2 and pp.~562--563]{ManteuffelResselStarke2000}.

\subsubsection*{Kinetic energy spaces}

By contrast, for graph spaces with an \(\H^{-1}_v\)-valued transport derivative, Baouendi and Grisvard proved the one-dimensional natural trace
estimate and the Green identity \cite[Theorem~1 and Corollary~1]{BaouendiGrisvard1968}. Degond and Mas--Gallic stated the same one-dimensional
estimate, referring to Baouendi and Grisvard \cite[Lemma~II.1]{DegondMasGallic1987}. Armstrong and Mourrat described the independent argument in
that paper as incomplete \cite[Appendix~A]{ArmstrongMourrat2019}. Brunken and Smetana later observed that its grazing cut-off step does not
establish convergence, or even a uniform bound, for the \(\H^{-1}_v\)-valued transport derivative \cite[Supplement,
  Section~SM2]{BrunkenSmetana2022}.

Carrillo asserted the corresponding higher-dimensional natural trace theorem, invoking the ideas of Degond and Mas--Gallic
\cite[Lemma~2.3]{Carrillo1998}. The proof relies on an approximation argument with suitable smooth functions and decomposes each approximation
into incoming and outgoing pieces, but does not establish a uniform bound for their graph norms in terms of the graph norm of the original
approximation \cite[Appendix~A]{ArmstrongMourrat2019}. Armstrong and Mourrat left the global trace estimate, including grazing velocities, open
and proved traces only locally away from the grazing set \cite[Question~4.2 and Lemma~4.3]{ArmstrongMourrat2019}. Their work nevertheless
contained an
independent error in the bounded-domain Dirichlet argument: the cut-off estimate in the two displays following (4.20) is not uniform in the test
function \cite[Lemma~4.5, Step~2]{ArmstrongMourrat2019}. The revised work of Albritton, Armstrong, Mourrat, and Novack states that this argument
could not be repaired, removes the bounded-domain Dirichlet and Cauchy--Dirichlet conclusions, and formulates the natural Gaussian trace estimate
as Question~1.8 \cite[Section~1C, footnote~6, and Remark~4.3]{AAMN2024}.

Bal and Palacios asserted global incoming and outgoing traces for a spherical-velocity graph space in the proof of \cite[Theorem~2.3,
  Step~2]{BalPalacios2020}. The decomposition argument there yields estimates in terms of the graph norms of the one-sided pieces, but does not
establish that these norms are controlled by the graph norm of their sum. Consequently, the argument does not prove the asserted global trace
statement \cite[Supplement, Section~SM2]{BrunkenSmetana2022}. For a space--time graph space with spherical velocities, Brunken and Smetana proved
smooth density and local traces on compact non-characteristic boundary pieces \cite[Propositions~3.1 and~3.2]{BrunkenSmetana2022}. They also
proved a global one-sided trace on graph-norm closures of smooth functions vanishing on the opposite boundary part \cite[Proposition~3.4 and
  Remark~3.5]{BrunkenSmetana2022}. The additional global trace and integration-by-parts statement needed in their uniqueness argument, for
graph-space functions with zero inflow trace that satisfy the homogeneous variational equation, is Assumption~4.4 and is left open there.

\subsubsection*{Renormalised and solution-dependent traces}

Other results concern solutions of transport equations rather than bounded trace operators on an ambient graph space. Cannone and Cercignani
treated the trace property used in the Boltzmann initial--boundary theory on Lyapunov surfaces \cite{CannoneCercignani1991}. Under his hypotheses
on the Sobolev vector field \(b\), Boyer considered distributional solutions \(\rho\in\L^\infty(0,T;\L^p(\Omega))\), \(1<p\le\infty\), of
\[
  \partial_t\rho+b\cdot\nabla_x\rho=g,
  \qquad
  g\in\L^1(0,T;\L^p(\Omega)).
\]
He proved existence and uniqueness of a renormalised trace. For finite \(p\), the trace estimate uses the exit-time-weighted \(\L^p\) boundary
space \cite[Theorems~3.1 and~5.1]{Boyer2005}.

In an operator-theoretic setting, Nier constructed maximal-accretive realisations of geometric Kramers--Fokker--Planck operators with a class of
boundary conditions, and proved integration-by-parts identities and global subelliptic estimates \cite[Theorems~1.1 and~1.2]{Nier2018}. Mischler
constructed renormalised boundary traces for weak Vlasov solutions and for kinetic equations with Maxwell boundary conditions
\cite{Mischler2000,Mischler2010}. Bernard recorded the gap in Carrillo's Lemma~2.3 and constructed weak \(\L^1\) solutions with an absorbing
boundary condition by duality from probabilistic solution theory. At positive times, these solutions have natural-flux \(\L^1\) traces and
satisfy Green's formula \cite[Theorems~2 and~4 and Lemmas~17--18]{Bernard2024}.

Carrapatoso and Mischler used renormalised boundary traces in their Maxwell-reflection semigroup theory \cite{CarrapatosoMischler2024}. The
companion work of Carrapatoso, Gabriel, Medina, and Mischler records a renormalised trace theorem for kinetic Fokker--Planck solutions. It gives
a local \(\L^2\) trace for the measure \((n_\Omega\cdot\widehat v)^2\,\dd v\,\dd S\), where \(\widehat v=v/ \sqrt{1+|v|^2}\), and a local
natural-flux \(\L^2\) trace when the initial value and incoming trace have the corresponding local \(\L^2\) control
\cite[Theorem~3.1]{CarrapatosoGabrielMedinaMischler2026}.

Zhu formulated time-dependent weak solutions as solution--trace pairs and established renormalised boundary identities \cite{Zhu2024}. Choi and
Song constructed global weak solutions to nonlinear, possibly degenerate kinetic Fokker--Planck equations with inflow or partial
absorption--reflection boundary conditions as solution-trace pairs. Their traces satisfy natural-flux \(\L^1\), kinetic-energy, and entropy
bounds; boundedness of a trace operator on the underlying function spaces is left open \cite[Theorems~1.1 and~1.3 and
  Remark~1.2(4)]{ChoiSong2025}.

Variational formulations provide another way to avoid requiring a suitable
trace operator. Litsg{\aa}rd and Nystr\"om obtained bounded-domain
solvability without such an operator on the entire kinetic energy space
\cite{LitsgardNystrom2021}.

\subsubsection*{Quadratic grazing estimates}

On \(\C^{1,1}\) domains, Silvestre proved a local trace with weight \(\min\{|v\cdot n_\Omega|,|v\cdot n_\Omega|^2\}\); the natural weight was
left open \cite[Proposition~4.3 and Remark~4.4]{Silvestre2022}. Ouyang and Silvestre defined a restriction operator from their weighted
fractional kinetic graph space to local boundary \(\L^2\), with the same quadratic grazing weight, in their work on the non-cutoff Boltzmann
equation \cite[Proposition~3.9]{OuyangSilvestre2024}. Avelin and Hou used a weak local trace estimate with quadratic grazing weight and a Green
identity in their stationary theory on bounded position and velocity domains \cite[Lemma~3.1]{AvelinHou2026}. For spherical velocities, Bosboom,
Egger, and Schlottbom stated a trace estimate with weight \(\lvert v\cdot n_\Omega\rvert\tau^2\) on bounded convex domains and spherical
velocities, where \(\tau=\tau(x,v)\) is the exit time. Their proof uses the assertion that \(\nabla_s\tau_+\) is bounded whenever the boundary is
Lipschitz. This assertion fails already for a cube, so that argument does not establish the estimate under the stated assumptions
\cite[Lemma~3]{BosboomEggerSchlottbom2025}. The second author defined a compatible-trace subspace and proved reciprocal one-sided flux estimates,
each controlling one boundary trace by the graph norm and the opposite trace \cite[Definition~2.6 and Theorem~2.7]{Valentini2026}.

\subsection{Proof architecture and outline}

Section~\ref{sec:flat} proves the natural half-space trace estimate. Section~\ref{sec:lebesgue-counterexample} gives the unrestricted
Lebesgue counterexample by norm-preserving velocity translation. Sections~\ref{sec:unbounded} and~\ref{sec:bounded} develop the unrestricted
Gaussian and bounded-velocity counterexamples, respectively. Section~\ref{sec:bounded-positive} proves the matching bounded-velocity trace
estimate.
Section~\ref{sec:energy} derives the trace operators, including the unrestricted Lebesgue and Gaussian natural traces on a half-space, and proves
Green's formula for these unrestricted half-space natural traces and for the bounded-velocity kinetic energy spaces.
Section~\ref{sec:dimension-one} gives the one-dimensional classification.
The appendices collect proofs of density results.

\subsection*{Acknowledgements}

This work grew out of discussions initiated at the workshop ``Regularity Theory for Evolution Equations'' at the SwissMAP Research Station, Les
Diablerets. The authors thank the organisers and the SwissMAP Research Station for their hospitality. We thank Cl\'ement Mouhot and Luis
Silvestre for stimulating discussions during the genesis of this work.

Lukas Niebel is supported by the Deutsche Forschungsgemeinschaft (DFG, German Research Foundation) under Germany's Excellence Strategy EXC
2044/2--390685587, Mathematics M\"unster: Dynamics--Geometry--Structure. Lisa Valentini is supported by a Gates Cambridge Scholarship issued by
the Gates Cambridge Trust.

\subsection*{Declaration of AI Use}
OpenAI's GPT-5.5 Pro provided preliminary counterexamples and OpenAI's GPT-5.6 Sol provided refinements of those counterexamples and an initial
proof of the natural half-space trace estimate. Generative AI was also used in the subsequent development of some of these arguments and in
drafting portions
of the exposition. The authors verified the mathematical content and are responsible for the final manuscript.

\section{\texorpdfstring{The natural trace estimate on the half-space}
  {The natural trace estimate on the half-space}}
\label{sec:flat}

Let \(d\ge1\). Write \(x=(z,y)\in\R^{d-1}\times(0,\infty)\), and \(v=(u,w)\in\R^{d-1}\times\R\). Set
\[
  \mathbb H^d:=\R^{d-1}\times(0,\infty).
\]
Here \(\mu\) is either Lebesgue measure on \(\R^d\) or the standard Gaussian measure \(\gamma_d\).

\begin{theorem}[Natural trace estimate on a half-space]\label{thm:flat}
  There is a constant \(C_d>0\) such that every \(f\in \C_c^\infty(\overline{\mathbb H^d}\times\R^d)\) satisfies the $\omega_1$-trace estimate
  \eqref{eq:trace_ineq}, that is,
  \begin{equation}\label{eq:flat-main}
    \int_{\R^{d-1}}\int_{\R^d}
    |f(z,0,v)|^2|w|\dd\mu(v)\dd z
    \le C_d\left(
    \norm{f}_{\L^2(\mathbb H^d;\H^1_\mu)}^2+
    \norm{v\cdot\nabla_xf}_{\L^2(\mathbb H^d;\H^{-1}_\mu)}^2\right),
  \end{equation}
  and consequently the $\omega_p$-trace estimate for every $1\le p < \infty$.
\end{theorem}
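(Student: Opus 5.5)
The plan is the multiplier argument sketched in the introduction, run on the truncated half-spaces \(\mathbb H^d_\delta=\{y>\delta\}\) followed by the limit \(\delta\downarrow0\). Fix \(f\in\C_c^\infty(\overline{\mathbb H^d}\times\R^d)\) and write \(\mathcal E:=\norm{f}_{\L^2(\mathbb H^d;\H^1_\mu)}^2+\norm{v\cdot\nabla_xf}_{\L^2(\mathbb H^d;\H^{-1}_\mu)}^2\). For a bounded smooth multiplier \(m=m(y,w)\), Green's identity on \(\mathbb H^d_\delta\) reads
\[
-\int_{\R^{d-1}}\int_{\R^d} w\,m(\delta,w)|f(z,\delta,v)|^2\dd\mu\dd z=\int_{\mathbb H^d_\delta}\int_{\R^d}(w\partial_ym)|f|^2\dd\mu\dd x+2\int_{\mathbb H^d_\delta}\pair{v\cdot\nabla_xf}{mf}\dd x .
\]
Since \(m\) does not depend on \(x\) tangentially or on \(u\), the pairing term is bounded by \(\mathcal E^{1/2}\,\norm{mf}_{\L^2_x\H^1_\mu}\), with
\[
\norm{mf}_{\L^2_x\H^1_\mu}\lesssim \norm{m}_\infty\mathcal E^{1/2}+\norm{(\partial_wm)f}_{\L^2}.
\]
The measure \(\mu\) enters only through \(\H^1_\mu\) and never through derivatives of \(m\). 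This is why both the Lebesgue and the Gaussian case can be treated at once.

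\emph{Step 1: the grazing-cone Hardy inequality.} I would set \(r=wy^{-1/3}\), take a cut-off \(\chi(y)\) equal to \(1\) on \([0,1]\) and supported in \([0,2]\), and aim to prove
\[
\int_{\mathbb H^d}\int_{\{|r|<2\}}\chi(y)\,y^{-2/3}|f|^2\dd\mu\dd x\le C\mathcal E .
\]
The region \(y\ge1\) is harmless, since there \(y^{-2/3}\le1\). I would use the multiplier \(m=\chi(y)G(r)\), where \(G\) is odd and smooth and has the following shape: \(G\) rises steeply on \([0,\eta]\) to an overshoot value \(M>1\), stays at \(M\) up to \(r=1\), decreases to \(1\) on \([1,2]\), and equals \(1\) beyond \(r=2\). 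The transport term is
\[
w\partial_y m=-\tfrac13\chi\,r^2G'(r)\,y^{-2/3}+\chi' wG,
\]
where the \(\chi'\) part is bounded by \(C\mathcal E\) after integration. On \(1<|r|<2\) the main term has the favourable sign, of size \(\gtrsim(M-1)\,y^{-2/3}\). Where \(G'>0\), namely \(|r|<\eta\), it has the bad sign but size only \(\lesssim \eta M\,y^{-2/3}\).

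The boundary term is also favourable: \(wG(r)\ge0\), so the left side of Green's identity is nonpositive. The velocity cost is \(\partial_wm=\chi G'(r)y^{-1/3}\). Its contribution to the pairing is controlled by \(\norm{G'}_\infty\,\mathcal E^{1/2}\) times the square root of the Hardy quantity, and can therefore be absorbed by Young's inequality.

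To dominate the bad region by the good one, I would use a one-dimensional Poincaré estimate in \(w\) on the window \(|w|<2y^{1/3}\) at fixed \((z,y,u)\):
\[
\int_{|r|<2}|f|^2\dd w\lesssim\int_{1<|r|<2}|f|^2\dd w+y^{2/3}\int_{|r|<2}|\partial_wf|^2\dd w .
\]
For the Gaussian measure I would carry the density along. Since \(y\le2\) the windows are bounded, so the Gaussian density is comparable to a constant on each window.

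Combining these pieces, the quantity \(H\) on the left of the target inequality satisfies
\[
(M-1)H_{\mathrm{good}}\le C\eta M\,(H_{\mathrm{good}}+\mathcal E)+C\mathcal E+C\mathcal E^{1/2}H^{1/2}.
\]
Choosing \(\eta\) small relative to \(M\) then gives \(H\lesssim\mathcal E\). \textbf{This balancing is the step I expect to be the main obstacle.} The profile must make the unfavourable near-grazing contribution, after the Poincaré transfer, strictly smaller than the favourable one, uniformly in \(\delta\). All terms here are integrated over \(y>\delta\) with constants independent of \(\delta\).

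\emph{Step 2: the boundary term.} Next I would take \(m=\chi(y)F(r)\), with \(F\) odd, equal to \(\sgn r\) for \(|r|\ge2\), and \(|F|\le M\). The transport term \(-\frac13 r^2F'(r)y^{-2/3}\) is supported in the cone \(\{|r|<2\}\), so it is bounded by the Hardy inequality. The same holds for the velocity cost \(\norm{F'(r)y^{-1/3}f}_{\L^2}\), and the pairing term is bounded by \(C\mathcal E\). Hence
\[
\int_{\R^{d-1}}\int_{\R^d} w\,F(w\delta^{-1/3})\,|f(z,\delta,v)|^2\dd\mu\dd z\le C\mathcal E
\]
uniformly in \(\delta\), using that \(\chi(\delta)=1\) for \(\delta<1\).

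As \(\delta\downarrow0\), we have \(wF(w\delta^{-1/3})\to|w|\) pointwise for \(w\neq0\), and the integrand is dominated by \(M|w||f|^2\) with \(f\) smooth and compactly supported. Dominated convergence then gives \eqref{eq:flat-main}. Finally, \(\omega_p(s)\le s=\omega_1(s)\) for every \(p\ge1\), so the \(\omega_p\)-trace estimate follows for all \(1\le p<\infty\).
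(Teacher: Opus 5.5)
Your strategy is essentially the paper's: the self-similar variable $r=wy^{-1/3}$, an odd overshooting profile, the anchored Poincar\'e inequality on $|r|<2$, Young absorption, and the same Green identity reused for the boundary term. The balancing you flag as the main obstacle is not actually a problem. With $\eta$ fixed small relative to $(M-1)/M$, the large constant $\norm{G'}_\infty\sim M/\eta$ in the pairing term only enlarges the constant in Young's inequality; this is Lemma~\ref{lem:flat-coercive-sign}. The genuine gap is the height cut-off $\chi(y)$, and it breaks the Lebesgue case.

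Since $\chi'\le0$ and $wG(r)\ge0$, the term $\chi'(y)\,wG(r)|f|^2$ in $v\cdot\nabla_x m$ lands on the side you must bound from above, as $+\int|\chi'|\,|w|\,|G(r)|\,|f|^2$. It acts like an outgoing flux smeared over heights $1<y<2$.
\begin{itemize}
\item For $\mu=\gamma_d$ this term is harmless, because $\int|w||f|^2\dd\gamma_d\lesssim\norm{f}_{\H^1_\gamma}^2$.
\item For Lebesgue measure it is not controlled by $\norm{f}_{\L^2_x\H^1_\mu}^2$. Take $f=\zeta(x)\psi(v-Re_d)$ with $\zeta$ supported in $1<y<2$: its $\L^2_x\H^1_v$ norm is independent of $R$, while the term grows like $R$.
\end{itemize}
So the claim that the term is ``bounded by $C\mathcal E$ after integration'' needs the transport part of $\mathcal E$, i.e.\ an interior natural-flux estimate, which you do not supply. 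In Step~2 the same term $\chi' wF|f|^2$ is dropped altogether, although it has the same unfavourable sign and equals $\int|\chi'||w||f|^2$ wherever $|F|=1$. In the unrestricted Lebesgue model large velocities are exactly what destroys trace estimates (Section~\ref{sec:lebesgue-counterexample}), so this omission is not cosmetic.

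The simplest repair is the paper's: use no cut-off at all. For $m=G(r)$ on all of $\{y>0\}$ you have $v\cdot\nabla_x m=-\tfrac13y^{-2/3}r^2G'(r)$ and $\partial_w m=y^{-1/3}G'(r)$. Because $G'$ is supported in $|r|\le2$, both are bounded by a constant on $\{y\ge1\}$ with no growth in $w$, so that region costs only $C\norm{f}_{\L^2_x\H^1_\mu}^2$.

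Alternatively, keep $\chi$ but first prove $\int_1^2\int\int|w||f|^2\dd\mu\dd z\dd y\lesssim\mathcal E$ with an auxiliary multiplier $\phi(y)s(w)$. Here $s$ is a fixed smooth odd sign with $ws(w)\ge|w|-1$, and $\phi$ is nondecreasing, vanishes near $y=0$, and has $\phi'\ge c$ on $[1,2]$. The boundary term then vanishes, the transport term is $\phi'(y)ws(w)|f|^2\ge0$, and the pairing is $O(\mathcal E)$. With either fix, Steps~1 and~2 go through for both measures.
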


We first identify the cubic scale and then use a single self-similar multiplier which supplies both grazing coercivity and the limiting boundary
sign.

\subsection{The cubic grazing scale}

We first analyse the behaviour of a smoothed sign multiplier near the boundary. The cubic scale \eqref{eq:cubic_scale} is visible before the
multiplier is chosen. If \(s(y,w)=S(w/a(y))\) for some smoothed sign $S \in \C^1(\R)$ and a scaling
$a \in \C^1((0,\infty);(0,\infty))$, then, in the transition region where \(|w|\lesssim a(y)\), one has
\begin{equation}
  \label{eq:exponents-balance}
  |\partial_ws|^2\lesssim a(y)^{-2},
  \qquad |v\cdot\nabla_xs|=|w\partial_ys|\lesssim |a'(y)|.
\end{equation}
For \(a(y)=y^\theta\), $\theta>0$, these singular densities have orders \(y^{-2\theta}\) and \(y^{\theta-1}\). Balancing them gives
\(-2\theta=\theta-1\), hence \(\theta=1/3\). Thus the grazing window where boundary concentration may occur is \(|w|\lesssim y^{1/3}\). We call
$\{(y,w):0<y<1,\ |w|<2y^{1/3}\}$ the grazing cone. The aperture condition \(|w|<2y^{1/3}\) is invariant under the kinetic scaling
\((y,w)\mapsto(\lambda^3y,\lambda w)\), $\lambda>0$.

\subsection{\texorpdfstring{The grazing functional and multiplier tools}
  {The grazing functional and multiplier tools}}

For the remainder of this section, fix a function \(f\in \C_c^\infty(\overline{\mathbb H^d}\times\R^d)\).

We quantify concentration near grazing velocities by the grazing-cone mass
\begin{equation}\label{eq:flat-P}
  \mathcal P(f):=
  \int_0^1\int_{\R^{d-1}}\int_{\R^d}
  y^{-2/3}\one_{\{|w|<2y^{1/3}\}}|f(z,y,v)|^2
  \dd\mu(v)\dd z\dd y.
\end{equation}
We will control \(\mathcal P(f)\) by the kinetic energy norm appearing on the right-hand side of \eqref{eq:flat-main}. The resulting estimate is
a
kinetic grazing-cone analogue of Hardy's inequality.

The weight is dictated by the cubic grazing scaling, which gives $y^{-2/3}$ in \eqref{eq:exponents-balance}. The corresponding weighted integral
is finite for the test class: the \(w\)-section has length \(O(y^{1/3})\), leaving the integrable singularity \(y^{-1/3}\).

The multiplier used below will control \(\mathcal P(f)\) and recover the boundary flux in the same Green identity.

The argument uses the following Green identity on the truncated half-space \(\mathbb H^d_\delta=\{y>\delta\}\), for \(\delta>0\).

\begin{lemma}[Truncated Green identity]\label{lem:flat-green}
  Let \(f\) be as above and let \(m=m(y,w)\) be \(\C^1\) for \(y>0\). Assume that \(mf\in\L^2(\mathbb H^d_\delta;\H^1_\mu)\), that
  \[
    \int_{\mathbb H^d_\delta}\int_{\R^d}
    |v\cdot\nabla_xm|\,|f|^2\dd\mu\dd x<\infty,
  \]
  and that the boundary term below is finite. Then
  \begin{align}
    -\int_{\R^{d-1}}\int_{\R^d}
    w\,m(\delta,w)|f(z,\delta,v)|^2\dd\mu(v)\dd z
     & =
    \int_{\mathbb H^d_\delta}\int_{\R^d}
    (v\cdot\nabla_xm)|f|^2\dd\mu(v)\dd x \notag \\
     & \quad+
    2\int_{\mathbb H^d_\delta}
    \pair{v\cdot\nabla_xf}{mf}_{\H^{-1}_\mu,\H^1_\mu}\dd x.
    \label{eq:flat-green}
  \end{align}
\end{lemma}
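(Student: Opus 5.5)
We prove Lemma~\ref{lem:flat-green} by computing the divergence of the transport field on the truncated half-space and then relaxing the regularity of the multiplier. Since \(f\in\C_c^\infty(\overline{\mathbb H^d}\times\R^d)\), it is smooth with compact support in \((x,v)\). The only difficulty is that \(m\) is merely \(\C^1\) in \((y,w)\) for \(y>0\) and may be unbounded or non-integrable as \(|w|\to\infty\) or \(y\downarrow0\). However, on \(\{y\ge\delta\}\) and on the compact \(v\)-support of \(f\), \(m\) is \(\C^1\) and bounded.

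\emph{Step 1: pointwise identity.} Since \(m\) depends only on \((y,w)\) and \(v\) is a parameter for the transport field, we have \(v\cdot\nabla_x(m|f|^2)=(v\cdot\nabla_xm)|f|^2+2mf\,(v\cdot\nabla_xf)\) pointwise on \(\{y>\delta\}\times\R^d\). Moreover \(v\cdot\nabla_x=\nabla_x\cdot(v\,\cdot)\), because \(v\) does not depend on \(x\).

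\emph{Step 2: integration.} For fixed \(v\), integrate the identity over \(x\in\mathbb H^d_\delta\). The divergence theorem on \(\{y>\delta\}\) has outward normal \(-e_d\), and the compact \(x\)-support of \(f\) kills all other contributions. This gives
\[
\int_{\mathbb H^d_\delta}v\cdot\nabla_x(m|f|^2)\dd x=-\int_{\R^{d-1}}w\,m(\delta,w)|f(z,\delta,v)|^2\dd z.
\]
Next integrate in \(\dd\mu(v)\). Because the integrand is continuous with compact support in \(\{y\ge\delta\}\times\R^d\), Fubini applies and all three terms are finite. (The assumed finiteness conditions are then automatic; they are stated so that the lemma also applies to limits.)

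\emph{Step 3: the duality term.} It remains to identify \(\int_{\R^d}mf\,(v\cdot\nabla_xf)\dd\mu\) with \(\pair{v\cdot\nabla_xf}{mf}_{\H^{-1}_\mu,\H^1_\mu}\) for a.e.\ \(x\). This follows from the convention that an \(\L^2(\mu)\) function acts on \(\H^1_\mu\) through the \(\L^2(\mu)\) pivot. For smooth \(f\), the quantity \(v\cdot\nabla_xf(x,\cdot)\) is a compactly supported smooth function, hence lies in \(\L^2(\mu)\subset\H^{-1}_\mu\). Also \(mf(x,\cdot)\in\H^1_\mu\) by hypothesis, and here this holds trivially since \(mf\) is \(\C^1\) with compact \(v\)-support. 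Combining Steps 1--3 yields \eqref{eq:flat-green}.

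The only point needing care, and thus the main (mild) obstacle, is justifying the integrations when \(m\) is singular. The singularities that the multipliers in this paper will have occur only at \(y=0\), and the truncation at \(\delta>0\) excludes them. Therefore no limiting argument is needed at this stage. The later passage \(\delta\downarrow0\) is where the stated integrability hypotheses and the Hardy control of \(\mathcal P(f)\) will be used.
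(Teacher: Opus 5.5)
Your proposal is correct and follows the paper's argument: for fixed \(v\), apply the divergence theorem in \(x\) to \(v\,m|f|^2\) on \(\{y>\delta\}\), where the outward normal is \(-e_d\); expand by the product rule; and identify the bulk term with the \(\H^{-1}_\mu\)--\(\H^1_\mu\) pairing via the \(\L^2(\mu)\) pivot. Your further remark is also accurate: for \(f\in\C_c^\infty(\overline{\mathbb H^d}\times\R^d)\) and fixed \(\delta>0\), the stated integrability hypotheses hold automatically, because \(m\) and its first derivatives are bounded on the compact support of \(f\) within \(\{y\ge\delta\}\).
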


\begin{proof}
  Apply the divergence theorem in \(x\) to \(v\,m|f|^2\). On \(y=\delta\) the outward normal is \(-e_y\); compact support removes the remaining
  boundary terms. Expanding \(v\cdot\nabla_x(m|f|^2)\) gives \eqref{eq:flat-green}; the stated hypotheses also make the duality term integrable.
  Since no integration by parts in \(v\) is used, the identity is the same for both Lebesgue and Gaussian velocity measures.
\end{proof}

Consider the annulus
\[
  K:=\left\{r\in\R:\frac54<|r|<\frac32\right\}.
\]
The elementary anchored Poincaré estimate
\begin{equation}\label{eq:flat-anchor}
  \int_{-2}^{2}|G(r)|^2\dd r
  \le C\left(\int_K|G(r)|^2\dd r+
  \int_{-2}^{2}|G'(r)|^2\dd r\right)
\end{equation}
holds for every \(G\in \H^1(-2,2)\). To prove it, subtract the average over \(K\), control the oscillation by the fundamental theorem of
calculus, and apply Cauchy--Schwarz to the average.

The following profile will also be used in the proof of the $\omega_p$ estimates on curved domains with bounded velocities.
\begin{lemma}[Compact odd multiplier profile]
  \label{lem:compact-odd-hardy-profile}
  Let
  \[
    K=\left\{r\in\R:\frac54<|r|<\frac32\right\}.
  \]
  There are constants \(c_0,C_0>0\) such that, for every \(0<\varepsilon<1/8\), there is an odd \(Q_\varepsilon\in\C_c^\infty((-2,2))\)
  satisfying \(rQ_\varepsilon(r)\ge0\) and
  \begin{equation}\label{eq:compact-odd-hardy-profile}
    -r^2Q_\varepsilon'(r)
    \ge c_0\one_K(r)
    -C_0\varepsilon\one_{\{|r|<2\varepsilon\}}(r).
  \end{equation}
\end{lemma}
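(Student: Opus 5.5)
The plan is to build \(Q_\varepsilon\) on \((0,\infty)\) as a product of two fixed smooth monotone profiles, one depending on \(r/\varepsilon\) and one independent of \(\varepsilon\), and then extend it as an odd function. The rising part of \(Q_\varepsilon\) is confined to \(\varepsilon<r<2\varepsilon\). The only positive part of \(Q_\varepsilon'\) is then multiplied by \(r^2\le4\varepsilon^2\), which is what produces the small error \(-C_0\varepsilon\). Away from this region, \(Q_\varepsilon'\le0\), and it is strictly negative on \(K\), which gives the coercive term \(c_0\one_K\).

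\emph{Step 1: fixed profiles.} Choose \(\chi\in\C^\infty(\R)\) nondecreasing with \(\chi=0\) on \((-\infty,1]\) and \(\chi=1\) on \([2,\infty)\). Choose \(\psi\in\C^\infty([0,\infty))\) nonincreasing with \(\psi=1\) on \([0,9/8]\), \(\psi=0\) on \([15/8,\infty)\), and \(\psi'\le-1/2\) on \([5/4,3/2]\). Such a \(\psi\) exists because a total drop of \(1\) is available over an interval of length \(3/4\): the slope \(-1/2\) on \([5/4,3/2]\) uses only \(1/8\) of it, and smooth monotone transitions on \([9/8,5/4]\) and \([3/2,15/8]\) absorb the rest.

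\emph{Step 2: definition of \(Q_\varepsilon\).} For \(r\ge0\), set \(Q_\varepsilon(r):=\chi(r/\varepsilon)\psi(r)\), and for \(r<0\), set \(Q_\varepsilon(r):=-Q_\varepsilon(-r)\). Since \(Q_\varepsilon\) vanishes on \([0,\varepsilon]\), the odd extension is smooth. The support of \(Q_\varepsilon\) lies in \([-15/8,15/8]\subset(-2,2)\). Both factors are nonnegative on \(r\ge0\), and \(Q_\varepsilon\) is odd, so \(rQ_\varepsilon(r)\ge0\).

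\emph{Step 3: the differential inequality.} The derivative \(Q_\varepsilon'\) is even, and the right-hand side of \eqref{eq:compact-odd-hardy-profile} is even, so it suffices to check \(r\ge0\). Since \(\varepsilon<1/8\), we have \(2\varepsilon<1/4<9/8\). Hence on \([0,2\varepsilon]\) we have \(\psi\equiv1\), and therefore
\[
  Q_\varepsilon'(r)=\varepsilon^{-1}\chi'(r/\varepsilon)\le \varepsilon^{-1}\norm{\chi'}_\infty .
\]
This gives
\[
  -r^2Q_\varepsilon'\ge-4\varepsilon^2\varepsilon^{-1}\norm{\chi'}_\infty=-4\norm{\chi'}_\infty\,\varepsilon
\]
on \([0,2\varepsilon)\). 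For \(r\ge2\varepsilon\) we have \(\chi(r/\varepsilon)=1\), so \(Q_\varepsilon'=\psi'\le0\). In particular \(-r^2Q_\varepsilon'\ge0\) there. On \(K\cap(0,\infty)=(5/4,3/2)\) we get
\[
  -r^2Q_\varepsilon'\ge(25/16)\cdot\tfrac12 .
\]
Since \(K\) is disjoint from \(\{|r|<2\varepsilon\}\), the inequality holds with \(c_0=25/32\) and \(C_0=4\norm{\chi'}_\infty\), and both constants are independent of \(\varepsilon\).

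There is no genuine obstacle here. The only point needing care is Step 1, in ordering the constants. The plateau \(\psi=1\) must extend past \(2\varepsilon\), which is guaranteed by \(\varepsilon<1/8\). It must also end before \(K\), so that \(\psi\) is smooth and strictly decreasing on the whole of \(K\). Finally, \(\psi\) must reach zero before \(r=2\), so that \(Q_\varepsilon\) has compact support in \((-2,2)\).
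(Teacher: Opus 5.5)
Your proof is correct and is essentially the paper's construction. The paper likewise takes $Q_\varepsilon(r)=\sigma(r/\varepsilon)\vartheta(r)$, with $\sigma$ a smooth odd nondecreasing sign and $\vartheta$ a fixed even cut-off that is strictly decreasing on $K$; your oddly extended $\chi$ and your $\psi$ are particular choices of these two functions. The only cosmetic difference is that you use $\psi\equiv1$ on $[0,2\varepsilon]$ to kill the cross term there, whereas the paper just observes that $-r^2\sigma(r/\varepsilon)\vartheta'(r)\ge0$ everywhere.
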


\begin{proof}
  Choose a smooth odd nondecreasing function \(\sigma\colon\R\to[-1,1]\) equal to \(\sgn r\) for \(|r|\ge2\). Choose an even
  \(\vartheta\in\C_c^\infty((-2,2))\), with \(0\le\vartheta\le1\), \(\vartheta=1\) on \([-1,1]\), nonincreasing for \(r>0\), and
  \(\vartheta'(r)\le-c_\vartheta<0\) on \(5/4<r<3/2\). Set
  \[
    Q_\varepsilon(r):=\sigma(r/\varepsilon)\vartheta(r).
  \]
  Then \(Q_\varepsilon\) is odd and \(rQ_\varepsilon(r)\ge0\). Moreover,
  \[
    -r^2Q_\varepsilon'(r)
    =-\varepsilon^{-1}r^2\sigma'(r/\varepsilon)\vartheta(r)
    -r^2\sigma(r/\varepsilon)\vartheta'(r).
  \]
  The first term is nonpositive, supported in \(|r|<2\varepsilon\), and bounded below there by \(-C_0\varepsilon\). The second term is
  nonnegative. On \(K\), it is bounded below by a positive constant: indeed, \(\varepsilon<1/8\) implies \(|r|/\varepsilon>2\), and
  \(|\vartheta'(r)|\ge c_\vartheta\). This proves \eqref{eq:compact-odd-hardy-profile}.
\end{proof}

\begin{lemma}[Coercive overshooting sign profile]
  \label{lem:flat-coercive-sign}
  There exist \(0<\varepsilon<1/8\), an odd \(F_\varepsilon\in\C^\infty(\R)\), and \(C>0\) such that
  \begin{equation}\label{eq:flat-coercive-sign-properties}
    rF_\varepsilon(r)\ge0,\qquad
    |F_\varepsilon(r)|\le2,\qquad
    F_\varepsilon(r)=\sgn r\quad\text{for }|r|\ge2,
    \qquad
    \supp F_\varepsilon'\subset[-2,2].
  \end{equation}
  If
  \[
    h_\varepsilon(r):=-r^2F_\varepsilon'(r),
  \]
  then, for every measurable \(\rho\colon(-2,2)\to\R\) satisfying \(e^{-2}\le\rho\le1\) and every \(G\in\H^1(-2,2)\),
  \begin{equation}\label{eq:flat-weighted-profile-coercivity}
    \int_{-2}^{2}|G|^2\rho\,\dd r
    \le C\left(
    \int_{-2}^{2}h_\varepsilon|G|^2\rho\,\dd r
    +
    \int_{-2}^{2}|G'|^2\rho\,\dd r
    \right).
  \end{equation}
\end{lemma}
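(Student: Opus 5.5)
We construct \(F_\varepsilon\) as an overshooting perturbation of a smoothed sign, combining the compact odd profile of Lemma~\ref{lem:compact-odd-hardy-profile} with a fixed sign function, and then prove the weighted estimate via the anchored Poincar\'e inequality \eqref{eq:flat-anchor}.

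\emph{Construction.} Fix the odd nondecreasing \(\sigma\) from the proof of Lemma~\ref{lem:compact-odd-hardy-profile}, equal to \(\sgn r\) for \(|r|\ge2\). Set
\[
  F_\varepsilon(r):=\sigma(r/\varepsilon)+Q_\varepsilon(r),
\]
with \(Q_\varepsilon=\sigma(\cdot/\varepsilon)\vartheta\). Then \(F_\varepsilon\) is odd and smooth, and \(rF_\varepsilon\ge0\) because both summands have the sign of \(r\). Since \(|F_\varepsilon|\le1+1=2\), the bound holds. For \(|r|\ge2\) we have \(\vartheta=0\) and \(\sigma(r/\varepsilon)=\sgn r\), so \(F_\varepsilon=\sgn r\) there and \(\supp F_\varepsilon'\subset[-2,2]\).

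Moreover,
\[
  h_\varepsilon=-\varepsilon^{-1}r^2\sigma'(r/\varepsilon)-r^2Q_\varepsilon'.
\]
The first term is supported in \(|r|<2\varepsilon\) and is bounded below by \(-4\varepsilon\sup\sigma'\). Combining this with \eqref{eq:compact-odd-hardy-profile} gives
\[
  h_\varepsilon\ge c_0\one_K-C_1\varepsilon\one_{\{|r|<2\varepsilon\}},
\]
with \(C_1\) independent of \(\varepsilon\).

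\emph{Coercivity.} Since \(e^{-2}\le\rho\le1\), the weight \(\rho\) costs only constant factors. Applying \eqref{eq:flat-anchor} gives
\[
  \int_{-2}^2|G|^2\rho\le \int_{-2}^2|G|^2\le C\Big(\int_K|G|^2+\int_{-2}^2|G'|^2\Big).
\]
We then bound \(\int_K|G|^2\le e^2c_0^{-1}\int_K h_\varepsilon|G|^2\rho\), using the lower bound on \(h_\varepsilon\) on \(K\) (note \(K\) and \(\{|r|<2\varepsilon\}\) are disjoint). To pass to the full integral \(\int h_\varepsilon|G|^2\rho\), we must add back the negative part of \(h_\varepsilon\), which gives
\[
  \int_K h_\varepsilon|G|^2\rho\le\int_{-2}^2h_\varepsilon|G|^2\rho+C_1\varepsilon\int_{|r|<2\varepsilon}|G|^2.
\]
The one-dimensional Sobolev embedding \(\|G\|_{\L^\infty(-2,2)}^2\le C\|G\|_{\H^1(-2,2)}^2\) yields
\[
  \varepsilon\int_{|r|<2\varepsilon}|G|^2\le 4\varepsilon^2 C\Big(\int|G|^2+\int|G'|^2\Big).
\]
Collecting terms, the left side \(\int|G|^2\) appears on the right with coefficient \(C'\varepsilon^2\). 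Using \(\rho\ge e^{-2}\) to convert between weighted and unweighted integrals, we fix \(\varepsilon\) so small that \(C'\varepsilon^2e^2\le1/2\) and absorb this term. This gives \eqref{eq:flat-weighted-profile-coercivity}.

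\emph{Main obstacle.} The main difficulty is this absorption step. It requires the negative mass of \(h_\varepsilon\) near \(0\) to be small uniformly in \(G\), which is where both the \(\varepsilon\) prefactor and the \(L^\infty\) bound on a window of length \(O(\varepsilon)\) are used. These two factors together give the quadratic smallness \(\varepsilon^2\), and the constants \(c_0,C_0\) from Lemma~\ref{lem:compact-odd-hardy-profile} are independent of \(\varepsilon\), so \(\varepsilon\) can be fixed without affecting them.
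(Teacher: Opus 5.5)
Your proposal is correct and is essentially the paper's proof: \(\sigma(r/\varepsilon)+Q_\varepsilon=\sigma(r/\varepsilon)(1+\vartheta(r))\) is exactly the paper's profile, and your coercivity argument (the anchored Poincar\'e inequality \eqref{eq:flat-anchor}, the lower bound \(h_\varepsilon\ge c_0\one_K-C\varepsilon\one_{\{|r|<2\varepsilon\}}\), and absorption of the central defect) is the same as the paper's. The one difference is that your \(\L^\infty\) Sobolev step is unnecessary: because the constants do not depend on \(\varepsilon\), the paper simply uses \(\varepsilon\int_{|r|<2\varepsilon}|G|^2\rho\le\varepsilon\int_{-2}^2|G|^2\rho\) and absorbs with linear smallness, so the \(\varepsilon^2\) gain is not the crux. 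Also, since \(\int h_\varepsilon|G|^2\rho\) may be negative, the paper works with \(c_*=\min\{c_0,1\}\) so that one constant multiplies both right-hand terms, and your ``collecting terms'' step should respect this bookkeeping point as well (shrinking \(c_0\) suffices).
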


\begin{proof}
  Choose \(\sigma\) and \(\vartheta\) as in the proof of Lemma~\ref{lem:compact-odd-hardy-profile} and set
  \[
    F_\varepsilon(r):=\sigma(r/\varepsilon)(1+\vartheta(r)).
  \]
  The properties in \eqref{eq:flat-coercive-sign-properties} follow from the choices of \(\sigma\) and \(\vartheta\). Moreover,
  \[
    h_\varepsilon(r)
    =-\varepsilon^{-1}r^2\sigma'(r/\varepsilon)(1+\vartheta(r))
    -r^2\sigma(r/\varepsilon)\vartheta'(r).
  \]
  The first term is supported in \(|r|<2\varepsilon\) and is bounded below there by \(-C_0\varepsilon\). The second term is nonnegative
  everywhere and bounded below by a constant \(c_0>0\) on \(K\). Hence
  \begin{equation}\label{eq:flat-coercive-sign-pointwise}
    h_\varepsilon
    \ge c_0\one_K-C_0\varepsilon\one_{\{|r|<2\varepsilon\}}.
  \end{equation}

  Fix a weight \(\rho\) as in the statement and abbreviate
  \[
    \mathsf L=\int_{-2}^{2}|G|^2\rho,\qquad
    \mathsf R=\int_K|G|^2\rho,\qquad
    \mathsf D=\int_{-2}^{2}|G'|^2\rho,\qquad
    \mathsf I=\int_{-2}^{2}h_\varepsilon|G|^2\rho.
  \]
  Since \(e^{-2}\le\rho\le1\), the anchored estimate \eqref{eq:flat-anchor} gives
  \[
    \mathsf L\le C_{\rm an}(\mathsf R+\mathsf D),
  \]
  with \(C_{\rm an}\) independent of \(\rho\). On the other hand, \eqref{eq:flat-coercive-sign-pointwise} gives
  \[
    \mathsf I\ge c_0\mathsf R-C_0\varepsilon\mathsf L.
  \]
  Set \(c_*:=\min\{c_0,1\}\). Since \(\mathsf D\ge0\), the last two inequalities imply
  \[
    \mathsf I+\mathsf D
    \ge c_*(\mathsf R+\mathsf D)-C_0\varepsilon\mathsf L
    \ge
    \left(\frac{c_*}{C_{\rm an}}-C_0\varepsilon\right)\mathsf L.
  \]
  Choose \(\varepsilon<1/8\) so small that \(C_0\varepsilon\le c_*/(2C_{\rm an})\). Then
  \[
    \mathsf L
    \le \frac{2C_{\rm an}}{c_*}
    \bigl(\mathsf I+\mathsf D\bigr),
  \]
  which is \eqref{eq:flat-weighted-profile-coercivity}.
\end{proof}

\begin{lemma}[Self-similar multiplier]\label{lem:flat-self-similar}
  Let \(F\in\C^1(\R)\), assume that \(F\) and \(F'\) are bounded, and that \(\supp F'\subset[-2,2]\). For
  \[
    r:=wy^{-1/3},\qquad m_F(y,w):=F(r),
  \]
  one has
  \begin{equation}\label{eq:flat-self-similar-derivatives}
    \partial_wm_F=y^{-1/3}F'(r),
    \qquad
    v\cdot\nabla_xm_F
    =-\frac13y^{-2/3}r^2F'(r).
  \end{equation}
  Moreover,
  \begin{align}
    \norm{m_Ff}_{\L^2_x\H^1_\mu}^2
    +\int_{\mathbb H^d}\int_{\R^d}
    |v\cdot\nabla_xm_F|\,|f|^2\dd\mu\dd x
    \le C_F\left(
    \norm{f}_{\L^2_x\H^1_\mu}^2+\mathcal P(f)\right).
    \label{eq:flat-self-similar-bound}
  \end{align}
\end{lemma}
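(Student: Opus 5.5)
The derivative formulas \eqref{eq:flat-self-similar-derivatives} come from the chain rule. Since \(m_F\) depends only on \((y,w)\), we have \(\partial_w m_F=y^{-1/3}F'(r)\) and \(v\cdot\nabla_x m_F=w\,\partial_y m_F\). Differentiating \(r=wy^{-1/3}\) in \(y\) gives \(\partial_y r=-\tfrac13 wy^{-4/3}\). Hence
\[
  w\,\partial_y m_F=-\tfrac13 w^2y^{-4/3}F'(r)=-\tfrac13 y^{-2/3}r^2F'(r),
\]
using \(w^2y^{-2/3}=r^2\).

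For the bound \eqref{eq:flat-self-similar-bound}, the plan is to expand \(\nabla_v(m_Ff)=m_F\nabla_vf+f\nabla_vm_F\). Note that \(\nabla_vm_F=(0,\partial_wm_F)\), since \(m_F\) does not depend on \(u\). The terms \(|m_Ff|^2\) and \(|m_F\nabla_vf|^2\) are bounded by \(\norm{F}_\infty^2(|f|^2+|\nabla_vf|^2)\) and integrate to at most \(\norm{F}_\infty^2\norm{f}_{\L^2_x\H^1_\mu}^2\). This holds for both Lebesgue and Gaussian \(\mu\), because the \(\H^1_\mu\) norm is just the weighted \(\L^2\) norm of \(g\) and \(\nabla_vg\); no integration by parts in \(v\) is needed. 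The remaining terms are \(y^{-2/3}|F'(r)|^2|f|^2\) and \(\tfrac13 y^{-2/3}r^2|F'(r)||f|^2\). Both are pointwise bounded by \(C_F\,y^{-2/3}\one_{\{|w|\le 2y^{1/3}\}}|f|^2\), since \(\supp F'\subset[-2,2]\) forces \(|r|\le2\) and hence \(r^2\le4\) on the support.

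The only real point is to compare this region with the grazing cone in \(\mathcal P(f)\), which is restricted to \(0<y<1\) and uses the strict inequality \(|w|<2y^{1/3}\). There are two discrepancies.

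First, the boundary \(|w|=2y^{1/3}\) is a null set in \(w\) for each \(y\), so it contributes nothing under either measure. Alternatively, \(F'\) is continuous and vanishes at \(r=\pm2\), being zero outside \([-2,2]\).

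Second, the region \(y\ge1\) must be handled separately. There \(y^{-2/3}\le1\), so this part is bounded by \(C_F\norm{f}_{\L^2_x\L^2_\mu}^2\le C_F\norm{f}_{\L^2_x\H^1_\mu}^2\).

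Adding the pieces gives \eqref{eq:flat-self-similar-bound} with \(C_F\) depending only on \(\norm{F}_\infty\) and \(\norm{F'}_\infty\). I expect no genuine obstacle; the only step needing care is this splitting at \(y=1\).
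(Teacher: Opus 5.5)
Your proposal is correct and follows essentially the same route as the paper. It uses the chain rule for the two derivative identities, then a pointwise bound by \(C_F y^{-2/3}\one_{\{|w|<2y^{1/3}\}}\) on \(0<y<1\) and by \(C_F\) on \(y\ge1\), and finally the velocity product rule \(\nabla_v(m_Ff)=m_F\nabla_vf+(\partial_wm_F)f\,e_w\); your remark that the set \(\{|w|=2y^{1/3}\}\) is null is a harmless detail the paper leaves implicit.
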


\begin{proof}
  Since
  \[
    \partial_wr=y^{-1/3},
    \qquad
    w\partial_yr=-\frac13y^{-2/3}r^2,
  \]
  the identities in \eqref{eq:flat-self-similar-derivatives} follow from the chain rule. On \(0<y<1\), the support assumption on \(F'\) gives
  \[
    |\partial_wm_F|^2+|v\cdot\nabla_xm_F|
    \le C_Fy^{-2/3}\one_{\{|w|<2y^{1/3}\}}.
  \]
  For \(y\ge1\), both quantities are bounded by \(C_F\). The product rule
  \[
    \nabla_v(m_Ff)=m_F\nabla_vf+(\partial_wm_F)f\,e_w
  \]
  now proves \eqref{eq:flat-self-similar-bound}.
\end{proof}

\subsection{\texorpdfstring{The overshooting sign-multiplier argument}
  {The overshooting sign-multiplier argument}}

The profile and the corresponding self-similar regions are shown in Figure~\ref{fig:flat-hardy-regions}.

\begin{figure}[htbp]
  \centering
  \begin{minipage}[c]{.51\linewidth}
    \centering
    \scalebox{.72}{\begin{tikzpicture}[font=\small]
        \begin{scope}[x=2.0cm,y=1.55cm]
          \def\hseps{.10}
          \path[hs anchor] (-1.5,-1.18) rectangle (-1.25,1.18);
          \path[hs anchor] (1.25,-1.18) rectangle (1.5,1.18);
          \path[hs error] (-2*\hseps,-1.18) rectangle (2*\hseps,1.18);

          \draw[hs axis] (-2.38,0) -- (2.40,0)
          node[below right=-1pt,hs note] {\(r\)};
          \draw[hs axis] (0,-1.30) -- (0,1.32)
          node[above left=-1pt,hs note] {\(F_\varepsilon(r)\)};

          \draw[hs guide] (-2.28,-.5) -- (-2,-.5);
          \draw[hs guide] (2,.5) -- (2.28,.5);
          \draw[draw=hsInk,line width=1.2pt]
          (-2.28,-.5) -- (-2,-.5)
          .. controls (-1.78,-.52) and (-1.48,-.96) .. (-1,-1)
          -- ({-2*\hseps},-1)
          .. controls ({-.9*\hseps},-1) and ({.9*\hseps},1)
          .. ({2*\hseps},1)
          -- (1,1)
          .. controls (1.48,.96) and (1.78,.52) .. (2,.5)
          -- (2.28,.5);

          \node[hs note,anchor=east] at (-2.03,-.38) {\(-1\)};
          \node[hs note,anchor=west] at (2.03,.38) {\(+1\)};
          \node[hs note,anchor=south] at (-.72,-1.02) {\(-2\)};
          \node[hs note,anchor=north] at (.72,1.02) {\(+2\)};
          \node[hs note,text=hsOrange!65!black] at (-1.375,.7) {\(K\)};
          \node[hs note,text=hsOrange!65!black] at (1.375,.7) {\(K\)};
          \node[hs note,text=hsMagenta!75!black,anchor=west]
          at (.28,-.72) {\(\lvert r\rvert<2\varepsilon\)};

          \path[hs anchor]
          (-2.03,-1.58) rectangle (-1.81,-1.45);
          \node[hs note,text=hsOrange!65!black,anchor=west]
          at (-1.68,-1.515)
          {coercivity on \(K\):
            \(-r^2F_\varepsilon'(r)\ge c\)};
          \path[hs error]
          (-2.03,-1.88) rectangle (-1.81,-1.75);
          \node[hs note,text=hsMagenta!75!black,anchor=west]
          at (-1.68,-1.815)
          {central defect:
            \(-r^2F_\varepsilon'(r)\ge-C\varepsilon\)};
        \end{scope}
      \end{tikzpicture}}
  \end{minipage}\hfill
  \begin{minipage}[c]{.44\linewidth}
    \centering
    \scalebox{.72}{
      \begin{tikzpicture}[font=\small]
        \def\hseps{.10}
        \begin{scope}[x=7cm,y=1.2cm]
          \path[fill=hsBlue!12]
          plot[domain=0:1,variable=\t,samples=101]
          ({pow(\t,3)},{2*\t})
          -- plot[domain=1:0,variable=\t,samples=101]
          ({pow(\t,3)},{-2*\t})
          -- cycle;

          \path[hs anchor]
          plot[domain=0:1,variable=\t,samples=81]
          ({pow(\t,3)},{1.5*\t})
          -- plot[domain=1:0,variable=\t,samples=81]
          ({pow(\t,3)},{1.25*\t})
          -- cycle;
          \path[hs anchor]
          plot[domain=0:1,variable=\t,samples=81]
          ({pow(\t,3)},{-1.5*\t})
          -- plot[domain=1:0,variable=\t,samples=81]
          ({pow(\t,3)},{-1.25*\t})
          -- cycle;

          \path[hs error]
          plot[domain=0:1,variable=\t,samples=81]
          ({pow(\t,3)},{2*\hseps*\t})
          -- plot[domain=1:0,variable=\t,samples=81]
          ({pow(\t,3)},{-2*\hseps*\t})
          -- cycle;

          \draw[draw=hsBlue,line width=1pt]
          plot[domain=0:1,variable=\t,samples=101]
          ({pow(\t,3)},{2*\t});
          \draw[draw=hsBlue,line width=1pt]
          plot[domain=0:1,variable=\t,samples=101]
          ({pow(\t,3)},{-2*\t});

          \draw[hs axis] (-.025,0) -- (1.10,0)
          node[below right=-1pt,hs note] {\(y\)};
          \draw[hs axis] (0,-2.45) -- (0,2.48)
          node[above left=-1pt,hs note] {\(w\)};
        \end{scope}

        \path[draw=hsBlue,fill=hsBlue!12,line width=.8pt]
        (.15,-3.62) rectangle (.55,-3.36);
        \node[hs note,anchor=west] at (.75,-3.49)
        {grazing cone:
          \(\lvert w\rvert<2y^{1/3}\)};

        \path[hs anchor] (.15,-4.37) rectangle (.55,-4.11);
        \node[hs note,anchor=west] at (.75,-4.24)
        {coercive anchor bands:
          \(\frac54y^{1/3}<\lvert w\rvert<\frac32y^{1/3}\)};

        \path[hs error] (.15,-5.12) rectangle (.55,-4.86);
        \node[hs note,anchor=west] at (.75,-4.99)
        {central \(\varepsilon\)-defect:
          \(\lvert w\rvert<2\varepsilon y^{1/3}\)};
      \end{tikzpicture}}
  \end{minipage}
  \caption{The overshooting sign profile \(F_\varepsilon\) (left) and the
    grazing cone, coercive anchor bands, and central defect after the
    self-similar substitution \(r=wy^{-1/3}\) (right).}
  \label{fig:flat-hardy-regions}
\end{figure}

\begin{proof}[Proof of Theorem~\ref{thm:flat}]
  Write \(T:=v\cdot\nabla_x\) and set
  \[
    M:=\norm{f}_{\L^2_x\H^1_\mu},
    \qquad
    N:=\norm{Tf}_{\L^2_x\H^{-1}_\mu},
    \qquad
    P:=\mathcal P(f).
  \]
  Let \(\varepsilon\) and \(F_\varepsilon\) be supplied by Lemma~\ref{lem:flat-coercive-sign}, and define
  \[
    r:=wy^{-1/3},
    \qquad
    m(y,w):=F_\varepsilon(r),
    \qquad
    h_\varepsilon(r):=-r^2F_\varepsilon'(r).
  \]
  Lemma~\ref{lem:flat-self-similar} gives
  \begin{equation}\label{eq:flat-one-profile-product}
    \norm{mf}_{\L^2_x\H^1_\mu}^2
    +\int_{\mathbb H^d}\int_{\R^d}|Tm|\,|f|^2\dd\mu\dd x
    \le C(M^2+P),
  \end{equation}
  and
  \begin{equation}\label{eq:flat-one-profile-transport}
    Tm=\frac13y^{-2/3}h_\varepsilon(r).
  \end{equation}

  We first convert the one-dimensional coercivity \eqref{eq:flat-weighted-profile-coercivity} into a grazing estimate. Fix \((z,y,u)\) with
  \(0<y<1\) and put
  \[
    G(r):=f(z,y,u,y^{1/3}r).
  \]
  For Lebesgue measure, take \(\rho=1\). For Gaussian measure, the change of variables \(w=y^{1/3}r\) produces, apart from its constant
  normalising factor,
  \[
    \rho_y(r):=e^{-y^{2/3}r^2/2}.
  \]
  On \(|r|\le2\), one has \(e^{-2}\le\rho_y\le1\), while the tangential Gaussian factor is unchanged. Since
  \[
    G'(r)=y^{1/3}\partial_wf(z,y,u,y^{1/3}r),
    \qquad
    \dd w=y^{1/3}\dd r,
  \]
  applying \eqref{eq:flat-weighted-profile-coercivity}, multiplying by $y^{-1/3}$, changing back to $w$, and integrating in \((z,y,u)\) gives
  \begin{equation}\label{eq:flat-one-profile-slice}
    P
    \le C\int_0^1\int_{\R^{d-1}}\int_{\R^d}
    y^{-2/3}h_\varepsilon(r)|f|^2
    \dd\mu\dd z\dd y
    +CM^2.
  \end{equation}
  Define the quantity
  \[
    A:=\int_{\mathbb H^d}\int_{\R^d}(Tm)|f|^2\dd\mu\dd x.
  \]
  Also set
  \[
    I_{\ge1}:=
    \int_{\{y\ge1\}}\int_{\R^{d-1}}\int_{\R^d}
    y^{-2/3}h_\varepsilon(r)|f|^2\dd\mu\dd z\dd y.
  \]
  Since \(h_\varepsilon\) is bounded and \(y^{-2/3}\le1\) for \(y\ge1\),
  \[
    |I_{\ge1}|\le CM^2.
  \]
  Moreover, \eqref{eq:flat-one-profile-transport} gives the exact identity
  \[
    \int_0^1\int_{\R^{d-1}}\int_{\R^d}
    y^{-2/3}h_\varepsilon(r)|f|^2\dd\mu\dd z\dd y
    =3A-I_{\ge1}.
  \]
  Combining this bound with \eqref{eq:flat-one-profile-transport}--\eqref{eq:flat-one-profile-slice} yields the key inequality
  \begin{equation}\label{eq:flat-one-profile-grazing}
    P\le CA+CM^2.
  \end{equation}

  Recall that \(P<\infty\) for the smooth compactly supported test class, as observed after \eqref{eq:flat-P}. Thus
  \eqref{eq:flat-one-profile-product} justifies all the following bulk integrals before any estimate is closed. Apply
  Lemma~\ref{lem:flat-green} on \(\mathbb H^d_\delta\) with multiplier \(m\). By \eqref{eq:flat-coercive-sign-properties},
  \[
    w\,m(\delta,w)\ge0,\qquad
    |w\,m(\delta,w)|\le2|w|,
    \qquad
    w\,m(\delta,w)\longrightarrow |w|.
  \]
  Smoothness and compact support of \(f\) therefore give dominated convergence in the boundary integral. The first bulk term converges by the
  absolute integrability in \eqref{eq:flat-one-profile-product}; the duality term converges by Cauchy--Schwarz and absolute continuity. Hence
  \begin{align}
    -B & =A+2D, \label{eq:flat-one-profile-green} \\
    B  & :=\int_{\R^{d-1}}\int_{\R^d}
    |w|\,|f(z,0,v)|^2\dd\mu\dd z, \notag          \\
    D  & :=\int_{\mathbb H^d}
    \pair{Tf}{mf}_{\H^{-1}_\mu,\H^1_\mu}\dd x. \notag
  \end{align}

  Since \(B\ge0\), identity \eqref{eq:flat-one-profile-green} gives
  \[
    A=-B-2D\le2|D|.
  \]
  By duality and \eqref{eq:flat-one-profile-product},
  \[
    |D|\le N\norm{mf}_{\L^2_x\H^1_\mu}
    \le CN(M+P^{1/2}).
  \]
  Inserting these bounds into \eqref{eq:flat-one-profile-grazing} and using Young's inequality gives
  \[
    P\le CM^2+CN(M+P^{1/2})
    \le\frac12P+C(M^2+N^2).
  \]
  Consequently, we obtain the kinetic grazing-cone Hardy inequality
  \begin{equation}\label{eq:flat-one-profile-hardy}
    P\le C(M^2+N^2).
  \end{equation}

  Returning to the same Green identity, and now using \eqref{eq:flat-one-profile-product} and \eqref{eq:flat-one-profile-hardy}, we obtain
  \[
    B\le |A|+2|D|
    \le \int_{\mathbb H^d}\int_{\R^d}|Tm|\,|f|^2\dd\mu\dd x
    +2N\norm{mf}_{\L^2_x\H^1_\mu}
    \le C(M^2+N^2).
  \]
  This is \eqref{eq:flat-main}. Finally, \(\omega_p(t)\le t\) for every \(t\ge0\) and \(1\le p<\infty\), so the \(\omega_p\)-trace estimate
  follows as well.
\end{proof}

\begin{remark}[Why flatness matters]\label{rem:flatness}
  The construction depends on the fixed normal coordinate \(w\). Replacing it by \(v\cdot N(x)\), where \(N\) is a sufficiently regular interior
  extension of \(n_\Omega\), introduces \(v\cdot\nabla_x(v\cdot N(x))\) in the transport derivative. This term is quadratic in \(v\) and measures
  variation of the extended normal. The boundary-flattening change of variables in \cite[Section~3, p.~8]{Silvestre2022} produces the
  corresponding quadratic velocity term in the transformed drift.
\end{remark}

\section{\texorpdfstring{Unrestricted Lebesgue model:
    norm-preserving velocity translation} {Unrestricted Lebesgue model: norm-preserving velocity translation}}
\label{sec:lebesgue-counterexample}

The key mechanism for this counterexample is norm-preserving velocity translation.

\begin{proposition}[Failure in the unrestricted Lebesgue model]
  \label{prop:interval-lebesgue-failure}
  Let \(d\ge1\), \(p\ge1\), and $(V,\mu)=(\R^d,\mathcal L^d)$. Let \(\Omega\subset\R^d\) be a bounded \(\C^1\) domain. There is no constant
  \(C_\Omega>0\) such that the $\omega_p$-trace estimate \eqref{eq:trace_ineq} holds for every \(f\in\C_c^\infty(\overline \Omega\times\R^d)\).
\end{proposition}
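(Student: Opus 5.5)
The proof is the norm-preserving velocity translation described in mechanism A of Section~\ref{sec:counterexamples}. Fix a nonzero \(\phi\in\C_c^\infty(B_1)\subset\C_c^\infty(\R^d)\) and a smooth cut-off \(\chi\in\C_c^\infty(\R^d)\) with \(\chi=1\) on a neighbourhood of \(\overline\Omega\). For a unit vector \(e\) and \(R\ge 1\), set
\[
  f_R(x,v):=\chi(x)\phi(v-Re).
\]
Restricted to \(\overline\Omega\times\R^d\), this function is smooth and compactly supported, hence admissible. On \(\Omega\) it is independent of \(x\), so \(v\cdot\nabla_xf_R=0\) there. By translation invariance of Lebesgue measure,
\[
  \mathcal E^{\mathcal L^d}_\Omega(f_R)=|\Omega|\,\norm{\phi}_{\H^1(\R^d)}^2 ,
\]
which is independent of \(R\). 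This uses that \(\Omega\) is bounded.

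For the boundary side, choose a boundary patch on which \(e\cdot n_\Omega\) is bounded below. Since \(\Omega\) is bounded and \(\C^1\), pick \(x_0\in\partial\Omega\) maximising \(x\cdot e\) for some fixed unit \(e\), for instance \(e=e_1\). At \(x_0\) the outward normal equals \(e\), because \(x_0\) maximises a linear functional on \(\overline\Omega\) and \(\partial\Omega\) is \(\C^1\) there. By continuity of \(n_\Omega\), there is a relatively open set \(\Sigma\subset\partial\Omega\) of positive surface measure with \(e\cdot n_\Omega\ge 1/2\) on \(\Sigma\).

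Now estimate the weight. For \(v\in\supp\phi(\cdot-Re)\) we have \(|v-Re|<1\). Hence for \(x\in\Sigma\),
\[
  v\cdot n_\Omega(x)\ge R/2-1\ge R/4\ge1 \qquad\text{once } R\ge4,
\]
so \(\omega_p(|v\cdot n_\Omega|)=|v\cdot n_\Omega|\ge R/4\). Therefore
\[
  \mathcal T^{\mathcal L^d}_{p,\Omega}(f_R)\ge \frac R4\,S(\Sigma)\,\norm{\phi}_{\L^2}^2\longrightarrow\infty ,
\]
while the energy stays bounded. This contradicts \eqref{eq:trace_ineq} for every constant \(C_\Omega\) and every \(p\ge1\).

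In dimension \(d=1\) the same argument runs with \(\Omega\) an interval, \(\Sigma\) its right endpoint, and counting measure in place of \(S\).

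No step poses a real obstacle. The only point needing care is that the translated packet stays admissible with an \(R\)-independent energy. Boundedness of \(\Omega\) gives exactly this, since the bulk integral in \(x\) is over a finite-measure set and no spatial localisation, which would cost a transport derivative of order \(R\), is required.
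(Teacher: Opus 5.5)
Your proposal is correct and follows essentially the same approach as the paper: an \(x\)-independent translated packet \(\phi(v-Re)\) has \(R\)-independent energy \(|\Omega|\norm{\phi}_{\H^1(\R^d)}^2\), while on a patch where \(e\cdot n_\Omega\) is bounded below the boundary functional grows linearly in \(R\). The only differences are details the paper leaves implicit. You construct the patch explicitly via a maximiser of \(x\cdot e\). You also insert a spatial cut-off \(\chi\), which is harmless but unnecessary because \(\overline\Omega\) is compact.
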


\begin{proof}
  Choose a unit vector \(e\), a boundary patch \(\Sigma\subset\partial \Omega\) of positive surface measure, and \(c_0>0\) such that \(e\cdot
  n_\Omega\ge c_0\) on \(\Sigma\). In dimension one, a boundary point equipped with counting measure is such a patch. For a nonzero
  \(\psi\in\C_c^\infty(B_1)\) and \(R>2\), set
  \[
    f_R(x,v)=\psi(v-Re).
  \]

  This packet is independent of \(x\), so its transport derivative vanishes, and translation invariance of Lebesgue measure gives
  \[
    \mathcal E_\Omega^{\mathcal L^d}(f_R)
    =|\Omega|\norm{\psi}_{\H^1(\R^d)}^2.
  \]
  On \(\Sigma\times\supp_vf_R\), one has \(|v\cdot n_\Omega|\ge c_0R/2\ge1\) for all sufficiently large \(R\). Hence
  \(\omega_p(|v\cdot n_\Omega|)=|v\cdot
  n_\Omega|\) there and
  \[
    \mathcal T^{\mathcal L^d}_{p,\Omega}(f_R)
    \ge \frac{c_0}{2}|\Sigma|\norm\psi_{\L^2(\R^d)}^2R
    \longrightarrow\infty. \qedhere
  \]
\end{proof}

Boundedness of \(\Omega\) makes an \(x\)-independent packet an admissible finite-energy test function. Localising the same packet in an unbounded
half-space introduces a transport cost of order \(R\), so this construction does not contradict Theorem~\ref{thm:flat}.

\section{\texorpdfstring{Unrestricted Gaussian model: the
    \(\omega_2\)-trace estimate and curvature-induced normal-flux amplification} {Unrestricted Gaussian model: the omega-2-trace estimate and
    curvature-induced normal-flux amplification}}
\label{sec:unbounded}

In this section, consider the unrestricted Gaussian model \((V,\mu)=(\R^d,\gamma_d)\).

\subsection{The \(\omega_2\) benchmark}

We first record the \(\omega_2\) estimate for bounded \(\C^{1,1}\) domains, which uses the multiplier from Silvestre's local, time-dependent
Lebesgue-velocity argument \cite[Proposition~4.3]{Silvestre2022}.

\begin{proposition}[Gaussian \(\omega_2\) estimate
    (after Silvestre \cite{Silvestre2022})]
  \label{prop:quadratic-gaussian}
  Let $d \ge 1$, and let \(\Omega\subset\R^d\) be a bounded \(\C^{1,1}\) domain. Then there exists a constant \(C_\Omega>0\) such that every
  \(f\in \C_c^\infty(\overline \Omega\times\R^d)\) satisfies the $\omega_2$-trace estimate \eqref{eq:trace_ineq}, that is,
  \begin{align}
    \int_{\partial \Omega}\int_{\R^d}|f(x,v)|^2 &
    \omega_2(|v\cdot n_\Omega(x)|)\dd\gamma_d(v)\dd S(x) \notag      \\
                                                & \le C_\Omega\left(
    \norm{f}_{\L^2(\Omega;\H^1_\gamma)}^2+
    \norm{v\cdot\nabla_xf}_{\L^2(\Omega;\H^{-1}_\gamma)}^2\right).
    \label{eq:quadratic-gaussian}
  \end{align}
\end{proposition}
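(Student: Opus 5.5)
The plan is to use Silvestre's multiplier: a smooth truncation of the signed normal velocity itself, which is already smooth in \(v\), so no grazing-scale analysis is needed. Cover a neighbourhood of \(\partial\Omega\) by finitely many charts and fix a \(\C^{0,1}\) extension \(N\) of \(n_\Omega\) to \(\overline\Omega\). Concretely, take \(N=-\nabla\delta\) on a collar \(\{\operatorname{dist}(x,\partial\Omega)<r_0\}\), where \(\delta\) is the signed distance function, which is \(\C^{1,1}\) there because \(\partial\Omega\) is \(\C^{1,1}\). Then multiply by a cut-off \(\chi(x)\) equal to \(1\) near \(\partial\Omega\) and supported in the collar. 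With \(\beta\in\C^\infty(\R)\) odd, \(\beta(s)=s\) for \(|s|\le1\), \(\beta(s)=\sgn s\) for \(|s|\ge2\), and \(s\beta(s)\ge c\,\omega_2(|s|)\), set
\[
  m(x,v):=-\chi(x)\,\beta\bigl(v\cdot N(x)\bigr)\,(v\cdot N(x))\Big/\max\{1,|v\cdot N(x)|\}\cdot 0+(-\chi(x))\,\beta(v\cdot N(x)) .
\]
Equivalently \(m=-\chi\,\beta(v\cdot N)\). On \(\partial\Omega\) the boundary density is then \(|v\cdot n|\beta(|v\cdot n|)\ge c\,\omega_2(|v\cdot n|)\).

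Next, apply the divergence theorem to \(v\,m|f|^2\) (the curved analogue of Lemma~\ref{lem:flat-green}, valid since \(m\) is Lipschitz in \(x\) and smooth in \(v\)). This gives
\[
  \int_{\partial\Omega}\int |f|^2 (v\cdot n)\,m\dd\gamma_d\dd S=\int_\Omega\int (v\cdot\nabla_x m)|f|^2\dd\gamma_d\dd x+2\int_\Omega\pair{v\cdot\nabla_xf}{mf}\dd x .
\]
The left side equals \(-\int|f|^2|v\cdot n|\beta(|v\cdot n|)\). The duality term is bounded by \(N\,\norm{mf}_{\L^2\H^1_\gamma}\le C N M\), because \(|m|\le1\) and \(|\nabla_v m|=|\chi\beta'(v\cdot N)N|\le C\) uniformly in \(v\). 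This uniform bound is the point of using \(\beta\) rather than the raw sign: \(\nabla_v m\) stays bounded.

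The main obstacle is the transport term. We have
\[
  v\cdot\nabla_x m=-(v\cdot\nabla\chi)\beta(v\cdot N)-\chi\beta'(v\cdot N)\,v\cdot(DN)v .
\]
The second piece grows like \(|v|^2\) and the first like \(|v|\), so neither is bounded, and controlling \(\int|v|^2|f|^2\dd\gamma_d\) by \(\norm{f}_{\H^1_\gamma}^2\) is exactly what the Gaussian setting supplies. I would invoke the Gaussian weighted Poincaré (Hermite) inequality
\[
  \int|v|^2|g|^2\dd\gamma_d\le C\int(|g|^2+|\nabla_vg|^2)\dd\gamma_d .
\]
It follows by integrating \(\nabla_v\cdot(v e^{-|v|^2/2})=(d-|v|^2)e^{-|v|^2/2}\) against \(|g|^2\) and absorbing \(2\int|g||v||\nabla_v g|\) with Young's inequality. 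Since \(DN\in\L^\infty\) (using \(\C^{1,1}\)), the transport term is then bounded by \(CM^2\).

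Combining, \(c\,\mathcal T^\gamma_{2,\Omega}(f)\le CM^2+CNM\le C(M^2+N^2)\). For \(\C^{1,1}\) boundaries \(DN\) exists only a.e., so I would either smooth \(N\) slightly or note that the divergence theorem holds for Lipschitz vector fields. Finally, I would check that the boundary measure and the signs match the orientation of the outward normal.
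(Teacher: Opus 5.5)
Your proposal is correct and follows the paper's proof essentially step for step. The common ingredients are: a bounded truncation of the signed normal velocity \(v\cdot N(x)\), with \(N\) a Lipschitz extension of \(n_\Omega\); the Green identity; the duality bound using \(|\nabla_v m|\le C\); and the Gaussian moment bound \(\int|v|^2|g|^2\dd\gamma_d\le C\norm{g}_{\H^1_\gamma}^2\) for the quadratic transport term.

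The only difference is cosmetic. The paper uses the Lipschitz clip \(\kappa(t)=\max\{-1,\min\{t,1\}\}\), which gives exactly \(\omega_2\) on the boundary, whereas your smooth \(\beta\) gives only comparability. Such a \(\beta\) necessarily exceeds \(1\) just beyond \(s=1\), so your claim \(|m|\le1\) should read \(|m|\le C\), which is all the argument needs.
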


\begin{proof}
  The proof uses Green's formula with the bounded clipped multiplier that factorises the quadratic grazing weight. Let
  \[
    \kappa(t):=\max\{-1,\min\{t,1\}\}.
  \]
  Since \(\partial\Omega\) is \(\C^{1,1}\), its outward normal is Lipschitz and admits an extension \(N\in\W^{1,\infty}(\Omega;\R^d)\) with
  boundary trace \(n_\Omega\). Set \(a(x,v):=\kappa(v\cdot N(x))\). On \(\partial\Omega\), with \(s=v\cdot n_\Omega(x)\),
  \[
    a(x,v)(v\cdot n_\Omega(x))
    =\kappa(s)s
    =\min\{s^2,|s|\}
    =\omega_2(|s|).
  \]
  The weak chain and product rules give, almost everywhere,
  \[
    |a|\le1,\qquad
    |\nabla_va|\le \norm{N}_{\L^\infty(\Omega)},\qquad
    |v\cdot\nabla_xa|
    \le \norm{\nabla N}_{\L^\infty(\Omega)}|v|^2,
  \]
  and consequently
  \[
    \norm{af}_{\L^2(\Omega;\H^1_\gamma)}
    \le C_\Omega\norm{f}_{\L^2(\Omega;\H^1_\gamma)}.
  \]
  Thus, applying the weak Gauss--Green theorem first for each fixed \(v\) and then integrating over \(v\) yields
  \begin{align*}
    \mathcal T^\gamma_{2,\Omega}(f)
     & =
    \int_\Omega\int_{\R^d}|f|^2v\cdot\nabla_xa
    \dd\gamma_d\dd x \\
     & \quad+
    2\int_\Omega
    \pair{v\cdot\nabla_xf}{af}_{\H^{-1}_\gamma,\H^1_\gamma}\dd x.
  \end{align*}
  The clipping points cause no additional term because the weak chain rule applies directly to the Lipschitz function \(\kappa\).

  It remains only to control the quadratic velocity moment created by \(v\cdot\nabla_xa\). Gaussian integration by parts and Young's inequality
  give, for \(g\in\C_c^\infty(\R^d)\),
  \[
    \int_{\R^d}|v|^2|g|^2\dd\gamma_d
    \le C_d\norm{g}_{\H^1_\gamma(\R^d)}^2.
  \]
  Therefore Cauchy--Schwarz in \(x\), followed by Young's inequality, gives
  \begin{align*}
    \mathcal T^\gamma_{2,\Omega}(f)
     & \le
    C_\Omega\norm{f}_{\L^2(\Omega;\H^1_\gamma)}^2
    +2\norm{v\cdot\nabla_xf}_{\L^2(\Omega;\H^{-1}_\gamma)}
    \norm{af}_{\L^2(\Omega;\H^1_\gamma)} \\
     & \le C_\Omega\left(
    \norm{f}_{\L^2(\Omega;\H^1_\gamma)}^2+
    \norm{v\cdot\nabla_xf}_{\L^2(\Omega;\H^{-1}_\gamma)}^2
    \right),
  \end{align*}
  which is \eqref{eq:quadratic-gaussian}.
\end{proof}

\subsection{Curvature-induced normal-flux amplification and the Gaussian counterexample}

We now construct counterexamples for \(1\le p<2\). Unlike norm-preserving velocity translation for Lebesgue measure, \(\L^2(\gamma)\)-normalised
translation to a tangential speed \(R\) produces a squared derivative cost of order \(R^2\). Thus the construction combines a fixed-width
translated Gaussian packet with a supporting quadratic boundary cap.

Silvestre notes that, on a non-convex domain, the constants in local boundary estimates are expected to deteriorate when the tangential component
of the reference velocity grows in a direction of negative curvature \cite[p.~4]{Silvestre2022}. A related curvature--velocity coupling is
encoded by the kinetic distance \(\alpha(x,v)=|v\cdot\nabla\xi(x)|^2 -2\xi(x)\,v\cdot\nabla^2\xi(x)v\), used in \cite[Definition~2 and
  equation~(5)]{GuoKimTononTrescases2017}; its Hessian term combines the boundary geometry with the velocity. The reason the construction below
applies to every bounded \(\C^{1,1}\) domain is Lemma~\ref{lem:unb-cap}: it produces the required supporting quadratic cap without any lower
curvature assumption. This packet construction realises the coupling on the coherent part of a supporting quadratic cap: normal rotation on the
spatial scale \(\sqrt h\) converts the tangential packet speed \(R\) into a boundary-normal flux of order \(R\sqrt h\), which is the mechanism
driving the growth of the weighted boundary functional.

The proof has four parts. First, a Gaussian-weighted primitive and an \(\L^2(\gamma)\)-normalised translated packet give the velocity-space
estimates. Next, an enclosing ball centred at a fixed interior point and touching the domain at a farthest boundary point supplies a quadratic
cap, confines the entire thin layer to one graph chart, and yields a large ball on which the normal rotates coherently in one fixed tangential
direction. Third, the tangential, normal, and transport scales are balanced to keep the bulk energy bounded. Finally, the upper and lower
boundary estimates give the scale of the weighted boundary functional in \eqref{eq:unb-main-scale}.

\begin{theorem}[Gaussian counterexamples on every \(\C^{1,1}\) domain, $p<2$]
  \label{thm:unbounded}
  Let \(d\ge2\), and let \(\Omega\subset\R^d\) be a bounded \(\C^{1,1}\) domain. There is a family \(f_h\in
  \C_c^\infty(\overline\Omega\times\R^d)\), \(h\downarrow0\), such that
  \[
    \sup_h\mathcal E^\gamma_\Omega(f_h)<\infty,
  \]
  and, for every \(1\le p<\infty\),
  \begin{equation}\label{eq:unb-main-scale}
    \mathcal T^\gamma_{p,\Omega}(f_h)
    \asymp_{p,\Omega} h^{(p-2)/6}.
  \end{equation}
  Thus the weighted boundary functional diverges for \(p<2\). For \(p=2\), it satisfies
  \[
    0<c\le\mathcal T^\gamma_{2,\Omega}(f_h)\le C,
  \]
  and it tends to zero for every \(p>2\).
\end{theorem}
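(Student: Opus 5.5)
The plan is to build an explicit packet family and check its scalings. We centre a translated Gaussian velocity packet at a large tangential speed \(R\), localise it in a boundary layer of thickness \(h\), and restrict it tangentially to a patch of size \(\ell\sim\sqrt h\). Across that patch, the normal of a quadratic cap rotates coherently by an amount of order \(\sqrt h\). This turns the tangential speed \(R\) into a normal flux \(|v\cdot n_\Omega|\sim R\sqrt h\).

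\emph{Geometry.} Fix an interior point \(x_*\) and let \(x_0\in\partial\Omega\) be a farthest boundary point from \(x_*\). The enclosing ball \(B(x_*,|x_0-x_*|)\) contains \(\Omega\) and touches \(\partial\Omega\) at \(x_0\). Since \(\partial\Omega\) is \(\C^{1,1}\), near \(x_0\) we can write \(\Omega=\{x_d>\Phi(z)\}\) with \(\Phi(0)=0\), \(\nabla\Phi(0)=0\) and
\[
  c|z|^2\le\Phi(z)\le C|z|^2 .
\]
The lower bound comes from the ball, the upper bound from \(\C^{1,1}\). We take the layer \(\{0<x_d-\Phi(z)<h\}\cap\{|z|\lesssim\sqrt h\}\), which lies inside one chart.

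We choose a fixed tangential direction \(e_1\) and a sub-ball \(\{z: z_1\in(\sqrt h,2\sqrt h),\ |z'|\le\kappa\sqrt h\}\). On it, the quadratic lower bound together with the Lipschitz slope forces \(|\nabla\Phi\cdot e_1|\asymp\sqrt h\) with a fixed sign. Establishing this coherent rotation of the normal is the content of the cap lemma. It uses only the enclosing ball and needs no lower curvature bound on \(\Omega\) itself.

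\emph{Scaling.} We take
\[
  f_h(x,v)=A\,\chi\bigl(z/\sqrt h\bigr)\,\eta\bigl((x_d-\Phi(z))/h\bigr)\,g_R(v),
\]
where \(g_R\) is the \(\L^2(\gamma)\)-normalised translate of a fixed bump to \(Re_1\). Concretely, \(g_R(v)=e^{R v_1-R^2/2}\)-type weight times \(\psi(v-Re_1)\), built from a Gaussian-weighted primitive. The velocity-space estimates we need are
\[
  \norm{g_R}_{\H^1_\gamma}^2\asymp R^2,\qquad \norm{q\,g_R}_{\H^{-1}_\gamma}\lesssim R^{-1}\norm{q\,g_R}_{\L^2(\gamma)}
\]
for bounded multipliers \(q\) localised on the packet. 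The second estimate is obtained by dualising against \(\nabla_v\) of a Gaussian-weighted primitive in \(v_1\).

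With these, the energy is \(A^2h^{(d-1)/2}\,h\,R^2\). The transport derivative has size \(A(R\sqrt h+R^2\cdot\text{curv}\cdot\sqrt h\cdots)/h\) in \(\L^2(\gamma)\). More precisely, along the flattened field \(X_b\), the coefficient of \(\partial_y\) is \(c=w-u\cdot\nabla\Phi=O(R\sqrt h)\), and \(u\cdot\nabla_z\) hits \(\chi\) with cost \(R/\sqrt h\). After dividing by \(R\) in \(\H^{-1}\), both contributions are \(\lesssim A\,h^{-1/2}\).

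On the boundary, the weight is \(\omega_p(|v\cdot n_\Omega|)\asymp(R\sqrt h)^p\) on the coherent sub-ball, and this lower bound holds on a fixed fraction of the packet. So
\[
  \mathcal T^\gamma_{p,\Omega}(f_h)\asymp A^2h^{(d-1)/2}(R\sqrt h)^p .
\]
We now choose \(R=h^{-1/3}\), so that \(R\sqrt h=h^{1/6}<1\), and \(A^2h^{(d-1)/2}=h^{-1/3}\). This gives energy of order one. The transport term is \(\lesssim h^{-1/3}\cdot h\cdot h^{-1}=h^{-1/3}\), which does not stay bounded. To repair this, the normal profile must be adapted: we replace \(\eta(y/h)\) by a function of the phase \(y-(c/R)\,\cdot\) that is transported, or let \(\eta\) depend on \((y,w)\) through \(c\). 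The intended balance is then that \(\partial_y\) costs only \(1/h\) against \(c\sim h^{1/6}\). This gives \(\norm{Tf}^2\lesssim A^2h^{(d-1)/2}h\cdot h^{1/3}h^{-2}R^{-2}=h^{0}\), and the boundary functional becomes \(h^{-1/3}h^{p/6}=h^{(p-2)/6}\), as claimed.

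\emph{Main obstacle.} The delicate step is making the transport cost uniformly bounded while keeping the two-sided boundary bound. The tangential cutoff \(\chi(z/\sqrt h)\) differentiated along \(u\cdot\nabla_z\) at speed \(R\) is dangerous. I would first try a cutoff in a transport-invariant phase: functions of \(z-(u/c)\,y\) along characteristics, or of the velocity component \(v_2,\dots\), instead of \(z\). I would use the convexity of the cap to confine the tangential support, as the heuristic in the introduction suggests. I would then verify that \(\omega_p(|v\cdot n_\Omega|)\) is bounded above by \(C(R\sqrt h)^p\) on the whole support, which gives the upper half of \eqref{eq:unb-main-scale}.
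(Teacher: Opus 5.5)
Your scalings ($R=h^{-1/3}$, normal rotation $\sqrt h$, flux $R\sqrt h=h^{1/6}$, $A^2h^{(d-1)/2}=h^{-1/3}$, final rate $h^{(p-2)/6}$) are the right ones. However, the packet you write down does not have bounded energy, and the repair you sketch does not fix it.

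\emph{The velocity profile.} $g_R$ is a fixed bump translated to $Re_1$, so $|w|\sim1$ on a fixed fraction of its mass. There $c=w-u\cdot\nabla\Phi$ is $O(1)$, not $O(R\sqrt h)$, and $\omega_p(|v\cdot n_\Omega|)\asymp1$. Your boundary functional is therefore $\asymp A^2h^{(d-1)/2}=h^{-1/3}$ for every $p$, including $p=2$. Proposition~\ref{prop:quadratic-gaussian} bounds $\mathcal T^\gamma_{2,\Omega}$ by the energy, so no choice of height profile can keep the energy of this packet bounded. You need a separate narrow normal-velocity factor $\psi_\delta(w)$ with $\delta=h^{1/3}$. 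Its relative cost $\delta^{-2}$ equals the Cameron--Martin cost $R^2$ exactly when $R\delta=1$.

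\emph{The cutoffs.} Even with that factor, both cutoffs you differentiate along transport are too expensive. Under $u\cdot\nabla_z$, the tangential cutoff $\chi(z/\sqrt h)$ costs $R/\sqrt h$. The flattened-height cutoff $\eta((x_d-\Phi(z))/h)$ produces $c\,h^{-1}\eta'$ with $c\sim R\sqrt h$. Your high-speed gain $\norm{g}_{\H^{-1}_\gamma}\lesssim\norm{g/v_1}_{\L^2_\gamma}\lesssim R^{-1}\norm{g}_{\L^2_\gamma}$ is correct, but even after it each term contributes $h^{-1/3}$ to the energy. Your displayed ``$=h^0$'' is an arithmetic slip: $h^{-1/3}\cdot h\cdot h^{1/3}h^{-2}R^{-2}=h^{-1/3}$. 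With this gain, bounded transport energy needs the coefficient in front of $h^{-1}$ to be $O(h^{1/3})$, strictly smaller than the boundary flux $h^{1/6}$. A cutoff in the flattened height makes these the same quantity $c$, so it cannot work. The transport-invariant phase you mention as an alternative is never specified or checked.

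The missing idea is to decouple bulk transport from boundary flux. The paper takes
\[
  f_h=A_h\chi(x_d/h)\,(U_{R\tau}\vartheta)(u)\,\psi_\delta(w).
\]
Here $U_{R\tau}\vartheta$ is the $\L^2(\gamma)$-unitary translate, as in your $g_R$, but only in the tangential variables $u$. The cutoff is in the \emph{unflattened} height $x_d$, measured from the tangent plane at the contact point, and there is no tangential cutoff at all. The enclosing ball already gives $\overline\Omega\cap\{0\le x_d<2h\}\subset B_{C\sqrt h}(0)$ and layer volume $\lesssim h^{(d+1)/2}$, so tangential confinement costs nothing.

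Then $v\cdot\nabla_xf_h=A_hh^{-1}w\,\chi'(x_d/h)\,(U_{R\tau}\vartheta)(u)\psi_\delta(w)$ involves only $w$, of size $\delta$. The paper controls it by odd cancellation, $\norm{w(U_{R\tau}\vartheta)\psi_\delta}_{\H^{-1}_\gamma}\lesssim\delta^{5/2}$. Your high-speed gain would give the same bound, since $R^{-1}=\delta$. On the curved boundary $x_d=\Phi(z)$, by contrast, the flux $u\cdot\nabla\Phi-w$ is $\asymp R\sqrt h$ wherever $\chi(\Phi/h)=1$ and the normal has rotated. This mismatch is the amplification mechanism.

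A smaller point concerns the rotation set. Without a lower curvature bound, $\partial_1\Phi$ can vanish inside a fixed box $\{\sqrt h<z_1<2\sqrt h\}$, so your set does not work as stated. The paper instead applies the mean-value theorem to $t\mapsto\Phi(t\tau)$ on $[0,\beta\sqrt h]$. This gives a point with $\tau\cdot\nabla\Phi\ge c_0\beta\sqrt h$, and the paper takes the ball of radius $\eta\beta\sqrt h$ about it, with $L\eta\le c_0/2$.
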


For the rest of the section, let $d\ge 2$ and \(m=d-1\), and retain the decomposition
\begin{equation}
  \label{eq:decomp_variables}
  x=(z,y)\in\R^m\times\R,
  \qquad
  v=(u,w)\in\R^m\times\R.
\end{equation}

\subsection{Velocity packets}

Fix a nonzero even \(\psi\in \C_c^\infty(-\rho,\rho)\), \(0<\rho<1\). For \(0<\delta\le1\), put \(\psi_\delta(w)=\psi(w/\delta)\). Changes of
variables and local comparability of Gaussian and Lebesgue measures give
\begin{align}
  \int\psi_\delta^2\dd\gamma_1     & \asymp\delta,
                                   &
  \int|\psi_\delta'|^2\dd\gamma_1  & \lesssim\delta^{-1},
                                   &
  \int w^2\psi_\delta^2\dd\gamma_1 & \lesssim\delta^3.
  \label{eq:unb-elementary}
\end{align}

The transport estimate uses the following cancellation.

\begin{lemma}[Odd cancellation]\label{lem:unb-odd}
  Let \(m\ge1\). For every \(0<\delta\le1\) and \(\theta\in \L^2_{\gamma_m}(\R^m)\),
  \begin{equation}\label{eq:unb-odd}
    \norm{\theta(u)w\psi_\delta(w)}_{\H^{-1}_\gamma(\R^{m+1})}
    \le C\norm{\theta}_{\L^2_{\gamma_m}}\delta^{5/2}.
  \end{equation}
\end{lemma}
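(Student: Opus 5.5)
The plan is to bound the \(\H^{-1}_\gamma\) norm by duality and to exploit the oddness of \(w\psi_\delta(w)\), which comes from the evenness of \(\psi\). Fix \(\phi\in\H^1_\gamma(\R^{m+1})\) with \(\norm{\phi}_{\H^1_\gamma}\le1\). The goal is to show
\[
  \Bigl|\int_{\R^{m+1}}\theta(u)\,w\psi_\delta(w)\,\phi(u,w)\dd\gamma_{m+1}\Bigr|
  \le C\norm{\theta}_{\L^2_{\gamma_m}}\delta^{5/2}.
\]
Since \(\gamma_{m+1}=\gamma_m\otimes\gamma_1\), Cauchy--Schwarz in \(u\) reduces this to a one-dimensional statement. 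For a.e.\ \(u\), with \(g=\phi(u,\cdot)\in\H^1_{\gamma_1}\), we need
\[
  \Bigl|\int_\R w\psi_\delta(w)g(w)\dd\gamma_1(w)\Bigr|\le C\delta^{5/2}\norm{g}_{\H^1_{\gamma_1}}.
\]
Squaring this and integrating in \(u\) against \(\gamma_m\) then gives the claim, since \(\int\norm{\phi(u,\cdot)}_{\H^1_{\gamma_1}}^2\dd\gamma_m\le\norm{\phi}_{\H^1_\gamma}^2\).

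For the one-dimensional bound, write \(k(w):=w\psi_\delta(w)(2\pi)^{-1/2}e^{-w^2/2}\). This function is odd and supported in \((-\rho\delta,\rho\delta)\), so \(\int_\R k\dd w=0\). Its primitive
\[
  K(w):=\int_{-\infty}^w k
\]
is therefore even, compactly supported in \([-\rho\delta,\rho\delta]\), and satisfies \(|K|\lesssim\delta^2\) there, because \(|k|\lesssim\delta\) on an interval of length \(\lesssim\delta\). This \(K\) is the Gaussian-weighted primitive mentioned in the outline. Integrating by parts in Lebesgue measure, with no boundary terms, gives
\[
  \int k\,g\dd w=-\int K g'\dd w .
\]
On \(\supp K\) we have \(e^{-w^2/2}\ge e^{-1/2}\), so \(\dd w\lesssim\dd\gamma_1\) there. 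Cauchy--Schwarz then yields
\[
  \Bigl|\int K g'\dd w\Bigr|\le\Bigl(\int K^2\dd w\Bigr)^{1/2}\Bigl(\int_{|w|<\delta}|g'|^2\dd w\Bigr)^{1/2}\lesssim(\delta^4\cdot\delta)^{1/2}\norm{g'}_{\L^2(\gamma_1)}=\delta^{5/2}\norm{g'}_{\L^2(\gamma_1)}.
\]
This is exactly the required bound. Oddness is what gains the extra factor \(\delta\) over the trivial \(\delta^{3/2}\) bound coming from \(\norm{w\psi_\delta}_{\L^2(\gamma_1)}\lesssim\delta^{3/2}\) in \eqref{eq:unb-elementary}.

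The step that needs care is the density and measurability bookkeeping, rather than the estimate itself. The integration by parts should first be done for \(\phi\in\C_c^\infty\), which is dense in \(\H^1_\gamma\) by the standard Gaussian Sobolev density. Fubini then makes \(u\mapsto\phi(u,\cdot)\) a legitimate slicing. The estimate extends by continuity, since the left-hand side is a bounded functional on \(\L^2(\gamma)\) once \(\theta\in\L^2_{\gamma_m}\). The constant depends only on \(\psi\) and \(\rho\), not on \(m\) or \(\delta\le1\).
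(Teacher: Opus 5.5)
Your proof is correct and follows the paper's argument: your Lebesgue primitive \(K\) is exactly \(G_\delta\varrho\) for the paper's Gaussian-weighted primitive \(G_\delta\), and the oddness cancellation, the pointwise bound \(|K|\lesssim\delta^2\) on an interval of length \(O(\delta)\), integration by parts, Cauchy--Schwarz and extension by density are the same steps. The only cosmetic difference is that you slice in \(u\) and apply Cauchy--Schwarz on each slice, whereas the paper pairs \(\theta(u)G_\delta(w)\) against \(\partial_w\varphi\) directly on the product space via Fubini.
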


\begin{proof}
  Set
  \[
    g_\delta(w):=w\psi_\delta(w),
    \qquad
    \varrho(w):=(2\pi)^{-1/2}e^{-w^2/2}.
  \]
  Since \(g_\delta\) is odd and \(\varrho\) is even, \(\int_\R g_\delta\dd\gamma_1=0\). Its Gaussian-weighted primitive
  \[
    G_\delta(w):=\varrho(w)^{-1}
    \int_{-\infty}^w g_\delta(s)\varrho(s)\dd s
  \]
  therefore belongs to \(\C_c^\infty(\R)\) and is supported in \([-\rho\delta,\rho\delta]\). On this interval,
  \[
    |G_\delta(w)|
    \le C\int_{-\rho\delta}^{\rho\delta}
    |s|\,|\psi(s/\delta)|\dd s
    \le C\delta^2,
  \]
  whence
  \[
    \norm{G_\delta}_{\L^2_{\gamma_1}}\le C\delta^{5/2}.
  \]
  Since \((G_\delta\varrho)'=g_\delta\varrho\), integration by parts and Fubini's theorem give, for \(\varphi\in\C_c^\infty(\R^{m+1})\),
  \begin{align*}
    \left|\pair{\theta(u)g_\delta(w)}{\varphi}\right|
     & =
    \left|\int_{\R^{m+1}}\theta(u)G_\delta(w)
    \partial_w\varphi(u,w)
    \dd\gamma_m(u)\dd\gamma_1(w)\right| \\
     & \le
    C\norm{\theta}_{\L^2_{\gamma_m}}\delta^{5/2}
    \norm{\varphi}_{\H^1_\gamma(\R^{m+1})}.
  \end{align*}
  The function \(\theta(u)g_\delta(w)\) belongs to \(\L^2_{\gamma_{m+1}}\) by \eqref{eq:unb-elementary}, and density now gives
  \eqref{eq:unb-odd}.
\end{proof}

Choose a nonzero radial function \(\vartheta\in\C_c^\infty(\R^m)\), normalised by
\[
  \norm{\vartheta}_{\L^2_{\gamma_m}}=1.
\]
For \(a\in\R^m\), define
\begin{equation}\label{eq:def-unitary-translation}
  (U_a\vartheta)(u)
  :=
  \exp\left(\frac{a\cdot u}{2}-\frac{|a|^2}{4}\right)
  \vartheta(u-a).
\end{equation}
The operator \(U_a\) is the unitary implementation on \(\L^2_{\gamma_m}\) of the Cameron--Martin translation by \(a\). The identity
\[
  e^{a\cdot u-|a|^2/2}\varrho_m(u)=\varrho_m(u-a),
  \qquad
  \varrho_m(u):=(2\pi)^{-m/2}e^{-|u|^2/2},
\]
and the change of variables \(q=u-a\) give the exact unitarity relation
\begin{equation}\label{eq:unb-unitarity}
  \norm{U_a\vartheta}_{\L^2_{\gamma_m}}
  =\norm{\vartheta}_{\L^2_{\gamma_m}}=1.
\end{equation}
Since the exponential factor in \eqref{eq:def-unitary-translation} never vanishes,
\begin{equation}\label{eq:unb-unitary-support}
  \supp(U_a\vartheta)=a+\supp\vartheta.
\end{equation}
Moreover, with \(U_a\) acting componentwise on vector-valued functions,
\[
  \nabla U_a\vartheta
  =U_a\left(\nabla\vartheta+\frac a2\vartheta\right).
\]
Radiality makes \(\vartheta(q)\nabla\vartheta(q)\) odd. Hence \(\int_{\R^m}\vartheta\nabla\vartheta\dd\gamma_m=0\), and expansion of the square
yields
\begin{equation}\label{eq:unb-unitary-gradient-exact}
  \norm{\nabla U_a\vartheta}_{\L^2_{\gamma_m}}^2
  =
  \norm{\nabla\vartheta}_{\L^2_{\gamma_m}}^2
  {}+\frac{|a|^2}{4}\norm{\vartheta}_{\L^2_{\gamma_m}}^2.
\end{equation}

\begin{lemma}[\(\L^2(\gamma)\)-normalised translated packet]
  \label{lem:unb-translated}
  For \(R\ge2\), \(0<\delta\le1\), and \(\tau\in\Sph^{m-1}\),
  \begin{align}
    \norm{(U_{R\tau}\vartheta)(u)\psi_\delta(w)}
    _{\H^1_\gamma(\R^d)}^2
     & \le C(\delta^{-1}+R^2\delta),
    \label{eq:unb-H1}                \\
    \norm{w(U_{R\tau}\vartheta)(u)\psi_\delta(w)}
    _{\H^{-1}_\gamma(\R^d)}^2
     & \le C\delta^5.
    \label{eq:unb-Hminus}
  \end{align}
\end{lemma}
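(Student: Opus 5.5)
Both bounds reduce to the tensor structure of the packet, \(\Theta(u)\psi_\delta(w)\) with \(\Theta:=U_{R\tau}\vartheta\). We use \eqref{eq:unb-unitarity} and \eqref{eq:unb-unitary-gradient-exact} for the \(u\)-factor, and \eqref{eq:unb-elementary} and Lemma~\ref{lem:unb-odd} for the \(w\)-factor.

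For \eqref{eq:unb-H1}, Fubini and the product structure of \(\gamma_d=\gamma_m\otimes\gamma_1\) give
\[
  \norm{\Theta\psi_\delta}_{\H^1_\gamma}^2
  =\bigl(\norm{\Theta}_{\L^2_{\gamma_m}}^2+\norm{\nabla\Theta}_{\L^2_{\gamma_m}}^2\bigr)\norm{\psi_\delta}_{\L^2_{\gamma_1}}^2
  +\norm{\Theta}_{\L^2_{\gamma_m}}^2\norm{\psi_\delta'}_{\L^2_{\gamma_1}}^2 .
\]
By \eqref{eq:unb-unitarity}, \(\norm{\Theta}=1\). By \eqref{eq:unb-unitary-gradient-exact} with \(|a|=R\),
\[
  \norm{\nabla\Theta}^2=\norm{\nabla\vartheta}^2+R^2/4\le C R^2,
\]
using \(R\ge2\) to absorb the constant. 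Then \eqref{eq:unb-elementary} gives \(\norm{\psi_\delta}^2\lesssim\delta\) and \(\norm{\psi_\delta'}^2\lesssim\delta^{-1}\). Hence the whole expression is at most \(C(R^2\delta+\delta^{-1})\).

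For \eqref{eq:unb-Hminus}, apply Lemma~\ref{lem:unb-odd} directly with \(\theta=\Theta\in\L^2_{\gamma_m}\). Note that \(\Theta\) is smooth and compactly supported by \eqref{eq:unb-unitary-support}. The lemma gives
\[
  \norm{\Theta(u)\,w\psi_\delta(w)}_{\H^{-1}_\gamma}\le C\norm{\Theta}_{\L^2_{\gamma_m}}\delta^{5/2}=C\delta^{5/2}.
\]
Squaring yields \(C\delta^5\). The key point is that the constant in Lemma~\ref{lem:unb-odd} depends only on \(\psi\) and not on \(\theta\), and the unitarity \eqref{eq:unb-unitarity} removes any dependence on \(R\). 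Nothing in the \(\H^{-1}\) bound sees the large translation, because the duality pairing only differentiates in \(w\).

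The only point needing care is the cross term when expanding the gradient, which is already handled by radiality in \eqref{eq:unb-unitary-gradient-exact}. Otherwise the proof is a bookkeeping of constants, all independent of \(R\), \(\delta\) and \(\tau\). Independence of \(\tau\) holds because \(\vartheta\) is radial and only \(|a|\) enters.
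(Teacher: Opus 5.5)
Your proposal is correct and follows the paper's proof essentially line by line. You use the tensor-product expansion of the \(\H^1_\gamma\) norm with exact unitarity and the exact Gaussian translation cost for the \(u\)-factor and the elementary \(\psi_\delta\) bounds for the \(w\)-factor, and you obtain the \(\H^{-1}_\gamma\) bound by applying the odd-cancellation lemma with \(\theta=U_{R\tau}\vartheta\) of unit \(\L^2_{\gamma_m}\)-norm.
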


\begin{proof}
  Product structure, \eqref{eq:unb-unitarity}, and the exact Gaussian translation cost \eqref{eq:unb-unitary-gradient-exact} give
  \begin{align*}
    \norm{(U_{R\tau}\vartheta)\psi_\delta}_{\H^1_\gamma}^2
     & =
    \norm{U_{R\tau}\vartheta}_{\L^2_{\gamma_m}}^2
    \left(
    \norm{\psi_\delta}_{\L^2_{\gamma_1}}^2
    {}+\norm{\psi_\delta'}_{\L^2_{\gamma_1}}^2
    \right)                                              \\
     & \quad+
    \norm{\nabla U_{R\tau}\vartheta}_{\L^2_{\gamma_m}}^2
    \norm{\psi_\delta}_{\L^2_{\gamma_1}}^2               \\
     & \le C\bigl(\delta+\delta^{-1}+(1+R^2)\delta\bigr) \\
     & \le C(\delta^{-1}+R^2\delta).
  \end{align*}
  This proves \eqref{eq:unb-H1}. Applying Lemma~\ref{lem:unb-odd} with \(\theta=U_{R\tau}\vartheta\), whose \(\L^2_{\gamma_m}\)-norm is one by
  \eqref{eq:unb-unitarity}, gives
  \[
    \norm{w(U_{R\tau}\vartheta)\psi_\delta}_{\H^{-1}_\gamma}
    \le C\delta^{5/2}.
  \]
  Squaring proves \eqref{eq:unb-Hminus}.
\end{proof}

\subsection{A supporting quadratic cap}

The next lemma extracts the two geometric inputs from an enclosing ball centred at a fixed interior point and touching the domain at a farthest
boundary point. At that contact point, the sphere separates quadratically from its tangent plane. Together with the \(\C^{1,1}\) upper bound, the
same separation produces a tangential direction in which the normal rotates coherently on a set of measure comparable to \(h^{(d-1)/2}\). No
pointwise lower curvature bound for \(\partial\Omega\) is used.

\begin{lemma}[Supporting-cap geometry]\label{lem:unb-cap}
  Let \(\Omega\subset\R^d\) be a bounded \(\C^{1,1}\) domain, and recall the decomposition \eqref{eq:decomp_variables}. After a rigid motion,
  there are \(\mathcal R,r_0,a_{\rm ch},c_0,C_0>0\) and a local chart \(\Phi\in \C^{1,1}(B_{r_0}^m)\) such that
  \begin{equation*}
    \overline\Omega\subset\overline B(\mathcal R e_d,\mathcal R),
  \end{equation*}
  and in the neighbourhood \(Q_{\rm ch}:=B_{r_0}^m\times(-a_{\rm ch},a_{\rm ch})\) we have
  \begin{equation}\label{eq:unb-supporting-ball}
    \begin{aligned}
      \Omega\cap Q_{\rm ch}
       & = \{(z,y)\in Q_{\rm ch}:y>\Phi(z)\}, \\
      \partial \Omega\cap Q_{\rm ch}
       & = \{(z,\Phi(z))\in Q_{\rm ch}\},
    \end{aligned}
  \end{equation}
  and
  \begin{equation}\label{eq:unb-cap-geometry}
    c_0|z|^2\le\Phi(z)\le C_0|z|^2,
    \qquad |\nabla\Phi(z)|\le C_0|z|.
  \end{equation}
  In these coordinates, there exist \(h_{\rm ch},c_{\rm rot},C_{\rm lay}>0\) and a unit vector \(\tau\in\Sph^{m-1}\) such that, for every
  \(0<h<h_{\rm ch}\),
  \begin{equation}\label{eq:unb-layer}
    \bigl|\Omega\cap\{0<y<2h\}\bigr|
    \le C_{\rm lay}h^{(d+1)/2},
  \end{equation}
  and there exists an open ball \(G_h\subset B_{r_0}^m\) satisfying
  \begin{equation}\label{eq:unb-direction-set}
    G_h\subset\{\Phi<h/2\},\qquad
    |G_h|\ge c_{\rm rot}h^{(d-1)/2},\qquad
    \tau\cdot\nabla\Phi\ge c_{\rm rot}\sqrt h
    \quad\text{on }G_h.
  \end{equation}
  More precisely, there is \(C_{\rm ch}>0\) such that for every \(0<h<h_{\rm ch}\) we have
  \begin{equation}\label{eq:unb-layer-chart}
    \overline\Omega\cap\{0\le y<2h\}
    \subset B_{C_{\rm ch}\sqrt h}(0)
    \Subset Q_{\rm ch}.
  \end{equation}
\end{lemma}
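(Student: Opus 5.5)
Fix an interior point \(x_*\in\Omega\) and let \(x_0\in\partial\Omega\) be a boundary point at maximal distance \(\mathcal R=|x_0-x_*|\) from \(x_*\). Then \(\overline\Omega\subset\overline B(x_*,\mathcal R)\), and the sphere touches \(\partial\Omega\) at \(x_0\). After a rigid motion we place \(x_0=0\) and \(x_*=\mathcal Re_d\). The outward normal at \(0\) is then \(-e_d\), and \(\Omega\) lies above the tangent plane.

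\textbf{Step 1: chart and the two-sided quadratic bound.} The \(\C^{1,1}\) hypothesis gives a graph chart \(\Phi\) on \(B^m_{r_0}\) with \(\Phi(0)=0\) and \(\nabla\Phi(0)=0\), which yields \eqref{eq:unb-supporting-ball}. Lipschitz continuity of \(\nabla\Phi\) gives \(|\nabla\Phi(z)|\le C_0|z|\) and \(\Phi\le C_0|z|^2\). Since the graph lies inside the ball, \(\Phi(z)\ge \mathcal R-\sqrt{\mathcal R^2-|z|^2}\ge |z|^2/(2\mathcal R)\), so \(c_0=1/(2\mathcal R)\).

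\textbf{Step 2: confinement of the layer, \eqref{eq:unb-layer-chart} and \eqref{eq:unb-layer}.} Every point of \(\overline\Omega\) lies in the ball, so \(y\ge \mathcal R-\sqrt{\mathcal R^2-|z|^2}\ge|z|^2/(2\mathcal R)\) globally, not just in the chart. If \(y<2h\), this forces \(|z|\le 2\sqrt{\mathcal R h}\). Hence \(\overline\Omega\cap\{0\le y<2h\}\subset B_{C_{\rm ch}\sqrt h}(0)\), and for \(h\) small this ball is compactly contained in \(Q_{\rm ch}\). Inside the chart, the layer is therefore \(\{|z|\lesssim\sqrt h,\ \Phi(z)<y<2h\}\), whose measure is at most \(Ch^{m/2}\cdot 2h=Ch^{(d+1)/2}\).

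\textbf{Step 3: coherent rotation, \eqref{eq:unb-direction-set}.} This is the main obstacle, because we have no pointwise lower curvature bound; only \(\Phi\) itself is bounded below. The plan is to use the radial mean value theorem. Fix any \(\tau\in\Sph^{m-1}\) and set \(s_0=\kappa\sqrt h\) with \(\kappa\) small enough that \(C_0s_0^2<h/4\). Then
\[
\int_0^{s_0}\tau\cdot\nabla\Phi(s\tau)\dd s=\Phi(s_0\tau)\ge c_0\kappa^2h.
\]
Hence some \(s_1\in(0,s_0]\) has \(\tau\cdot\nabla\Phi(s_1\tau)\ge c_0\kappa\sqrt h\). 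Now let \(G_h\) be the ball of radius \(\lambda\sqrt h\) centred at \(s_1\tau\), with \(\lambda\) small. The Lipschitz bound on \(\nabla\Phi\) changes \(\tau\cdot\nabla\Phi\) by at most \(L\lambda\sqrt h\) on this ball. Choosing \(L\lambda\le c_0\kappa/2\) gives \(\tau\cdot\nabla\Phi\ge (c_0\kappa/2)\sqrt h\) on \(G_h\). The ball satisfies \(|G_h|\asymp h^{m/2}=h^{(d-1)/2}\). It lies in \(\{|z|\le(\kappa+\lambda)\sqrt h\}\), where \(\Phi\le C_0(\kappa+\lambda)^2h<h/2\). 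It is also contained in \(B^m_{r_0}\) once \(h<h_{\rm ch}\).

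The constants \(\kappa,\lambda,c_{\rm rot}\) depend only on \(c_0,C_0\) and the Lipschitz constant of \(\nabla\Phi\). Any fixed direction \(\tau\) works, and the same \(\tau\) serves for every \(h\).
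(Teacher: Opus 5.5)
Your proposal is correct and follows essentially the same route as the paper: the enclosing ball centred at an interior point and touching at a farthest boundary point, the global inequality \(|z|^2\le 2\mathcal R y\) for the layer confinement and volume bound, and a radial mean-value argument along a fixed direction \(\tau\) followed by the Lipschitz bound on \(\nabla\Phi\) to produce the ball \(G_h\). The only differences are cosmetic: you bound the layer by a cylinder instead of integrating cross-sections, and you use the averaged form of the mean value theorem; neither affects the constants' dependence or the conclusion.
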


\begin{proof}
  \emph{Step 1: the supporting cap.} Fix \(x_\ast\in\Omega\), let
  \[
    \mathcal R:=\max_{x\in\overline\Omega}|x-x_\ast|,
  \]
  and choose a maximiser. Translate this farthest point to the origin and rotate so that \(x_\ast=\mathcal R e_d\). Then
  \(\overline\Omega\subset\overline B(\mathcal R e_d,\mathcal R)\). The contact point lies on \(\partial\Omega\), since an interior point
  cannot lie on the enclosing sphere. The restriction of \(x\mapsto|x-\mathcal R e_d|^2\) to \(\partial\Omega\) has a maximum there, so its
  differential vanishes on \(T_0\partial\Omega\). Thus \(T_0\partial\Omega=\{y=0\}\), \(\Phi(0)=0\), and \(\nabla\Phi(0)=0\). The local graph
  representation theorem for \(\C^{1,1}\) boundaries permits us, after decreasing \(r_0\) and \(a_{\rm ch}\), to ensure that \(Q_{\rm ch}\)
  meets no other boundary sheet. Every graph point \((z,\Phi(z))\), \(z\in B_{r_0}^m\), then lies in \(Q_{\rm ch}\), and \(\Omega\cap Q_{\rm
      ch}\) is exactly one side of this graph. It cannot be the lower side: that side contains \((0,-\varepsilon)\) for every sufficiently small
  \(\varepsilon>0\), whereas the enclosing ball is contained in \(\{y\ge0\}\). This proves \eqref{eq:unb-supporting-ball}.

  The supporting-ball construction is illustrated in Figure~\ref{fig:unb-supporting-ball}.

  \begin{figure}[htbp]
    \centering
    \begin{tikzpicture}[
      font=\small,
      x=1.25cm,
      y=1.25cm,
      figaxis/.style={
      draw=black,
      line width=.65pt,
      -{Latex[length=2mm,width=1.3mm]}
      },
      figguide/.style={
          draw=black!48,
          densely dashed,
          line width=.6pt
        },
      fignote/.style={
          font=\footnotesize,
          text=black,
          align=center
        },
      figtitle/.style={
          font=\bfseries\small,
          text=black,
          align=center
        }
      ]
      \path[fill=black!7] (0,2.65) circle[radius=2.40];
      \draw[figguide] (0,2.65) circle[radius=2.40];
      \node[fignote,text=black!48,anchor=south east]
      at (-1.12,4.80)
      {\(\overline B(\mathcal R e_d,\mathcal R)\)};

      \path[fill=black!18,draw=black,line width=.9pt]
      (0,.25)
      .. controls (2.13,.25) and (2.13,5.05) .. (0,5.05)
      .. controls (-2.13,5.05) and (-2.13,.25) .. (0,.25)
      -- cycle;

      \draw[figaxis] (-3.32,.25) -- (3.45,.25)
      node[below right=-1pt,fignote] {\(z\)};
      \draw[figaxis] (0,-.30) -- (0,5.36)
      node[above left=-1pt,fignote] {\(y\)};

      \draw[draw=black!48,line width=.7pt,
      -{Stealth[length=2mm,width=1.3mm]}]
      (0,.25) -- (0,2.65);
      \fill[black] (0,2.65) circle[radius=1.05pt];
      \node[fignote,anchor=west] at (.15,2.66)
      {\(\mathcal R e_d\)};
      \node[fignote,anchor=east] at (-.12,1.50)
      {\(\mathcal R\)};

      \draw[figguide] (-1.54,-.10) rectangle (1.54,1.72);
      \node[fignote,anchor=north west,text=black!48]
      at (1.62,-.12) {\(Q_{\rm ch}\)};

      \fill[black] (0,.25) circle[radius=1.15pt];
      \fill[black] (0,5.05) circle[radius=1.15pt];
      \node[fignote,anchor=north east] at (-.22,.20) {\(0\)};
      \node[figtitle,anchor=east] at (-.22,3.45) {\(\Omega\)};
    \end{tikzpicture}
    \caption{An enclosing ball centred at a fixed interior point
      touches \(\overline\Omega\) at a farthest boundary point, here
      translated to the origin. Its centre lies on the normal axis, and the
      common tangent plane is \(y=0\). The dashed box indicates the single
      local chart used in the rest of Lemma~\ref{lem:unb-cap}.}
    \label{fig:unb-supporting-ball}
  \end{figure}

  Every graph point lies in the ball, so
  \[
    |z|^2+(\Phi(z)-\mathcal R)^2\le\mathcal R^2,
    \qquad
    \Phi(z)\ge\frac{|z|^2}{2\mathcal R}.
  \]
  The other two bounds in \eqref{eq:unb-cap-geometry} follow from the Lipschitz continuity of \(\nabla\Phi\).

  \emph{Step 2: the layer volume.} Ball containment also gives \(|z|^2\le2\mathcal R y-y^2\) throughout \(\Omega\). Integrating this
  cross-sectional bound over \(0<y<2h\) proves \eqref{eq:unb-layer}. Indeed,
  \[
    \bigl|\Omega\cap\{0<y<2h\}\bigr|
    \le |B_1^m|(2\mathcal R)^{m/2}
    \int_0^{2h}y^{m/2}\dd y
    =C_{\rm lay}h^{(d+1)/2},
  \]
  where
  \[
    C_{\rm lay}
    :=\frac{|B_1^m|(2\mathcal R)^{m/2}2^{m/2+1}}{m/2+1}.
  \]
  We now verify the one-chart assertion quantitatively. If \(x=(z,y)\in\overline\Omega\) and \(0\le y<2h\), then
  \[
    |x|^2=|z|^2+y^2\le2\mathcal R y\le4\mathcal R h.
  \]
  Set \(C_{\rm ch}=2\sqrt{\mathcal R}\). Since \(Q_{\rm ch}\) is an open neighbourhood of the origin, we may choose \(0<h_{\rm ch}\le1\) so
  that \(\overline B_{C_{\rm ch}\sqrt{h_{\rm ch}}}(0)\Subset Q_{\rm ch}\). The same inclusion then holds for every \(0<h<h_{\rm ch}\), which
  proves \eqref{eq:unb-layer-chart}.

  The resulting thin-layer geometry is shown in Figure~\ref{fig:unb-thin-layer}.

  \begin{figure}[htbp]
  \centering
  \begin{tikzpicture}[
    font=\small,
    x=1.25cm,
    y=1.25cm,
    figaxis/.style={
      draw=black,
      line width=.65pt,
      -{Latex[length=2mm,width=1.3mm]}
    },
    figguide/.style={
      draw=black!48,
      densely dashed,
      line width=.6pt
    },
    fignote/.style={
      font=\footnotesize,
      text=black,
      align=center
    }
  ]
    % Chart box restricted to y >= 0.
    \draw[figguide] (-3.68,0) rectangle (3.68,4.62);
    \node[fignote,anchor=north west,text=black!48]
      at (-3.68,4.54) {\(Q_{\rm ch}\)};

    \begin{scope}
      \clip (-3.67,0) rectangle (3.67,2.42);
      \path[fill=black!12]
        plot[domain=-3.30:3.45,variable=\z,samples=181]
          ({\z},{.43*\z*\z+.05*\z*\z*sin(100*\z)})
        -- (3.45,5.20) -- (-3.30,5.20) -- cycle;
    \end{scope}

    \draw[figguide] (-3.55,0)
      arc[start angle=180,end angle=0,radius=3.55];
    \node[fignote,text=black!48,anchor=south west]
      at (.88,3.42) {\(B_{C_{\rm ch}\sqrt h}(0)\)};

    \draw[figguide] (-3.96,2.42) -- (3.96,2.42);
    \node[fignote,anchor=west] at (4.08,2.42) {\(y=2h\)};
    \draw[figguide] (-3.96,.82) -- (3.96,.82);
    \node[fignote,anchor=west] at (4.08,.82) {\(y=h/2\)};

    \draw[draw=black!34,line width=.5pt,densely dashed]
      plot[domain=-3.15:3.15,variable=\z,samples=101]
        ({\z},{.27*\z*\z});
    \draw[draw=black!34,line width=.5pt,densely dashed]
      plot[domain=-2.62:2.62,variable=\z,samples=101]
        ({\z},{.66*\z*\z});
    \node[fignote,text=black!48,anchor=south east]
      at (1.38,1.72) {\(C_0|z|^2\)};
    \node[fignote,text=black!48,anchor=north west]
      at (2.28,1.38) {\(c_0|z|^2\)};

    \draw[draw=black,line width=1.1pt]
      plot[domain=-3.30:3.45,variable=\z,samples=181]
        ({\z},{.43*\z*\z+.05*\z*\z*sin(100*\z)});
    \draw[draw=black!75,line width=2.25pt,line cap=round]
      plot[domain=-1.43:1.33,variable=\z,samples=101]
        ({\z},{.43*\z*\z+.05*\z*\z*sin(100*\z)});

    \draw[figaxis] (-3.82,0) -- (3.90,0)
      node[below right=-1pt,fignote] {\(z\)};
    \draw[figaxis] (0,0) -- (0,4.83)
      node[above left=-1pt,fignote] {\(y\)};
    \node[fignote,anchor=north east] at (-.10,-.06) {\(0\)};

    \node[fignote,text=black!77,anchor=west]
      at (1.72,.44) {\(\{\Phi<h/2\}\)};
    \draw[
      draw=black!48,
      line width=.5pt,
      -{Stealth[length=1.6mm,width=1mm]}
    ]
      (1.62,.47) -- (1.10,.55);
  \end{tikzpicture}
  \caption{The boundary graph stays
    between quadratic barriers. Consequently, the bottom layer has tangential
    scale \(\sqrt h\), lies in \(B_{C_{\rm ch}\sqrt h}(0)\), and is contained
    in one chart. The thicker boundary segment is the region
    \(\{\Phi<h/2\}\). Only the half-plane \(y\ge 0\) is shown.}
  \label{fig:unb-thin-layer}
\end{figure}

  \emph{Step 3: a large ball with coherently rotating normals.} Fix any \(\tau\in\Sph^{m-1}\), and let \(L>0\) be a Lipschitz constant for
  \(\nabla\Phi\). Choose \(0<\eta\le1\) so that \(L\eta\le c_0/2\), and then choose \(\beta>0\) so that
  \[
    C_0(1+\eta)^2\beta^2<\frac12.
  \]
  Reduce \(h_{\rm ch}\), if necessary, so that
  \[
    (1+\eta)\beta\sqrt{h_{\rm ch}}<r_0,
  \]
  and put \(r=\beta\sqrt h\). Then \((1+\eta)r<r_0\) for every \(0<h<h_{\rm ch}\). Since \(\Phi(0)=0\) and \(\Phi(r\tau)\ge c_0r^2\), the
  mean-value theorem applied to \(t\mapsto\Phi(t\tau)\) gives \(t_h\in(0,r)\) such that
  \[
    \tau\cdot\nabla\Phi(t_h\tau)
    =\frac{\Phi(r\tau)-\Phi(0)}r
    \ge c_0r.
  \]
  Define
  \[
    G_h:=B_{\eta r}^m(t_h\tau).
  \]
  For every \(z\in G_h\), Lipschitz continuity gives
  \[
    \tau\cdot\nabla\Phi(z)
    \ge c_0r-L|z-t_h\tau|
    \ge \frac{c_0}{2}r
    =\frac{c_0\beta}{2}\sqrt h.
  \]
  Moreover, \(|z|\le(1+\eta)r\), and hence
  \[
    \Phi(z)\le C_0|z|^2
    \le C_0(1+\eta)^2r^2<\frac h2.
  \]
  Finally,
  \[
    |G_h|=|B_1^m|(\eta\beta)^m h^{m/2}.
  \]
  Thus both bounds in \eqref{eq:unb-direction-set} hold with
  \[
    c_{\rm rot}
    :=\min\left\{\frac{c_0\beta}{2},
    |B_1^m|(\eta\beta)^m\right\}>0.
  \]
  This proves \eqref{eq:unb-direction-set}, including the case \(m=1\), with one direction \(\tau\) fixed for all \(h\).

\end{proof}

For a lower graph with \(\Omega\) above it,
\[
  n_\Omega
  (z,\Phi(z))=\frac{(\nabla\Phi(z),-1)}{\sqrt{1+|\nabla\Phi(z)|^2}},
  \qquad \dd S=\sqrt{1+|\nabla\Phi(z)|^2}\dd z,
\]
and therefore
\begin{equation}\label{eq:unb-graph-flux}
  |v\cdot n_\Omega
  (z,\Phi(z))|^p\dd S
  =
  \frac{|u\cdot\nabla\Phi(z)-w|^p}
  {(1+|\nabla\Phi(z)|^2)^{(p-1)/2}}\dd z.
\end{equation}

\subsection{Normalisation and bulk estimates}

Fix \(\chi\in \C_c^\infty(-1,2)\), with \(\chi=1\) on \([0,1/2]\). For small \(h,\delta>0\), large \(R\), and an amplitude \(A_h\), define
\begin{equation}\label{eq:unb-f}
  f_h(z,y,u,w):=
  A_h\chi(y/h)(U_{R\tau}\vartheta)(u)\psi_\delta(w),
\end{equation}
where the direction \(\tau\) is fixed by Lemma~\ref{lem:unb-cap}. Up to the common spatial factor, the normal-velocity, Gaussian translation,
and transport costs are
\[
  \delta^{-1}+R^2\delta+h^{-2}\delta^5
  =
  \delta^{-1}\left[
    1+(R\delta)^2+\left(\frac{\delta^3}{h}\right)^2
    \right].
\]
We balance them by imposing \(R\delta=1\) and \(\delta^3=h\), and choose \(A_h\) so that the resulting bulk energy is uniformly
bounded:
\begin{equation}\label{eq:unb-parameters}
  \delta=h^{1/3},
  \qquad R=h^{-1/3},
  \qquad A_h^2=h^{-(d+1)/2+1/3}.
\end{equation}

Choose \(0<h_0<\min\{h_{\rm ch},1/8\}\), to be decreased once more when the fixed constants in the trace estimate have been identified, and
henceforth restrict to \(0<h<h_0\). Then \(R\ge2\), \(0<\delta\le1\), and \(h<h_{\rm ch}\). These choices give the three balances used below:
\begin{equation}\label{eq:unb-scale-checkpoint}
  R^2\delta=\delta^{-1}=h^{-2}\delta^5=h^{-1/3},
  \qquad
  A_h^2h^{(d+1)/2}\delta^{-1}=1,
  \qquad
  R\sqrt h=h^{1/6}.
\end{equation}
The first identity balances the tangential, normal, and transport costs. The second makes the bulk energy of order at most one, while the third
is the normal-flux scale imposed by the supporting cap. For later use, set
\[
  \varepsilon_h:=R\sqrt h=h^{1/6}.
\]
The boundary mass factor produced by the same normalisation is
\begin{equation}\label{eq:unb-boundary-mass-scale}
  A_h^2\delta h^{(d-1)/2}
  =h^{-1/3}
  =\varepsilon_h^{-2}.
\end{equation}

\begin{lemma}[Bulk estimates for the high-tangential-velocity packet]
  \label{lem:unb-bulk}
  The functions \(f_h\) defined by \eqref{eq:unb-f}--\eqref{eq:unb-parameters} satisfy
  \begin{equation}
    \sup_h\mathcal E^\gamma_\Omega(f_h)<\infty,
  \end{equation}
  and
  \begin{align}
    \supp_xf_h
     & \subset\overline\Omega\cap\{0\le y<2h\},
    \label{eq:unb-spatial-support}              \\
    \supp_vf_h
     & \subset
    \{u=R\tau+q,\quad q\in\supp\vartheta,\quad |q|\le C,
    \quad |w|<\rho\delta\}.
    \label{eq:velocity-support}
  \end{align}
\end{lemma}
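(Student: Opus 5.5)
The support statements come first, directly from the construction. Since \(\chi\in\C_c^\infty(-1,2)\), the factor \(\chi(y/h)\) vanishes unless \(y<2h\). On \(\overline\Omega\) we have \(y\ge0\) by the enclosing ball of Lemma~\ref{lem:unb-cap}, which gives \eqref{eq:unb-spatial-support}. The inclusion \eqref{eq:unb-layer-chart} then places this layer inside \(Q_{\rm ch}\). The velocity support \eqref{eq:velocity-support} follows from \eqref{eq:unb-unitary-support} together with \(\supp\psi_\delta\subset(-\rho\delta,\rho\delta)\). Smoothness of \(f_h\) up to the boundary is immediate, so \(f_h\in\C_c^\infty(\overline\Omega\times\R^d)\).

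For the \(\L^2_x\H^1_\gamma\) part, the product structure reduces the problem to the velocity estimate. Using \(|\chi|\le C\) and the layer volume bound \eqref{eq:unb-layer}, I would write
\[
  \norm{f_h}_{\L^2_x\H^1_\gamma}^2\le A_h^2\norm{\chi}_\infty^2\,\bigl|\Omega\cap\{0<y<2h\}\bigr|\,
  \norm{(U_{R\tau}\vartheta)\psi_\delta}_{\H^1_\gamma}^2 .
\]
By \eqref{eq:unb-layer} and \eqref{eq:unb-H1}, this is at most
\[
  C A_h^2h^{(d+1)/2}(\delta^{-1}+R^2\delta).
\]
By \eqref{eq:unb-scale-checkpoint}, we have \(R^2\delta=\delta^{-1}\) and \(A_h^2h^{(d+1)/2}\delta^{-1}=1\), so the bound is \(O(1)\).

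For the transport part, the key observation is that \(f_h\) depends on \(x\) only through \(y\). Hence
\[
  v\cdot\nabla_xf_h=w\,\partial_yf_h=A_hh^{-1}\chi'(y/h)\,w(U_{R\tau}\vartheta)(u)\psi_\delta(w).
\]
For each fixed \(x\), the velocity factor is exactly the odd packet of \eqref{eq:unb-Hminus}, so its \(\H^{-1}_\gamma\)-norm squared is at most \(C\delta^5\). Integrating over the layer with \eqref{eq:unb-layer} gives
\[
  \norm{v\cdot\nabla_xf_h}_{\L^2_x\H^{-1}_\gamma}^2\le CA_h^2h^{-2}h^{(d+1)/2}\delta^5 .
\]
Since \(h^{-2}\delta^5=\delta^{-1}\) by \eqref{eq:unb-scale-checkpoint}, this is again \(CA_h^2h^{(d+1)/2}\delta^{-1}=C\).

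There is one point to check: the distributional transport derivative on \(\Omega\) must coincide with this pointwise one. This holds because \(f_h\) is smooth on \(\overline\Omega\), and the \(\H^{-1}_\gamma\) norm is computed fibrewise in \(x\).

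The main obstacle is the transport term. A naive \(\L^2_\gamma\) bound would give only \(\norm{w\psi_\delta}_{\L^2}^2\sim\delta^3\), and then \(h^{-2}\delta^3\) would blow up relative to \(\delta^{-1}\). The odd-cancellation Lemma~\ref{lem:unb-odd}, used through \eqref{eq:unb-Hminus}, provides the two extra powers of \(\delta\). With it in hand, the rest is bookkeeping with the parameter choices \eqref{eq:unb-parameters}.
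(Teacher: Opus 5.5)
Your proposal is correct and follows essentially the same route as the paper. Both read the supports off the construction and the enclosing ball, use the $z$-independence of $f_h$ to compute the transport derivative, and combine Lemma~\ref{lem:unb-translated} with the layer bound \eqref{eq:unb-layer} to get the single energy estimate $A_h^2h^{(d+1)/2}(\delta^{-1}+R^2\delta+h^{-2}\delta^5)=O(1)$ under the parameter choices \eqref{eq:unb-parameters}.
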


\begin{proof}
  The velocity support follows from \eqref{eq:unb-unitary-support} and the support of \(\psi_\delta\). The enclosing-ball containment gives
  \(y\ge0\) on \(\overline\Omega\); hence \(\supp\chi\subset(-1,2)\) implies \eqref{eq:unb-spatial-support}, and \eqref{eq:unb-layer-chart}
  places this support inside \(Q_{\rm ch}\).

  Put
  \[
    P_h(u,w):=(U_{R\tau}\vartheta)(u)\psi_\delta(w).
  \]
  Since \(f_h\) is independent of \(z\),
  \[
    v\cdot\nabla_xf_h
    =A_hh^{-1}w\chi'(y/h)P_h(u,w).
  \]
  Lemma~\ref{lem:unb-translated} and the layer bound \eqref{eq:unb-layer} therefore give the single energy estimate
  \begin{align*}
    \mathcal E^\gamma_\Omega(f_h)
     & \lesssim
    A_h^2h^{(d+1)/2}
    \left(\delta^{-1}+R^2\delta+h^{-2}\delta^5\right) \\
     & \lesssim
    A_h^2h^{(d+1)/2}h^{-1/3}
    =1,
  \end{align*}
  where the last equality follows from \eqref{eq:unb-parameters}.
\end{proof}

\subsection{Asymptotics and optimality for the weighted boundary
  functional}

\begin{proof}[Proof of Theorem~\ref{thm:unbounded}]
  Let \(f_h\) be the packets defined in \eqref{eq:unb-f}--\eqref{eq:unb-parameters}. Their energies are uniformly bounded by
  Lemma~\ref{lem:unb-bulk}; it remains to identify one boundary mass scale and one normal-flux scale.

  Put
  \[
    P_h(u,w):=(U_{R\tau}\vartheta)(u)\psi_\delta(w),
    \qquad
    Z_h:=\{z\in B_{r_0}^m:\ \Phi(z)/h\in\supp\chi\}.
  \]
  On \(Z_h\), \eqref{eq:unb-cap-geometry} and the support of \(\chi\) give \(0\le\Phi(z)<2h\). Hence \eqref{eq:unb-cap-geometry} gives the
  following
  bounds with the fixed constant
  \[
    C_\Phi:=C_0\sqrt{\frac{2}{c_0}}.
  \]
  \[
    |Z_h|\le Ch^{(d-1)/2},
    \qquad
    |\nabla\Phi(z)|\le C_\Phi\sqrt h
    \quad\text{for }z\in Z_h.
  \]
  If \(P_h(u,w)\ne0\), then
  \[
    u=R\tau+q,\qquad |q|\le B_\vartheta,
    \qquad |w|\le\rho\delta,
    \qquad
    B_\vartheta:=\sup_{q\in\supp\vartheta}|q|<\infty.
  \]
  Set
  \[
    C_{\rm gr}:=C_\Phi(1+B_\vartheta)+\rho.
  \]
  Since \(h<h_0<1\), throughout the boundary support we have
  \begin{equation}\label{eq:unb-master-flux-upper}
    |u\cdot\nabla\Phi(z)-w|
    \le C_\Phi(R+B_\vartheta)\sqrt h+\rho\delta
    \le C_{\rm gr}\varepsilon_h.
  \end{equation}
  Decrease \(h_0\), once and for all, so that
  \[
    C_{\rm gr}h_0^{1/6}<1,
    \qquad
    C_\Phi B_\vartheta h_0^{1/3}+\rho h_0^{1/6}
    \le \frac{c_{\rm rot}}2.
  \]
  The first condition gives \(C_{\rm gr}\varepsilon_h<1\). It follows that \(\omega_p(|v\cdot n_\Omega|)=|v\cdot n_\Omega|^p\) on the boundary
  support. Hence \eqref{eq:unb-graph-flux} gives the exact identity for the weighted boundary functional
  \begin{equation}\label{eq:unb-master-trace}
    \mathcal T^\gamma_{p,\Omega}(f_h)
    =
    A_h^2\int_{Z_h}
    \frac{\chi(\Phi(z)/h)^2}
    {(1+|\nabla\Phi(z)|^2)^{(p-1)/2}}
    \int_{\R^d}
    |u\cdot\nabla\Phi(z)-w|^pP_h(u,w)^2
    \dd\gamma_d(v)\dd z.
  \end{equation}
  Exact unitarity and \eqref{eq:unb-elementary} give
  \begin{equation}\label{eq:unb-packet-mass}
    \int_{\R^d}P_h(v)^2\dd\gamma_d(v)
    =
    \norm{U_{R\tau}\vartheta}_{\L^2_{\gamma_m}}^2
    \norm{\psi_\delta}_{\L^2_{\gamma_1}}^2
    \asymp\delta.
  \end{equation}
  Equations \eqref{eq:unb-master-flux-upper}--\eqref{eq:unb-packet-mass} therefore yield
  \[
    \mathcal T^\gamma_{p,\Omega}(f_h)
    \le
    C A_h^2\delta h^{(d-1)/2}\varepsilon_h^p.
  \]

  On the other hand, \eqref{eq:unb-direction-set} gives
  \[
    G_h\subset Z_h,\qquad
    |G_h|\ge c_{\rm rot}h^{(d-1)/2},\qquad
    \chi(\Phi/h)=1,\qquad
    \tau\cdot\nabla\Phi\ge c_{\rm rot}\sqrt h
    \quad\text{on }G_h.
  \]
  Uniformly on \(G_h\) and the velocity support,
  \begin{align*}
    u\cdot\nabla\Phi-w
     & \ge
    R\tau\cdot\nabla\Phi
    -|q|\,|\nabla\Phi|-|w|                                   \\
     & \ge
    c_{\rm rot}R\sqrt h-C_\Phi B_\vartheta\sqrt h-\rho\delta \\
     & =
    \varepsilon_h
    \bigl(c_{\rm rot}-C_\Phi B_\vartheta h^{1/3}
    -\rho h^{1/6}\bigr)
    \ge \frac{c_{\rm rot}}2\varepsilon_h.
  \end{align*}
  The last inequality is exactly the second condition imposed on \(h_0\). The denominator in \eqref{eq:unb-master-trace} is uniformly bounded
  above on \(G_h\). Thus \eqref{eq:unb-packet-mass} gives the matching lower estimate
  \[
    \mathcal T^\gamma_{p,\Omega}(f_h)
    \ge
    c A_h^2\delta h^{(d-1)/2}\varepsilon_h^p.
  \]
  We have proved
  \[
    \mathcal T^\gamma_{p,\Omega}(f_h)
    \asymp
    A_h^2\delta h^{(d-1)/2}\varepsilon_h^p
    =
    \varepsilon_h^{p-2}
    =
    h^{(p-2)/6},
  \]
  where \eqref{eq:unb-boundary-mass-scale} was used to obtain \(\varepsilon_h^{p-2}\). This is \eqref{eq:unb-main-scale} and proves all
  conclusions of the theorem. Together with Proposition~\ref{prop:quadratic-gaussian}, it also shows directly that \(p=2\) is the smallest
  admissible exponent in the scale \(\omega_p\).
\end{proof}

\section{\texorpdfstring{Bounded-velocity models: counterexamples from
    rapid normal variation and the sharp threshold} {Bounded-velocity models: counterexamples from rapid normal variation and the sharp
    threshold}}
\label{sec:bounded}

\subsection{The common spatial geometry and the counterexample from rapid normal variation}

Fix \(d\ge2\), set \(m=d-1\), fix \(0<\alpha<1\), and put \(q=1+\alpha\). Consider
\[
  \Omega_{\alpha,d}
  =\{(z,y)\in\R^m\times\R:\ |z|^q+|y-1|^q<1\}.
\]
The defining function \(F(z,y)=|z|^q+|y-1|^q-1\) is strictly convex and belongs to \(\C_{\mathrm{loc}}^{1,\alpha}(\R^d)\). Its gradient does not
vanish on \(F=0\), so the \(\C^{1,\alpha}\) implicit-function theorem gives local boundary charts; compactness of the boundary yields a finite
atlas. Thus \(\Omega_{\alpha,d}\) is strictly convex with \(\C^{1,\alpha}\) boundary. Near \((0,0)\), its lower boundary is
\begin{equation}\label{eq:bnd-Phi}
  y=\Phi_q(z):=1-(1-|z|^q)^{1/q}.
\end{equation}
For small \(|z|\),
\begin{equation}\label{eq:bnd-geometry}
  \Phi_q(z)\asymp_q|z|^q,
  \qquad
  |\nabla\Phi_q(z)|\asymp_q|z|^{q-1}.
\end{equation}
More precisely,
\[
  \nabla\Phi_q(z)=|z|^{q-2}z
  (1-|z|^q)^{-\alpha/q}.
\]
The standard \(\alpha\)-H\"older estimate for \(\xi\mapsto |\xi|^{q-2}\xi\), applied in dimensions \(d-1\) and \(1\) to the two blocks of
\(\nabla F\), shows that \(\nabla F\) is globally \(\alpha\)-H\"older. In particular, \(F\in\C_{\mathrm{loc}}^{1,\alpha}(\R^d)\), which directly
yields the global boundary regularity asserted above. This regularity is exact: for every \(\alpha<\beta\le1\),
\[
  \frac{|\nabla\Phi_q(te_1)-\nabla\Phi_q(0)|}{|t|^\beta}
  \asymp |t|^{\alpha-\beta}\longrightarrow\infty
  \qquad \text{as } t \to 0.
\]
If the hypersurface were \(\C^{1,\beta}\), then, since its tangent plane at the lower tip is horizontal, the \(\C^{1,\beta}\)
implicit-function theorem would represent it in these same coordinates by a \(\C^{1,\beta}\) graph. The preceding divergence rules this out.
Therefore
\begin{equation}\label{eq:bnd-exact-regularity}
  \partial\Omega_{\alpha,d}\in \C^{1,\alpha},
  \qquad
  \partial\Omega_{\alpha,d}\notin \C^{1,\beta}
  \quad\text{for every }\alpha<\beta\le1.
\end{equation}

\begin{theorem}[Counterexamples for the bounded-support Euclidean model]
  \label{thm:bounded-euclidean}
  Let \(d\ge2\), \(0<\alpha<1\), and \(L>0\). There are \(\ell_0>0\) and a family
  \[
    (f_\ell)_{0<\ell<\ell_0}
    \subset\C_c^\infty(
    \overline{\Omega_{\alpha,d}}\times\R^d)
  \]
  with a common compact velocity support in \(B_L\), such that, simultaneously for \(\mu\in\{\mathcal L^d,\gamma_d\}\) and every \(1\le
  p<\infty\),
  \begin{equation}\label{eq:bnd-invariant-scale}
    \sup_{0<\ell<\ell_0}
    \mathcal E^\mu_{\Omega_{\alpha,d}}(f_\ell)<\infty,
    \qquad
    \mathcal T^\mu_{p,\Omega_{\alpha,d}}(f_\ell)
    \asymp\ell^{\alpha(p+1)-1}
    \quad\text{as }\ell\downarrow0.
  \end{equation}
  The comparison constants may depend on \(p,\mu,L\), but not on \(\ell\); the family and \(\ell_0\) are independent of \(p\) and \(\mu\).
  Consequently, for each fixed \(p\), the corresponding weighted boundary functional diverges as \(\ell\downarrow0\) if and only if
  \(\alpha<\alpha_p=1/(p+1)\). For every \(\ell\), the incoming and outgoing weighted boundary functionals are equal.
\end{theorem}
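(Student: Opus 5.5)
We will realise the scaling heuristic of the introduction with packets that are \emph{exactly} constant along free transport. Then \(v\cdot\nabla_xf_\ell=0\), and the bulk energy reduces to \(\norm{f_\ell}_{\L^2_x\H^1_\mu}^2\). Work near the lower tip \((0,0)\) of \(\Omega_{\alpha,d}\), where the boundary is the graph \(y=\Phi_q(z)\) with \(\Phi_q(z)\asymp|z|^q\) by \eqref{eq:bnd-geometry}. Write \(z=(z_1,z')\) and \(u=(u_1,u')\). Fix \(0<\lambda<L/2\) and set
\[
  a:=\ell^\alpha,\qquad h:=a\ell=\ell^{1+\alpha},\qquad A^2:=\ell^{-d}.
\]
The velocity cut-off is \(\eta(u)\), supported near \(\lambda e_1\) with \(|u'|\) small. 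For \(u_1\approx\lambda\), use the transport invariants
\[
  s:=y-\frac{z_1w}{u_1},\qquad \zeta:=z'-\frac{z_1u'}{u_1}.
\]
These are the height intercept and transverse position of a characteristic at \(z_1=0\). Define
\[
  f_\ell:=A\,\chi\!\left(\frac sh\right)\theta\!\left(\frac{\zeta}{\ell}\right)\eta(u)\,\psi\!\left(\frac{w}{\lambda a}\right).
\]
Here \(\chi\in\C_c^\infty((1,2))\), \(\theta\in\C_c^\infty(B_1^{m-1})\) (omitted when \(d=2\)), and \(\psi\) is even with support in \(\{1<|r|<2\}\). Then \(f_\ell\) is a function of invariants only, so \(v\cdot\nabla_xf_\ell=0\). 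The common velocity support lies in \(B_L\) for small \(\ell\).

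\textbf{Step 1: support geometry from convexity.} On \(\supp f_\ell\) we have \(s\sim h\), \(|w|\sim a\), and \(|\zeta|<\ell\). The line \(y=s+(w/u_1)z_1\) meets \(\{y>c|z|^q\}\) only for \(|z_1|\lesssim\max\{s/a,a^{1/\alpha}\}\sim\ell\), by solving \(s+az_1\approx|z_1|^q\). Strict convexity of \(\Omega_{\alpha,d}\) therefore confines \(\supp_xf_\ell\) to a box of size \(\ell^{d-1}\times h\) inside one chart, with no transport cost. The \(x\)-volume is \(\lesssim\ell^{d-1}h\), and the velocity measure of the support is \(\asymp a\). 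This holds for both \(\mu=\mathcal L^d\) and \(\mu=\gamma_d\), since the two measures are comparable on \(B_L\).

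\textbf{Step 2: bulk energy.} The derivative \(\partial_w\) falls on \(\psi\), giving a factor \(a^{-1}\), and on \(\chi(s/h)\), giving \(|z_1|/(u_1h)\lesssim\ell/h=a^{-1}\). The derivatives \(\partial_{u'}\) and \(\partial_{u_1}\) produce factors \(\lesssim\ell/\ell\) and \(\lesssim a\ell/h\), both \(O(1)\). Hence
\[
  \mathcal E^\mu(f_\ell)\lesssim A^2\ell^{d-1}h\,a\,a^{-2}=A^2\ell^d=1.
\]

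\textbf{Step 3: the boundary functional.} By \eqref{eq:unb-graph-flux} with \(\Phi_q\), the normal velocity at a boundary point of the support is \(u\cdot\nabla\Phi_q(z)-w\). Since \(|\nabla\Phi_q|\lesssim\ell^\alpha=a\) there, we get \(|v\cdot n|\lesssim a<1\), so \(\omega_p(|v\cdot n|)=|v\cdot n|^p\). This gives the upper bound
\[
  \mathcal T\lesssim A^2\ell^{d-1}a\cdot a^p=\ell^{\alpha(p+1)-1}.
\]
For the lower bound, we need \(|u\cdot\nabla\Phi_q-w|\gtrsim a\) on a fixed fraction of the rescaled boundary support. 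Consider the secant through intercept \(s\in(h,2h)\) with slope \(w/u_1\sim\ell^\alpha\). Its two endpoints sit at \(|z_1|\sim\ell\), where \(\partial_1\Phi_q\asymp\ell^\alpha\). Transversality of a secant of a strictly convex curve, quantified by \(\Phi_q(z)\asymp|z|^q\), should give angle gap \(\gtrsim\ell^\alpha\) at both endpoints.

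\emph{This quantitative transversality is the step I expect to be the main obstacle.} I would first try to prove it on the exact one-dimensional profile \(t\mapsto1-(1-|t|^q)^{1/q}\) by rescaling \(z_1=\ell\tilde z\), \(w=a\tilde w\), \(s=h\tilde s\). With these variables the chord problem converges to the fixed model \(\tilde s+\tilde w\tilde z=|\tilde z|^q\) on compact parameter ranges, whose roots are simple. The transverse coordinates \(z'\) and \(u'\) only perturb this by \(O(\ell^{q})\) terms. The scale-invariant picture also yields \(\mathcal T\asymp\ell^{\alpha(p+1)-1}\) uniformly in \(p\) and \(\mu\).

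\textbf{Step 4: equality of incoming and outgoing functionals.} The map \((z_1,z',y,u_1,u',w)\mapsto(-z_1,z',y,u_1,-u',-w)\) preserves \(\Omega_{\alpha,d}\), the measure \(\mu\) and \(v\cdot\nabla_x\), and it flips the sign of \(v\cdot n\). The map sends \(s\mapsto s\) and \(\zeta\mapsto\zeta\). To make \(f_\ell\) invariant, we additionally choose \(\eta\) even in \(u'\); the factor \(\psi\) is already even. The map then exchanges \(\Gamma_+\) and \(\Gamma_-\) while preserving the weight \(\omega_p(|v\cdot n|)\), which proves the final assertion.
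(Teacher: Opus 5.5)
Your proposal is correct, but it is organised differently from the paper.

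\textbf{How the paper's construction differs.} The paper never builds a Euclidean packet directly. It first proves Theorem~\ref{thm:spherical} with a spherical packet in a chart \((\xi,w)\) near \(e_1\). That packet uses the same invariant \(s=y-\frac{w}{u_1}z_1\), but with \(\chi\) supported near \(0\), \(\psi(w/t)\) supported near \(t\omega^*\) with \(\omega^*=r_0^{q-1}/q>0\), and no transverse spatial cut-off. The bound \(y\ge\Phi_q(z)\ge c|z|^q\) involves the full \(|z|\) and already forces \(|z|\le C\ell\), so your factor \(\theta(\zeta/\ell)\) is harmless but unnecessary.

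The paper equalises incoming and outgoing functionals by antipodal doubling \(g+g^\#\). The Euclidean family is then the radial thickening \(\vartheta_L(r)f^{\Sp}_{\ell}(x,\theta)\), which gives the exact identity \(\mathcal T^\mu_{p,\Omega_{\alpha,d}}(f_\ell)=c_{p,\mu,L}\,\mathcal T^\sigma_{p,\Omega_{\alpha,d}}(f^{\Sp}_{\ell})\) together with an energy comparison. So one construction serves both velocity models; your packet is self-contained but yields only the Euclidean statement.

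The paper's lower bound is the rescaled limit you sketch. It is evaluated at the single point \((Z^*,0,\omega^*)\) on the secant from the origin to \(Z^*=r_0e_1\), where the limiting phase vanishes and the limiting normalised flux is \(\frac{q-1}{q}r_0^{q-1}\); a fixed neighbourhood \(U\) of this point carries the bound. Your reflection \((x,v)\mapsto(Rx,-Rv)\), with \(R(z_1,z',y)=(-z_1,z',y)\), is a valid substitute for antipodal doubling. It reverses rather than preserves the sign of \(v\cdot\nabla_x\), but that is irrelevant here.

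\textbf{On the step you flag as the main obstacle.} Your claim that \(z'\) and \(u'\) perturb the model only by \(O(\ell^q)\) is wrong. After \(z'=\ell Z'\), they enter the limiting phase \(|Z|^q/q-\omega Z_1/u_1\), the cut-off \(\theta(Z'-Z_1u'/u_1)\) and the limiting flux at order one. In a limit argument you must instead localise near \(Z'=0\), \(u=\lambda e_1\), as the paper localises to \(U\).

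In fact your positive intercept makes the limit unnecessary.
\begin{itemize}
\item Along a characteristic in the support, \(k(z_1):=\Phi_q(z_1,\zeta+z_1u'/u_1)-s-z_1w/u_1\) is convex, with \(k'=(u\cdot\nabla\Phi_q-w)/u_1\).
\item Since \(|\zeta|<\ell\) and \(s>h\), one has \(k(0)=\Phi_q(\zeta)-s\le h/q+O(h^2)-h\le-ch\).
\item At a crossing point \(z_1^*\), convexity gives \(k'(z_1^*)z_1^*\ge-k(0)\ge ch\). Together with \(|z_1^*|\lesssim\ell\) from your Step~1, this gives \(|v\cdot n|\gtrsim h/\ell=a\) on the whole boundary support.
\item Every such line passes through \((0,\zeta,s)\in\Omega_{\alpha,d}\). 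So for fixed \(v\), the flux identity \(|v\cdot n|\dd S=u_1\dd s\dd\zeta\) on the exit set (drop \(\zeta\) when \(d=2\)) yields \(\int_{\Gamma_+}|f_\ell|^2|v\cdot n|\dd S\dd\mu\asymp A^2h\ell^{d-2}a\).
\end{itemize}
Combining these, \(\mathcal T^\mu_{p,\Omega_{\alpha,d}}(f_\ell)\asymp A^2\ell^{d-1}a^{p+1}=\ell^{\alpha(p+1)-1}\) follows directly.
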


\begin{theorem}[Counterexamples for the spherical velocity model]
  \label{thm:spherical}
  Let \(d\ge2\) and \(0<\alpha<1\). There are \(\ell_0>0\) and a family
  \[
    (f_\ell^\Sp)_{0<\ell<\ell_0}
    \subset
    \C^\infty(
    \overline{\Omega_{\alpha,d}}\times\Sph^{d-1})
  \]
  such that, simultaneously for every \(1\le p<\infty\),
  \begin{equation}\label{eq:spherical-rough-scale}
    \sup_{0<\ell<\ell_0}
    \mathcal E^\sigma_{\Omega_{\alpha,d}}(f_\ell^\Sp)<\infty,
    \qquad
    \mathcal T^\sigma_{p,\Omega_{\alpha,d}}(f_\ell^\Sp)
    \asymp\ell^{\alpha(p+1)-1}
    \quad\text{as }\ell\downarrow0.
  \end{equation}
  The comparison constants may depend on \(p\), but not on \(\ell\); the family and \(\ell_0\) are independent of \(p\). Consequently, for each
  fixed \(p\), the corresponding weighted boundary functional diverges as \(\ell\downarrow0\) if and only if \(\alpha<\alpha_p=1/(p+1)\). For
  every \(\ell\), the incoming and outgoing weighted boundary functionals are equal.
\end{theorem}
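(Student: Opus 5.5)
The statement to prove is Theorem~\ref{thm:spherical}. I would deduce it from Theorem~\ref{thm:bounded-euclidean} by restricting the Euclidean packets to a sphere. The alternative, rerunning the construction directly on \(\Sph^{d-1}\), looks heavier.

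\textbf{Construction.} Rescale so that the Euclidean family \(f_\ell\) has velocity support in a thin spherical shell \(\{1-\eta<|v|<1+\eta\}\). This is possible because the construction of Theorem~\ref{thm:bounded-euclidean} only needs a packet centred at a nearly tangent direction with fixed tangential speed, localised to width \(a\sim\ell^\alpha\) in the normal velocity. Rather than using \(f_\ell\) as a black box, I would reuse its phase structure and define
\[
  f^\Sp_\ell(x,\theta)=g_\ell(x,\theta),
\]
where \(g_\ell(x,v)\) is the Euclidean packet written in polar form \(v=r\theta\) and frozen at \(r=1\). The packet is built from the phase \(x-(\text{exit time})\,v\) or similar quantities that are constant along characteristics. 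These quantities depend on \(v\) only through the direction \(\theta\) once speed is fixed, since lines with velocity \(r\theta\) are the lines with velocity \(\theta\). So \(\theta\cdot\nabla_x f^\Sp_\ell\) has the same structure as in the Euclidean case.

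\textbf{Energy bound.} The transport derivative is handled as in the Euclidean proof: it is either identically zero or controlled in \(\H^{-1}\) by the same odd-cancellation primitive in the normal-velocity variable. The spherical \(\H^{-1}\) norm is no larger than the norm seen in a local chart, because a primitive in the normal-component coordinate of \(\theta\) can be integrated by parts on a spherical cap, with Jacobians bounded near a fixed nearly tangent direction. For the \(\H^1(\Sph^{d-1})\) part, a chart of the sphere near the central direction \(\theta_0\) uses coordinates \((\theta\cdot n_0,\ \text{remaining } d-2 \text{ tangential angles})\). In these coordinates the spherical gradient is comparable to the Euclidean gradient in the variables \((w,u')\) at fixed speed. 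The dominant term \(A^2\ell^{d-1}h/a\sim1\) therefore persists, and \(\sup_\ell\mathcal E^\sigma<\infty\).

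\textbf{Boundary functional.} On the boundary, \(|\theta\cdot n_\Omega|\) on the packet support is again bounded above by \(Ca\) and is at least \(ca\) on a fixed fraction of the support, by exactly the same secant geometry on \(\Omega_{\alpha,d}\). Since \(a<1\), we have \(\omega_p=s^p\) there. Integrating then gives
\[
  \mathcal T^\sigma_{p}\asymp A^2\ell^{d-1}a\cdot a^p\sim\ell^{\alpha(p+1)-1}.
\]
Here the factor \(a\) comes from the width of the normal-component interval, measured in \(\dd\sigma\); since \(d\ge2\) it is a genuine one-dimensional direction on the sphere. Equality of incoming and outgoing functionals follows from the same symmetry \(\theta\mapsto\) reflection used for the Euclidean family, or from Green's identity with \(g=f\) together with the vanishing of the bulk term for an exactly transported packet.

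\textbf{Main obstacle.} The hard step is the case \(d=2\). There \(\Sph^1\) has no tangential directions besides the normal angle, so velocity localisation is purely in the angle, and I must check that the \(\dd\sigma\)-width \(a\) of the angular window still matches the Euclidean count. I expect it does, because the Euclidean packet in \(d=2\) similarly has \(u\) fixed and \(w\) localised. I would first attempt the reduction uniformly in \(d\) via the chart \(\theta\mapsto\theta\cdot n_0\); if the Jacobian \((1-s^2)^{(d-3)/2}\) caused trouble near \(s=\pm1\), I would avoid it by working near \(s\approx0\), which is exactly the nearly tangent regime.
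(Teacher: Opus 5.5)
There is a genuine gap. The proposal never constructs a packet, and the reduction it announces runs against the logic of the paper. Theorem~\ref{thm:bounded-euclidean} is obtained \emph{from} the spherical theorem by radial thickening $f_\ell(x,r\theta)=\vartheta_L(r)f^\Sp_\ell(x,\theta)$, which is bounded on the energy spaces by Lemma~\ref{lem:spherical-radial-lift}. The converse step is not available as a black box: restricting an $\H^1(\R^d)$ function to $\Sph^{d-1}$ loses half a derivative, and an $\H^{-1}(\R^d)$-valued transport derivative has no restriction to the sphere at all. So ``freezing the radius'' is either circular (for the paper's product-form packets it just returns a multiple of $f^\Sp_\ell$) or amounts to building the spherical packet from scratch, which is the whole content of the theorem. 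What is left is the heuristic scaling from the introduction. It comes with a hedge that the transport term is ``either identically zero or controlled by odd cancellation'', and the second option does not work. A $z$-independent layer packet as in Section~\ref{sec:unbounded}, with bounded speeds on $\Omega_{\alpha,d}$, meets the boundary only where $|z|\lesssim h^{1/(1+\alpha)}$, so $|\nabla\Phi_q|\lesssim h^{\alpha/(1+\alpha)}$ there. Optimising the velocity width then gives at most $\mathcal T\lesssim h^{-1/3+p\alpha/(1+\alpha)}$ at bounded energy. This diverges only for $\alpha<1/(3p-1)$, which misses $\alpha_p$ for every $p>1$.

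The missing construction is an exactly transport-invariant packet in a spherical chart. Write $\theta=(u_1(\xi,w),\xi,w)$ with $u_1=\sqrt{1-|\xi|^2-w^2}$, and take $g_\ell=A_\ell\eta_\Sp(\xi)\psi(w/t)\chi(s/H)$, where $s=y-\frac{w}{u_1}z_1$, $t=\ell^\alpha$, $H=\ell t$, $A_\ell=\ell^{-d/2}$. Then $v\cdot\nabla_xg_\ell=0$; your remark that invariant phases depend only on the direction is the right starting point. Three steps carry the proof, and your text only asserts them.

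\emph{Confinement.} The bounds $|w|\lesssim t$, $|s|\lesssim H$ and $y\ge\Phi_q(z)\gtrsim|z|^{1+\alpha}$ force $|z|\lesssim\ell$ and $y\lesssim H$. This is what makes the velocity derivative of $\chi(s/H)$ cost $\ell/H=t^{-1}$, so that $\mathcal E^\sigma\lesssim A_\ell^2H\ell^{d-1}t^{-1}=O(1)$. It also gives the upper bound, since $|\nabla\Phi_q|+|w|\lesssim t$ on the boundary support.

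\emph{Lower bound.} After $z=\ell Z$, $w=t\omega$, the boundary phase converges to $|Z|^q/q-\omega Z_1/\overline u_1(\xi)$ and the normalised flux to $\overline u(\xi)\cdot|Z|^{q-2}Z-\omega$. At $(Z^*,0,\omega^*)$ with $\omega^*=r_0^{q-1}/q$, the phase vanishes while the flux equals $\frac{q-1}{q}r_0^{q-1}>0$. So $\psi$ must be centred at this secant slope, and uniform convergence then gives a fixed neighbourhood on which all cut-offs and the flux are bounded below. Your ``same secant geometry'' sentence is exactly this step, left unproved.

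\emph{Incoming equals outgoing for every $p$.} Green's identity with $v\cdot\nabla_xf=0$ only shows that the signed flux vanishes, which gives equality for $p=1$ alone. For $p>1$ you need the antipodal symmetrisation $f^\Sp_\ell(x,v)=g_\ell(x,v)+g_\ell(x,-v)$, whose two summands have disjoint velocity supports.

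Finally, $d=2$ is not an obstacle: the $\xi$-variable is simply absent.
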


\subsection{The secant--tangent slope gap and the common scaling}

Retain \(m=d-1\) and \(q=1+\alpha\). For \(0<\ell\ll1\), set
\begin{equation}\label{eq:rough-common-scales}
  t:=\ell^{q-1}=\ell^\alpha,
  \qquad
  H:=\ell^q=\ell t,
  \qquad
  A_\ell:=\ell^{-\frac{m+1}{2}}.
\end{equation}
Fix \(r_0>0\), and put
\[
  Z^*:=r_0e_1,
  \qquad
  \omega^*:=\frac{r_0^{q-1}}q.
\]
The number \(\omega^*\) is the slope of the secant from the origin to \(Z^*\) for the limiting boundary graph \(Z\mapsto |Z|^q/q\), whereas the
tangent slope at \(Z^*\) is \(r_0^{q-1}\). Their gap
\begin{equation}\label{eq:rough-secant-tangent-gap}
  r_0^{q-1}-\omega^*
  =\frac{q-1}{q}r_0^{q-1}>0
\end{equation}
will be the limiting normal flux.

Choose \(\psi,\chi\in\C_c^\infty(\R)\), with \(\psi\) supported in a sufficiently small neighbourhood of the positive number \(\omega^*\) and
\(\chi\) supported near \(0\), so that
\[
  \psi(\omega^*)\chi(0)\ne0.
\]

\subsection{The spherical packet}

\begin{proof}[Proof of Theorem~\ref{thm:spherical}]
  Fix \(0<R<1\), and use the spherical chart
  \[
    \mathcal V(\xi,w)
    :=\bigl(u_1(\xi,w),\xi,w\bigr),
    \qquad
    u_1(\xi,w):=\sqrt{1-|\xi|^2-w^2},
  \]
  on
  \[
    D_R:=\{(\xi,w)\in\R^{d-2}\times\R:
    |\xi|^2+w^2<R^2\}.
  \]
  On every \(K\Subset D_R\), the chart density is
  \[
    \dd\sigma(\mathcal V(\xi,w))
    =u_1(\xi,w)^{-1}\dd\xi\,\dd w,
  \]
  and the coordinate metric is uniformly elliptic. Consequently, for functions supported in \(K\), the spherical \(\H^1\)-norm is bounded by a
  fixed multiple of the Euclidean coordinate \(\H^1\)-norm.

  Choose
  \[
    \eta_{\Sp}\in\C_c^\infty(\R^{d-2}),
    \qquad
    \eta_{\Sp}(0)\ne0,
  \]
  with support sufficiently close to the origin. When \(d=2\), the \(\xi\)-variable is absent, and we put \(\eta_{\Sp}\equiv1\). After
  decreasing \(\ell_0\), the support of \(\eta_{\Sp}(\xi)\psi(w/t)\) lies in one fixed \(K\Subset D_R\) for \(0<\ell<\ell_0\).

  For \(x=(z,y)\), with \(z=(z_1,z')\in\R^{d-1}\), define in the chart
  \begin{equation}\label{eq:rough-spherical-packet}
    \widetilde g_\ell(z,y,\xi,w)
    :=A_\ell\eta_{\Sp}(\xi)\psi(w/t)
    \chi\!\left(\frac{s(z,y,\xi,w)}H\right),
    \qquad
    s(z,y,\xi,w):=
    y-\frac{w}{u_1(\xi,w)}z_1.
  \end{equation}
  Let \(g_\ell^\Sp\) be the corresponding function on \(\Sph^{d-1}\), extended by zero outside the chart. This extension is smooth because its
  velocity support is compactly contained in the chart. Writing \(u(\xi,w):=(u_1(\xi,w),\xi)\), one has the exact cancellation
  \begin{equation}\label{eq:rough-spherical-invariance}
    \bigl(u(\xi,w)\cdot\nabla_z+w\partial_y\bigr)s
    =u_1\left(-\frac w{u_1}\right)+w=0,
    \qquad
    v\cdot\nabla_xg_\ell^\Sp=0.
  \end{equation}

  We next locate the packet. On its support,
  \[
    |w|\le Ct,
    \qquad
    |s|\le CH,
    \qquad
    u_1\ge c>0,
  \]
  and therefore
  \[
    0\le y\le C(t|z|+H).
  \]
  Since \(\Omega_{\alpha,d}\) is bounded, this shows that \(y\to0\) uniformly on the support. For small \(\ell\), the support is
  therefore contained in the lower graph neighbourhood, where \(y\ge\Phi_q(z)\ge c|z|^q\). Hence
  \[
    |z|^q\le C(t|z|+H).
  \]
  Writing \(|z|=\ell r\) and using \eqref{eq:rough-common-scales} gives \(r^q\le C(r+1)\), so
  \begin{equation}\label{eq:rough-spherical-support}
    |z|\le C\ell,
    \qquad
    0\le y\le CH.
  \end{equation}
  The spatial support thus has measure \(O(H\ell^m)\). The same argument on \(y=\Phi_q(z)\) confines the boundary support to \(|z|\le C\ell\),
  and only the lower graph meets it.

  Set \(\lambda(\xi,w):=w/u_1(\xi,w)\). On the fixed chart and packet support,
  \[
    |\lambda|\le Ct,
    \qquad
    |\nabla_{\xi,w}\lambda|\le C.
  \]
  Together with \eqref{eq:rough-spherical-support}, this implies
  \[
    |\nabla_{\xi,w}(s/H)|
    \le C\frac{\ell}{H}
    =Ct^{-1}.
  \]
  The cut-off \(\psi(w/t)\) has the same derivative cost, and hence
  \[
    |\widetilde g_\ell|
    +t|\nabla_{\xi,w}\widetilde g_\ell|
    \le CA_\ell.
  \]
  The coordinate velocity support has measure \(O(t)\). Uniform ellipticity of the chart and \eqref{eq:rough-spherical-support} now give
  \begin{align}
    \int_{\Omega_{\alpha,d}}
    \norm{g_\ell^\Sp(x,\cdot)}_{\H^1(\Sph^{d-1})}^2\dd x
     & \le
    C A_\ell^2(H\ell^m)t(1+t^{-2})\notag \\
     & \le C A_\ell^2H\ell^m t^{-1}
    =C. \label{eq:rough-spherical-energy}
  \end{align}
  Together with \eqref{eq:rough-spherical-invariance}, this proves the uniform graph-energy bound for the basic packet.

  It remains to estimate the weighted boundary functional. On the lower graph,
  \[
    |v\cdot n_{\Omega_{\alpha,d}}|^p\dd S
    =
    |u(\xi,w)\cdot\nabla\Phi_q(z)-w|^p
    (1+|\nabla\Phi_q(z)|^2)^{(1-p)/2}\dd z.
  \]
  Here \(\omega_p(|v\cdot n|)=|v\cdot n|^p\), since \(|v\cdot n|\le1\) on the sphere. On the boundary support,
  \[
    |\nabla\Phi_q(z)|+|w|\le Ct,
  \]
  so the flux is \(O(t)\). The \(z\)-support has measure \(O(\ell^m)\) and the coordinate velocity support has measure \(O(t)\); consequently,
  \begin{equation}\label{eq:rough-spherical-trace-upper}
    \mathcal T^\sigma_{p,\Omega_{\alpha,d}}(g_\ell^\Sp)
    \le C_p A_\ell^2\ell^m t^{p+1}.
  \end{equation}

  For the matching lower bound, put \(z=\ell Z\) and \(w=t\omega\), and set
  \[
    \overline u_1(\xi):=\sqrt{1-|\xi|^2},
    \qquad
    \overline u(\xi):=(\overline u_1(\xi),\xi).
  \]
  On the lower graph \(y=\Phi_q(z)\), using \(H=\ell t\), the rescaled argument of the characteristic cut-off is
  \begin{align*}
    \frac{s(\ell Z,\Phi_q(\ell Z),\xi,t\omega)}{H}
     & =
    \frac{\Phi_q(\ell Z)}{H}
    -\frac{t\omega}{u_1(\xi,t\omega)}
    \frac{\ell Z_1}{H} \\
     & =
    \frac{\Phi_q(\ell Z)}{H}
    -\omega\frac{Z_1}{u_1(\xi,t\omega)}
    =:P_\ell(Z,\xi,\omega).
  \end{align*}
  We likewise normalise the unnormalised boundary flux by setting
  \[
    G_\ell(Z,\xi,\omega)
    :=
    \frac{
      u(\xi,t\omega)\cdot\nabla\Phi_q(\ell Z)-t\omega
    }{t}.
  \]

  The explicit formulae for \(\Phi_q\) and \(\nabla\Phi_q\), together with \(H=\ell^q\) and \(t=\ell^{q-1}\), give
  \begin{align*}
    \frac{\Phi_q(\ell Z)}{H}
     & =
    \frac{
      1-(1-\ell^q|Z|^q)^{1/q}
    }{\ell^q}
    \longrightarrow \frac{|Z|^q}{q}, \\
    \frac{\nabla\Phi_q(\ell Z)}{t}
     & =
    |Z|^{q-2}Z
    (1-\ell^q|Z|^q)^{-\alpha/q}
    \longrightarrow |Z|^{q-2}Z.
  \end{align*}
  Moreover,
  \[
    u_1(\xi,t\omega)\longrightarrow\overline u_1(\xi),
    \qquad
    u(\xi,t\omega)\longrightarrow\overline u(\xi).
  \]
  These convergences are uniform on fixed compact sets, and therefore
  \begin{align*}
    P_\ell(Z,\xi,\omega)
     & \longrightarrow
    \frac{|Z|^q}{q}
    -\omega\frac{Z_1}{\overline u_1(\xi)}, \\
    G_\ell(Z,\xi,\omega)
     & \longrightarrow
    \overline u(\xi)\cdot|Z|^{q-2}Z-\omega.
  \end{align*}

  At \((Z,\xi,\omega)=(Z^*,0,\omega^*)\), the limiting phase is zero, since
  \[
    \frac{|Z^*|^q}{q}-\omega^*Z_1^*
    =
    \frac{r_0^q}{q}
    -\frac{r_0^{q-1}}{q}r_0
    =0,
  \]
  whereas the limiting normalised flux is
  \[
    r_0^{q-1}-\omega^*
    =\frac{q-1}{q}r_0^{q-1}>0.
  \]
  Since \(\eta_{\Sp}(0)\psi(\omega^*)\chi(0)\ne0\), continuity of the limiting expressions and uniform convergence allow us to choose a fixed
  product neighbourhood \(U\) of \((Z^*,0,\omega^*)\) such that, for every sufficiently small \(\ell\), the three cut-off factors are bounded
  away
  from zero in absolute value on \(U\), and
  \[
    |G_\ell|
    \ge \frac{q-1}{2q}r_0^{q-1}.
  \]

  Under the same changes of variables,
  \[
    \dd z=\ell^m\dd Z,
    \qquad
    \dd\sigma(\mathcal V(\xi,t\omega))
    =
    \frac{t}{u_1(\xi,t\omega)}
    \dd\xi\,\dd\omega,
  \]
  and, by the definition of \(G_\ell\),
  \[
    \left|
    u(\xi,t\omega)\cdot\nabla\Phi_q(\ell Z)-t\omega
    \right|^p
    =
    t^p|G_\ell(Z,\xi,\omega)|^p.
  \]
  Using these identities in the boundary integral and restricting it to \(U\) gives
  \begin{align*}
    \mathcal T^\sigma_{p,\Omega_{\alpha,d}}(g_\ell^\Sp)
    \ge{} &
    A_\ell^2\ell^m t^{p+1}
    \int_U
    |\eta_{\Sp}(\xi)|^2|\psi(\omega)|^2
    |\chi(P_\ell(Z,\xi,\omega))|^2 \\
          & \quad{}\times
    |G_\ell(Z,\xi,\omega)|^p
    \frac{
      (1+|\nabla\Phi_q(\ell Z)|^2)^{(1-p)/2}
    }{
      u_1(\xi,t\omega)
    }
    \dd Z\,\dd\xi\,\dd\omega.
  \end{align*}
  When \(d=2\), the \(\xi\)-variable and \(\dd\xi\) are absent. The remaining chart-density and surface factors are uniformly bounded above and
  away from zero on \(U\). Hence the integral is bounded below by a positive constant, and
  \begin{equation}\label{eq:rough-spherical-trace-lower}
    \mathcal T^\sigma_{p,\Omega_{\alpha,d}}(g_\ell^\Sp)
    \ge c_p A_\ell^2\ell^m t^{p+1}.
  \end{equation}
  Combining \eqref{eq:rough-spherical-trace-upper}, \eqref{eq:rough-spherical-trace-lower}, and \eqref{eq:rough-common-scales} proves
  \[
    \mathcal T^\sigma_{p,\Omega_{\alpha,d}}(g_\ell^\Sp)
    \asymp\ell^{\alpha(p+1)-1}.
  \]

  Finally, define
  \[
    (g_\ell^\Sp)^\#(x,v):=g_\ell^\Sp(x,-v),
    \qquad
    f_\ell^\Sp:=g_\ell^\Sp+(g_\ell^\Sp)^\#.
  \]
  The two summands have disjoint velocity supports. The antipodal map preserves surface measure and the spherical \(\H^1\)-norm, and
  \[
    v\cdot\nabla_x(g_\ell^\Sp)^\#(x,v)
    =-\bigl[(-v)\cdot\nabla_xg_\ell^\Sp\bigr](x,-v)=0.
  \]
  Thus
  \[
    \mathcal E^\sigma_{\Omega_{\alpha,d}}(f_\ell^\Sp)
    =2\mathcal E^\sigma_{\Omega_{\alpha,d}}(g_\ell^\Sp),
    \qquad
    \mathcal T^\sigma_{p,\Omega_{\alpha,d}}(f_\ell^\Sp)
    =2\mathcal T^\sigma_{p,\Omega_{\alpha,d}}(g_\ell^\Sp).
  \]
  Moreover, \(f_\ell^\Sp(x,-v)=f_\ell^\Sp(x,v)\), so the antipodal change of variables exchanges the incoming and outgoing phase boundaries.
  This proves Theorem~\ref{thm:spherical}.
\end{proof}

\subsection{Radial thickening to the bounded-support Euclidean model}

\begin{proof}[Proof of Theorem~\ref{thm:bounded-euclidean}]
  Fix the prescribed \(L>0\), choose
  \[
    0<a<b<\min\{1,L\},
    \qquad
    0\ne\vartheta_L\in\C_c^\infty((a,b)),
  \]
  and, for \(v=r\theta\), define
  \begin{equation}\label{eq:rough-radial-thickening}
    f_\ell(x,r\theta)
    :=\vartheta_L(r)f_\ell^\Sp(x,\theta).
  \end{equation}
  Set \(f_\ell(x,0):=0\). The right-hand side vanishes for \(r\) near zero, so it defines a function in
  \(\C_c^\infty(\overline{\Omega_{\alpha,d}}\times\R^d)\), with \(\supp_vf_\ell\Subset B_L\). The same family will be used for both Euclidean
  measures and for every \(p\).

  Write
  \[
    \rho_{\mathcal L^d}(r):=1,
    \qquad
    \rho_{\gamma_d}(r):=(2\pi)^{-d/2}e^{-r^2/2}.
  \]
  Since
  \[
    \nabla_v f_\ell(x,r\theta)
    =
    \vartheta_L'(r)f_\ell^\Sp(x,\theta)\theta
    +\frac{\vartheta_L(r)}r
    \nabla_{\Sph^{d-1}}f_\ell^\Sp(x,\theta),
  \]
  and the two terms are orthogonal, polar coordinates give
  \begin{align*}
    \norm{f_\ell(x,\cdot)}_{\H^1_\mu}^2
    ={} &
    a_{\mu,L}
    \norm{f_\ell^\Sp(x,\cdot)}_{\L^2(\Sph^{d-1})}^2 \\
        & +
    b_{\mu,L}
    \norm{\nabla_{\Sph^{d-1}}f_\ell^\Sp(x,\cdot)}
    {\L^2(\Sph^{d-1})}^2,
  \end{align*}
  where
  \begin{align*}
    a_{\mu,L}
     & :=
    \int_0^\infty
    \bigl(\vartheta_L(r)^2+\vartheta_L'(r)^2\bigr)
    \rho_\mu(r)r^{d-1}\dd r, \\
    b_{\mu,L}
     & :=
    \int_0^\infty
    \vartheta_L(r)^2\rho_\mu(r)r^{d-3}\dd r.
  \end{align*}
  Both constants are finite. When \(d=2\), the factor \(r^{d-3}=r^{-1}\) in \(b_{\mu,L}\) causes no singularity because \(\vartheta_L\) is
  supported away from \(r=0\). Moreover,
  \[
    v\cdot\nabla_xf_\ell(x,r\theta)
    =
    r\vartheta_L(r)
    \theta\cdot\nabla_xf_\ell^\Sp(x,\theta)=0.
  \]
  Consequently,
  \[
    \mathcal E^\mu_{\Omega_{\alpha,d}}(f_\ell)
    \le C_{\mu,L}
    \mathcal E^\sigma_{\Omega_{\alpha,d}}(f_\ell^\Sp),
  \]
  and \eqref{eq:spherical-rough-scale} gives the uniform Euclidean graph-energy bounds.

  Because \(r<b<1\) and \(|\theta\cdot n|\le1\),
  \[
    \omega_p(r|\theta\cdot n|)
    =r^p|\theta\cdot n|^p.
  \]
  Another polar-coordinate calculation therefore gives the exact identity
  \begin{equation}\label{eq:rough-radial-trace}
    \mathcal T^\mu_{p,\Omega_{\alpha,d}}(f_\ell)
    =
    c_{p,\mu,L}\,
    \mathcal T^\sigma_{p,\Omega_{\alpha,d}}(f_\ell^\Sp),
    \qquad
    c_{p,\mu,L}:=
    \int_0^\infty
    \vartheta_L(r)^2\rho_\mu(r)r^{d-1+p}\dd r>0.
  \end{equation}
  Since \(r>0\), the signs of \(v\cdot n\) and \(\theta\cdot n\) agree. Hence \eqref{eq:rough-radial-trace} holds separately on the incoming
  and outgoing phase boundaries. The spherical estimates and symmetry now prove Theorem~\ref{thm:bounded-euclidean}.
\end{proof}

\begin{remark}[Endpoint behaviour]
  \label{rem:bnd-endpoint}
  For each fixed \(p\), at \(\alpha=\alpha_p=1/(p+1)\), \eqref{eq:bnd-invariant-scale} and \eqref{eq:spherical-rough-scale} show that the
  weighted boundary functionals of both packet families remain bounded above and away from zero. For \(0<\alpha<\alpha_p\), these functionals
  diverge on the strictly convex bounded domain \(\Omega_{\alpha,d}\), whose boundary has exact regularity \(\C^{1,\alpha}\).
\end{remark}

\begin{remark}[Relation to \cite{Valentini2026}]\label{rem:valentini}
  When interpreted as a bounded restriction estimate on the entire kinetic energy space, the estimate in
  \cite[Equation~(2.14)]{Valentini2026} does not hold on all
  Lipschitz domains when \(d\ge2\): it fails on \(\Omega_{\alpha,d}\) for \(0<\alpha<1/2\). This is consistent with
  \cite[Definition~2.6 and Theorem~2.7]{Valentini2026}: the theorem there is formulated on \(\H^1_{\mathrm{hyp},\mathrm{tr}}\). In that space, a
  flux-integrable trace satisfying Green's identity is part of the definition, and the theorem controls either one-sided trace in terms of the
  bulk norm and the other trace.
  Our packets have equal incoming and outgoing weighted boundary functionals, so both may grow without violating that estimate. By contrast,
  Corollary~\ref{cor:spherical-positive}, together with Proposition~\ref{cor:intrinsic-p-trace}, proves the natural spherical trace estimate on
  the entire kinetic energy space for every bounded \(\C^{1,1/2}\) domain, showing that the regularity threshold \(1/2\) is sharp.
\end{remark}

\section{\texorpdfstring{Bounded-velocity models: \(\omega_p\)-trace
    estimates for \(\C^{1,\frac{1}{p+1}}\) domains} {Bounded-velocity models: omega-p-trace estimates for C1,1/(p+1) domains}}
\label{sec:bounded-positive}

Let $d \ge 2$. Fix \(1\le p<\infty\) and recall \(\alpha_p=1/(p+1)\). It suffices to prove the estimate at \(\alpha=\alpha_p\): on each bounded
chart, \(\C^{0,\alpha}\subset\C^{0,\alpha_p}\) whenever \(\alpha>\alpha_p\). For the positive implication below, we only use \(\omega_p(s)\le
s^p\).

The proof is organised around flattened-normal-velocity multipliers and Green identities, as in \cite[Proposition~4.3]{Silvestre2022} and
\cite[Proposition~3.7]{OuyangSilvestre2024}. Its local part has two genuinely different regularisations. A height-dependent mollified normal
velocity produces the critical bulk Hardy estimate, whereas height-independent dyadically mollified slopes recover the boundary weight \(\lvert
c\rvert^p\).

\begin{theorem}[\(\omega_p\)-trace estimate for the bounded-support
    Euclidean model]
  \label{thm:bounded-positive}
  Let \(d\ge2\), let \(\Omega\subset\R^d\) be a bounded \(\C^{1,\alpha}\) domain with \(\alpha_p\le\alpha\le1\), and fix \(L >0\). For either
  \(\mu\in\{\mathcal L^d,\gamma_d\}\), every \(f\in\C_c^\infty(\overline\Omega\times\R^d)\) with \(\supp_vf\subset \overline B_L\) satisfies
  \begin{align}
    \int_{\partial\Omega}\int_{\R^d}
    |f(x,v)|^2 & |v\cdot n_\Omega(x)|^p\dd\mu(v)\dd S(x)
    \notag                                               \\
               & \le C\left(
    \norm{f}_{\L^2(\Omega;\H^1_\mu)}^2+
    \norm{v\cdot\nabla_xf}_{\L^2(\Omega;\H^{-1}_\mu)}^2
    \right),
    \label{eq:bounded-positive}
  \end{align}
  and thus, in particular, the $\omega_p$-trace estimate \eqref{eq:trace_ineq}. Here \(C>0\) depends on \(p,d,L\) and on a fixed quantitative
  \(\C^{1,\alpha_p}\) atlas; see Remark~\ref{rem:quantitative-atlas}.
\end{theorem}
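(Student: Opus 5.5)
We will prove the estimate at \(\alpha=\alpha_p\) by localisation to finitely many boundary charts, followed by two multiplier arguments in flattened coordinates. Fix a finite \(\C^{1,\alpha_p}\) atlas and a subordinate partition of unity \(\{\zeta_j\}\) depending only on \(x\). Multiplying \(f\) by \(\zeta_j\) costs only \(\norm{f}_{\L^2_x\L^2_\mu}\) in the transport term, because \(v\cdot\nabla_x\zeta_j\) is bounded on \(\supp_vf\subset\overline B_L\). After a rotation, each chart has the form \(\Omega=\{y>\Phi(z)\}\) with \(b=\nabla\Phi\in\C^{0,\alpha_p}\). We set \(y'=y-\Phi(z)\). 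The transport field becomes \(X_b=u\cdot\nabla_z+c\,\partial_{y'}\) with \(c=w-u\cdot b(z)\), and the boundary weight is \(|c|^p\dd z\) up to a bounded factor. The Green identity of Lemma~\ref{lem:flat-green} carries over verbatim to multipliers \(m(z,y',v)\), with \(X_bm\) in place of \(v\cdot\nabla_xm\). Velocity derivatives of \(m\) enter only through \(\norm{mf}_{\L^2\H^1_\mu}\). For the Gaussian measure no velocity integration by parts is used, so both measures are treated identically.

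\emph{Step 1 (critical Hardy estimate).} Put \(\rho(y')=\kappa_1 y'^{1/(p+2)}\) and \(\ell(y')=\kappa_2y'^{(p+1)/(p+2)}\). Let \(b_{y'}=b*\phi_{\ell(y')}\) be the tangential mollification, and set \(\bar c=w-u\cdot b_{y'}(z)\) and \(r=\bar c/\rho(y')\). Take \(m=Q_\varepsilon(r)\) with the compact odd profile of Lemma~\ref{lem:compact-odd-hardy-profile}. Because \(Q_\varepsilon\) has compact support, the boundary term vanishes. The transport derivative is
\[
X_bm=Q_\varepsilon'(r)\Bigl(-\tfrac{\rho'}{\rho}\,\bar c\,r+\tfrac{c-\bar c}{\rho}\cdot\text{(from }\partial_{y'}\bar c)\ldots\Bigr).
\]
Its main term is \(-c\,\rho'\bar c/\rho^2\,Q_\varepsilon'\approx \tfrac{1}{p+2}y'^{-1}\rho^{p}\cdot(-r^2Q_\varepsilon')\cdot\rho^{1-p}\). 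Here I expect to have to choose the normalisation carefully, because the favourable weight must come out as \(\rho^p/y'\sim y'^{-2/(p+2)}\). Three error terms remain. The first is \((c-\bar c)\), bounded by \(L[b]_{\alpha_p}\ell^{\alpha_p}\lesssim\kappa_2^{\alpha_p}\rho/\kappa_1\). The second is \(u\cdot\nabla_z b_{y'}\), bounded by \(\ell^{\alpha_p-1}\). The third is \(\partial_{y'}b_{y'}\), bounded by \(\ell^{\alpha_p-1}\ell'\). With \(\rho\ell\sim y'\) and \(\ell^{\alpha_p}\sim\rho\), all of these are of the same critical order \(\rho^{p}/y'\), times small constants once \(\kappa_2^{\alpha_p}/\kappa_1\) is small; this uses bounded \(|u|\le L\). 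The velocity cost \(|\nabla_vm|^2\lesssim\rho^{-2}\) is of the same order. Arguing as in Lemma~\ref{lem:flat-coercive-sign} (anchored Poincaré on slices in \(r\), central defect absorbed) then gives
\[
\mathcal P_p(f):=\int y'^{-2/(p+2)}\one_{\{|\bar c|<2\rho\}}|f|^2\lesssim M^2+N^2 .
\]
Away from the cone, where \(|\bar c|\ge2\rho\), no singular weight is needed.

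\emph{Step 2 (boundary weight via dyadic multipliers).} Decompose \(|c|\sim 2^{-k}\), \(k\ge0\), using a smooth dyadic partition \(\beta_k(\bar c_k)\). Here \(\bar c_k=w-u\cdot b*\phi_{\ell_k}\) with the height-independent scale \(\ell_k=(\kappa 2^{-k})^{p+1}\), chosen so that \(|c-\bar c_k|\le\tfrac14 2^{-k}\). We use the multiplier
\[
m=-\sum_k \sgn(\bar c_k)|\bar c_k|^{p-1}\beta_k(\bar c_k)\,\eta(y'/h_k),\qquad h_k\sim 2^{-k}\ell_k\sim2^{-k(p+2)}.
\]
Finite overlap keeps \(|m|\lesssim1\) also for \(p=1\). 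Its boundary flux is \(\sum_k|\bar c_k|^{p-1}\beta_k\,|c|\gtrsim|c|^p\) when \(\sgn\bar c_k=\sgn c\) on each band. The interior terms are as follows. \(X_b\) applied to \(\eta(y'/h_k)\) gives \(|c|h_k^{-1}2^{-k(p-1)}\sim 2^{-kp}/h_k\), which is supported in \(y'\sim h_k\) and \(|\bar c|\lesssim\rho(y')\). This is exactly the Hardy weight \(y'^{-2/(p+2)}\) of Step~1, so it is controlled by \(\mathcal P_p\). Tangential derivatives of the mollified slope give \(2^{-k(p-1)}\ell_k^{\alpha_p-1}\), which is again controlled by the same weight on the layer \(y'\lesssim h_k\). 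The velocity derivative is \(2^{k}\cdot 2^{-k(p-1)}\lesssim\rho^{-1}\) on the layer. Summing over \(k\) uses the finite overlap of the layers and bands. The Green identity then yields \(\int|f|^2|c|^p\dd z\dd\mu\lesssim M^2+N^2+\mathcal P_p\). Near-grazing velocities with \(|c|\) below every band contribute nothing, and \(|c|\gtrsim1\) is handled by a fixed non-dyadic cutoff.

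The main obstacle is Step~1, in particular the bookkeeping of the derivative of \(b_{y'}\) in \(y'\) and \(z\) against the favourable term. At exact criticality these terms are not lower order. They are only small through the constant choice \(\kappa_2\ll\kappa_1^{p+1}\), which in turn must not destroy the velocity cost \(\rho^{-2}\). I would try first to verify the pointwise inequality \(X_bm\ge c\,y'^{-2/(p+2)}(\one_K(r)-C\varepsilon\one_{|r|<2\varepsilon})-\text{small}\cdot y'^{-2/(p+2)}\one_{|r|<2}\) directly, and then globalise by summing the charts.
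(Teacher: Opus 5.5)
There is a genuine gap where your two steps meet. Step~1 only gives the cone quantity $\mathcal P_p(f)=\int y'^{-2/(p+2)}\one_{\{|\bar c|<2\rho\}}|f|^2$, and you say that away from the cone no singular weight is needed. Step~2, however, needs the singular weight precisely away from the cone. Since $\eta(y'/h_k)=1$ all the way down to $y'=0$, the costs of the $k$-th dyadic term are present on the whole layer $0<y'<2h_k$ at $|c|\sim a_k:=2^{-k}$:
differentiating the band profile $\sgn(\bar c_k)|\bar c_k|^{p-1}\beta_k(\bar c_k)$ along $X_b$ gives a term of size $a_k^{p-2}|X_b\bar c_k|\sim a_k^{-2}$, because $|X_b\bar c_k|=|u^{\mathsf T}(\nabla_zb_k)u|\lesssim\ell_k^{\alpha_p-1}\sim a_k^{-p}$, and the velocity cost is $|\nabla_vm_k|^2\lesssim a_k^{2p-4}\le a_k^{-2}$. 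Consider the heights $0<y'\lesssim(a_k/\kappa_1)^{p+2}$. Whatever constants you fix, they form a $k$-independent fraction of each layer, possibly all of it. There $|\bar c|\ge2\rho(y')$, so these points lie outside your cone and $\mathcal P_p$ says nothing about them. Summed over $k$, the costs are of the form $\int|c|^{-2}\one_{\{y'\lesssim|c|^{p+2}\}}|F|^2$. For $F$ roughly constant in $y'$ near the boundary, this is comparable to $\int|c|^p|F(z,0,v)|^2\dd z\dd\mu$, the very quantity you are estimating. Hence ``trace $\lesssim M^2+N^2+\mathcal P_p$'' does not follow: the dyadic multiplier pays exactly in the non-grazing part of the boundary layer, and nothing in your Step~1 controls that region.

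The missing idea is to make the Hardy multiplier coercive at all velocities, not only on the grazing window. The paper takes $q=\chi_0(y')\rho^{p-1}G(r)$ with $G=R+A_*Q_\varepsilon$ odd and $G(r)=1/r$ for $|r|\ge2$ (Lemma~\ref{lem:bounded-positive-single-profile}). This choice does three things.
\begin{enumerate}
\item On the tails $\mathfrak L_pG=(p-1)rG-r^2G'=p$, so transport produces the favourable term $\tfrac{p}{p+2}\rho^p/y'\sim y'^{-2/(p+2)}$ outside the cone as well.
\item The bound $(1+|r|)|G'|\le C$ neutralises the linear growth in $r$ of the mollification error $E$.
\item The boundary term still vanishes, since there $c\,q(z,\delta,v)=\chi_0(\delta)\rho(\delta)^p\,c/\bar c\to0$.
\end{enumerate}
The result is the full-velocity estimate $\int_0^{h_0}\int_{\R^{d-1}}\int_{\R^d}y'^{-2/(p+2)}|F|^2\lesssim M^2+N^2$. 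This is what the dyadic step actually uses, via $a_k^{-2}\lesssim y'^{-2/(p+2)}$ on $\{y'<2h_k\}$.

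Your own remark about the normalisation also has to be acted on. With $m=Q_\varepsilon(r)$ and $p>1$, the favourable weight is $\rho/y'$. Transferring it from the anchor set across $|r|<2$ by the anchored Poincar\'e inequality costs $(\rho^3/y')|\partial_wf|^2\sim y'^{(1-p)/(p+2)}|\partial_wf|^2$, which $M^2$ does not control. Consequently the central defect and the mollification errors, all carrying weight $\rho/y'$, cannot be absorbed. The prefactor $\rho^{p-1}$ reduces the transfer cost to $\rho^{p+2}/y'=\kappa_1^{p+2}$. The price is an additional error term proportional to $(p-1)\rho^{p-1}(c-\bar c)G(r)/y'$, as in Lemma~\ref{lem:bounded-positive-multiplier}, and it is of the same absorbable size.
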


\begin{corollary}[Natural trace estimate at the critical regularity]
  \label{cor:bounded-positive-natural}
  Let \(d\ge2\), let \(\Omega\subset\R^d\) be a bounded \(\C^{1,\alpha}\) domain with \(1/2\le\alpha\le1\), and fix \(L >0\). For either
  \(\mu\in\{\mathcal L^d,\gamma_d\}\), every \(f\in\C_c^\infty(\overline\Omega\times\R^d)\) with \(\supp_vf\subset \overline B_L\) satisfies the
  $\omega_1$-trace estimate \eqref{eq:trace_ineq}. The constant in the natural trace estimate depends only on \(d,L\) and on a fixed quantitative
  \(\C^{1,1/2}\) atlas.
\end{corollary}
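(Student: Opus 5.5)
This corollary is the case \(p=1\) of Theorem~\ref{thm:bounded-positive}, so the plan is a short reduction. Set \(p=1\). Then \(\alpha_1=1/(1+1)=1/2\), and the hypothesis \(1/2\le\alpha\le1\) is exactly the condition \(\alpha_p\le\alpha\le1\) required there. Theorem~\ref{thm:bounded-positive} therefore applies to every \(f\in\C_c^\infty(\overline\Omega\times\R^d)\) with \(\supp_vf\subset\overline B_L\). It gives \eqref{eq:bounded-positive} with weight \(|v\cdot n_\Omega|^1\) for both \(\mu\in\{\mathcal L^d,\gamma_d\}\).

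The remaining step is to compare weights. We have \(\omega_1(s)=\min\{s,s\}=s\). So for \(p=1\) the weighted boundary functional \(\mathcal T^\mu_{1,\Omega}(f)\) in \eqref{eq:trace-functional} coincides with the left-hand side of \eqref{eq:bounded-positive}. This is the \(\omega_1\)-trace estimate \eqref{eq:trace_ineq} in the bounded-support formulation, with constant \(C_{\Omega,L}\).

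For the dependence of the constant, the constant in Theorem~\ref{thm:bounded-positive} depends on \(p,d,L\) and on a quantitative \(\C^{1,\alpha_p}\) atlas. With \(p=1\) fixed, it depends on \(d,L\) and a \(\C^{1,1/2}\) atlas. When \(\alpha>1/2\), a quantitative \(\C^{1,\alpha}\) atlas of the bounded domain is also a quantitative \(\C^{1,1/2}\) atlas. The reason is that on each bounded chart \(\C^{0,\alpha}\subset\C^{0,1/2}\), with Hölder seminorm bounded by the \(\C^{0,\alpha}\) seminorm times a power of the chart diameter, as the paper already notes at the start of Section~\ref{sec:bounded-positive}.

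There is no genuine obstacle. The only point needing care is this bookkeeping: checking that the atlas constants transfer from \(\C^{1,\alpha}\) to \(\C^{1,1/2}\) with the chart size fixed, so that the constant depends only on the stated data.
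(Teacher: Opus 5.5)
Your proposal is correct and matches the paper's (implicit) argument. The corollary is the case \(p=1\) of Theorem~\ref{thm:bounded-positive}: \(\alpha_1=1/2\) and \(\omega_1(s)=s\), and the atlas dependence follows from Remark~\ref{rem:quantitative-atlas} together with the H\"older reduction \eqref{eq:bounded-positive-holder-reduction}.
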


The proof has four steps.
\begin{enumerate}[leftmargin=*]
  \item We prove a local critical Hardy estimate using a height-dependent mollified normal velocity.
  \item We use that estimate to control a dyadic boundary multiplier whose boundary value is comparable with \(-\operatorname{sgn}(c)\lvert
        c\rvert^{p-1}\).
  \item We flatten the boundary, localise, and sum the local graph estimates.
  \item We obtain the spherical estimate by radial thickening.
\end{enumerate}
The argument includes \(p=1\) directly: all factors \(p-1\) vanish, and finite overlap keeps the dyadic multiplier bounded even though
\(a_j^{p-1}=1\).

\subsection{Local graph formulation}
Fix $1\le p < \infty$ and $L>0$, and let $\mu\in\{\mathcal L^d,\gamma_d\}$. Let \(x=(z,y)\in\R^{d-1}\times (0, \infty)\), and
\(v=(u,w)\in\R^{d-1}\times\R\). For a bounded field \(b\in\C^{0,\alpha_p}(\R^{d-1};\R^{d-1})\), define
\begin{equation}\label{eq:bounded-positive-X}
  c(z,v):=w-u\cdot b(z),
  \qquad
  X_b:=u\cdot\nabla_z+c(z,v)\partial_y.
\end{equation}
Here \(c\) is the flattened, unnormalised normal velocity, and \(X_b\) is the transport field obtained by flattening a graph \(x_d=\Phi(z)\),
with \(b=\nabla\Phi\). Unless a domain is specified, bulk integrals in this section are over \(\R^{d-1}\times(0,\infty)\times\R^d\) with measure
\(\dd z\dd y\dd\mu(v)\), and boundary integrals are over \(\R^{d-1}\times\R^d\) with measure \(\dd z\dd\mu(v)\).

\begin{proposition}[Local graph \(\omega_p\)-trace estimate]
  \label{prop:bounded-positive-local}
  If \(F\in\C_c^1(\R^{d-1}\times[0,\infty)\times\R^d)\) and \(\supp_vF\subset \overline B_L\), then
  \begin{align}
    \int_{\R^{d-1}}\int_{\R^d}
    |F(z,0,v)|^2|c(z,v)|^p\dd\mu(v)\dd z
    \le C\left(
    \norm{F}_{\L^2_{z,y}\H^1_\mu}^2+
    \norm{X_bF}_{\L^2_{z,y}\H^{-1}_\mu}^2
    \right),
    \label{eq:bounded-positive-local}
  \end{align}
  where \(C\) depends only on \(p,d,L,\norm{b}_{\L^\infty}\), and \([b]_{\C^{0,\alpha_p}}\).
\end{proposition}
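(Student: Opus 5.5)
Throughout, write \(\rho(y)=\kappa_1 y^{1/(p+2)}\) and \(\ell(y)=\kappa_2 y^{(p+1)/(p+2)}\), so that \(\rho\ell\sim y\) and \(\ell^{\alpha_p}\sim\rho\). Let \(b_y:=b*\eta_{\ell(y)}\) be a tangential mollification of the slope at scale \(\ell(y)\), and set \(\bar c:=w-u\cdot b_y(z)\). The plan follows the four-step outline above, restricted to the local graph setting, and uses the truncated Green identity on \(\{y>\delta\}\) for the field \(X_b\). That identity is the analogue of Lemma~\ref{lem:flat-green}. Its boundary term is \(-\int c\,m|F|^2\), because \(\operatorname{div}_{z,y}\) of the coefficient vector \((u,c(z,v))\) vanishes: \(c\) does not depend on \(y\), and \(u\) is constant in \(z\).

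\textbf{Step 1: critical Hardy estimate.} I take \(m_1:=Q_\varepsilon(\bar c/\rho(y))\), with \(Q_\varepsilon\) the compact odd profile of Lemma~\ref{lem:compact-odd-hardy-profile}, and cut it off for \(y\ge1\). Since \(Q_\varepsilon\) is compactly supported and \(\rho(0)=0\), the boundary contribution vanishes in the limit \(\delta\downarrow0\). Differentiating,
\[
X_b m_1 = Q_\varepsilon'(r)\Bigl(\frac{X_b\bar c}{\rho}-\frac{\bar c\,c\,\rho'}{\rho^2}\Bigr),\qquad r=\bar c/\rho .
\]
Using \(\rho'/\rho=\frac{1}{(p+2)y}\) and writing \(c=\bar c+(c-\bar c)\), the main term is \(-\frac{1}{(p+2)y}r^2Q'_\varepsilon(r)\). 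This term is coercive on the anchor set \(K\) and carries weight \(1/y\sim\rho^{p}\cdot y^{-2/(p+2)}/\rho^p\). The error terms are of two kinds:
\begin{itemize}
\item The approximation error: \(|c-\bar c|\le L[b]\ell^{\alpha_p}\lesssim \kappa_2^{\alpha_p}\rho/\kappa_1\), which is small relative to \(\rho\) once \(\kappa_2^{\alpha_p}\ll\kappa_1\).
\item The derivative of the mollified slope: \(|X_b\bar c|=|u\cdot\nabla_z b_y\,u|+|c\,u\cdot\partial_y b_y|\lesssim L^2[b]\ell^{\alpha_p-1}+L|c|[b]\ell^{\alpha_p}/y\).
\end{itemize}
Dividing by \(\rho\), the first derivative term is of order \(\ell^{\alpha_p-1}\sim 1/\ell\sim \rho/y\). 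This is again a small multiple of the main weight \(1/y\), since \(\rho\) is small near \(y=0\) and the constants can be tuned. It must also be compared on the support \(|\bar c|\lesssim\rho\), and I expect the comparison to work precisely at \(\alpha=\alpha_p\).

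To close the step, I combine Lemma~\ref{lem:compact-odd-hardy-profile} with the anchored Poincaré inequality \eqref{eq:flat-anchor}, applied in the variable \(r\) after the change \(w=\bar c+u\cdot b_y\). This change is a pure translation in \(w\), so the \(\partial_w\)-derivatives are preserved and the Gaussian weight is comparable on bounded sets. Absorbing the duality term via \(|\langle X_bF,m_1F\rangle|\le N\,\|m_1F\|_{\H^1}\) and \(|\partial_w m_1|\lesssim\rho^{-1}\), I expect the Hardy bound
\[
\mathcal P_p:=\int_0^1\!\!\int y^{-2/(p+2)}\one_{\{|\bar c|<2\rho\}}|F|^2\le C(M^2+N^2),
\]
obtained exactly as in the proof of Theorem~\ref{thm:flat}. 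One check is needed here: \(\rho^{-2}\sim y^{-2/(p+2)}\), so the velocity-gradient cost \(|\partial_w m_1|^2\) matches the Hardy weight.

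\textbf{Step 2: dyadic boundary multiplier.} Let \(a_j=2^{-j}\) for \(j\ge0\), and let \(\{\phi_j\}\) be a partition of unity in \(c\)-bands \(|c|\sim a_j\). On band \(j\), the slope is mollified at the height-independent scale \(\ell_j:=\kappa a_j^{p+1}\), giving \(\bar c_j:=w-u\cdot b_{\ell_j}\). I set
\[
m_2:=-\sum_j a_j^{p-1}\sgn(\bar c_j)\phi_j(\bar c_j/a_j)\,\zeta(y/y_j),\qquad y_j:=a_j^{p+2},
\]
where \(\zeta\) is a cutoff with \(\zeta=1\) near \(0\). At \(y=0\), \(-c\,m_2\) is comparable to \(|c|^p\) up to the error \(|c-\bar c_j|\lesssim\kappa^{\alpha_p}a_j\), which is small. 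Finite overlap of the bands gives \(|m_2|\lesssim1\), and also for \(p=1\).

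The interior costs are estimated band by band:
\begin{itemize}
\item The velocity derivative satisfies \(|\partial_w m_2|\lesssim a_j^{p-2}\) on the layer \(y\lesssim a_j^{p+2}\). Hence \(|\partial_w m_2|^2\lesssim a_j^{2p-4}\le \rho(y)^{-2}\)-type weights restricted to the region \(|\bar c|\lesssim\rho\)-analogues.
\item \(X_b m_2\) produces \(a_j^{p-1}\bigl(|X_b\bar c_j|/a_j+|c|/y_j\bigr)\lesssim a_j^{p-1}(1/\ell_j+1/a_j^{p+1})\sim a_j^{-2}\), supported where \(y\lesssim a_j^{p+2}\) and \(|c|\sim a_j\).
\end{itemize}
On that support \(a_j\gtrsim y^{1/(p+2)}\sim\rho(y)\). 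For the band with \(a_j\sim\rho(y)\), this lies in the Hardy weight \(y^{-2/(p+2)}\). The bands with \(a_j\gg\rho(y)\) are cut off by \(\zeta\) only below height \(y_j\gg y\), so they contribute there as well. I would sum the geometric series in \(j\) for fixed \((y,c)\) to get total weight \(\lesssim y^{-2/(p+2)}\) on \(\{|c|\lesssim\rho\}\) plus bounded terms. Since \(|c-\bar c|\ll\rho\), this region is controlled by \(\mathcal P_p\) from Step 1 and \(M^2\). The Green identity with \(m_2\) then gives \eqref{eq:bounded-positive-local}.

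\textbf{Main obstacle.} The delicate point is Step 2: checking that the summed interior weights from all dyadic bands, including the ones where \(|c|\gg\rho(y)\), are dominated by the Hardy weight on the grazing region together with \(M^2\). For bands with \(|c|\sim a_j\gg\rho(y)\), the weight \(a_j^{-2}\) must be controlled without a Hardy region. I would try to handle this by noting that \(y\le y_j\) forces \(a_j\ge y^{1/(p+2)}\), so \(\sum_{a_j\ge\rho}a_j^{-2}\lesssim\rho^{-2}\). The remaining mass at \(|c|\gg\rho\) would then be controlled by a second application of the flat-type coercive profile at the band scale, that is, a Hardy estimate on each cone \(|\bar c_j|\sim a_j\), \(y\lesssim a_j^{p+2}\), rather than only at \(|c|\lesssim\rho\). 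Tuning \(\kappa,\kappa_1,\kappa_2\) so that all the smallness conditions hold simultaneously is where the bounded-velocity constant \(L\) enters.
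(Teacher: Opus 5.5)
Your Step~2 is essentially the paper's dyadic multiplier. The obstacle you flag at the end, however, is a genuine gap: the Hardy bound of Step~1 controls the wrong region. Your \(\mathcal P_p\) only sees \(y^{-2\beta}|F|^2\), \(\beta=1/(p+2)\), on the grazing cone \(\{|\bar c|<2\rho(y)\}\).

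At a point \((z,y,v)\), only bands with \(a_j\sim|c|\) are active. Since \(\bar c_j\) is independent of \(y\), the costs \(|X_bm_j|+|\nabla_vm_j|^2\lesssim a_j^{-2}\) persist on the whole layer \(\{y\lesssim a_j^{p+2}\}\). The total weight is therefore \(\lesssim|c|^{-2}\one_{\{y\lesssim|c|^{p+2}\}}\). This weight is unbounded, and it is supported where \(|c|\gtrsim y^{\beta}\), that is, for \(y\ll|c|^{p+2}\) far outside the cone.

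Your claim that summing over \(j\) gives weight \(\lesssim y^{-2/(p+2)}\) on \(\{|c|\lesssim\rho\}\) plus bounded terms is therefore false. Since \(Q_\varepsilon\) vanishes for \(|r|\ge2\), no multiplier built from it is coercive off the cone, and the band-wise Hardy estimates you suggest are not constructed. What you need is the Hardy estimate over all velocities,
\[
\int_0^{h_0}\int_{\R^{d-1}}\int_{\R^d}y^{-2\beta}|F|^2\dd\mu(v)\dd z\dd y\le C(M^2+N^2).
\]
This dominates the dyadic costs because \(a_j^{-2}\lesssim y^{-2\beta}\) on \(\{y\lesssim a_j^{p+2}\}\).

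The paper obtains it from a single multiplier \(\chi_0(y)\rho^{p-1}G(\bar c/\rho)\), with \(\rho=\Lambda y^{\beta}\). Here \(G\) is odd with \(G(r)=1/r\) for \(|r|\ge2\), and a multiple of \(Q_\varepsilon\) is added inside \(|r|<2\) only for the anchoring. Then \(X_b(\rho^{p-1}G(r))=\beta(\rho^p/y)\bigl((p-1)rG-r^2G'\bigr)\) plus mollified-slope errors. The bracket equals \(p\) for \(|r|\ge2\), so the transport term is coercive with weight \(\Lambda^py^{-2\beta}\) at every velocity. The boundary term still vanishes: for \(c\ne0\), eventually \(|r|\ge2\) and \(c\rho^{p-1}G(r)=\rho^pc/\bar c\to0\). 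The velocity-gradient cost \(\rho^{2p-4}\le\rho^{-2}\) is absorbed for large \(\Lambda\).

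The prefactor \(\rho^{p-1}\) is needed even inside the cone, so Step~1 as written also fails for \(p>1\). For \(m_1=Q_\varepsilon(\bar c/\rho)\), the principal term is \(-\beta(\rho/y)r^2Q_\varepsilon'(r)\), not \(-\beta y^{-1}r^2Q_\varepsilon'(r)\); you dropped a factor \(\rho\) in \(\bar c\,c\,\rho'/\rho^2\). The slope-derivative error is of the same order \(\rho/y\), so its smallness comes only from tuning constants, not from \(\rho\to0\).

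More seriously, absorbing the central defect \(-C_0\varepsilon\one_{\{|r|<2\varepsilon\}}\) and these errors requires the anchored Poincar\'e inequality at weight \(\rho/y\). Since \(h'(r)=\rho\,\partial_wF\), this costs \(\int(\rho^3/y)|\partial_wF|^2\). Because \(\rho^3/y\sim y^{(1-p)/(p+2)}\), this is not controlled by \(M^2\) when \(p>1\). With the prefactor, the principal weight becomes \(\rho^p/y\sim y^{-2\beta}\), and the anchoring cost is \(\rho^{p+2}/y=\Lambda^{p+2}\), which is uniformly bounded.
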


\subsection{Multiplier framework and critical scales}
Write

\[
  B:=\|b\|_{\L^\infty},\quad
  H:=[b]_{\C^{0,\alpha_p}}.
\]
For the moment, let \(\Lambda\ge1\). Its lower bound will be fixed only after the coercive multiplier profile has been constructed. The dyadic
parameters are independent of this construction and will be introduced where they are first used.

Put
\begin{equation}\label{eq:bounded-positive-beta}
  \beta:=\frac1{p+2}.
\end{equation}
The critical identities are
\begin{equation}\label{eq:bounded-positive-critical-identities}
  \alpha_p=\frac1{p+1},\qquad
  (1-\beta)\alpha_p=\beta,\qquad
  p\beta-1=-2\beta,\qquad
  1-\beta=(p+1)\beta.
\end{equation}
Fix a nonnegative tangential mollifier
\[
  \eta\in\C_c^\infty(\R^{d-1}),\qquad
  \supp\eta\subset B_1^{d-1},\qquad
  \int_{\R^{d-1}}\eta(z)\,\dd z=1,
\]
and, for \(\ell>0\), set
\[
  \eta_\ell(z):=\ell^{-(d-1)}\eta(z/\ell).
\]
Define
\begin{equation}\label{eq:bounded-positive-scales}
  \rho(y):=\Lambda y^\beta,\qquad
  \ell(y):=\Lambda^{-1}y^{1-\beta},\qquad
  b_y:=\eta_{\ell(y)}*b,
\end{equation}
where the convolution is taken in the tangential variable \(z\). The continuous scales are
\[
  \begin{array}{c|c|c}
    \text{quantity} & \text{definition}                                 & \text{role} \\ \hline
    \rho(y)         & \Lambda y^{1/(p+2)}
                    & \text{width of the grazing-velocity region}                     \\[1mm]
    \ell(y)         & \Lambda^{-1}y^{(p+1)/(p+2)}
                    & \text{height-dependent slope-mollification scale}               \\[1mm]
  \end{array}
\]
These scales make the threshold visible:
\[
  \rho(y)\ell(y)=y,\qquad
  \ell(y)^{\alpha_p}
  =\Lambda^{-\alpha_p}y^\beta,\qquad
  \frac{\rho(y)^p}{y}
  =\Lambda^py^{-2\beta},\qquad
  \rho(y)^{-2}=\Lambda^{-2}y^{-2\beta}.
\]
Thus the normal-approximation error is of grazing size \(y^\beta\), while both transport coercivity and the squared velocity-gradient cost
carry the critical Hardy weight \(y^{-2\beta}\).

\begin{lemma}[Admissible-multiplier Green identity]
  \label{lem:bounded-positive-admissible-multiplier}
  \label{lem:bounded-positive-green}
  Let \(F\) be as in Proposition~\ref{prop:bounded-positive-local}, and let \(m\in\C^1(\R^{d-1}\times(0,\infty)\times\R^d)\). Suppose that, for
  some \(A>0\),
  \[
    |m|\le A\qquad\text{on }\supp F,
  \]
  and that
  \begin{equation}\label{eq:bounded-positive-admissibility-integral}
    \int\bigl(|\nabla_vm|^2+|X_bm|\bigr)|F|^2
    \dd z\dd y\dd\mu<\infty.
  \end{equation}
  For each fixed \(v\), the coefficient field \((u,c)\) of \(X_b\) is divergence-free in \((z,y)\):
  \[
    \nabla_z\cdot u+\partial_yc=0.
  \]
  Then
  \begin{equation}\label{eq:bounded-positive-admissibility-product}
    \norm{mF}_{\L^2_{z,y}\H^1_\mu}^2
    \le 2A^2\norm{F}_{\L^2_{z,y}\H^1_\mu}^2
    +2\int|\nabla_vm|^2|F|^2\dd z\dd y\dd\mu,
  \end{equation}
  and, for every \(\delta>0\),
  \begin{align}
    -\int_{\R^{d-1}}\int_{\R^d}
    c(z,v)m(z,\delta,v)|F(z,\delta,v)|^2\dd\mu(v)\dd z
     & =\int_{y>\delta}(X_bm)|F|^2\dd z\dd y\dd\mu
    \notag                                         \\
     & \quad+2\int_{y>\delta}
    \pair{X_bF}{mF}_{\H^{-1}_\mu,\H^1_\mu}\dd z\dd y.
    \label{eq:bounded-positive-green}
  \end{align}
  If, in addition,
  \begin{equation}\label{eq:bounded-positive-admissibility-boundary}
    B_\delta:=-\int c(z,v)m(z,\delta,v)|F(z,\delta,v)|^2
    \dd\mu(v)\dd z\longrightarrow B_0,
  \end{equation}
  then all terms below are absolutely convergent and
  \begin{align}
    B_0
    =\int_{y>0}(X_bm)|F|^2\dd z\dd y\dd\mu
    +2\int_{y>0}
    \pair{X_bF}{mF}_{\H^{-1}_\mu,\H^1_\mu}\dd z\dd y.
    \label{eq:bounded-positive-admissibility-limit}
  \end{align}
\end{lemma}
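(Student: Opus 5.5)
The plan is to treat Lemma~\ref{lem:bounded-positive-admissible-multiplier} exactly as Lemma~\ref{lem:flat-green}, with the flat field \(v\cdot\nabla_x\) replaced by \(X_b\). The one new ingredient is the divergence-free structure of the coefficients. For fixed \(v\), \(u\) is constant in \((z,y)\). The coefficient \(c(z,v)=w-u\cdot b(z)\) does not depend on \(y\). Hence \(\nabla_z\cdot u+\partial_yc=0\) holds trivially. This is in the distributional sense, since \(b\) is only H\"older continuous, but the identity involves no derivative of \(b\) at all. Consequently, for any \(\C^1\) scalar \(\Psi\) with compact support in \(z\) and \(y\ge\delta\), we have \(X_b\Psi=\nabla_{z,y}\cdot\bigl((u,c)\Psi\bigr)\) in the distributional sense. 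The divergence theorem on \(\{y>\delta\}\) then gives \(-\int c\,\Psi(z,\delta)\dd z=\int_{y>\delta}X_b\Psi\). The outward normal there is \(-e_y\), and compact support in \(z\) removes the lateral terms.

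To make this rigorous despite the roughness of \(b\), I would mollify \(b\) in \(z\). This makes \((u,c_\varepsilon)\) smooth and still divergence-free. I then apply the classical divergence theorem to \(\Psi=m|F|^2\). The term \(X_b\Psi\) only involves \(\partial_y\Psi\) times \(c\), together with \(u\cdot\nabla_z\Psi\). It converges since \(c_\varepsilon\to c\) locally uniformly and \(F\in\C^1_c\). The singular behaviour of \(m\) is harmless on \(y\ge\delta\), because there \(m\in\C^1\) and \(F\) has compact support, so everything is bounded.

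The product estimate \eqref{eq:bounded-positive-admissibility-product} is the pointwise identity \(\nabla_v(mF)=m\nabla_vF+F\nabla_vm\). Combined with \(|m|\le A\) and \((a+b)^2\le2a^2+2b^2\), it gives \(|mF|^2+|\nabla_v(mF)|^2\le 2A^2(|F|^2+|\nabla_vF|^2)+2|\nabla_vm|^2|F|^2\). Integrating over \(z\), \(y\) and \(\mu\) proves the claim, and the right-hand side is finite by \eqref{eq:bounded-positive-admissibility-integral}.

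For the identity \eqref{eq:bounded-positive-green}, I expand \(X_b(m|F|^2)=(X_bm)|F|^2+2mF\,X_bF\) and integrate in \(v\). Since \(F\) is \(\C^1\), \(X_bF\) is a genuine function with compact support. Its \(\H^{-1}_\mu\) pairing with \(mF\in\H^1_\mu\) is therefore the \(\L^2(\mu)\) integral \(\int mF\,X_bF\dd\mu\), by the pivot convention. Fubini is justified on \(\{y>\delta\}\) by boundedness. The identity then follows by integrating the \((z,y)\) divergence formula in \(v\). No integration by parts in \(v\) occurs, so Lebesgue and Gaussian measures are handled identically.

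For the limit \eqref{eq:bounded-positive-admissibility-limit}, I let \(\delta\downarrow0\). The bulk transport term converges by dominated convergence, because \(\int|X_bm||F|^2<\infty\). The duality term is bounded pointwise in \((z,y)\) by \(\|X_bF\|_{\H^{-1}_\mu}\|mF\|_{\H^1_\mu}\). This product is integrable over \(y>0\) by Cauchy--Schwarz and \eqref{eq:bounded-positive-admissibility-product}, since \(X_bF\in\L^2_{z,y}\H^{-1}_\mu\) because \(F\in\C^1_c\). Absolute continuity of the integral therefore gives convergence as well. The left-hand side converges to \(B_0\) by assumption \eqref{eq:bounded-positive-admissibility-boundary}, which yields \eqref{eq:bounded-positive-admissibility-limit}.

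The only delicate point is the divergence theorem with merely H\"older \(b\). I expect the \(z\)-mollification to handle it, because \(c\) enters only as an undifferentiated coefficient.
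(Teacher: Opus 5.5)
Your proposal is correct and follows the paper's proof. It uses the same pointwise product-rule bound and the same expansion \(X_b(mF^2)=(X_bm)F^2+2mF\,X_bF\) integrated over \(\{y>\delta\}\) via the divergence-free coefficients, and passes to \(\delta\downarrow0\) by the same dominated-convergence and absolute-continuity argument. The one difference is that your \(z\)-mollification of \(b\) is unnecessary: the paper integrates \(u\cdot\nabla_z(mF^2)\) in \(z\), which gives zero by compact support since \(u\) is constant, and \(c\,\partial_y(mF^2)\) in \(y\) for fixed \(z\), which is the fundamental theorem of calculus since \(c\) is independent of \(y\), so the H\"older slope is never regularised.
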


\begin{proof}
  The velocity product rule and \(|m|\le A\) give
  \[
    |mF|^2+|\nabla_v(mF)|^2
    \le2A^2\bigl(|F|^2+|\nabla_vF|^2\bigr)
    +2|\nabla_vm|^2|F|^2,
  \]
  which proves \eqref{eq:bounded-positive-admissibility-product}.

  Fix \(\delta>0\). For fixed \(v=(u,w)\), put \(h=mF^2\). Compact support gives
  \[
    \int_{y>\delta}u\cdot\nabla_zh\dd z\dd y=0.
  \]
  Moreover, \(c(z,v)=w-u\cdot b(z)\) is independent of \(y\), and hence
  \[
    \int_{y>\delta}c(z,v)\partial_yh\dd z\dd y
    =-\int_{\R^{d-1}}c(z,v)h(z,\delta,v)\dd z.
  \]
  Since \(u\) is independent of \(z\) and \(c=w-u\cdot b(z)\) is independent of \(y\), the coefficient field of \(X_b\)
  is divergence-free in \((z,y)\). Consequently, the Green-identity argument never differentiates the merely H\"older field \(b\). Expanding
  \(X_b(mF^2)=(X_bm)F^2+2mF\,X_bF\), integrating in \(v\), and using \(\H^{-1}_\mu,\H^1_\mu\) duality proves \eqref{eq:bounded-positive-green}.
  The bulk terms are absolutely integrable by \eqref{eq:bounded-positive-admissibility-integral} and the estimate
  \[
    \int\left|
    \pair{X_bF}{mF}_{\H^{-1}_\mu,\H^1_\mu}
    \right|\dd z\dd y
    \le
    \norm{X_bF}_{\L^2_{z,y}\H^{-1}_\mu}
    \norm{mF}_{\L^2_{z,y}\H^1_\mu}<\infty
  \]
  above. Letting \(\delta\downarrow0\) and using the assumed convergence of \(B_\delta\) proves
  \eqref{eq:bounded-positive-admissibility-limit}.
\end{proof}

\subsection{Height-dependent slope mollification}

With \(\rho,\ell,b_y\) as above, define
\begin{equation}\label{eq:bounded-positive-adapted-variables}
  \bar c:=w-u\cdot b_y(z),\qquad
  e:=c-\bar c=u\cdot(b_y-b),\qquad
  r:=\frac{\bar c}{\rho(y)}.
\end{equation}
When \(X_b\) differentiates \(\bar c\), the \(y\)-dependence of \(b_y\) produces the term \(-c\,u\cdot\partial_yb_y\); its size is controlled by
the critical choice of \(\ell(y)\). The core point is the approximate Riccati evolution
\[
  X_br\approx-\beta\frac{\rho}{y}r^2.
\]
In a multiplier \(\rho^{p-1}G(r)\), this quadratic drift produces the critical factor \(\rho^p/y=\Lambda^py^{-2\beta}\). The error \(E\) below
measures the cost of replacing the true slope \(b\) by its height-dependent mollification \(b_y\).

\begin{lemma}[Height-dependent mollified normal velocity]
  \label{lem:bounded-positive-adapted}
  On \(|v|\le L\),
  \begin{align}
    |b_y-b|         & \le CH\Lambda^{-\alpha_p}y^\beta,
    \label{eq:bounded-positive-b-error}                      \\
    |\nabla_zb_y|   & \le CH\Lambda^{1-\alpha_p}y^{-p\beta},
    \label{eq:bounded-positive-b-gradient}                   \\
    |\partial_yb_y| & \le
    CH\Lambda^{-\alpha_p}y^{-(p+1)\beta}.
    \label{eq:bounded-positive-b-yderivative}
  \end{align}
  Moreover,
  \begin{equation}\label{eq:bounded-positive-r-evolution}
    X_br=-\beta\frac{\rho}{y}r^2+E,\qquad
    |E|\le C_*\Lambda^{-\alpha_p}(1+|r|)
    y^{-(p+1)\beta},
  \end{equation}
  where \(C_*=C_*(p,d,L,B,H)\), and
  \begin{equation}\label{eq:bounded-positive-r-vgradient}
    \nabla_vr=\frac{(-b_y,1)}{\rho(y)}.
  \end{equation}
\end{lemma}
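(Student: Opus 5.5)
The plan is a direct computation in three parts: first the mollification bounds, then the exact formula for \(X_br\), then the error bookkeeping.

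\emph{Mollification bounds.} Since \(\int\eta_\ell=1\),
\[
  b_y(z)-b(z)=\int\eta_{\ell(y)}(z')\bigl(b(z-z')-b(z)\bigr)\dd z',
\]
so \eqref{eq:bounded-positive-b-error} follows from \(|b_y-b|\le H\ell(y)^{\alpha_p}=H\Lambda^{-\alpha_p}y^{(1-\beta)\alpha_p}\) and the identity \((1-\beta)\alpha_p=\beta\) in \eqref{eq:bounded-positive-critical-identities}.

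For the \(z\)-gradient I would use \(\int\nabla\eta_\ell=0\) to subtract \(b(z)\). This gives
\[
  |\nabla_zb_y|\le CH\ell^{\alpha_p-1}=CH\Lambda^{1-\alpha_p}y^{-(1-\beta)(1-\alpha_p)}.
\]
From \((1-\beta)(1-\alpha_p)=1-\beta-\beta=1-2\beta=p\beta\), this is \eqref{eq:bounded-positive-b-gradient}.

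For the \(y\)-derivative I would differentiate the kernel in the scale:
\[
  \partial_y\eta_{\ell(y)}=\ell'(y)\,\partial_\ell\eta_\ell,
  \qquad
  \partial_\ell\eta_\ell=-\ell^{-1}\operatorname{div}(z\eta_\ell)-\dots
\]
This has zero integral and size \(\ell^{-1}\) times a normalised kernel. With \(|\ell'|\lesssim\Lambda^{-1}y^{-\beta}\), I get
\[
  |\partial_yb_y|\le C|\ell'|\ell^{-1}H\ell^{\alpha_p}=CHy^{-1}\Lambda^{-\alpha_p}y^\beta.
\]
Since \(1-\beta=(p+1)\beta\), this is \eqref{eq:bounded-positive-b-yderivative}.

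\emph{Evolution of \(r\).} The velocity gradient \eqref{eq:bounded-positive-r-vgradient} is immediate because \(b_y\) and \(\rho\) do not depend on \(v\). Next,
\[
  X_b\bar c=-u\cdot(u\cdot\nabla_z)b_y-c\,u\cdot\partial_yb_y
  \qquad\text{and}\qquad
  X_b\rho^{-1}=-c\beta\rho^{-1}y^{-1}.
\]
Writing \(c=\bar c+e=\rho r+e\), this yields
\[
  X_br=-\beta\frac{\rho}{y}r^2+E,
  \qquad
  E=-\frac{\beta e r}{y}-\frac{u\cdot(u\cdot\nabla_z)b_y}{\rho}-\frac{c\,u\cdot\partial_yb_y}{\rho}.
\]

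\emph{Error bookkeeping.} I bound the three terms of \(E\) using \(|u|\le L\) and \(|c|\le L(1+B)\).
\begin{itemize}
  \item The first term: \(|e|\le CLH\Lambda^{-\alpha_p}y^\beta\), so it is at most \(C\Lambda^{-\alpha_p}|r|y^{\beta-1}=C\Lambda^{-\alpha_p}|r|y^{-(p+1)\beta}\).
  \item The second term is at most \(CL^2H\Lambda^{1-\alpha_p}y^{-p\beta}\Lambda^{-1}y^{-\beta}=C\Lambda^{-\alpha_p}y^{-(p+1)\beta}\).
  \item The third term is at most \(C\Lambda^{-\alpha_p}y^{-(p+1)\beta}\Lambda^{-1}y^{-\beta}\).
\end{itemize}
The third term carries an extra \(y^{-\beta}\), which is a problem for small \(y\), so I would not bound \(c\) crudely there. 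Instead I write \(c=\rho r+e\). The \(\rho r\) part gives
\[
  |r|\,|u|\,|\partial_yb_y|\le C\Lambda^{-\alpha_p}|r|y^{-(p+1)\beta}.
\]
The \(e\) part gives
\[
  C\Lambda^{-\alpha_p}y^\beta\cdot\Lambda^{-\alpha_p}y^{-(p+1)\beta}\Lambda^{-1}y^{-\beta}\le C\Lambda^{-\alpha_p}y^{-(p+1)\beta},
\]
using \(\Lambda\ge1\). Both are of the required form, and together the three terms give \eqref{eq:bounded-positive-r-evolution}.

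The main obstacle is exactly this exponent accounting: the third term only closes after the substitution \(c=\rho r+e\), and the \(y\)-derivative of the scale-varying mollifier must be computed with the cancellation \(\int\partial_\ell\eta_\ell=0\).
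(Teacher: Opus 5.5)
Your proposal is correct and follows the paper's proof essentially step by step. It uses the same H\"older mollifier bounds combined with the identities \((1-\beta)\alpha_p=\beta\) and \(1-\beta=(p+1)\beta\), and the same three-term decomposition of \(E\). It also closes the \(\partial_yb_y\) term by the same substitution \(c=\rho r+e\), which is the paper's bound \(|c|/\rho\le|r|+C\Lambda^{-1-\alpha_p}\). The only difference is that you write out the zero-mean kernel computation for \(\partial_\ell\eta_\ell\), which the paper cites as a standard mollifier estimate. In fact \(\partial_\ell\eta_\ell=-\ell^{-1}\operatorname{div}(z\eta_\ell)\) holds exactly, so your trailing dots are unnecessary.
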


\begin{proof}
  The standard H\"older mollifier estimates give
  \[
    |b_y-b|\le CH\ell(y)^{\alpha_p},\qquad
    |\nabla_zb_y|+|\partial_\ell b_y|
    \le CH\ell(y)^{\alpha_p-1}.
  \]
  Since
  \[
    \ell'(y)=(1-\beta)\Lambda^{-1}y^{-\beta},
  \]
  the identities in \eqref{eq:bounded-positive-critical-identities} give
  \begin{align*}
    |b_y-b|
     & \le CH\Lambda^{-\alpha_p}y^{(1-\beta)\alpha_p}
    =CH\Lambda^{-\alpha_p}y^\beta,                    \\
    |\nabla_zb_y|
     & \le CH\Lambda^{1-\alpha_p}
    y^{(1-\beta)(\alpha_p-1)}
    =CH\Lambda^{1-\alpha_p}y^{-p\beta},               \\
    |\partial_yb_y|
     & \le CH\ell(y)^{\alpha_p-1}|\ell'(y)|
    \le CH\Lambda^{-\alpha_p}y^{-(p+1)\beta}.
  \end{align*}
  This proves \eqref{eq:bounded-positive-b-error}--\eqref{eq:bounded-positive-b-yderivative}. In particular, on \(|v|\le L\),
  \[
    |e|=|u\cdot(b_y-b)|
    \le CLH\Lambda^{-\alpha_p}y^\beta.
  \]

  We next compute the evolution of the height-dependent mollified normal velocity. Since \(u\) and \(w\) are independent of \((z,y)\),
  \[
    X_b\bar c
    =-u^{\mathsf T}(\nabla_zb_y)u
    -c\,u\cdot\partial_yb_y.
  \]
  Moreover, \(X_by=c\) and \(X_b\rho=\rho'(y)c=\beta\rho c/y\). Therefore
  \[
    X_br=\frac{X_b\bar c}{\rho}-\beta\frac{rc}{y}
    =-\beta\frac{\rho}{y}r^2+E,
  \]
  where, using \(c=\rho r+e\),
  \[
    E:=-\frac{u^{\mathsf T}(\nabla_zb_y)u}{\rho}
    -\frac{c\,u\cdot\partial_yb_y}{\rho}
    -\beta\frac{re}{y}.
  \]
  We estimate these three terms separately. First,
  \begin{align*}
    \frac{|u^{\mathsf T}(\nabla_zb_y)u|}{\rho}
     & \le CL^2H
    \frac{\Lambda^{1-\alpha_p}y^{-p\beta}}
    {\Lambda y^\beta}                           \le C\Lambda^{-\alpha_p}y^{-(p+1)\beta}.
  \end{align*}
  Next, \eqref{eq:bounded-positive-b-error} and \(\Lambda\ge1\) imply
  \[
    \frac{|c|}{\rho}
    \le |r|+\frac{|e|}{\rho}
    \le |r|+C\Lambda^{-1-\alpha_p}\le C(1+|r|).
  \]
  Hence
  \[
    \frac{|c\,u\cdot\partial_yb_y|}{\rho}
    \le C\Lambda^{-\alpha_p}(1+|r|)
    y^{-(p+1)\beta}.
  \]
  Finally, since \(\beta-1=-(p+1)\beta\),
  \[
    \frac{|re|}{y}
    \le C\Lambda^{-\alpha_p}|r|y^{\beta-1}
    =C\Lambda^{-\alpha_p}|r|y^{-(p+1)\beta}.
  \]
  Combining the three bounds proves \eqref{eq:bounded-positive-r-evolution}. Since \(\rho\) is independent of \(v\) and \(\bar c=w-u\cdot b_y\),
  direct differentiation gives \eqref{eq:bounded-positive-r-vgradient}.
\end{proof}

The following identity isolates the one-dimensional profile operator
\[
  \mathfrak L_pG(r):=(p-1)rG(r)-r^2G'(r),
\]
which governs the Hardy multiplier.

\begin{lemma}[Critical multiplier identity]
  \label{lem:bounded-positive-multiplier}
  For a smooth scalar function \(G\), on \(y>0\) we have
  \begin{align}
    X_b(\rho^{p-1}G(r))
     & =\beta\frac{\rho^p}{y}
    \bigl((p-1)rG(r)-r^2G'(r)\bigr)
    \notag                    \\
     & \quad+\rho^{p-1}G'(r)E
    +\beta(p-1)\frac{\rho^{p-1}e}{y}G(r).
    \label{eq:bounded-positive-multiplier}
  \end{align}
\end{lemma}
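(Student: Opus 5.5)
This identity is a direct chain-rule computation built on Lemma~\ref{lem:bounded-positive-adapted}. We expand the left-hand side with the Leibniz rule for the first-order operator \(X_b\):
\[
  X_b(\rho^{p-1}G(r))=(p-1)\rho^{p-2}(X_b\rho)\,G(r)+\rho^{p-1}G'(r)\,X_br .
\]
Since \(\rho=\rho(y)\) depends only on \(y\) and \(X_by=c\), we have \(X_b\rho=\rho'(y)c=\beta\rho c/y\). This gives
\[
  (p-1)\rho^{p-2}(X_b\rho)G(r)=\beta(p-1)\frac{\rho^{p-1}c}{y}G(r).
\]

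Next we insert the decomposition \(c=\bar c+e=\rho r+e\) from \eqref{eq:bounded-positive-adapted-variables}. This splits the first term into two pieces:
\[
  \beta(p-1)\frac{\rho^p}{y}\,rG(r)
  \qquad\text{and}\qquad
  \beta(p-1)\frac{\rho^{p-1}e}{y}G(r).
\]
The second piece is already the last term of \eqref{eq:bounded-positive-multiplier}. For the second term of the Leibniz expansion we use the evolution identity \eqref{eq:bounded-positive-r-evolution}, which is taken here as the definition of \(E\) (only the identity matters, not the bound on \(E\)). Substituting \(X_br=-\beta\rho r^2/y+E\) gives
\[
  \rho^{p-1}G'(r)X_br=-\beta\frac{\rho^p}{y}r^2G'(r)+\rho^{p-1}G'(r)E .
\]

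Finally we collect the two \(\beta\rho^p/y\) contributions into \(\beta\frac{\rho^p}{y}\bigl((p-1)rG(r)-r^2G'(r)\bigr)\), which yields \eqref{eq:bounded-positive-multiplier}. No step is a genuine obstacle; this is pure algebra. The only points needing care are regularity and consistency. Because \(b_y\) is a tangential mollification at the scale \(\ell(y)>0\), \(\bar c\) and hence \(r\) are \(\C^1\) in \((z,y)\) on \(y>0\), so the chain rule applies pointwise, and \(\rho^{p-1}\) is smooth for \(y>0\) even when \(p<2\). We must also use the same \(E\) as in Lemma~\ref{lem:bounded-positive-adapted}, namely \(E=X_br+\beta\rho r^2/y\), so that the subsequent error bounds carry over. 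When \(p=1\) the \(G\)-terms vanish identically, consistent with the stated formula.
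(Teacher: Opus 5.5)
Your proposal is correct and follows the paper's own proof essentially verbatim. Both expand $X_b(\rho^{p-1}G(r))$ by the chain rule, use $X_b\rho=\beta\rho c/y$ and the splitting $c=\rho r+e$ in the first term, and substitute the evolution identity \eqref{eq:bounded-positive-r-evolution} for $X_br$ in the second term.
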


\begin{proof}
  The chain rule gives
  \[
    X_b(\rho^{p-1}G(r))
    =(p-1)\rho^{p-2}(X_b\rho)G(r)
    +\rho^{p-1}G'(r)X_br.
  \]
  Since \(X_b\rho=\beta\rho c/y\) and \(c=\rho r+e\), the first term is
  \[
    \beta(p-1)\frac{\rho^p}{y}rG(r)
    +\beta(p-1)\frac{\rho^{p-1}e}{y}G(r).
  \]
  Substitution of \eqref{eq:bounded-positive-r-evolution} in the second term gives
  \[
    -\beta\frac{\rho^p}{y}r^2G'(r)
    +\rho^{p-1}G'(r)E.
  \]
  Adding these two displays proves \eqref{eq:bounded-positive-multiplier}.
\end{proof}

\subsection{Critical Hardy estimate from a single multiplier profile}

\begin{lemma}[Single coercive multiplier profile]
  \label{lem:bounded-positive-single-profile}
  There is an odd \(G\in\C^\infty(\R)\) such that
  \begin{equation}\label{eq:bounded-positive-single-profile-bounds}
    G(r)=\frac1r\quad (|r|\ge2),
    \qquad
    |G(r)|+(1+|r|)|G'(r)|
    +|\mathfrak L_pG(r)|\le C.
  \end{equation}
  Moreover, there are \(c,C>0\), depending only on \(p,L,B\), such that for every \(|m|\le LB\), every \(0<\varrho\le1\), and every
  \(h\in\C_c^1(\R)\),
  \begin{equation}\label{eq:bounded-positive-single-profile-coercivity}
    \int_{\R}\mathfrak L_pG(r)|h(r)|^2
    \varpi_{m,\varrho}(r)\dd r
    \ge
    c\int_{\R}|h(r)|^2\varpi_{m,\varrho}(r)\dd r
    -C\int_{|r|<2}|h'(r)|^2
    \varpi_{m,\varrho}(r)\dd r,
  \end{equation}
  where
  \[
    \varpi_{m,\varrho}(r):=
    \begin{cases}
      1,
       & \mu=\mathcal L^d, \\[1mm]
      (2\pi)^{-1/2}
      \exp\!\left(-\dfrac{|m+\varrho r|^2}{2}\right),
       & \mu=\gamma_d.
    \end{cases}
  \]
  The same profile \(G\) works for both velocity measures.
\end{lemma}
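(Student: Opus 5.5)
The plan is to copy the construction of Lemma~\ref{lem:flat-coercive-sign}: an odd profile that is a sharpened sign near the origin, multiplied by a decreasing envelope equal to \(1/|r|\) for \(|r|\ge2\). Take \(\sigma\) as in the proof of Lemma~\ref{lem:compact-odd-hardy-profile}. Choose a smooth nonincreasing function \(k\colon[0,\infty)\to(0,\infty)\) with the following properties:
\[
  k\equiv1 \text{ near } 0,\qquad k(s)=1/s \text{ for } s\ge2,\qquad k'(s)\le-c_k<0 \text{ for } \tfrac54<s<\tfrac32 .
\]
Set \(G(r):=\sigma(r/\varepsilon)k(|r|)\) for a small \(\varepsilon\in(0,1/8)\) fixed below. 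Then \(G\) is odd, smooth (since \(k\) is constant near \(0\)), and equals \(1/r\) for \(|r|\ge2\). For fixed \(\varepsilon\), the bounds \eqref{eq:bounded-positive-single-profile-bounds} follow from compact support of the transition together with \(G'=-r^{-2}\) for \(|r|\ge2\). For the same reason \((p-1)rG-r^2G'=(p-1)+1=p\) there, so \(\mathfrak L_pG\) is bounded.

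The next step is the pointwise lower bound for \(\mathfrak L_pG\). For \(|r|\ge2\varepsilon\) we have \(\sigma(r/\varepsilon)=\sgn r\), so
\[
  \mathfrak L_pG=(p-1)|r|k(|r|)-r^2k'(|r|)\ge0 .
\]
This is at least \(c_0:=\min\{c_k(5/4)^2,p\}\) on \(K\cup\{|r|\ge2\}\), with \(K\) as in Lemma~\ref{lem:compact-odd-hardy-profile}. On \(|r|<2\varepsilon\), the term \((p-1)rG\) is nonnegative because \(rG\ge0\), and \(-r^2G'\) contributes:
\begin{itemize}
  \item a nonpositive piece \(-\varepsilon^{-1}r^2\sigma'(r/\varepsilon)k\ge-C\varepsilon\);
  \item a nonnegative piece \(-r^2\sigma k'\operatorname{sgn}r\ge0\).
\end{itemize}
Hence
\[
  \mathfrak L_pG\ge c_0\one_{K\cup\{|r|\ge2\}}-C_0\varepsilon\one_{\{|r|<2\varepsilon\}},
\]
with \(c_0,C_0\) independent of \(\varepsilon\).

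Finally, the weighted coercivity \eqref{eq:bounded-positive-single-profile-coercivity} would follow as in Lemma~\ref{lem:flat-coercive-sign}. For \(|m|\le LB\), \(0<\varrho\le1\) and \(|r|\le2\), we have \(|m+\varrho r|\le LB+2\). Hence \(\varpi_{m,\varrho}\) is comparable to a constant on \((-2,2)\), with ratio bounded in terms of \(L,B\) only; this is the only place where the dependence on \(L,B\) enters. Write
\[
  \mathsf L_{\rm in}=\int_{-2}^2|h|^2\varpi,\qquad
  \mathsf L_{\rm out}=\int_{|r|\ge2}|h|^2\varpi,\qquad
  \mathsf R=\int_K|h|^2\varpi,\qquad
  \mathsf D=\int_{-2}^2|h'|^2\varpi .
\]
The anchored Poincar\'e estimate \eqref{eq:flat-anchor}, transferred through this comparability, gives \(\mathsf L_{\rm in}\le C_{\rm an}(\mathsf R+\mathsf D)\). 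The pointwise bound gives
\[
  \int\mathfrak L_pG|h|^2\varpi\ge c_0(\mathsf R+\mathsf L_{\rm out})-C_0\varepsilon\mathsf L_{\rm in}.
\]
Choosing \(\varepsilon\) so that \(C_0\varepsilon\le c_0/(2C_{\rm an})\) and absorbing yields the claim, with \(c\sim c_0/C_{\rm an}\) and \(C\) on the \(\mathsf D\) term. Since \(\varepsilon\) depends only on \(p,L,B\) and on the fixed profiles, the same \(G\) serves both measures: the Lebesgue case is \(\varpi\equiv1\).

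The main point to check is that the defect constant \(C_0\) and the anchored constant \(C_{\rm an}\) are indeed independent of \(\varepsilon\) and of \((m,\varrho)\), so that the absorption works. The \(\varepsilon\)-dependence of \(\sup|G'|\) is harmless, because the bounds in \eqref{eq:bounded-positive-single-profile-bounds} are only needed once \(\varepsilon\) has been fixed.
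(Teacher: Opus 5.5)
Your proof is correct. It reaches the same pointwise structure as the paper, namely coercivity on \(K\cup\{|r|\ge2\}\) and an \(O(\varepsilon)\) defect on \(\{|r|<2\varepsilon\}\), but by a different construction of \(G\).

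The paper uses an additive profile. It takes an arbitrary odd \(R\) with \(R=1/r\) for \(|r|\ge2\). For this \(R\), \(\mathfrak L_pR=p\) on \(\{|r|\ge2\}\), but \(\mathfrak L_pR\) may be as negative as \(-C_R\) throughout \(\{|r|<2\}\). The paper then sets \(G=R+A_*Q_\varepsilon\), with \(Q_\varepsilon\) from Lemma~\ref{lem:compact-odd-hardy-profile}. This needs two absorptions: first \(\varepsilon\) small against \(C_{\rm an}C_0\), then \(A_*\) large so that \(A_*c_0/(2C_{\rm an})\) beats \(C_R\).

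Your multiplicative profile \(\sigma(r/\varepsilon)k(|r|)\) is the direct analogue of Lemma~\ref{lem:flat-coercive-sign}, with the constant envelope replaced by one decaying like \(1/|r|\). Monotonicity of \(\sigma\) and \(k\) gives \(\mathfrak L_pG\ge0\) everywhere outside \(\{|r|<2\varepsilon\}\). So the only negative contribution is the central \(O(\varepsilon)\) defect, and a single absorption suffices. There is no large parameter, \(|G|\le1\), and the constant in front of the derivative term is just \(c_0\) rather than \(A_*c_0\).

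The price is that you must actually exhibit the envelope \(k\): smooth, nonincreasing, \(\equiv1\) near \(0\), equal to \(1/s\) for \(s\ge2\), with \(k'\le-c_k\) on \((5/4,3/2)\). One choice is \(k=1/M\), where \(M\) is a smoothed \(\max\{1,s\}\): take \(M'=\psi\), with \(\psi\) smooth and nondecreasing, equal to \(0\) near \(0\) and to \(1\) on \([2,\infty)\), positive on \([5/4,3/2]\), and satisfying \(\int_0^2\psi=1\). Then \(M(0)=1\) gives \(M\equiv1\) near \(0\) and \(M(s)=s\) for \(s\ge2\).

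The paper's additive splitting avoids this design step. It reuses \(Q_\varepsilon\) as a black box and leaves the \(1/r\) tail completely unconstrained on \((-2,2)\). In both constructions \(G\) depends on \(L,B\) through \(\varepsilon\), since \(C_{\rm an}=C_{\rm an}(L,B)\), so there is no difference there.
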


\begin{proof}
  Choose an odd \(R\in\C^\infty(\R)\) such that
  \[
    R(r)=\frac1r\quad(|r|\ge2),
    \qquad
    |R(r)|+(1+|r|)|R'(r)|\le C.
  \]
  Then
  \[
    \mathfrak L_pR(r)=p\quad(|r|\ge2),
    \qquad
    \mathfrak L_pR(r)
    \ge p\one_{\{|r|\ge2\}}(r)
    -C_R\one_{\{|r|<2\}}(r)
  \]
  for some \(C_R<\infty\).

  Let \(Q_\varepsilon\) be supplied by Lemma~\ref{lem:compact-odd-hardy-profile}. Since \(p\ge1\) and \(rQ_\varepsilon(r)\ge0\),
  \begin{equation}\label{eq:bounded-positive-single-Q-coercivity}
    \mathfrak L_pQ_\varepsilon(r)
    \ge c_0\one_K(r)
    -C_0\varepsilon\one_{\{|r|<2\varepsilon\}}(r).
  \end{equation}

  We use a weighted form of the anchored inequality. On \(|r|<2\), the Gaussian weight \(\varpi_{m,\varrho}\) is bounded above and below
  uniformly for \(|m|\le LB\) and \(0<\varrho\le1\). Therefore \eqref{eq:flat-anchor} gives, for both velocity measures,
  \begin{equation}\label{eq:bounded-positive-weighted-anchor}
    \int_{|r|<2}|h|^2\varpi_{m,\varrho}\dd r
    \le C_{\rm an}\left(
    \int_K|h|^2\varpi_{m,\varrho}\dd r
    +\int_{|r|<2}|h'|^2\varpi_{m,\varrho}\dd r
    \right),
  \end{equation}
  where \(C_{\rm an}=C_{\rm an}(L,B)\).

  Choose
  \[
    0<\varepsilon<
    \min\left\{\frac18,\frac{c_0}{2C_{\rm an}C_0}\right\},
  \]
  and then choose \(A_*>0\) so large that
  \[
    \frac{A_*c_0}{2C_{\rm an}}-C_R\ge1.
  \]
  Set \(G:=R+A_*Q_\varepsilon\). The bounds in \eqref{eq:bounded-positive-single-profile-bounds} follow from the construction. To prove
  coercivity, write
  \begin{align*}
    I_{\rm in}
     & :=\int_{|r|<2}|h|^2\varpi_{m,\varrho}\dd r,
     & I_{\rm out}
     & :=\int_{|r|\ge2}|h|^2\varpi_{m,\varrho}\dd r, \\
    I_K
     & :=\int_K|h|^2\varpi_{m,\varrho}\dd r,
     & D
     & :=\int_{|r|<2}|h'|^2\varpi_{m,\varrho}\dd r.
  \end{align*}
  Inequality \eqref{eq:bounded-positive-weighted-anchor} implies \(I_K\ge C_{\rm an}^{-1}I_{\rm in}-D\). Consequently,
  \begin{align*}
    \int_{\R}\mathfrak L_pG\,|h|^2\varpi_{m,\varrho}\dd r
     & \ge pI_{\rm out}-C_RI_{\rm in}
    +A_*c_0I_K-A_*C_0\varepsilon I_{\rm in} \\
     & \ge pI_{\rm out}
    +\left(
    \frac{A_*c_0}{C_{\rm an}}
    -C_R-A_*C_0\varepsilon
    \right)I_{\rm in}
    -A_*c_0D                                \\
     & \ge pI_{\rm out}+I_{\rm in}-A_*c_0D.
  \end{align*}
  This proves \eqref{eq:bounded-positive-single-profile-coercivity}. The argument includes \(p=1\): then the term \((p-1)rQ_\varepsilon\)
  vanishes, while \(\mathfrak L_1R=1\) on \(\{|r|\ge2\}\).
\end{proof}

\paragraph{Parameter hierarchy.}
The critical-regularity proof \((\alpha=\alpha_p)\) fixes its choices in the following order:
\begin{enumerate}[leftmargin=*]
  \item choose the coercive profile \(G\) from Lemma~\ref{lem:bounded-positive-single-profile};
  \item choose \(\Lambda\) sufficiently large to absorb the mollified-slope errors in the Hardy estimate;
  \item choose \(h_0>0\) sufficiently small that \(\rho(2h_0)\le1\);
  \item independently choose the dyadic parameter \(\kappa>0\) sufficiently small to obtain \eqref{eq:bounded-positive-dyadic-error};
  \item after defining the dyadic scales, select \(J_0\) so that \(a_j\le1\) and \(2\tau_j\le h_0\) for every \(j\ge J_0\).
\end{enumerate}
Thus \(\kappa\) is independent of the choices of \(\Lambda\) and \(h_0\), while \(J_0\) is selected only after all the preceding parameters
have been fixed.

Choose \(\Lambda\ge1\) sufficiently large as specified in the proof below, and then choose \(h_0=h_0(\Lambda)>0\) so small that
\begin{equation}\label{eq:bounded-positive-h0}
  \rho(2h_0)\le1.
\end{equation}
Let \(\chi_0\in\C_c^\infty(-h_0,2h_0)\) equal one on \([0,h_0]\). For \(F\) as in Proposition~\ref{prop:bounded-positive-local}, put
\[
  M:=\norm{F}_{\L^2_{z,y}\H^1_\mu},
  \qquad
  N:=\norm{X_bF}_{\L^2_{z,y}\H^{-1}_\mu},
\]
and define
\begin{equation}\label{eq:bounded-positive-H}
  \mathcal H(F)
  :=\int_0^{h_0}\int_{\R^{d-1}}\int_{\R^d}
  y^{-2\beta}|F|^2\dd\mu(v)\dd z\dd y.
\end{equation}
This quantity is finite for the smooth test class because \(2\beta<1\).

\begin{proposition}[Critical Hardy estimate]
  \label{prop:bounded-positive-global-hardy}
  After choosing the profile above, one may choose \(\Lambda\) sufficiently large and then \(h_0\) satisfying \eqref{eq:bounded-positive-h0} so
  that
  \begin{equation}\label{eq:bounded-positive-global-hardy}
    \mathcal H(F)\le C(M^2+N^2).
  \end{equation}
\end{proposition}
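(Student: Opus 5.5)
We mimic the half-space argument of Theorem~\ref{thm:flat}, using the multiplier
\[
  m:=\chi_0(y)\rho(y)^{p-1}G(r),\qquad r=\bar c/\rho(y),
\]
where \(G\) is the odd profile of Lemma~\ref{lem:bounded-positive-single-profile}. The plan is to apply Lemma~\ref{lem:bounded-positive-green} to \(m\) and show three things: the boundary term vanishes in the limit \(\delta\downarrow0\), the transport term \(\int (X_bm)|F|^2\) dominates \(\Lambda^p\mathcal H(F)\) up to errors, and the duality term is controlled by \(N\) times a square root of \(M^2+\mathcal H\).

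\emph{Boundary term and admissibility.} We have \(|G(r)|\lesssim\min\{1,1/|r|\}\), so \(|\rho^{p-1}G(r)|\lesssim\rho^{p-1}\). Hence \(m\to0\) as \(y\to0\) when \(p>1\). When \(p=1\), \(|cG|\le|c|\) and \(c\) is bounded on \(\supp_v F\), so the integrand \(c\,m|F|^2\) at \(y=\delta\) stays bounded. In that case we need a sharper argument. Since \(G\) is odd with \(G=1/r\) for large \(|r|\), we get \(cG(r)\to \rho\cdot c/\bar c\), which tends to \(0\) as \(\rho(\delta)\to0\) wherever \(c\ne0\). Dominated convergence then gives \(B_0=0\) for all \(p\ge1\). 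Admissibility follows from Lemma~\ref{lem:bounded-positive-adapted}:
\[
  |\nabla_vm|\lesssim\rho^{p-2}|G'(r)|\le\Lambda^{p-2}y^{(p-2)\beta}.
\]
Since \(\rho\le1\) on \(\supp\chi_0\) by \eqref{eq:bounded-positive-h0} and \(G'\) is bounded, \(|\nabla_vm|^2\lesssim\rho^{-2}=\Lambda^{-2}y^{-2\beta}\). Likewise \(|X_bm|\lesssim y^{-2\beta}\) after accounting for the errors below. Thus \eqref{eq:bounded-positive-admissibility-product} gives \(\|mF\|^2\lesssim M^2+\Lambda^{-2}\mathcal H\), and the integrability \eqref{eq:bounded-positive-admissibility-integral} holds because \(\mathcal H<\infty\).

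\emph{Coercivity.} By Lemma~\ref{lem:bounded-positive-multiplier},
\[
  X_bm=\chi_0\beta\frac{\rho^p}{y}\mathfrak L_pG(r)+\chi_0\rho^{p-1}G'E+\chi_0\beta(p-1)\frac{\rho^{p-1}e}{y}G+\chi_0'c\,\rho^{p-1}G .
\]
The last term is bounded and supported in \(y\ge h_0\), so its contribution is \(\lesssim M^2\). We use \((1+|r|)|G'|\le C\) together with \eqref{eq:bounded-positive-r-evolution} and \(|e|\lesssim\Lambda^{-\alpha_p}y^\beta\). With \(\rho^{p-1}\le 1\) these give error terms bounded by \(C_*\Lambda^{-\alpha_p}\rho^{p-1}y^{-(p+1)\beta}\). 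Since \(\rho^{p-1}y^{-(p+1)\beta}=\Lambda^{p-1}y^{-2\beta}\), the errors are of order \(\Lambda^{p-1-\alpha_p}y^{-2\beta}\), while the main term is of order \(\Lambda^py^{-2\beta}\). So the errors are smaller by a factor \(\Lambda^{-1-\alpha_p}\).

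For the main term, fix \((z,y,u)\) and change variables \(w=u\cdot b_y+\rho r\). The Gaussian weight becomes \(\varpi_{m,\varrho}\) with \(m=u\cdot b_y\) and \(\varrho=\rho\le1\), and \(h(r)=F\) along the slice. Applying \eqref{eq:bounded-positive-single-profile-coercivity} and changing back, the derivative term becomes \(\rho^2|\partial_wF|^2\). Integrating then gives
\[
  \int X_bm\,|F|^2\ge c\Lambda^p\mathcal H-C\Lambda^p\Lambda^{2}\!\int y^{-2\beta}y^{2\beta}|\nabla_vF|^2-CM^2\gtrsim\Lambda^p\mathcal H-C_\Lambda M^2 .
\]
Combining this with Green's identity, \(0=\int X_bm|F|^2+2\int\langle X_bF,mF\rangle\), we obtain
\[
  \Lambda^p\mathcal H\lesssim C_\Lambda M^2+N(M+\Lambda^{-1}\mathcal H^{1/2})+\Lambda^{p-1-\alpha_p}\mathcal H .
\]
Taking \(\Lambda\) large absorbs the last term, and Young's inequality absorbs the \(\mathcal H^{1/2}\) term, which yields \eqref{eq:bounded-positive-global-hardy}.

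\emph{Main obstacle.} The delicate step is the bookkeeping of powers of \(\Lambda\). The error from \(E\) must genuinely carry the factor \(\Lambda^{-\alpha_p}\) relative to the main term, uniformly in \(r\), including the region \(|r|\ge2\) where \(G'=-1/r^2\). The velocity-derivative cost must also be only polynomial in \(\Lambda\), so that after fixing \(\Lambda\) it becomes part of \(M^2\). Finally, the boundary limit at \(p=1\) must be justified carefully on the grazing set \(c=0\).
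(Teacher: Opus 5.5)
Your proposal is correct and follows essentially the same route as the paper. You use the same multiplier $\chi_0\rho^{p-1}G(r)$, the same sliced application of the profile coercivity producing $c\Lambda^p\mathcal H(F)-C\Lambda^{p+2}M^2$, the same $\Lambda^{p-1-\alpha_p}y^{-2\beta}$ bound for the mollified-slope errors, and the same vanishing boundary flux followed by absorption, first in $\Lambda$ and then via Young's inequality. The only cosmetic difference is that you split the boundary limit into the case $p>1$, where $\rho^{p-1}$ decays uniformly, and the case $p=1$; the paper treats all $p$ at once via $c\,q=\chi_0\rho^p c/\bar c\to0$ whenever $c\ne0$.
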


We call the estimate critical because, at \(\alpha=\alpha_p\), the geometric and analytic scales coincide: \(\ell(y)^{\alpha_p}\asymp\rho(y)\)
and \(\rho(y)^p/y\asymp\rho(y)^{-2}\asymp y^{-2\beta}\).

\begin{proof}
  Let \(G\) be supplied by Lemma~\ref{lem:bounded-positive-single-profile}, and define
  \begin{equation}\label{eq:bounded-positive-single-multiplier}
    q(z,y,v):=\chi_0(y)\rho(y)^{p-1}G(r).
  \end{equation}

  We first establish the product bound. On \(0<y<2h_0\), condition \eqref{eq:bounded-positive-h0} gives \(\rho(y)\le1\), and
  \(\|b_y\|_\infty\le B\). Hence
  \[
    |q|\le C,\qquad
    \nabla_vq
    =\chi_0\rho^{p-2}G'(r)(-b_y,1).
  \]
  Since \(p\ge1\),
  \[
    |\nabla_vq|^2
    \le C\rho^{2p-4}
    \le C\rho^{-2}
    =C\Lambda^{-2}y^{-2\beta}.
  \]
  The velocity product rule therefore gives
  \begin{equation}\label{eq:bounded-positive-single-product}
    \norm{qF}_{\L^2_{z,y}\H^1_\mu}^2
    \le C_{\Lambda,h_0} M^2+C\Lambda^{-2}\mathcal H(F).
  \end{equation}
  Here the transition strip \(h_0<y<2h_0\) is harmless because \(y^{-2\beta}\le h_0^{-2\beta}\); its contribution can therefore be absorbed
  into \(C_{\Lambda,h_0}M^2\).

  On \(0<y<h_0\), where \(\chi_0=1\), Lemma~\ref{lem:bounded-positive-multiplier} gives
  \begin{equation}\label{eq:bounded-positive-single-transport}
    X_bq
    =\beta\frac{\rho^p}{y}\mathfrak L_pG(r)
    +\rho^{p-1}G'(r)E
    +\beta(p-1)\frac{\rho^{p-1}e}{y}G(r).
  \end{equation}
  Fix \((z,y,u)\). If the corresponding velocity slice of \(F\) is nonzero, then \(|u|\le L\). Put
  \[
    m:=u\cdot b_y(z),\qquad
    h(r):=F\bigl(z,y,u,m+\rho(y)r\bigr).
  \]
  In the Lebesgue case, apply \eqref{eq:bounded-positive-single-profile-coercivity} with weight one. In the Gaussian case, use
  \(\gamma_d=\gamma_{d-1}\otimes\gamma_1\) and the weight \(\varpi_{m,\rho(y)}\). Since
  \[
    h'(r)=\rho(y)\,
    \partial_wF\bigl(z,y,u,m+\rho(y)r\bigr),
  \]
  changing back to \(w\) gives, with \(\mu_1=\mathcal L^1\) in the Lebesgue case and \(\mu_1=\gamma_1\) in the Gaussian case,
  \begin{align}
    \int_{\R}\mathfrak L_pG(r)|F(z,y,u,w)|^2\dd\mu_1(w)
    \ge{} &
    c\int_{\R}|F(z,y,u,w)|^2\dd\mu_1(w)
    \notag              \\
          & -C\rho(y)^2
    \int_{\R}|\partial_wF(z,y,u,w)|^2\dd\mu_1(w).
    \label{eq:bounded-positive-single-sliced}
  \end{align}
  Here \(r=(w-m)/\rho(y)\) in the preceding display. Integrating in \((z,y,u)\) and using
  \[
    \frac{\rho^p}{y}=\Lambda^py^{-2\beta},
    \qquad
    \frac{\rho^{p+2}}{y}=\Lambda^{p+2},
  \]
  yields
  \begin{equation}\label{eq:bounded-positive-single-principal}
    \beta\int_{0<y<h_0}
    \frac{\rho^p}{y}\mathfrak L_pG(r)|F|^2
    \dd z\dd y\dd\mu
    \ge c\Lambda^p\mathcal H(F)-C\Lambda^{p+2}M^2.
  \end{equation}

  The two mollified-slope errors are controlled by \eqref{eq:bounded-positive-r-evolution}, \eqref{eq:bounded-positive-b-error}, and
  \eqref{eq:bounded-positive-single-profile-bounds}:
  \begin{align*}
    \rho^{p-1}|G'(r)E|
     & \le C\Lambda^{p-1-\alpha_p}y^{-2\beta}, \\
    \frac{\rho^{p-1}|e|}{y}|G(r)|
     & \le C\Lambda^{p-1-\alpha_p}y^{-2\beta}.
  \end{align*}
  On \(h_0<y<2h_0\), all coefficients in
  \[
    X_bq
    =\chi_0X_b\!\left(\rho^{p-1}G(r)\right)
    +c\chi_0'\rho^{p-1}G(r)
  \]
  are bounded on \(\supp F\). Combining these observations with \eqref{eq:bounded-positive-single-principal} gives
  \begin{equation}\label{eq:bounded-positive-single-lower}
    \int(X_bq)|F|^2
    \ge
    \left(c\Lambda^p-C\Lambda^{p-1-\alpha_p}\right)
    \mathcal H(F)-C_{\Lambda,h_0}M^2.
  \end{equation}
  The same estimates without signs show that
  \[
    \int|X_bq|\,|F|^2
    \le C_{\Lambda,h_0}\bigl(M^2+\mathcal H(F)\bigr).
  \]

  It remains to verify the boundary-flux limit. The multiplier \(q\) is uniformly bounded on \(\supp F\). For fixed \((z,v)\),
  \eqref{eq:bounded-positive-b-error} gives \(\bar c(z,\delta,v)\to c(z,v)\). If \(c(z,v)\ne0\), then \(|r|\to\infty\), so eventually
  \(G(r)=1/r\), and
  \[
    c(z,v)q(z,\delta,v)
    =\chi_0(\delta)
    \frac{c(z,v)}{\bar c(z,\delta,v)}\rho(\delta)^p
    \longrightarrow0.
  \]
  If \(c(z,v)=0\), the product vanishes for every \(\delta\). Bounded velocity support and compact support of \(F\) give an integrable
  dominator, so dominated convergence proves
  \[
    -\int c(z,v)q(z,\delta,v)|F(z,\delta,v)|^2
    \dd z\dd\mu(v)\longrightarrow0.
  \]
  Thus Lemma~\ref{lem:bounded-positive-admissible-multiplier} applies with \(B_0=0\).

  The lower bound on \(\Lambda\) announced above is fixed by requiring \(C\Lambda^{p-1-\alpha_p}\le(c/2)\Lambda^p\); after this choice, \(h_0\)
  is selected as in \eqref{eq:bounded-positive-h0}. The limiting Green identity and \eqref{eq:bounded-positive-single-product} give
  \[
    \frac c2\Lambda^p\mathcal H(F)
    \le C_{\Lambda,h_0}M^2
    +C_{\Lambda,h_0}NM
    +C\Lambda^{-1}N\mathcal H(F)^{1/2}.
  \]
  Young's inequality absorbs the last term and proves \eqref{eq:bounded-positive-global-hardy}. This includes \(p=1\), without a limiting
  argument in \(p\).
\end{proof}

\subsection{Boundary-weight recovery by dyadic regularisation}

Boundary-weight recovery instead uses a dyadic regularisation of the boundary slope. In contrast to the continuous family \(b_y\) in the Hardy
argument, each \(b_j\) below is independent of \(y\), and the cut-off \(\chi(y/\tau_j)\) assigns it to the corresponding height layer.

The formal multiplier \(-\sgn(c)|c|^{p-1}\) is discontinuous at \(c=0\) for \(p=1\) and has a singular velocity derivative there for \(1<p<2\);
moreover, it involves the merely H\"older slope \(b\). To treat every \(p\ge1\) uniformly, we regularise both the grazing singularity and the
slope dyadically. Set
\[
  a_0:=2\max\{1,L(1+B)\},\qquad a_j:=2^{-j}a_0.
\]
Choose \(0<\kappa\le1\) sufficiently small that \(CLH\kappa^{\alpha_p}\le1/16\), with \(C\) the constant in the H\"older mollifier estimate,
and define
\begin{equation}\label{eq:bounded-positive-dyadic-scales}
  \ell_j:=\kappa a_j^{p+1},\qquad
  \tau_j:=\kappa a_j^{p+2},\qquad
  b_j:=\eta_{\ell_j}*b,\qquad
  \bar c_j:=w-u\cdot b_j(z).
\end{equation}
At \(\alpha_p=1/(p+1)\), for \(|v|\le L\),
\[
  |\bar c_j-c|
  \le CL[b]_{\C^{0,\alpha_p}}\kappa^{\alpha_p}a_j.
\]
The choice of \(\kappa\) ensures that
\begin{equation}\label{eq:bounded-positive-dyadic-error}
  |\bar c_j-c|\le\frac{a_j}{16}
  \quad (j\ge0,\ |v|\le L).
\end{equation}
The exponents in \(\ell_j\) and \(\tau_j\) express the two balances used below:
\[
  \ell_j^{\alpha_p}
  =\kappa^{\alpha_p}a_j,
  \qquad
  \tau_j=\ell_ja_j=\kappa a_j^{p+2}.
\]
Thus \(b_j\) approximates \(b\) at the scale required to resolve flattened normal velocities of size \(a_j\), while the transport derivatives
cost \(a_j^{-2}\). On the support of the height cut-off, \(y\lesssim\tau_j\), and hence \(a_j^{-2}\lesssim y^{-2\beta}\), exactly the weight
controlled by the Hardy estimate for the sufficiently small scales whose layers lie below \(h_0\); the finitely many larger scales are absorbed
into \(M^2\).

Choose an odd \(\theta\in\C_c^\infty(\R)\) that is supported in \(\{1/4<|s|<4\}\), satisfies \(s\theta(s)\ge0\), and equals \(\sgn s\) on
\(\{3/4<|s|<9/4\}\). Let \(\chi\in\C_c^\infty(-1,2)\) equal one on \([0,1]\), and, for \(y>0\), define
\begin{equation}\label{eq:bounded-positive-dyadic-multiplier}
  m(z,y,v):=-\sum_{j\ge0}a_j^{p-1}
  \theta(\bar c_j/a_j)\chi(y/\tau_j).
\end{equation}
Write
\[
  m_j:=-a_j^{p-1}\theta(\bar c_j/a_j)\chi(y/\tau_j),
  \qquad m=\sum_{j\ge0}m_j.
\]

\begin{lemma}[Dyadic boundary multiplier]
  \label{lem:bounded-positive-dyadic-local-finiteness}
  \label{lem:bounded-positive-active-scales}
  \label{lem:bounded-positive-dyadic-derivatives}
  The series defining \(m\) is locally finite on \(\{y>0\}\). Consequently \(m\in\C^1(\{y>0\})\), its first derivatives are obtained term by
  term, and on every truncated half-space \(\{y\ge\delta\}\) the multiplier and its derivatives are finite sums.

  For every \((z,v)\) with \(|v|\le L\), at most five indices are active, meaning that \(\theta(\bar c_j/a_j)\ne0\) or \(\theta'(\bar
  c_j/a_j)\ne0\). Every active index satisfies
  \begin{equation}\label{eq:bounded-positive-active-comparison}
    \frac3{16}a_j\le|c|\le\frac{65}{16}a_j,
    \qquad
    \sgn\bar c_j=\sgn c.
  \end{equation}
  For \(|v|\le L\), the boundary value \(m(z,0,v)\) is therefore a finite sum and, for \(y\ge0\),
  \begin{equation}\label{eq:bounded-positive-boundary-coercivity}
    |m(z,y,v)|\le C,\qquad
    -c(z,v)m(z,0,v)\asymp|c(z,v)|^p.
  \end{equation}

  Finally, there is \(J_0\), depending only on the fixed data, such that for \(j\ge J_0\), pointwise on \(\{y>0,\ |v|\le L\}\cap\supp m_j\),
  \begin{equation}\label{eq:bounded-positive-dyadic-pointwise}
    |X_bm_j|\le Ca_j^{-2}\le Cy^{-2\beta},
    \qquad
    |\nabla_vm_j|^2\le Ca_j^{2p-4}\le Cy^{-2\beta}.
  \end{equation}
  The finitely many indices \(j<J_0\) contribute at most \(CM^2\) to the integrated product and derivative terms. Consequently,
  \begin{equation}\label{eq:bounded-positive-dyadic-product}
    \norm{mF}_{\L^2_{z,y}\H^1_\mu}^2
    +\int\bigl(|\nabla_vm|^2+|X_bm|\bigr)|F|^2
    \le C\bigl(M^2+\mathcal H(F)\bigr).
  \end{equation}
\end{lemma}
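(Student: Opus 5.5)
The plan is to verify the claims in the order stated, working throughout on \(|v|\le L\). Local finiteness on \(\{y>0\}\) comes from the height cut-off. \(\chi(y/\tau_j)\neq0\) forces \(y<2\tau_j=2\kappa a_j^{p+2}\), and \(\tau_j\to0\). So on \(\{y\ge\delta\}\) only the indices with \(2\tau_j>\delta\) survive, which is a finite set. Smoothness of each \(m_j\) in \((z,y,v)\) holds because \(b_j=\eta_{\ell_j}*b\) is smooth and \(\theta,\chi\) are smooth. Hence \(m\in\C^1\), and derivatives may be taken term by term.

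\emph{Active indices and boundary coercivity.} If \(j\) is active, then \(1/4\le|\bar c_j|/a_j\le4\), since \(\supp\theta\subset\{1/4<|s|<4\}\) and the same holds for \(\theta'\). By \eqref{eq:bounded-positive-dyadic-error},
\[
\frac{3}{16}a_j\le|c|\le\frac{65}{16}a_j .
\]
Because \(|\bar c_j|\ge a_j/4>a_j/16\ge|\bar c_j-c|\), the signs of \(\bar c_j\) and \(c\) agree. This gives \eqref{eq:bounded-positive-active-comparison}. Any two active indices therefore have \(a_j\) comparable within a factor \(65/3<2^5\), so at most five indices are active. If \(c=0\) no index is active and \(m=0\). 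Since \(|c|\le L(1+B)<a_0\), at least one index is active near scale \(|c|\): some \(j\ge0\) has \(a_j\) in \([|c|,2|c|)\), hence \(\bar c_j/a_j\in[3/4,9/4]\) in absolute value up to the \(1/16\) error, where \(\theta=\sgn\). Then
\[
-c\,m(z,0,v)=\sum_j a_j^{p-1}\,c\,\theta(\bar c_j/a_j),
\]
and every term is nonnegative because \(s\theta(s)\ge0\) and the signs agree. This sum is at least \(a_{j}^{p-1}|c|\asymp|c|^p\) and at most \(5\max a_j^{p-1}|c|\lesssim|c|^p\). Likewise \(|m|\le5\sup a_j^{p-1}\le C\), using \(a_j\le a_0\) and \(p\ge1\). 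This proves \eqref{eq:bounded-positive-boundary-coercivity}.

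\emph{Derivative bounds (the main computation).} Let \(\partial_y\) fall only on the cut-off, since \(\bar c_j\) is \(y\)-independent. Then
\[
X_bm_j=-a_j^{p-2}\theta'(\bar c_j/a_j)\,X_b\bar c_j\,\chi-a_j^{p-1}\theta\,c\,\tau_j^{-1}\chi',\qquad X_b\bar c_j=-u^{\mathsf T}\nabla_zb_j\,u .
\]
The H\"older mollifier estimate gives \(|\nabla_zb_j|\le CH\ell_j^{\alpha_p-1}=CH\kappa^{\alpha_p-1}a_j^{-p}\). So the first term is \(\lesssim a_j^{p-2}a_j^{-p}=a_j^{-2}\). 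For the second term, \(|c|\lesssim a_j\) on the support, so it is \(\lesssim a_j^{p}/\tau_j\lesssim a_j^{-2}\). Also
\[
\nabla_vm_j=-a_j^{p-2}\theta'(\bar c_j/a_j)(-b_j,1)\chi ,
\]
which gives \(|\nabla_vm_j|^2\lesssim a_j^{2p-4}\le Ca_j^{-2}\) because \(a_j\le a_0\) and \(p\ge1\). On the support \(y<2\tau_j\), so \(y^{\beta}\lesssim a_j\) (recall \(\beta=1/(p+2)\)), i.e.\ \(a_j^{-2}\lesssim y^{-2\beta}\). This yields \eqref{eq:bounded-positive-dyadic-pointwise}. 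Choose \(J_0\) with \(a_j\le1\) and \(2\tau_j\le h_0\) for \(j\ge J_0\); then the supports of these \(m_j\) lie in \(\{y<h_0\}\), where \(\mathcal H\) is defined. For the finitely many \(j<J_0\), all coefficients are bounded by constants depending on the fixed data, including \(y\)-bounded cut-offs. Their contribution is \(\le CM^2\).

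\emph{Assembly.} At each point at most five indices are active in \(\nabla_vm\) and in the \(\theta'\)-part of \(X_bm\). The \(\chi'\)-parts, however, can come from indices with active \(\theta\) at various heights. Since \(\theta\neq0\) forces the same five-index bound, the total is still a sum of at most five terms, each \(\lesssim y^{-2\beta}\) (or bounded, for \(j<J_0\)). Integrating against \(|F|^2\) and using the product bound \eqref{eq:bounded-positive-admissibility-product} with \(A=C\) gives \eqref{eq:bounded-positive-dyadic-product}. The region \(y\ge h_0\) has bounded weights and contributes \(\lesssim M^2\). The only delicate point is the uniform overlap count, which must not depend on \(y\) (so the height cut-offs cannot add unboundedly many terms). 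This is handled exactly by the fact that activity is defined through \(\theta(\bar c_j/a_j)\), independently of \(y\).
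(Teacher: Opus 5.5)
Your argument is the paper's proof essentially step by step: local finiteness from the height cut-off, the comparison and five-index count from the window \(\frac14\le|\bar c_j|/a_j\le4\) together with \eqref{eq:bounded-positive-dyadic-error}, the same formulae for \(X_bm_j\) and \(\nabla_vm_j\) with \(|\nabla_zb_j|\lesssim a_j^{-p}\), and \(J_0\) chosen so that \(a_j\le1\) and \(2\tau_j\le h_0\).

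There is one slip, in the coercive lower bound. Choosing \(a_j\in[|c|,2|c|)\) gives \(|c|/a_j\in(1/2,1]\), and hence only \(|\bar c_j|/a_j\in(7/16,17/16]\). That interval is not contained in \(\{3/4<|s|<9/4\}\), so you cannot conclude \(\theta(\bar c_j/a_j)=\sgn c\) for that index.

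Choose instead \(j\) with \(a_j\le|c|<2a_j\), as the paper does. Such a \(j\ge1\) exists because \(|c|\le L(1+B)<a_0\). Then \(|\bar c_j|/a_j\in[15/16,33/16]\), where \(\theta=\sgn\), and the rest of your argument goes through unchanged.
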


\begin{proof}
  Since \(\tau_j=\kappa a_0^{p+2}2^{-j(p+2)}\to0\), the height cut-off makes \(m_j\) vanish on every compact subset of \(\{y>0\}\) once \(j\) is
  sufficiently large. This proves local finiteness and the corresponding assertions on truncated half-spaces.

  If \(j\) is active, the support properties of \(\theta\) give
  \[
    \frac14a_j\le|\bar c_j|\le4a_j.
  \]
  Together with \eqref{eq:bounded-positive-dyadic-error}, this gives \eqref{eq:bounded-positive-active-comparison}; in particular, \(\bar c_j\)
  and \(c\) have the same sign. If \(c\ne0\), every active scale lies in
  \[
    \frac{16}{65}|c|\le a_j\le\frac{16}{3}|c|.
  \]
  Since \(a_j=2^{-j}a_0\) and \(65/3<2^5\), at most five indices are active. If \(c=0\), estimate \eqref{eq:bounded-positive-dyadic-error}
  shows that no index is active. Finite overlap, \(p\ge1\), and \(a_j\le a_0\) give \(|m|\le C\).

  If \(c\ne0\), choose \(j\ge0\) with \(a_j\le|c|<2a_j\), which is possible because \(|c|\le L(1+B)\le a_0/2\). Then
  \[
    \frac{15}{16}
    \le\frac{|\bar c_j|}{a_j}
    \le\frac{33}{16},
  \]
  and hence \(\theta(\bar c_j/a_j)=\sgn c\). This summand contributes a positive multiple of \(|c|^p\) to \(-cm(z,0,v)\). Every other active
  summand has the same sign and is at most \(C|c|^p\). Finite overlap proves \eqref{eq:bounded-positive-boundary-coercivity}; the case \(c=0\)
  is immediate. This also shows directly why \(p=1\) requires no limiting argument.

  It remains to estimate derivatives. The H\"older mollifier bound and \((p+1)\alpha_p=1\) give
  \[
    |\nabla_zb_j|\le CH\ell_j^{\alpha_p-1}\le Ca_j^{-p}.
  \]
  Since \(b_j\) is independent of \(y\),
  \[
    X_b\bar c_j=-u^{\mathsf T}(\nabla_zb_j)u,
    \qquad |X_b\bar c_j|\le Ca_j^{-p}
  \]
  on \(|v|\le L\). Direct differentiation yields
  \begin{align*}
    X_bm_j
    ={} & -a_j^{p-2}\theta'(\bar c_j/a_j)
    (X_b\bar c_j)\chi(y/\tau_j)           \\
        & -a_j^{p-1}\theta(\bar c_j/a_j)
    \chi'(y/\tau_j)\frac{c}{\tau_j},      \\
    \nabla_vm_j
    ={} & -a_j^{p-2}\theta'(\bar c_j/a_j)
    (-b_j,1)\chi(y/\tau_j).
  \end{align*}
  On an active scale, \(|c|\le Ca_j\), so
  \[
    |X_bm_j|\le Ca_j^{-2},
    \qquad
    |\nabla_vm_j|\le Ca_j^{p-2}.
  \]
  Choose \(J_0\) so that \(a_j\le1\) and \(2\tau_j\le h_0\) for \(j\ge J_0\). On the support of the corresponding height cut-off,
  \[
    y<2\kappa a_j^{p+2},
    \qquad
    a_j^{-2}\le Cy^{-2\beta},
    \qquad
    a_j^{2p-4}\le a_j^{-2}.
  \]
  This proves \eqref{eq:bounded-positive-dyadic-pointwise}.

  Uniform active-scale overlap gives
  \[
    \left|X_b\sum_{j\ge J_0}m_j\right|
    +\left|\nabla_v\sum_{j\ge J_0}m_j\right|^2
    \le Cy^{-2\beta}\one_{\{0<y<h_0\}}.
  \]
  For the finitely many indices \(j<J_0\), the preceding derivative formulae are uniformly bounded on \(\{|v|\le L\}\), so their integrated
  contribution is at most \(CM^2\). Thus
  \[
    \int\bigl(|\nabla_vm|^2+|X_bm|\bigr)|F|^2
    \le C\bigl(M^2+\mathcal H(F)\bigr).
  \]
  The product estimate \eqref{eq:bounded-positive-admissibility-product} now proves \eqref{eq:bounded-positive-dyadic-product}.
\end{proof}

\begin{proof}[Proof of Proposition~\ref{prop:bounded-positive-local}]
  The velocity restriction in the dyadic lemma is sufficient. Indeed, \(\supp_vF\cup\supp_v\nabla_vF\subset\overline B_L\), and
  \[
    \nabla_v(mF)=m\nabla_vF+(\nabla_vm)F.
  \]
  The Green identity contains the remaining multiplier factors only as \(mF\), \((X_bm)|F|^2\), and \(cm|F|^2\). Hence no estimate for \(m\) or
  its derivatives outside \(\overline B_L\) is required.

  Proposition~\ref{prop:bounded-positive-global-hardy} and Lemma~\ref{lem:bounded-positive-dyadic-derivatives} give
  \begin{equation}\label{eq:bounded-positive-dyadic-controlled}
    \norm{mF}_{\L^2_{z,y}\H^1_\mu}^2
    +\int\bigl(|\nabla_vm|^2+|X_bm|\bigr)|F|^2
    \le C(M^2+N^2).
  \end{equation}
  Lemma~\ref{lem:bounded-positive-dyadic-local-finiteness} shows that the multiplier and its first derivatives are finite sums on
  \(\{y\ge\delta\}\). Estimate \eqref{eq:bounded-positive-dyadic-controlled} verifies the integral hypotheses of
  Lemma~\ref{lem:bounded-positive-admissible-multiplier}. Hence the truncated Green identity applies for every \(\delta>0\).

  It remains to treat the boundary term. Since \(F(z,\delta,v)=0\) for \(|v|>L\), all boundary integrals may be restricted to \(\overline B_L\).
  For fixed \((z,v)\) in this ball, only finitely many indices are active. For each such index, \(\chi(\delta/\tau_j)\to\chi(0)=1\), and hence
  \[
    m(z,\delta,v)\longrightarrow m(z,0,v).
  \]
  Also \(F(z,\delta,v)\to F(z,0,v)\). The uniform bound for \(m\), the estimate \(|c|\le L(1+B)\), and compact support of \(F\) provide an
  integrable dominator of the form \(C\one_{K_z}(z)\one_{\overline B_L}(v)\). Dominated convergence gives
  \[
    -\int c\,m(z,\delta,v)|F(z,\delta,v)|^2\dd z\dd\mu
    \longrightarrow
    -\int c\,m(z,0,v)|F(z,0,v)|^2\dd z\dd\mu.
  \]

  Thus \eqref{eq:bounded-positive-admissibility-boundary} holds with
  \[
    B_0=-\int c\,m(z,0,v)|F(z,0,v)|^2\dd z\dd\mu.
  \]
  Lemma~\ref{lem:bounded-positive-admissible-multiplier} gives the limiting Green identity. Applying duality yields
  \[
    -\int c\,m(z,0,v)|F(z,0,v)|^2\dd z\dd\mu
    \le\int|X_bm|\,|F|^2
    +2N\norm{mF}_{\L^2\H^1_\mu}.
  \]
  Boundary coercivity from \eqref{eq:bounded-positive-boundary-coercivity} is used only where \(F(z,0,v)\ne0\), and hence only for \(|v|\le L\).
  Combining it with \eqref{eq:bounded-positive-dyadic-controlled} gives
  \[
    \int|c|^p|F(z,0,v)|^2\dd z\dd\mu
    \le C(M^2+N^2),
  \]
  which is \eqref{eq:bounded-positive-local}.
\end{proof}

\subsection{Globalisation}

We now collect the three ingredients needed to pass from the local graph estimate to a boundary atlas: rotation invariance, quantitative
extension of the boundary slope, and smooth flattening with localisation.

\begin{lemma}[Rotation invariance of the velocity norms]
  \label{lem:velocity-rotation}
  Let \(Q\in O(d)\). On \(\H^1_\mu\), define \(U_Qg(v)=g(Q^{\mathsf T}v)\); on \(\H^{-1}_\mu\), define \(U_Q\) by
  \[
    \langle U_Qq,\varphi\rangle
    :=\langle q,U_{Q^{\mathsf T}}\varphi\rangle.
  \]
  For \(\mu=\mathcal L^d\) or \(\gamma_d\),
  \[
    \norm{U_Qg}_{\H^1_\mu}=\norm{g}_{\H^1_\mu},
    \qquad
    \norm{U_Qq}_{\H^{-1}_\mu}=\norm{q}_{\H^{-1}_\mu}.
  \]
  The same identities hold on \(\Sph^{d-1}\), and \(U_Q\) preserves every velocity ball \(\overline B_L\).
\end{lemma}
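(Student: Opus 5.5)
The argument splits into the $\H^1_\mu$ identity, the $\H^{-1}_\mu$ identity by duality, and the spherical case.

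\emph{The $\H^1_\mu$ identity.} For $\mu=\mathcal L^d$ or $\gamma_d$ the density of $\mu$ is radial, so $\mu$ is invariant under $v\mapsto Q^{\mathsf T}v$. The change of variables $v'=Q^{\mathsf T}v$ therefore gives $\int|g(Q^{\mathsf T}v)|^2\dd\mu(v)=\int|g|^2\dd\mu$. The chain rule gives $\nabla_v(U_Qg)(v)=Q\,(\nabla g)(Q^{\mathsf T}v)$. Since $Q$ is orthogonal, $|Q\xi|=|\xi|$, and the same change of variables shows the gradient part of the norm is preserved. This yields $\norm{U_Qg}_{\H^1_\mu}=\norm{g}_{\H^1_\mu}$, first for smooth compactly supported $g$ and then for all $g$ by density. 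In particular $U_Q$ is a surjective isometry of $\H^1_\mu$, with inverse $U_{Q^{\mathsf T}}$, because $U_QU_{Q^{\mathsf T}}=\mathrm{id}$.

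\emph{The $\H^{-1}_\mu$ identity.} I compute the dual norm directly:
\[
\norm{U_Qq}_{\H^{-1}_\mu}=\sup_{\norm{\varphi}_{\H^1_\mu}\le1}|\langle q,U_{Q^{\mathsf T}}\varphi\rangle|.
\]
As $\varphi$ ranges over the unit ball of $\H^1_\mu$, so does $U_{Q^{\mathsf T}}\varphi$, by the surjective isometry just shown. Hence the supremum equals $\norm{q}_{\H^{-1}_\mu}$. I also check that this definition is consistent with the $\L^2(\mu)$ pivot. For $q\in\L^2(\mu)$, the invariance of $\mu$ gives $\int q\,(\varphi\circ Q)\dd\mu=\int (q\circ Q^{\mathsf T})\varphi\dd\mu$. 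So on $\L^2$ functions the dual $U_Q$ agrees with composition by $Q^{\mathsf T}$.

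\emph{The sphere and the velocity balls.} On $\Sph^{d-1}$, orthogonal maps are isometries of the round metric and preserve $\sigma$. The spherical gradient therefore transforms by $\nabla_{\Sph}(g\circ Q^{\mathsf T})(\theta)=Q(\nabla_{\Sph}g)(Q^{\mathsf T}\theta)$, and the identical argument applies. Finally, $|Q^{\mathsf T}v|=|v|$, so $\supp U_Qg=Q\supp g$ and every $\overline B_L$ is mapped to itself.

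There is no real obstacle. The only point needing care is the transformation rule for the spherical gradient under isometries, which is standard Riemannian geometry.
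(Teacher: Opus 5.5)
Your proposal is correct and takes essentially the same route as the paper. Both use orthogonal invariance of $\mathcal L^d$, $\gamma_d$ and $\sigma$ together with the chain rule $\nabla_v(U_Qg)=Q(\nabla_vg)\circ Q^{\mathsf T}$ for the $\H^1_\mu$ isometry, and then the dual definition for the $\H^{-1}_\mu$ identity; you merely make explicit the details the paper leaves implicit, namely surjectivity of $U_{Q^{\mathsf T}}$, consistency with the $\L^2(\mu)$ pivot, and the support identity $\supp U_Qg=Q\supp g$.
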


\begin{proof}
  Lebesgue measure and the standard Gaussian measure are orthogonally invariant, and
  \[
    \nabla_v(U_Qg)=Q(\nabla_vg)\circ Q^{\mathsf T}.
  \]
  This proves the \(\H^1_\mu\) identity; the dual definition gives the \(\H^{-1}_\mu\) identity. Orthogonal maps are isometries of the sphere and
  preserve surface measure, tangential gradient, and Euclidean velocity balls.
\end{proof}

\begin{lemma}[Quantitative global H\"older extension]
  \label{lem:bounded-positive-holder-extension}
  Let \(E\subset\R^{d-1}\) be nonempty and let \(0<\alpha\le1\). Every bounded \(b\in\C^{0,\alpha}(E;\R^{d-1})\) has an extension \(\widetilde
  b\in\C^{0,\alpha}(\R^{d-1};\R^{d-1})\) such that
  \begin{equation}\label{eq:bounded-positive-holder-extension}
    \widetilde b=b\quad\text{on }E,\qquad
    \norm{\widetilde b}_{\L^\infty}
    \le\norm b_{\L^\infty(E)},\qquad
    [\widetilde b]_{\C^{0,\alpha}}
    \le\sqrt{d-1}\,[b]_{\C^{0,\alpha}(E)}.
  \end{equation}
  If \(E\) is bounded and \(\alpha_p\le\alpha\), then
  \begin{equation}\label{eq:bounded-positive-holder-reduction}
    [b]_{\C^{0,\alpha_p}(E)}
    \le\max\{1,\operatorname{diam}E\}^{\alpha-\alpha_p}
      [b]_{\C^{0,\alpha}(E)}.
  \end{equation}
\end{lemma}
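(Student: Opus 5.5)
The extension will be built componentwise by a McShane-type formula, and the reduction inequality will follow from splitting pairs of points by their distance. Write \(b=(b^1,\dots,b^{d-1})\) and set \(M:=\norm b_{\L^\infty(E)}\) and \(H:=[b]_{\C^{0,\alpha}(E)}\). Each component satisfies \(|b^i(z)-b^i(z')|\le H|z-z'|^\alpha\) on \(E\). Since \(t\mapsto t^\alpha\) is concave with value zero at zero, it is subadditive. Hence \((z,z')\mapsto|z-z'|^\alpha\) is a metric on \(\R^{d-1}\), and \(b^i\) is \(H\)-Lipschitz for this metric.

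We then define
\[
  \widehat b^i(z):=\inf_{\zeta\in E}\bigl(b^i(\zeta)+H|z-\zeta|^\alpha\bigr).
\]
The standard McShane argument, using only the triangle inequality for the metric \(|\cdot|^\alpha\), shows three things: \(\widehat b^i\) is finite, since the infimum is bounded below by \(-M\); \(\widehat b^i=b^i\) on \(E\); and \(\widehat b^i\) is \(H\)-Lipschitz for this metric on all of \(\R^{d-1}\). To restore the sup bound, we truncate and set \(\widetilde b^i:=\max\{-M,\min\{\widehat b^i,M\}\}\). Truncation by a \(1\)-Lipschitz map does not increase the Hölder seminorm and leaves the values on \(E\) unchanged, because \(|b^i|\le M\) there. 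This gives \(|\widetilde b^i|\le M\) componentwise. To keep the vector sup norm bounded by \(M\) rather than \(\sqrt{d-1}\,M\), we instead compose the componentwise extension \(\widehat b\) with the nearest-point projection \(\pi\) onto the closed ball \(\overline B_M\subset\R^{d-1}\). This projection is \(1\)-Lipschitz and fixes \(b(E)\). Summing the \(d-1\) component bounds in quadrature then gives
\[
  |\widetilde b(z)-\widetilde b(z')|\le|\widehat b(z)-\widehat b(z')|\le\sqrt{d-1}\,H|z-z'|^\alpha,
\]
together with \(\norm{\widetilde b}_{\L^\infty}\le M\) and \(\widetilde b=b\) on \(E\). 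This proves \eqref{eq:bounded-positive-holder-extension}.

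For \eqref{eq:bounded-positive-holder-reduction}, take \(z,z'\in E\) and put \(s=|z-z'|\le\operatorname{diam}E\). Since \(\alpha-\alpha_p\ge0\),
\[
  |b(z)-b(z')|\le[b]_{\C^{0,\alpha}(E)}\,s^{\alpha_p}s^{\alpha-\alpha_p}\le[b]_{\C^{0,\alpha}(E)}\max\{1,\operatorname{diam}E\}^{\alpha-\alpha_p}s^{\alpha_p}.
\]
Dividing by \(s^{\alpha_p}\) and taking the supremum over \(z\ne z'\) yields the claim.

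The only point needing care is keeping the sup bound \(\norm b_{\L^\infty(E)}\) exactly in the Euclidean vector norm, without a dimensional factor. I expect this to be the main subtlety, and the projection onto \(\overline B_M\) handles it because that projection is \(1\)-Lipschitz. Everything else is routine.
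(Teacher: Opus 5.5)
Your proposal is correct and follows the paper's proof essentially verbatim. Like the paper, you use a componentwise McShane extension for the metric \(|z-\xi|^\alpha\), then compose with the \(1\)-Lipschitz metric projection onto \(\overline B_M\) to keep the Euclidean sup bound, and you get the reduction inequality from \(s^{\alpha-\alpha_p}\le\max\{1,\operatorname{diam}E\}^{\alpha-\alpha_p}\); the intermediate componentwise truncation you mention is unnecessary once you use the projection, so you can drop it.
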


\begin{proof}
  Set \(B=\norm b_{\L^\infty(E)}\) and \(H=[b]_{\C^{0,\alpha}(E)}\). The distance
  \[
    d_\alpha(z,\xi):=|z-\xi|^\alpha
  \]
  is a metric because \(0<\alpha\le1\). For \(1\le k\le d-1\), define
  \[
    e_k(z):=\inf_{\xi\in E}
    \bigl(b_k(\xi)+Hd_\alpha(z,\xi)\bigr).
  \]
  The scalar McShane extension theorem \cite[Theorem~1]{McShane1934}, applied to the metric \(d_\alpha\), gives \(e_k=b_k\) on \(E\) and
  \([e_k]_{\C^{0,\alpha}}\le H\). Thus \(e=(e_1,\ldots,e_{d-1})\) has H\"older seminorm at most \(\sqrt{d-1}H\).

  Let \(P_B\) be the metric projection of \(\R^{d-1}\) onto the closed ball \(\overline B_B\). It is one-Lipschitz and fixes \(b(E)\). Hence
  \(\widetilde b:=P_B\circ e\) agrees with \(b\) on \(E\), is bounded by \(B\), and has the stated seminorm. Finally, for \(z,\xi\in E\),
  \[
    |z-\xi|^\alpha
    \le\max\{1,\operatorname{diam}E\}^{\alpha-\alpha_p}
    |z-\xi|^{\alpha_p},
  \]
  which proves \eqref{eq:bounded-positive-holder-reduction}.
\end{proof}

\begin{lemma}[Smooth flattening and localisation]
  \label{lem:bounded-positive-flattening}
  Let \(\mu\in\{\mathcal L^d,\gamma_d\}\). Let \(P\subset\R^{d-1}\) be bounded and open, let \(\Phi\in\C^1(P)\), and set
  \[
    b:=\nabla\Phi,\qquad
    \Psi(z,y):=(z,y+\Phi(z)).
  \]
  Let \(D\subset P\times(0,\infty)\) be open, put \(O:=\Psi(D)\), and assume that
  \[
    b\in\C^{0,\alpha_p}(P;\R^{d-1})
    \cap\L^\infty(P;\R^{d-1}).
  \]
  Let \(\widetilde b\in \C^{0,\alpha_p}(\R^{d-1};\R^{d-1})\cap\L^\infty\) be an extension of \(b\).

  Suppose that \(f\in\C^1(\overline O\times\R^d)\) has velocity support in \(\overline B_L\), and let \(\zeta\in\C_c^1(\R^d)\) satisfy
  \[
    \operatorname{dist}\bigl(
    \operatorname{supp}(\zeta\circ\Psi),
    \partial D\cap\{y>0\}\bigr)>0.
  \]
  Define
  \[
    F_\zeta(z,y,v):=(\zeta f)(\Psi(z,y),v)
    \qquad ((z,y)\in D),
  \]
  and extend it by zero across the artificial faces \(\partial D\cap\{y>0\}\), retaining \(y=0\) as the physical boundary. Denote the extension
  by \(\widetilde F_\zeta\). Then
  \[
    \widetilde F_\zeta
    \in\C_c^1(\R^{d-1}\times[0,\infty)\times\R^d),
    \qquad
    \supp_v\widetilde F_\zeta\subset \overline B_L,
  \]
  and
  \begin{equation}\label{eq:bounded-positive-flattening-transport}
    X_bF_\zeta
    =(v\cdot\nabla_x(\zeta f))\circ\Psi
    \quad\text{on }D.
  \end{equation}
  After zero extension,
  \begin{equation}\label{eq:bounded-positive-flattening-zero-extension}
    X_{\widetilde b}\widetilde F_\zeta
    =\widetilde{(v\cdot\nabla_x(\zeta f))\circ\Psi}
    \quad\text{on }\R^{d-1}\times(0,\infty),
  \end{equation}
  where the tilde on the right-hand side denotes zero extension across the artificial faces. Moreover,
  \begin{align}
    \norm{\widetilde F_\zeta}_{\L^2_{z,y}\H^1_\mu}
     & =\norm{\zeta f}_{\L^2(O;\H^1_\mu)},\notag \\
    \norm{X_{\widetilde b}\widetilde F_\zeta}
    _{\L^2_{z,y}\H^{-1}_\mu}
     & =\norm{v\cdot\nabla_x(\zeta f)}
    _{\L^2(O;\H^{-1}_\mu)}.
    \label{eq:bounded-positive-flattening-norms}
  \end{align}
  Consequently,
  \begin{align}
     & \norm{\widetilde F_\zeta}_{\L^2_{z,y}\H^1_\mu}
    +\norm{X_{\widetilde b}\widetilde F_\zeta}
    _{\L^2_{z,y}\H^{-1}_\mu}\notag                    \\
     & \qquad\le C\left(
    \norm{\zeta}_{\L^\infty}
    +L\norm{\nabla_x\zeta}_{\L^\infty}\right)
    \left(
    \norm f_{\L^2(O;\H^1_\mu)}
    +\norm{v\cdot\nabla_xf}_{\L^2(O;\H^{-1}_\mu)}
    \right).
    \label{eq:bounded-positive-flattening-localisation}
  \end{align}

  On the lower graph \(x_d=\Phi(z)\), set \(n_\Phi=(b,-1)/(1+|b|^2)^{1/2}\). Then
  \begin{equation}\label{eq:bounded-positive-flattening-surface}
    |v\cdot n_\Phi|^p\dd S
    =|w-u\cdot b(z)|^p
    (1+|b(z)|^2)^{(1-p)/2}\dd z.
  \end{equation}
\end{lemma}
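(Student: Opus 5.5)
The plan is a direct computation in the flattened chart, verified in four pieces. First, the regularity and support of \(\widetilde F_\zeta\). The map \(\Psi\) is a \(\C^1\) diffeomorphism of \(P\times\R\) onto its image, with inverse \((z,x_d)\mapsto(z,x_d-\Phi(z))\) and Jacobian determinant identically one. Hence \(F_\zeta\in\C^1(\overline D\times\R^d)\). The distance hypothesis says that \(\supp(\zeta\circ\Psi)\) stays a positive distance from the artificial faces \(\partial D\cap\{y>0\}\). Therefore the zero extension is \(\C^1\) across those faces, and it has compact support because \(\zeta\) has compact support and \(P\) is bounded. Velocity support is untouched, since \(\Psi\) acts only in \(x\).

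Second, the transport identity \eqref{eq:bounded-positive-flattening-transport}, which follows from the chain rule. For \(g(z,y)=G(\Psi(z,y))\), one has
\[
\nabla_zg=(\nabla_zG)\circ\Psi+(\partial_{x_d}G)\circ\Psi\,b,
\qquad
\partial_yg=(\partial_{x_d}G)\circ\Psi .
\]
Hence \(u\cdot\nabla_zg+(w-u\cdot b)\partial_yg=(u\cdot\nabla_zG+w\partial_{x_d}G)\circ\Psi\). On \(D\), the extension \(\widetilde b\) agrees with \(b\), so \(X_{\widetilde b}=X_b\) there. Off \(D\), both sides of \eqref{eq:bounded-positive-flattening-zero-extension} vanish near the artificial faces, by the support gap. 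This gives \eqref{eq:bounded-positive-flattening-zero-extension} pointwise, because everything is classical \(\C^1\).

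Third, the norm identities \eqref{eq:bounded-positive-flattening-norms}. The change of variables \(x=\Psi(z,y)\) has unit Jacobian, and \(\Psi\) commutes with every velocity-space operation. Therefore \(\norm{\widetilde F_\zeta(z,y,\cdot)}_{\H^1_\mu}=\norm{(\zeta f)(\Psi(z,y),\cdot)}_{\H^1_\mu}\), and the same holds for the \(\H^{-1}_\mu\) norms of the transport terms. Integrating gives equality of the \(\L^2\) norms.

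The localisation bound \eqref{eq:bounded-positive-flattening-localisation} then reduces to estimates in \(O\). The first is \(\norm{\zeta g}_{\H^1_\mu}\le\norm\zeta_{\L^\infty}\norm g_{\H^1_\mu}\) pointwise in \(x\), since \(\zeta\) does not depend on \(v\). For the transport term I would write
\[
v\cdot\nabla_x(\zeta f)=\zeta\,v\cdot\nabla_xf+(v\cdot\nabla_x\zeta)f .
\]
The first summand has \(\H^{-1}_\mu\) norm at most \(\norm\zeta_{\L^\infty}\norm{v\cdot\nabla_xf}_{\H^{-1}_\mu}\). The second is an \(\L^2(\mu)\) function. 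Its \(\H^{-1}_\mu\) norm is bounded by its \(\L^2(\mu)\) norm, through the pivot embedding, and that is at most \(L\norm{\nabla_x\zeta}_{\L^\infty}\norm{f}_{\L^2(\mu)}\) because \(\supp_vf\subset\overline B_L\). Summing gives the claim with an absolute constant.

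Fourth, the surface formula \eqref{eq:bounded-positive-flattening-surface}. For the graph \(x_d=\Phi(z)\) one has \(\dd S=(1+|b|^2)^{1/2}\dd z\) and \(v\cdot n_\Phi=(u\cdot b-w)(1+|b|^2)^{-1/2}\). Raising to the power \(p\) and multiplying gives the formula, exactly as in \eqref{eq:unb-graph-flux}.

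The main obstacle is minor: making sure the zero extension is genuinely \(\C^1\) up to \(y=0\) when \(\supp(\zeta\circ\Psi)\) reaches the physical boundary. This needs \(f\in\C^1(\overline O)\), which holds because \(O\) includes its bottom face \(\Psi(\overline D\cap\{y=0\})\) in \(\partial\Omega\), together with the distance hypothesis applying only to the faces with \(y>0\).
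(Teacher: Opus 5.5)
Your route is the paper's: the chain rule with the triangular, unit-determinant $D\Psi$; a collar of zeros at the artificial faces; change of variables; the product rule with the pivot bound $\norm{g}_{\H^{-1}_\mu}\le\norm{g}_{\L^2_\mu}$; and the graph surface element. Every step is fine except the compact-support claim, which is not justified as written. Compactness of $\supp\zeta$ and boundedness of $P$ bound $z$ and $x_d=y+\Phi(z)$. They do not bound $y=x_d-\Phi(z)$, because the hypotheses do not make $\Phi$ bounded on $P$.

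For example, let $P$ be a union of disjoint balls $B_k$ with radii tending to zero and accumulating at a point. Take $\Phi=-k$ on $B_k$ (so $b=0$), $D=P\times(0,\infty)$, $f(x,v)=g(v)$ with $0\ne g\in\C_c^1(B_L)$, and $\zeta$ with $\zeta(z_k,0)\ne0$ for points $z_k\in B_k$. Then $F_\zeta(z_k,k,\cdot)=\zeta(z_k,0)g\ne0$ for infinitely many $k$, so the support is unbounded in $y$.

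What excludes this is the distance hypothesis, which you used only for the $\C^1$ regularity across the faces. The paper's argument runs as follows. Let $\delta_*$ be the distance in the hypothesis. Suppose $F_\zeta(z,y,v)\ne0$ and $z'\notin P$ with $|z-z'|<\delta_*$. Then the horizontal segment from $(z,y)$ to $(z',y)$ leaves $D\subset P\times(0,\infty)$ through a point of $\partial D\cap\{y>0\}$ at distance less than $\delta_*$, which is a contradiction. Hence $z$ lies in the compact set $\{z\in\pi_{\R^{d-1}}(\supp\zeta):\operatorname{dist}(z,P^c)\ge\delta_*\}\subset P$. On this set the continuous function $\Phi$ is bounded, so $y$ is bounded because $y+\Phi(z)$ ranges over the bounded $x_d$-projection of $\supp\zeta$.

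A smaller point concerns your last paragraph. The set $O=\Psi(D)$ is open and does not contain its bottom face. Regularity of $\widetilde F_\zeta$ up to $y=0$ follows directly from the hypothesis $f\in\C^1(\overline O\times\R^d)$ together with $\Psi$ being $\C^1$ on all of $P\times\R$. No statement about a physical domain is involved.
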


\begin{proof}
  The matrix \(D\Psi\) is triangular with determinant one. The ordinary chain rule gives
  \[
    X_bF_\zeta
    =u\cdot\nabla_zF_\zeta
    +(w-u\cdot b(z))\partial_yF_\zeta
    =\bigl(v\cdot\nabla_x(\zeta f)\bigr)\circ\Psi,
  \]
  proving \eqref{eq:bounded-positive-flattening-transport}. The positive-distance hypothesis makes \(F_\zeta\) identically zero in a collar of
  every artificial face. Its zero extension is therefore \(\C^1\), creates no artificial-face distribution, and proves
  \eqref{eq:bounded-positive-flattening-zero-extension}.

  To verify compact support, let \(\delta_*>0\) be the distance in the hypothesis and set \(K:=\pi_{\R^{d-1}}(\supp\zeta)\). Whenever
  \(F_\zeta(z,y,v)\ne0\) for some \(v\) and \(y>0\), we have
  \[
    z\in K,\qquad \operatorname{dist}(z,P^c)\ge\delta_*,
  \]
  since a horizontal path from \(z\) to \(P^c\) would otherwise cross an artificial face within distance \(\delta_*\). Thus \(z\) belongs to a
  compact subset of \(P\), on which \(\Phi\) is bounded. Because \(y+\Phi(z)\) lies in the bounded \(x_d\)-projection of \(\supp\zeta\), \(y\)
  is bounded as well. Together with the uniform velocity-support bound, this proves that \(\widetilde F_\zeta\) is compactly supported.

  Since \(\det D\Psi=1\), change of variables proves \eqref{eq:bounded-positive-flattening-norms}. The product rule gives
  \[
    v\cdot\nabla_x(\zeta f)
    =\zeta\,v\cdot\nabla_xf+(v\cdot\nabla_x\zeta)f,
  \]
  while the velocity-support restriction implies
  \[
    \norm{(v\cdot\nabla_x\zeta)f}_{\H^{-1}_\mu}
    \le L\norm{\nabla_x\zeta}_{\L^\infty}
    \norm f_{\L^2_\mu}.
  \]
  This proves \eqref{eq:bounded-positive-flattening-localisation}. Finally,
  \[
    \dd S=(1+|b|^2)^{1/2}\dd z,\qquad
    |v\cdot n_\Phi|
    =\frac{|w-u\cdot b|}{(1+|b|^2)^{1/2}},
  \]
  which proves \eqref{eq:bounded-positive-flattening-surface}.
\end{proof}

\begin{proof}[Proof of Theorem~\ref{thm:bounded-positive}]
  By the initial reduction, it is enough to work at \(\alpha=\alpha_p\). Quantitatively, on a bounded chart base \(P\),
  \eqref{eq:bounded-positive-holder-reduction} controls the \(\C^{0,\alpha_p}\) seminorm by the given \(\C^{0,\alpha}\) seminorm.

  Cover \(\partial\Omega\) by finitely many graph neighbourhoods \(U_j\), using rigid rotations applied simultaneously to \(x\) and \(v\).
  Lemma~\ref{lem:velocity-rotation} preserves the velocity norms and support under these rotations. Choose smaller neighbourhoods \(V_j\Subset
  U_j\) that still cover the boundary. Let \(P_j\) denote the tangential base of the \(j\)-th chart. In its coordinates,
  \[
    \begin{aligned}
      \Omega\cap U_j
                        & =\{(z,x_d)\in U_j:x_d>\Phi_j(z)\},                                    \\
      b_j^{\mathrm{ch}} & :=\nabla\Phi_j\in\C^{0,\alpha_p},
                        & c_j^{\mathrm{ch}}(z,v)             & :=w-u\cdot b_j^{\mathrm{ch}}(z).
    \end{aligned}
  \]
  Choose smooth cut-offs \(0\le\zeta_j\le1\), supported in \(V_j\) and at a positive distance from each artificial face, such that
  \(\sum_j\zeta_j^2=1\) near \(\partial\Omega\), and set \(f_j=\zeta_jf\). Lemma~\ref{lem:bounded-positive-holder-extension} gives a global field
  \(\widetilde b_j^{\mathrm{ch}}\) that agrees with \(b_j^{\mathrm{ch}}\) on \(P_j\) and satisfies
  \[
    \norm{\widetilde b_j^{\mathrm{ch}}}_{\L^\infty}
    \le\norm{b_j^{\mathrm{ch}}}_{\L^\infty(P_j)},\qquad
    [\widetilde b_j^{\mathrm{ch}}]_{\C^{0,\alpha_p}}
    \le\sqrt{d-1}\,
    [b_j^{\mathrm{ch}}]_{\C^{0,\alpha_p}(P_j)}.
  \]

  Flatten the physical boundary by
  \[
    y=x_d-\Phi_j(z),\qquad
    F_j(z,y,v)=f_j(z,y+\Phi_j(z),v).
  \]
  Let \(D_j\subset P_j\times(0,\infty)\) be the interior of the flattened chart. Because \(\supp\zeta_j\Subset V_j\Subset U_j\), the function
  \(F_j\) vanishes near every artificial lateral and upper face. Extend it by zero across only those faces, not across \(y=0\), and denote the
  extension by \(\widetilde F_j\). Apply Lemma~\ref{lem:bounded-positive-flattening} with \(f|_{\Omega\cap U_j}\), \(\zeta_j\), and \(\widetilde
  b_j^{\mathrm{ch}}\). It gives \eqref{eq:bounded-positive-flattening-zero-extension} and the norm bound
  \eqref{eq:bounded-positive-flattening-localisation}. In particular,
  \[
    X_{\widetilde b_j^{\mathrm{ch}}}\widetilde F_j
    =\widetilde{(v\cdot\nabla_xf_j)\circ\Psi_j}
    \quad\text{in }
    \mathcal D'(\R^{d-1}\times(0,\infty);\H^{-1}_\mu),
  \]
  where \(\Psi_j(z,y)=(z,y+\Phi_j(z))\). The lemma also places \(\widetilde F_j\) in precisely the test class of
  Proposition~\ref{prop:bounded-positive-local}. For the graph boundary, \eqref{eq:bounded-positive-flattening-surface} gives
  \begin{equation}\label{eq:bounded-positive-surface-factor}
    |v\cdot n_\Omega|^p\dd S
    =|c_j^{\mathrm{ch}}(z,v)|^p
    (1+|b_j^{\mathrm{ch}}(z)|^2)^{(1-p)/2}\dd z
    \le |c_j^{\mathrm{ch}}(z,v)|^p\dd z.
  \end{equation}
  Together with Lemma~\ref{lem:velocity-rotation}, this shows that Proposition~\ref{prop:bounded-positive-local} applies in every chart.

  On the boundary, \(\sum_j|f_j|^2=|f|^2\). To make the summation quantitative, set
  \[
    M_{\mathcal A}:=\sup_{x\in\Omega}
    \#\{j:x\in\operatorname{supp}\zeta_j\},
    \qquad
    Z_{\mathcal A}:=\max_j\|\nabla\zeta_j\|_\infty.
  \]
  Thus \(M_{\mathcal A}\) is the overlap multiplicity of the cut-off supports. The product rule and the fixed velocity support give
  \begin{align*}
    \sum_j\norm{f_j}_{\L^2_x\H^1_\mu}^2
     & \le M_{\mathcal A}\norm f_{\L^2_x\H^1_\mu}^2, \\
    \sum_j\norm{v\cdot\nabla_xf_j}_{\L^2_x\H^{-1}_\mu}^2
     & \le2M_{\mathcal A}
    \norm{v\cdot\nabla_xf}_{\L^2_x\H^{-1}_\mu}^2     \\
     & \quad+2L^2M_{\mathcal A}Z_{\mathcal A}^2
    \norm f_{\L^2_x\L^2_\mu}^2.
  \end{align*}
  Summing the local estimates and using these two bounds proves \eqref{eq:bounded-positive}.
\end{proof}

\begin{remark}[Quantitative geometric dependence]
  \label{rem:quantitative-atlas}
  Fix a finite graph atlas \(\mathcal A=\{(U_j,V_j,\Phi_j,\zeta_j)\}_{j=1}^{N_{\rm ch}}\) of the type used in the preceding proof. Define
  \[
    B_{\mathcal A}:=\max_j\|\nabla\Phi_j\|_\infty,
    \quad
    H_{\mathcal A}:=\max_j[\nabla\Phi_j]_{\C^{0,\alpha_p}},
    \quad
    Z_{\mathcal A}:=\max_j\|\nabla\zeta_j\|_\infty.
  \]
  Lemma~\ref{lem:bounded-positive-holder-extension} supplies the global slope extensions with bounds depending only on \(B_{\mathcal
      A},H_{\mathcal A}\), and \(d\); they introduce no further atlas parameter. If \(M_{\mathcal A}\) bounds the overlap multiplicity, the
  displayed
  proof gives
  \[
    C=C\!\left(
    p,d,L,B_{\mathcal A},H_{\mathcal A},
    Z_{\mathcal A},M_{\mathcal A}\right).
  \]
  The chart number, chart radii, and bi-Lipschitz constants enter only through the listed slope, cut-off, and overlap bounds; they need not be
  recorded as additional independent parameters.
\end{remark}

\begin{remark}[Dependence on the support radius]
  \label{rem:L-dependence}
  For a bounded \(\C^{1,1}\) domain \(\Omega\subset\R^d\), \(d\ge2\), and \(L>0\), let
  \[
    C^\gamma_{p,\Omega}(L):=
    \sup_{\substack{0\ne f\in
        \C_c^\infty(\overline\Omega\times\R^d)\\
        \supp_vf\subset\overline B_L}}
    \frac{\mathcal T^\gamma_{p,\Omega}(f)}
    {\mathcal E^\gamma_\Omega(f)}.
  \]
  For $1 \le p <2$, the dependence on \(L\) is unavoidable. The high-tangential-velocity packets in Theorem~\ref{thm:unbounded} have tangential
  scale
  \(R=h^{-1/3}\), support in \(B_{R+C_0}\) for a fixed \(C_0\), uniformly bounded energy, and a weighted boundary functional of size
  \(h^{(p-2)/6}=R^{(2-p)/2}\). Consequently, for all sufficiently large \(L\),
  \begin{equation}\label{eq:L-lower-growth}
    C^\gamma_{p,\Omega}(L)
    \gtrsim_{p,\Omega}L^{(2-p)/2},
    \qquad 1\le p<2.
  \end{equation}
  In particular, the constant in the natural trace estimate grows at least like \(L^{1/2}\). For every \(p\ge2\),
  Proposition~\ref{prop:quadratic-gaussian} and \(\omega_p\le\omega_2\) instead give an \(L\)-uniform \(\omega_p\)-trace estimate.

  The support restriction enters the proof at four stages. In the Hardy argument, the error \(e=u\cdot(b_y-b)\) and the term \(-u^{\mathsf
      T}(\nabla b_y)u\) in \(X_b\bar c\) force the absorption parameter \(\Lambda\) to depend on \(L\). In the dyadic boundary-weight recovery
  argument, the estimate
  \[
    |\bar c_j-c|\le CLH\kappa^{\alpha_p}a_j
  \]
  forces \(\kappa\) to depend on \(L\). Globalisation introduces \(L\) through \(v\cdot\nabla_x\zeta_j\). Finally, for Gaussian measure, the
  weighted anchored inequality in Lemma~\ref{lem:bounded-positive-single-profile} uses Gaussian--Lebesgue comparability on an \(L\)-dependent
  velocity region. For $1 \le p <2$, the proof yields the finite constant described in Remark~\ref{rem:quantitative-atlas}, but does not provide
  a
  matching upper growth rate. For comparison, the translated Lebesgue packets in Proposition~\ref{prop:interval-lebesgue-failure} give a linear
  lower bound in \(L\) for the corresponding \(\omega_p\)-constant on every bounded \(\C^1\) domain.
\end{remark}

\subsection{Spherical velocity model by radial thickening}

\begin{lemma}[Radial thickening]
  \label{lem:spherical-radial-lift}
  Fix a nonzero \(\vartheta\in \C_c^\infty(1/2,2)\). For \(g\in \H^1(\Sph^{d-1})\), define \(G(r\theta)=\vartheta(r)g(\theta)\). Then
  \[
    \norm{G}_{\H^1(\R^d)}
    \le C_\vartheta\norm{g}_{\H^1(\Sph^{d-1})}.
  \]
  Moreover, there is a bounded map
  \[
    \mathcal R_\vartheta\colon
    \H^{-1}(\Sph^{d-1})\longrightarrow \H^{-1}(\R^d)
  \]
  such that, for functions \(h\),
  \[
    (\mathcal R_\vartheta h)(r\theta)=r\vartheta(r)h(\theta),
    \qquad
    \norm{\mathcal R_\vartheta h}_{\H^{-1}(\R^d)}
    \le C_\vartheta\norm{h}_{\H^{-1}(\Sph^{d-1})}.
  \]
\end{lemma}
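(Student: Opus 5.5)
The plan has two parts: a direct polar-coordinate computation for the \(\H^1\) bound, and a duality construction for \(\mathcal R_\vartheta\) based on the adjoint of a radial averaging map.

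\emph{The \(\H^1\) bound.} For smooth \(g\) and \(v=r\theta\),
\[
  \nabla_vG(r\theta)=\vartheta'(r)g(\theta)\theta+\frac{\vartheta(r)}{r}\nabla_{\Sph^{d-1}}g(\theta).
\]
The two terms are orthogonal, exactly as in the proof of Theorem~\ref{thm:bounded-euclidean}. Polar coordinates \(\dd v=r^{d-1}\dd r\dd\sigma(\theta)\) then give
\[
  \norm{G}_{\H^1(\R^d)}^2=a_\vartheta\norm{g}_{\L^2(\Sph^{d-1})}^2+b_\vartheta\norm{\nabla_{\Sph^{d-1}}g}_{\L^2(\Sph^{d-1})}^2,
\]
where
\[
  a_\vartheta=\int(\vartheta^2+\vartheta'^2)r^{d-1}\dd r,\qquad b_\vartheta=\int\vartheta^2r^{d-3}\dd r.
\]
Both constants are finite because \(\supp\vartheta\subset(1/2,2)\) stays away from \(r=0\), including when \(d=2\). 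Density of smooth functions in \(\H^1(\Sph^{d-1})\) extends the bound to all \(g\in\H^1(\Sph^{d-1})\).

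\emph{Construction of \(\mathcal R_\vartheta\).} I would obtain it as the adjoint of a radial averaging operator. For \(\varphi\in\H^1(\R^d)\), set
\[
  (\mathcal A\varphi)(\theta):=\int_0^\infty r\vartheta(r)\varphi(r\theta)r^{d-1}\dd r.
\]
For a function \(h\), Fubini in polar coordinates gives
\[
  \int_{\R^d}r\vartheta(r)h(\theta)\varphi(v)\dd v=\int_{\Sph^{d-1}}h\,\mathcal A\varphi\dd\sigma.
\]
This identity suggests defining
\[
  \pair{\mathcal R_\vartheta h}{\varphi}:=\pair{h}{\mathcal A\varphi}_{\H^{-1}(\Sph^{d-1}),\H^1(\Sph^{d-1})}
\]
for general \(h\in\H^{-1}(\Sph^{d-1})\). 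With this definition, \(\mathcal R_\vartheta\) agrees with the stated pointwise formula on \(\L^2\) functions, since the pivot identification is bilinear. It is also bounded with \(\norm{\mathcal R_\vartheta}\le\norm{\mathcal A}_{\H^1\to\H^1}\). So everything reduces to showing that \(\mathcal A\colon\H^1(\R^d)\to\H^1(\Sph^{d-1})\) is bounded.

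\emph{Main step: boundedness of \(\mathcal A\).} For the \(\L^2\) part, Cauchy--Schwarz in \(r\) over the compact interval \((1/2,2)\) gives \(|\mathcal A\varphi(\theta)|^2\le C\int_{1/2}^2|\varphi(r\theta)|^2r^{d-1}\dd r\). Integrating over \(\theta\) yields \(\norm{\mathcal A\varphi}_{\L^2}\le C\norm{\varphi}_{\L^2}\). For the gradient, I would differentiate under the integral for smooth \(\varphi\), using
\[
  \nabla_{\Sph^{d-1}}\bigl[\varphi(r\theta)\bigr]=r\,P_{\theta^\perp}\nabla\varphi(r\theta),
\]
where \(P_{\theta^\perp}\) is the orthogonal projection onto \(\theta^\perp\). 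This gives \(|\nabla_{\Sph^{d-1}}\mathcal A\varphi(\theta)|\le\int r^2|\vartheta(r)||\nabla\varphi(r\theta)|r^{d-1}\dd r\), and the same Cauchy--Schwarz argument bounds \(\norm{\nabla_{\Sph^{d-1}}\mathcal A\varphi}_{\L^2}\) by \(C\norm{\nabla\varphi}_{\L^2}\). Density of \(\C_c^\infty(\R^d)\) in \(\H^1(\R^d)\) then completes the argument.

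The only delicate points are consistency and the choice of norm. Consistency of the duality definition with the formula for functions is handled by the Fubini identity above. The lemma is stated with the Lebesgue norm \(\H^{-1}(\R^d)\). For the Gaussian case, the same proof works with \(\vartheta(r)\) replaced by \(\vartheta(r)\rho_\gamma(r)^{\pm1}\), because the Gaussian density is bounded above and below on \((1/2,2)\).
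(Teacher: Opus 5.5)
Your proposal is correct and follows the paper's proof essentially verbatim. The \(\H^1\) bound comes from the same polar-coordinate computation with orthogonal radial and tangential gradient components. Your operator \(\mathcal A\varphi(\theta)=\int_0^\infty r\vartheta(r)\varphi(r\theta)r^{d-1}\dd r\) is exactly the paper's averaging operator \(A\). Its \(\H^1\) boundedness, via Cauchy--Schwarz in \(r\) and \(\nabla_{\Sph^{d-1}}[\varphi(r\,\cdot)]=rP_{\theta^\perp}\nabla\varphi(r\,\cdot)\), gives \(\mathcal R_\vartheta=\mathcal A^*\) just as in the paper. Your closing Gaussian remark is harmless but unnecessary, since the lemma and its use in Corollary~\ref{cor:spherical-positive} involve only Lebesgue measure.
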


\begin{proof}
  Polar coordinates and orthogonality of radial and tangential derivatives give
  \[
    \nabla_vG(r\theta)
    =\vartheta'(r)g(\theta)\theta
    +\frac{\vartheta(r)}r
    \nabla_{\Sph^{d-1}}g(\theta).
  \]
  Because \(\vartheta\) is smooth and supported in the fixed annulus \(1/2<r<2\), integration in \(r\) yields
  \[
    \norm{G}_{\H^1(\R^d)}
    \le C_\vartheta\norm{g}_{\H^1(\Sph^{d-1})}.
  \]

  For \(\varphi\in\C_c^\infty(\R^d)\), define
  \[
    A\varphi(\theta)
    :=\int_0^\infty\vartheta(r)r^d\varphi(r\theta)\dd r.
  \]
  Cauchy--Schwarz in \(r\), together with
  \[
    \nabla_{\Sph^{d-1}}[\varphi(r\,\cdot)](\theta)
    =r(I-\theta\otimes\theta)\nabla_v\varphi(r\theta),
  \]
  gives
  \[
    \norm{A\varphi}_{\H^1(\Sph^{d-1})}
    \le C_\vartheta\norm{\varphi}_{\H^1(\R^d)}.
  \]
  Define \(\mathcal R_\vartheta:=A^*\). Then
  \[
    \norm{\mathcal R_\vartheta h}_{\H^{-1}(\R^d)}
    \le C_\vartheta
    \norm{h}_{\H^{-1}(\Sph^{d-1})}.
  \]
  For functions \(h\), polar coordinates identify
  \[
    (\mathcal R_\vartheta h)(r\theta)
    =r\vartheta(r)h(\theta).
  \]
\end{proof}

\begin{corollary}[Spherical \(\omega_p\)-trace estimate]
  \label{cor:spherical-positive}
  Let \(d\ge2\), \(1\le p<\infty\), and \(\alpha_p\le\alpha\le1\). If \(\Omega\subset\R^d\) is a bounded \(\C^{1,\alpha}\) domain, then every
  \(f\in\C^\infty(\overline\Omega\times\Sph^{d-1})\) satisfies
  \begin{align*}
     & \int_{\partial\Omega}\int_{\Sph^{d-1}}
    |f(x,v)|^2|v\cdot n_\Omega(x)|^p\dd\sigma(v)\dd S(x) \\
     & \qquad\le C\left(
    \norm{f}_{\L^2(\Omega;\H^1(\Sph^{d-1}))}^2+
    \norm{v\cdot\nabla_xf}_{\L^2(\Omega;\H^{-1}(\Sph^{d-1}))}^2
    \right).
  \end{align*}
  Because \(|v\cdot n_\Omega|\le1\) on the sphere, this is exactly the \(\omega_p\)-trace estimate.
\end{corollary}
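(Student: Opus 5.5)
We deduce the spherical estimate from the bounded-support Euclidean estimate, Theorem~\ref{thm:bounded-positive}, by radial thickening, using Lemma~\ref{lem:spherical-radial-lift}. Fix a nonzero \(\vartheta\in\C_c^\infty(1/2,2)\) and take \(L=2\). Given \(f\in\C^\infty(\overline\Omega\times\Sph^{d-1})\), define
\[
  G(x,r\theta):=\vartheta(r)f(x,\theta).
\]
This function lies in \(\C_c^\infty(\overline\Omega\times\R^d)\) with \(\supp_vG\subset\overline B_2\), so Theorem~\ref{thm:bounded-positive} applies to it with Lebesgue measure \(\mu=\mathcal L^d\).

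First we compare the boundary sides. Since \(v\cdot n=r\,\theta\cdot n\) and \(r>0\), polar coordinates give
\[
  \int_{\partial\Omega}\int_{\R^d}|G|^2|v\cdot n_\Omega|^p\dd v\dd S
  =c_{\vartheta,p}\int_{\partial\Omega}\int_{\Sph^{d-1}}|f|^2|\theta\cdot n_\Omega|^p\dd\sigma\dd S,
\]
where \(c_{\vartheta,p}=\int_0^\infty\vartheta(r)^2r^{d-1+p}\dd r>0\). The spherical left-hand side is therefore a fixed multiple of the Euclidean one, as in \eqref{eq:rough-radial-trace}.

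Next we bound the bulk terms. Pointwise in \(x\), Lemma~\ref{lem:spherical-radial-lift} gives
\[
  \norm{G(x,\cdot)}_{\H^1(\R^d)}\le C_\vartheta\norm{f(x,\cdot)}_{\H^1(\Sph^{d-1})}.
\]
For the transport term,
\[
  v\cdot\nabla_xG(x,r\theta)=r\vartheta(r)\,\theta\cdot\nabla_xf(x,\theta)
  =\bigl(\mathcal R_\vartheta h_x\bigr)(r\theta),
  \qquad h_x:=\theta\cdot\nabla_xf(x,\cdot).
\]
Here \(h_x\) is exactly the spherical transport derivative \((v\cdot\nabla_xf)(x,\cdot)\). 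The boundedness of \(\mathcal R_\vartheta\) from \(\H^{-1}(\Sph^{d-1})\) to \(\H^{-1}(\R^d)\) then gives
\[
  \norm{v\cdot\nabla_xG(x,\cdot)}_{\H^{-1}(\R^d)}\le C_\vartheta\norm{v\cdot\nabla_xf(x,\cdot)}_{\H^{-1}(\Sph^{d-1})}.
\]
Squaring and integrating over \(\Omega\) bounds \(\mathcal E^{\mathcal L^d}_\Omega(G)\) by \(C\,\mathcal E^\sigma_\Omega(f)\).

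Chaining the boundary identity, Theorem~\ref{thm:bounded-positive}, and these bulk bounds yields the corollary, with a constant depending on \(p,d\), the atlas, and \(\vartheta\).

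The one point needing care is identifying \(v\cdot\nabla_xG\) with \(\mathcal R_\vartheta\) applied to the spherical transport derivative, because the latter is only controlled in the dual norm. Since \(f\) is smooth, \(h_x\) is a genuine smooth function on the sphere, so the function formula in Lemma~\ref{lem:spherical-radial-lift} applies directly and no density argument is needed. Finally, the last sentence of the corollary holds because \(|v\cdot n_\Omega|\le1\) on \(\Sph^{d-1}\), so \(\omega_p(|v\cdot n_\Omega|)=|v\cdot n_\Omega|^p\) there.
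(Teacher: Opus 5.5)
Your proposal is correct and follows the paper's proof essentially verbatim. Both arguments thicken \(f\) radially, bound the \(\H^1\) and transport terms via Lemma~\ref{lem:spherical-radial-lift} (identifying \(v\cdot\nabla_xG\) with \(\mathcal R_\vartheta\) applied to the smooth spherical transport derivative), use the polar-coordinate identity with the radial constant \(\int_0^\infty\vartheta(r)^2r^{d-1+p}\dd r\), and apply Theorem~\ref{thm:bounded-positive} with Lebesgue velocity measure.
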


\begin{proof}
  Choose a nonzero \(\vartheta\in\C_c^\infty(1/2,2)\), and, for \(v=r\theta\), define
  \[
    F(x,r\theta):=\vartheta(r)f(x,\theta).
  \]
  Lemma~\ref{lem:spherical-radial-lift} gives
  \[
    \norm{F}_{\L^2(\Omega;\H^1(\R^d))}
    \le C_\vartheta
    \norm{f}_{\L^2(\Omega;\H^1(\Sph^{d-1}))}.
  \]
  Moreover,
  \[
    v\cdot\nabla_xF(x,r\theta)
    =r\vartheta(r)\,
    \theta\cdot\nabla_xf(x,\theta)
    =\mathcal R_\vartheta
    \bigl(\theta\cdot\nabla_xf\bigr)(x,r\theta).
  \]
  The boundedness of \(\mathcal R_\vartheta\) therefore yields
  \[
    \norm{v\cdot\nabla_xF}
    _{\L^2(\Omega;\H^{-1}(\R^d))}
    \le C_\vartheta
    \norm{\theta\cdot\nabla_xf}
    _{\L^2(\Omega;\H^{-1}(\Sph^{d-1}))}.
  \]
  Polar coordinates give
  \[
    \int_{\R^d}|F(x,v)|^2|v\cdot n_\Omega(x)|^p\dd v
    =
    \left(
    \int_0^\infty\vartheta(r)^2r^{d-1+p}\dd r
    \right)
    \int_{\Sph^{d-1}}|f(x,\theta)|^2
    |\theta\cdot n_\Omega(x)|^p\dd\sigma(\theta).
  \]
  The radial constant is positive. Applying Theorem~\ref{thm:bounded-positive} with Lebesgue velocity measure and dividing by this constant
  proves the claim.
\end{proof}

\begin{remark}[Sharp boundary regularity threshold]
  \label{rem:bounded-positive-sharp}
  Together with Theorems~\ref{thm:bounded-euclidean} and~\ref{thm:spherical}, the positive results show that
  \[
    \alpha_p=\frac1{p+1}
  \]
  is the sharp boundary regularity threshold for bounded \(\C^{1,\alpha}\) domains in the bounded-support Euclidean model and in the spherical
  velocity model. Below the threshold, failure is existential rather than universal.
\end{remark}

\section{\texorpdfstring{Trace operators and Green's formula}
  {Trace operators and Green's formula}}
\label{sec:energy}

We now pass the \(\omega_p\)-trace estimates to their graph-space completions. On a half-space, this yields \(\omega_p\)-trace operators for the
unrestricted Lebesgue and unrestricted Gaussian models. On bounded domains in \(d\ge2\), it yields \(\omega_p\)-trace operators for the
unrestricted Gaussian model when \(p\ge2\), and at the stated thresholds for the two bounded-velocity models. Green's formula below is proved for
the natural trace operators in the unrestricted half-space models and, on bounded domains, in the two bounded-velocity models. Density is used
only for approximation. We collect all density statements and their provenance first.

\subsection{Density}
\label{sec:functional-density}

For convenience, we recall the complete graph spaces introduced above:
\begin{align}
  \mathcal X_\mu(\Omega)
   & =\{f\in \L^2(\Omega;\H^1_\mu):
  v\cdot\nabla_xf\in \L^2(\Omega;\H^{-1}_\mu)\}, \\
  \mathcal X_{\mu,L}(\Omega)
   & =\left\{f\in\mathcal X_\mu(\Omega):
  \operatorname*{ess\,supp}_v f\subset\overline B_L\right\},
  \qquad \qquad \mu\in\{\mathcal L^d,\gamma_d\}, \quad L>0
  \\
  \mathcal X_\sigma(\Omega)
   & =\left\{f\in \L^2(\Omega;\H^1(\Sph^{d-1})):
  v\cdot\nabla_xf\in \L^2(\Omega;\H^{-1}(\Sph^{d-1}))\right\}.
\end{align}

For \(L>0\), let
\[
  \mathcal D_L(\overline\Omega)
  :=\left\{F|_\Omega:\ F\in \C_c^\infty(\R_x^d\times\R_v^d),\
  \supp_vF\Subset B_L\right\}.
\]
Set
\[
  \mathcal D_c(\overline\Omega)
  :=\bigcup_{L>0}\mathcal D_L(\overline\Omega).
\]

\begin{proposition}[Density of smooth functions on the half-space]
  \label{prop:flat-intrinsic-density}
  For either the Lebesgue or the standard Gaussian velocity measure, \(\C_c^\infty(\overline{\mathbb H^d}\times\R^d)\) is dense in \(\mathcal
  X_\mu(\mathbb H^d)\) in the graph norm.
\end{proposition}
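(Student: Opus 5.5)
We prove density by the Friedrichs-type strategy adapted to the half-space: tangential mollification in \((z,v)\), a small inward shift in \(y\), mollification in \(y\), and then cut-offs in \(x\) and \(v\). Fix \(f\in\mathcal X_\mu(\mathbb H^d)\) and write \(g:=v\cdot\nabla_xf\in\L^2(\mathbb H^d;\H^{-1}_\mu)\).

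\emph{Step 1: truncation in \(x\) and \(v\).} Let \(\phi_R(x)=\phi(x/R)\) and \(\psi_R(v)=\psi(v/R)\) be cut-offs. Then
\[
v\cdot\nabla_x(\phi_R\psi_Rf)=\phi_R\psi_Rg+\psi_R(v\cdot\nabla\phi_R)f .
\]
The commutator term is supported in \(|v|\lesssim R\). It is bounded in \(\L^2\) by \(\norm{f}_{\L^2}\), since \(|v|\,|\nabla\phi_R|\lesssim R\cdot R^{-1}\). Multiplication by \(\psi_R\) is uniformly bounded on \(\H^{\pm1}_\mu\) for both measures, because \(|\nabla\psi_R|\lesssim R^{-1}\). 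Dominated convergence then gives graph-norm convergence, using strong convergence of \(\psi_R\to1\) on \(\H^1_\mu\) and by duality on \(\H^{-1}_\mu\).

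After this step \(f\) has compact support in \(x\) and in \(v\). On a bounded velocity region the Gaussian and Lebesgue norms are comparable, so we may work with \(\mu=\mathcal L^d\) from now on.

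\emph{Step 2: shift and mollify.} Set \(f^\epsilon(z,y,v):=f(z,y+\epsilon,v)\), which is defined on \(\{y>-\epsilon\}\). It satisfies the same transport identity there, and by continuity of translation \(f^\epsilon\to f\) in the graph norm on \(\mathbb H^d\). Next mollify in all variables, \(f^{\epsilon}_\eta:=f^\epsilon*_{x,v}j_\eta\) with \(\eta<\epsilon/2\). This is smooth up to \(y=0\) and compactly supported.

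The main obstacle is the commutator \([v\cdot\nabla_x,*j_\eta]\) in the \(\H^{-1}_v\) norm. Convolution in \(x\) alone commutes with transport, but convolution in \(v\) does not:
\[
v\cdot\nabla_x(f*_vj_\eta)-(v\cdot\nabla_xf)*_vj_\eta=\int j_\eta(v')\,v'\cdot\nabla_xf(x,v-v')\dd v' .
\]
We handle this in the usual DiPerna--Lions way, in two stages. First mollify in \(x\) at scale \(\epsilon'\), which commutes exactly and makes \(\nabla_xf\) bounded in \(\L^2_x\H^1_v\) with constant \(C(\epsilon')\). Then mollify in \(v\) at scale \(\eta\ll\epsilon'\). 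The commutator displayed above is then \(O(\eta\,C(\epsilon'))\) in \(\L^2_{x,v}\), hence also in \(\L^2_x\H^{-1}_v\), so it tends to zero as \(\eta\to0\) with \(\epsilon'\) fixed. Meanwhile \(v\)-mollification of \(f*_xj_{\epsilon'}\) converges in \(\L^2_x\H^1_v\), and \((g*_xj_{\epsilon'})*_vj_\eta\to g*_xj_{\epsilon'}\) in \(\L^2_x\H^{-1}_v\). Finally \(g*_xj_{\epsilon'}\to g\) as \(\epsilon'\to0\).

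A diagonal choice \(\eta\ll\epsilon'\ll\epsilon\to0\) yields functions in \(\C_c^\infty(\overline{\mathbb H^d}\times\R^d)\) converging to \(f\) in the graph norm.
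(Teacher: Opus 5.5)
Your proposal is correct and follows essentially the same route as the paper: cut-offs in \(v\) and \(x\), then an inward shift combined with spatial mollification, which commutes exactly with \(v\cdot\nabla_x\). After that comes velocity mollification, whose commutator \(\int j_\eta(v')\,v'\cdot\nabla_xF(x,v-v')\dd v'\) is \(O(\eta)\) against the already \(x\)-smoothed function, and a diagonal choice of parameters finishes the argument. The differences are cosmetic: the paper truncates first in \(v\) and then in \(x\) rather than at a common scale \(R\), and it gets strong convergence of the velocity cut-offs on \(\H^{-1}_\mu\) from uniform boundedness plus convergence on the dense subspace \(\L^2_\mu\), which is what your ``by duality'' needs to mean, since duality alone gives only weak convergence of the adjoints.
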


The proof is given in Appendix~\ref{app:flat-density}.

\begin{proposition}[Density in the unrestricted Gaussian model]
  \label{prop:bounded-gaussian-density}
  Let \(\Omega\subset\R^d\) be a bounded \(\C^1\) domain. Then \(\mathcal D_c(\overline\Omega)\) is dense in \(\mathcal X_\gamma(\Omega)\).
\end{proposition}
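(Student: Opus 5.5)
The proof reduces, in three successive steps, to a Friedrichs-type commutator estimate on a neighbourhood-free setting: first truncation in velocity, then spatial localisation, and finally mollification in \(x\) and \(v\).

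\emph{Step 1: velocity truncation.} Let \(f\in\mathcal X_\gamma(\Omega)\). Fix a radial cut-off \(\phi\in\C_c^\infty(\R^d)\) equal to one on \(B_1\), and set \(\phi_R(v)=\phi(v/R)\). Since \(\phi_R\) does not depend on \(x\), we have \(v\cdot\nabla_x(\phi_Rf)=\phi_R\,v\cdot\nabla_xf\). Multiplication by \(\phi_R\) is bounded on \(\H^1_\gamma\), uniformly in \(R\), because \(|\nabla\phi_R|\le C/R\). By duality it is therefore uniformly bounded on \(\H^{-1}_\gamma\) as well. Moreover, \(\phi_R\to1\) strongly on \(\H^1_\gamma\). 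A standard argument (uniform boundedness together with convergence on the dense subspace \(\L^2_\gamma\)) then gives strong convergence on \(\H^{-1}_\gamma\). Dominated convergence in \(x\) yields \(\phi_Rf\to f\) in the graph norm. It therefore suffices to approximate functions whose velocity support lies in \(\overline B_L\).

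\emph{Step 2: spatial localisation.} Cover \(\overline\Omega\) by finitely many sets: one interior open set, and boundary charts in which \(\Omega\) is locally the region above a \(\C^1\) graph. Choose a partition of unity \(\zeta_j\) subordinate to this cover. For functions with bounded velocity support, the commutator \((v\cdot\nabla_x\zeta_j)f\) is bounded in \(\L^2\). Hence each piece \(\zeta_jf\) lies in the graph space, with norm controlled as in Lemma~\ref{lem:bounded-positive-flattening}.

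\emph{Step 3: mollification of each piece.} For the interior piece, convolve in \((x,v)\). For a boundary piece, use the classical inward-translation trick, which needs no flattening, so the \(\C^1\) regularity of the graph is enough. Pick a transversal direction \(e\) that points into \(\Omega\) uniformly on the chart; this is possible because the graph is continuous and the chart is small. Translate \(g_\epsilon(x,v)=(\zeta_jf)(x+2\epsilon e,v)\). This translate is defined on a neighbourhood of \(\overline\Omega\cap\supp\zeta_j\) and commutes with \(v\cdot\nabla_x\). Then mollify in \(x\) at scale \(\epsilon\) and in \(v\) at scale \(\epsilon\). Spatial convolution commutes with the transport operator. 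Velocity convolution \(\eta_\epsilon*_v\) does not commute with it, and that is the one nontrivial point.

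\emph{Main obstacle: the velocity-mollification commutator.} We must show that
\[
[v\cdot\nabla_x,\eta_\epsilon*_v]g=\int\eta_\epsilon(v-v')(v-v')\cdot\nabla_xg(x,v')\dd v'
\]
tends to zero in \(\L^2_x\H^{-1}_\gamma\). The plan is to first mollify in \(x\) at a scale \(\delta\), and then take \(\epsilon\ll\delta\). After the \(x\)-mollification, \(\nabla_xg\) is bounded in \(\L^2\) by \(C\delta^{-1}\|g\|\). The commutator is then at most \(C\epsilon\delta^{-1}\|g\|_{\L^2}\) in \(\L^2\), hence also in \(\H^{-1}_\gamma\), because on the bounded velocity support the Gaussian weight is comparable to Lebesgue measure. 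Next, velocity mollification converges in \(\L^2_x\H^1_\gamma\). It is also uniformly bounded on \(\H^{-1}_\gamma\) restricted to \(B_{L+1}\), again by Gaussian–Lebesgue comparability on bounded sets. Hence it converges strongly there.

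Conclusion: combining these facts with the convergence of the translation (continuity of translation in \(\L^2_x\)) gives approximants in \(\mathcal D_{L+1}(\overline\Omega)\subset\mathcal D_c(\overline\Omega)\). Summing over the charts proves the proposition.
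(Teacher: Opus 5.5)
Your proof is correct, but it is not how the paper proves this statement. The paper simply cites \cite[Proposition~2.2]{AAMN2024}, after noting that \(\H^1_{\mathrm{hyp}}(\Omega)\) coincides with \(\mathcal X_\gamma(\Omega)\) and that, for bounded \(\Omega\), the test class there is \(\mathcal D_c(\overline\Omega)\). What you have written is essentially the paper's own proof of the Lebesgue analogue, Proposition~\ref{prop:bounded-lebesgue-density}, run with \(\gamma_d\) instead:
\begin{itemize}
\item your Step~1 is Lemma~\ref{lem:density-velocity-cutoff};
\item your Steps~2--3 reproduce Appendix~\ref{app:density}, namely the partition of unity, inward translation plus spatial mollification as in Lemma~\ref{lem:density-regularisation}(i), and velocity mollification after \(x\)-smoothing;
\item your \(C\epsilon\delta^{-1}\) commutator bound is the estimate \(\norm{[v_i,J_\eta]a}_{\L^2_\mu}\le C\eta\norm{a}_{\L^2_\mu}\) of Lemma~\ref{lem:density-regularisation}(ii), applied to \(a=\partial_{x_i}G\).
\end{itemize}

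Both of those results are already stated for \(\mu=\gamma_d\), so your route makes the proposition self-contained within the paper. It also shows that the Gaussian weight matters only in the truncation step; after that, Gaussian and Lebesgue norms are comparable on a fixed ball. Your version is also slightly leaner than Appendix~\ref{app:density}. Because \(\mathcal D_c\) lets the velocity support grow to \(B_{L+1}\), the dilation \(D_\lambda\) is unnecessary. Translating along an inward direction in the original coordinates also avoids rotating velocities. The citation buys brevity, at the cost of relying on an external argument.

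One bookkeeping correction. In Step~3 you use a single \(\epsilon\) for the translation, the \(x\)-mollification and the \(v\)-mollification. Your commutator argument needs the order you state afterwards:
\begin{itemize}
\item translation distance \(t\);
\item spatial radius \(\delta<t/(1+K)\), or charts of small Lipschitz constant;
\item then \(\epsilon\ll\delta\), followed by a diagonal choice.
\end{itemize}
With that order you do not even need the uniform \(\H^{-1}_\gamma\)-boundedness of velocity mollification, since \(v\cdot\nabla_xg_\delta\in\L^2_{x,v}\) for fixed \(\delta\).
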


\begin{proof}
  This is \cite[Proposition~2.2]{AAMN2024}. Their space \(\H^1_{\mathrm{hyp}}(\Omega)\) and its graph norm agree with \(\mathcal
  X_\gamma(\Omega)\) and the norm used here. Since \(\Omega\) is bounded, their class \(\C_c^\infty(\overline\Omega\times\R^d)\) agrees with
  \(\mathcal D_c(\overline\Omega)\).
\end{proof}

\begin{proposition}[Density in the bounded-support Euclidean model]
  \label{prop:bounded-support-density}
  Let \(\Omega\subset\R^d\) be a bounded \(\C^1\) domain, let \(\mu\in\{\mathcal L^d,\gamma_d\}\), and let \(L>0\). Then \(\mathcal
  D_L(\overline\Omega)\) is dense in \(\mathcal X_{\mu,L}(\Omega)\).
\end{proposition}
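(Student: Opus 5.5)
We prove density of \(\mathcal D_L(\overline\Omega)\) in \(\mathcal X_{\mu,L}(\Omega)\) by the standard localise--translate--mollify scheme, taking care at each step that the velocity support stays strictly inside \(B_L\).

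\emph{Step 1: velocity dilation.} Fix \(f\in\mathcal X_{\mu,L}(\Omega)\) and, for \(0<\lambda<1\), set \(f_\lambda(x,v):=f(x,v/\lambda)\). Then \(\operatorname*{ess\,supp}_vf_\lambda\subset\overline B_{\lambda L}\Subset B_L\), and
\[
  v\cdot\nabla_xf_\lambda(x,v)=\lambda\,\bigl(v'\cdot\nabla_xf\bigr)(x,v/\lambda),\qquad v'=v/\lambda.
\]
On the fixed ball \(\overline B_L\), Lebesgue and Gaussian measures are comparable, so both \(\H^{1}_\mu\) and \(\H^{-1}_\mu\) restricted to functions and functionals supported in \(\overline B_L\) are equivalent to their Lebesgue versions. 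Dilation is strongly continuous as \(\lambda\to1\) on \(\H^1(\R^d)\), hence also on \(\H^{-1}(\R^d)\) by duality (its adjoint is again a rescaled dilation). Dominated convergence in \(x\) then gives \(f_\lambda\to f\) in the graph norm. It therefore suffices to approximate functions with \(\operatorname*{ess\,supp}_v\subset\overline B_{L'}\) for some \(L'<L\).

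\emph{Step 2: localisation.} Cover \(\overline\Omega\) by one interior open set and finitely many boundary \(\C^1\) graph charts, and take a smooth partition of unity \(\zeta_k\) in \(x\) only. Since \(v\cdot\nabla_x(\zeta_kf)=\zeta_k\,v\cdot\nabla_xf+(v\cdot\nabla_x\zeta_k)f\), and the last term lies in \(\L^2_x\L^2_v\) because \(|v|\le L'\), each piece \(\zeta_kf\) stays in the space with velocity support in \(\overline B_{L'}\).

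\emph{Step 3: approximation on each piece.} For the interior piece, mollify in \((x,v)\) with a velocity scale below \(L-L'\). For a boundary piece in a chart \(\{x_d>\Phi(z)\}\), first translate inward, \(g_s(x,v):=g(x+se_d,v)\) for small \(s>0\). This is possible because \(\Phi\) is continuous, so \(\Omega\cap U\) is stable under small upward shifts within the chart. Translation commutes with \(v\cdot\nabla_x\) and is strongly continuous in \(\L^2_x(\H^{\pm1}_\mu)\). Then mollify in \(x\) at a scale smaller than the inward margin, so that only interior values are used, and mollify in \(v\) at scale \(\varepsilon<L-L'\).

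\emph{Main obstacle.} The delicate point is \(v\)-mollification of the \(\H^{-1}_\mu\)-valued transport derivative. We have
\[
  v\cdot\nabla_x(\varphi_\varepsilon*_vg)=\varphi_\varepsilon*_v(v\cdot\nabla_xg)+\bigl[v\cdot\nabla_x,\varphi_\varepsilon*_v\bigr]g,
\]
and the commutator equals \(-\nabla_x\cdot\bigl((v\varphi_\varepsilon)*_vg\bigr)\). We plan to handle this term by first mollifying in \(x\) at a scale \(\delta\) and then choosing \(\varepsilon\ll\delta\). The resulting error is \(O(\varepsilon\delta^{-1})\norm{g}\) in \(\L^2\), while velocity convolution is uniformly bounded and strongly convergent on \(\H^{\pm1}\) of compactly supported functions, thanks to the Lebesgue--Gaussian comparability on \(\overline B_L\). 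This is exactly the place where the compact support is essential.

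Alternatively, we would cite Proposition~\ref{prop:bounded-gaussian-density} and \cite[Proposition~2.2]{AAMN2024} and check that their construction preserves velocity supports after Step 1.
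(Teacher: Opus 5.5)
Your proposal is correct and follows essentially the same scheme as the paper's proof in Appendix~B: a partition of unity, inward translation plus spatial mollification in each $\C^1$ chart, a velocity dilation $v\mapsto v/\lambda$ to make room inside $B_L$, and velocity mollification whose commutator $\nabla_x\cdot((v\varphi_\varepsilon)*_vg)$ is made small by prior spatial smoothing ($\varepsilon\ll\delta$), exactly as in Lemma~\ref{lem:density-regularisation}(ii). The only difference is the order: you dilate first, which needs strong continuity of dilations on $\H^{-1}_\mu$. That is true, but it follows from uniform boundedness plus convergence on the dense subspace $\L^2_\mu$, not from duality alone. The paper instead dilates after spatial smoothing, when $v\cdot\nabla_xF_n\in\L^2_x\H^1_\mu$, so only $\H^1_\mu$-continuity is needed.
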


The proof is given in Appendix~\ref{app:density}.

\begin{proposition}[Density in the unrestricted Lebesgue model]
  \label{prop:bounded-lebesgue-density}
  Let \(\Omega\subset\R^d\) be a bounded \(\C^1\) domain. Then \(\mathcal D_c(\overline\Omega)\) is dense in \(\mathcal X_{\mathcal
      L^d}(\Omega)\).
\end{proposition}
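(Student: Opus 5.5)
We must prove density of $\mathcal D_c(\overline\Omega)$ in $\mathcal X_{\mathcal L^d}(\Omega)$ for a bounded $\C^1$ domain. The plan is to reduce to the bounded-support case, Proposition~\ref{prop:bounded-support-density}, by a velocity truncation. Given $f\in\mathcal X_{\mathcal L^d}(\Omega)$, fix $\phi\in\C_c^\infty(B_2)$ with $0\le\phi\le1$ and $\phi=1$ on $B_1$, and set $\phi_R(v)=\phi(v/R)$ and $f_R:=\phi_Rf$. Then $f_R$ has velocity support in $\overline B_{2R}$. Once we show $f_R\in\mathcal X_{\mathcal L^d,2R+1}(\Omega)$ and $f_R\to f$ in the graph norm, Proposition~\ref{prop:bounded-support-density} supplies elements of $\mathcal D_{2R+1}(\overline\Omega)\subset\mathcal D_c(\overline\Omega)$ that approximate $f_R$ arbitrarily well, and a diagonal argument concludes.

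\emph{Convergence in $\L^2_x\H^1$.} Since $\nabla_v f_R=\phi_R\nabla_vf+R^{-1}(\nabla\phi)(v/R)f$, dominated convergence gives $\phi_Rf\to f$ and $\phi_R\nabla_vf\to\nabla_vf$ in $\L^2$. The extra term has norm at most $CR^{-1}\norm f_{\L^2}\to0$.

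\emph{Transport term.} Because $\phi_R$ is independent of $x$, we have $v\cdot\nabla_x f_R=\phi_R\,(v\cdot\nabla_xf)$ distributionally, where multiplication on $\H^{-1}$ is defined by duality: $\pair{\phi_Rq}{\varphi}=\pair q{\phi_R\varphi}$. The map $\varphi\mapsto\phi_R\varphi$ is bounded on $\H^1(\R^d)$ uniformly in $R\ge1$, so multiplication by $\phi_R$ is uniformly bounded on $\H^{-1}$. It remains to show that $\phi_Rq\to q$ in $\H^{-1}$ for each fixed $q\in\H^{-1}$. For $q\in\L^2$ this follows from $\norm{(1-\phi_R)q}_{\H^{-1}}\le\norm{(1-\phi_R)q}_{\L^2}\to0$. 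Since $\L^2$ is dense in $\H^{-1}$ (the $\L^2$ pivot with reflexivity), the uniform bound extends the convergence to all of $\H^{-1}$. Pointwise in $x$ we then have $\norm{(1-\phi_R)q(x)}_{\H^{-1}}\le C\norm{q(x)}_{\H^{-1}}$, and dominated convergence in $x$ gives convergence in $\L^2_x\H^{-1}$.

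The main obstacle is the transport term: we must justify the identity $v\cdot\nabla_x(\phi_Rf)=\phi_R\,v\cdot\nabla_xf$ at the $\H^{-1}$-valued distributional level and prove strong rather than merely bounded convergence. The density argument above handles the convergence. The identity follows by testing against $\psi(x)\varphi(v)$ and moving $\phi_R$ onto $\varphi$.
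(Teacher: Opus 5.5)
Your proof is correct and follows the paper's argument. You truncate in velocity with an $x$-independent cut-off, use $v\cdot\nabla_x(\phi_R f)=\phi_R\,(v\cdot\nabla_x f)$, and get strong convergence on $\H^{-1}$ from a uniform bound plus convergence on the dense subspace $\L^2$; the paper packages that step as Lemma~\ref{lem:density-velocity-cutoff}. You then apply the bounded-support density result (Proposition~\ref{prop:bounded-support-density}) and pass to a diagonal sequence, exactly as the paper does.
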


\begin{proof}
  Let \(f\in\mathcal X_{\mathcal L^d}(\Omega)\) and use the velocity cut-offs \(\chi_R\) from Lemma~\ref{lem:density-velocity-cutoff}. Since
  \(\chi_R\) is independent of \(x\),
  \[
    v\cdot\nabla_x(\chi_Rf)
    =\chi_R(v\cdot\nabla_xf)
  \]
  in distributions. The lemma and dominated convergence for the corresponding Bochner norms give
  \[
    \chi_Rf\longrightarrow f
    \qquad\text{in }\mathcal X_{\mathcal L^d}(\Omega).
  \]
  Each \(\chi_Rf\) has velocity support in a fixed ball, so Proposition~\ref{prop:bounded-support-density} applies. A diagonal sequence as
  \(R\to\infty\) gives approximants in \(\mathcal D_c(\overline\Omega)\).
\end{proof}

\begin{proposition}[Density in the spherical velocity model]
  \label{prop:spherical-density}
  Let \(d\ge2\) and let \(\Omega\subset\R^d\) be a bounded \(\C^1\) domain. Then \(\C^\infty(\overline\Omega\times\Sph^{d-1})\) is dense in
  \(\mathcal X_\sigma(\Omega)\).
\end{proposition}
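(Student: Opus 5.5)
We reduce the spherical density statement to the bounded-support Euclidean density of Proposition~\ref{prop:bounded-support-density} by the radial thickening of Lemma~\ref{lem:spherical-radial-lift}, and then return to the sphere by radial averaging.

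\emph{Lifting to the Euclidean model.} Given \(f\in\mathcal X_\sigma(\Omega)\), fix a nonzero \(\vartheta\in\C_c^\infty(1/2,2)\) and set \(F(x,r\theta):=\vartheta(r)f(x,\theta)\). By Lemma~\ref{lem:spherical-radial-lift}, \(F\in\L^2(\Omega;\H^1(\R^d))\). In distributions,
\[
  v\cdot\nabla_xF=\mathcal R_\vartheta(\theta\cdot\nabla_xf)\in\L^2(\Omega;\H^{-1}(\R^d)).
\]
To justify this identity, we pair with test functions \(\varphi\) and use \(A\varphi\), where \(A\) is the adjoint map from the proof of that lemma. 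Since \(\operatorname{supp}_vF\subset\overline B_2\), we get \(F\in\mathcal X_{\mathcal L^d,3}(\Omega)\). Proposition~\ref{prop:bounded-support-density} then supplies \(F_k\in\mathcal D_3(\overline\Omega)\) with \(F_k\to F\) in the graph norm.

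\emph{Projecting back to the sphere.} Choose \(\vartheta\) with \(\int_0^\infty\vartheta(r)^2r^{d-1}\dd r=1\) and define the radial average
\[
  (\mathcal P F)(x,\theta):=\int_0^\infty\vartheta(r)r^{d-1}F(x,r\theta)\dd r .
\]
Then \(\mathcal P F=f\), and \(\mathcal P F_k\in\C^\infty(\overline\Omega\times\Sph^{d-1})\). It remains to check two bounds.

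First, \(\mathcal P\colon\H^1(\R^d)\to\H^1(\Sph^{d-1})\) is bounded. This follows from Cauchy--Schwarz in \(r\) together with
\[
  \nabla_{\Sph^{d-1}}[F(r\,\cdot)]=r(I-\theta\otimes\theta)\nabla_vF ,
\]
exactly as for \(A\).

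Second, the transport term must be controlled, and this is the main obstacle. The field \(v\cdot\nabla_x\) equals \(r\,\theta\cdot\nabla_x\), so for smooth \(G\),
\[
  \theta\cdot\nabla_x\mathcal P G=\mathcal P'(v\cdot\nabla_xG),\qquad \mathcal P'H(\theta):=\int_0^\infty\vartheta(r)r^{d-2}H(r\theta)\dd r .
\]
We need \(\mathcal P'\) to extend boundedly from \(\H^{-1}(\R^d)\) to \(\H^{-1}(\Sph^{d-1})\). For this we identify \(\mathcal P'\) as the adjoint of the lift \(g\mapsto\vartheta(r)r^{-1}g(\theta)\). That lift is bounded \(\H^1(\Sph^{d-1})\to\H^1(\R^d)\) by the first part of Lemma~\ref{lem:spherical-radial-lift}, with the profile \(\vartheta/r\), which is smooth and supported in \((1/2,2)\). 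The duality pairings match because both sides use polar coordinates with the \(\L^2\) pivots.

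Consequently,
\[
  \norm{\mathcal P F_k-f}_{\mathcal X_\sigma}\le C\norm{F_k-F}_{\mathcal X_{\mathcal L^d}}\to0 .
\]
The same distributional identity applied to \(F\) confirms that \(\theta\cdot\nabla_x\mathcal P F=\mathcal P'(v\cdot\nabla_xF)\) in \(\mathcal X_\sigma\). This proves density.
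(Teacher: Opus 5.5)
Your argument is correct, but it follows a genuinely different route from the paper. The paper imports the space--time density theorem of Brunken and Smetana \cite[Proposition~3.1]{BrunkenSmetana2022}, which is stated only for $d\in\{2,3\}$; the paper merely remarks that the proof appears dimension-independent. It then removes the time variable by averaging against $\eta\in\C_c^\infty(I)$ with $\int_I\eta=1$ and integrating by parts in $t$.

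You instead exhibit $\mathcal X_\sigma(\Omega)$ as a retract of $\mathcal X_{\mathcal L^d,2}(\Omega)$. The lift $f\mapsto\vartheta(r)f(\theta)$ and the radial average $\mathcal P$ satisfy $\mathcal P\circ(\text{lift})=\mathrm{id}$. Both intertwine the transport operators through bounded maps: $\mathcal R_\vartheta=A^*$ in one direction, and $\mathcal P'=E^*$ in the other, where $Eg(r\theta)=\vartheta(r)r^{-1}g(\theta)$. Density then transfers from Proposition~\ref{prop:bounded-support-density}. Its proof in Appendix~\ref{app:density} makes no use of the sphere, so there is no circularity.

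Your route gives a self-contained proof, valid in every dimension $d\ge2$, built from the same radial-thickening machinery as Lemma~\ref{lem:spherical-radial-lift} and Corollary~\ref{cor:spherical-positive}. The paper's route is shorter, but it rests on an external result whose stated hypotheses do not formally cover $d\ge4$.

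The only step you leave terse is the identity $\theta\cdot\nabla_x\mathcal PG=\mathcal P'(v\cdot\nabla_xG)$ for non-smooth $G$. It holds for every $G\in\mathcal X_{\mathcal L^d}(\Omega)$: test against $\psi\in\C_c^\infty(\Omega\times\Sph^{d-1})$ and note that $E\psi\in\C_c^\infty(\Omega\times\R^d)$, because $\vartheta$ vanishes near $r=0$. For $G=F$ it also follows from $\mathcal P'\mathcal R_\vartheta=(AE)^*=\mathrm{id}$, since $AEg=g\int_0^\infty\vartheta^2r^{d-1}\dd r=g$.
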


\begin{proof}
  Brunken and Smetana prove the corresponding space--time density statement \cite[Proposition~3.1]{BrunkenSmetana2022}; a bounded \(\C^1\) domain
  satisfies their globally Lipschitz, piecewise \(\C^1\) spatial-domain hypothesis. Although their proposition is stated under the standing
  restriction \(d\in\{2,3\}\), its proof is dimension-independent: the spatial regularisation is Bochner-valued, while the velocity
  regularisation uses only a smooth \(\H^1(\Sph^{d-1})\)-basis and the boundedness of multiplication by the smooth coordinate functions on
  \(\H^1(\Sph^{d-1})\) \cite[Supplement, Section~SM1]{BrunkenSmetana2022}. Inspection of its proof suggests that the same argument extends to
  every \(d\ge2\).

  To pass from their space--time statement to the stationary one, put \(I=(0,1)\) and lift \(f\in\mathcal X_\sigma(\Omega)\) to \(\widetilde
  f(t,x,v)=f(x,v)\). Choose
  \[
    \Phi_n\in
    \C^\infty(\overline{I\times\Omega}\times\Sph^{d-1})
  \]
  such that
  \begin{align*}
    \Phi_n
     & \to\widetilde f
     &                                            & \text{in }\L^2(I\times\Omega;\H^1(\Sph^{d-1})),    \\
    (\partial_t+v\cdot\nabla_x)\Phi_n
     & \to(\partial_t+v\cdot\nabla_x)\widetilde f
     &                                            & \text{in }\L^2(I\times\Omega;\H^{-1}(\Sph^{d-1})).
  \end{align*}
  Choose \(\eta\in\C_c^\infty(I)\) with \(\int_I\eta=1\), and set
  \[
    f_n(x,v):=\int_I\eta(t)\Phi_n(t,x,v)\dd t.
  \]
  Then \(f_n\in \C^\infty(\overline\Omega\times\Sph^{d-1})\). Writing \(G_n=\Phi_n-\widetilde f\), integration by parts in \(t\) gives
  \[
    v\cdot\nabla_x(f_n-f)
    =\int_I\eta\,(\partial_t+v\cdot\nabla_x)G_n\dd t
    +\int_I\eta'G_n\dd t.
  \]
  Since \(\H^1(\Sph^{d-1})\hookrightarrow\H^{-1}(\Sph^{d-1})\), Cauchy--Schwarz in \(t\) yields
  \[
    \norm{f_n-f}_{\mathcal X_\sigma(\Omega)}
    \le C_\eta\left(
    \norm{G_n}_{\L^2_{t,x}\H^1_v}
    +\norm{(\partial_t+v\cdot\nabla_x)G_n}
    _{\L^2_{t,x}\H^{-1}_v}\right)
    \longrightarrow0.
  \]
\end{proof}

\subsection{Trace operators by density}

\begin{proposition}[Extension of boundary restriction]
  \label{prop:trace-completion}
  Let \(X\) be one of the graph spaces above corresponding to the velocity model $(V,\mu)$. Let \(\mathcal D\subset X\) be its stated dense
  smooth core, and let
  \[
    Y=\L^2\!\left(\partial\Omega\times V,
    W(x,v)\dd S(x)\dd\mu(v)\right)
  \]
  for one of the boundary weights for which an estimate was proved above. Suppose that
  \begin{equation}\label{eq:abstract-trace-completion-assumption}
    \norm{f|_{\partial\Omega}}_Y\le C\norm f_X,
    \qquad f\in\mathcal D.
  \end{equation}
  Then restriction extends uniquely to a bounded linear operator \(\operatorname{Tr}\colon X\to Y\), with \(\norm{\operatorname{Tr}}\le C\). If
  \(f_n\in\mathcal D\) and \(f_n\to f\) in \(X\), then
  \[
    f_n|_{\partial\Omega}\longrightarrow\operatorname{Tr}f
    \qquad\text{in }Y,
  \]
  and this limit is independent of the approximating sequence.
\end{proposition}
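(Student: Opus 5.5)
This is the standard bounded-linear-extension (BLT) argument. The boundary restriction map $f\mapsto f|_{\partial\Omega}$ is linear on the smooth core $\mathcal D$. By \eqref{eq:abstract-trace-completion-assumption}, it is bounded from $(\mathcal D,\norm{\cdot}_X)$ into $Y$. Two facts are already available: $Y$ is complete as an $\L^2$ space over a $\sigma$-finite weighted measure, and $\mathcal D$ is dense in $X$ by the density propositions of Section~\ref{sec:functional-density}.

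For $f\in X$, choose $f_n\in\mathcal D$ with $f_n\to f$ in $X$. The estimate gives
\[
\norm{f_n|_{\partial\Omega}-f_k|_{\partial\Omega}}_Y\le C\norm{f_n-f_k}_X,
\]
so the boundary values form a Cauchy sequence in $Y$. We then define $\operatorname{Tr}f$ as their limit.

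Independence of the sequence comes from interlacing. If $(g_n)$ is a second approximating sequence, the merged sequence $f_1,g_1,f_2,g_2,\dots$ also converges to $f$ in $X$. Its restrictions are therefore Cauchy, so both subsequential limits coincide; equivalently,
\[
\norm{f_n|_{\partial\Omega}-g_n|_{\partial\Omega}}_Y\le C\norm{f_n-g_n}_X\to0.
\]
Taking the constant sequence shows that $\operatorname{Tr}$ agrees with restriction on $\mathcal D$.

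Linearity passes to the limit, since sums and scalar multiples of approximating sequences approximate the corresponding combinations. The norm bound follows by letting $n\to\infty$ in $\norm{f_n|_{\partial\Omega}}_Y\le C\norm{f_n}_X$, which gives $\norm{\operatorname{Tr}}\le C$. The convergence statement $f_n|_{\partial\Omega}\to\operatorname{Tr}f$ holds by construction. For uniqueness, suppose $T$ is bounded and agrees with restriction on $\mathcal D$. Then $T-\operatorname{Tr}$ is continuous and vanishes on a dense set, hence is zero.

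There is no real obstacle here. The only point needing care is bookkeeping: for each model, the pairing of $X$, $\mathcal D$ and $W$ must be exactly one for which both a density statement and a trace estimate have been proved. In particular, for $\mathcal X_{\mu,L}$ one must use the core $\mathcal D_L$, whose velocity supports lie in $B_L$, so that Theorem~\ref{thm:bounded-positive} applies to every element of the core.
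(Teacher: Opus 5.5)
Your proposal is correct and matches the paper's proof: the paper likewise applies the estimate to $f_n-f_m$ to get a Cauchy sequence in $Y$, and applies it to the difference of two approximating sequences to get independence. It then passes to the limit for the bound $\norm{\operatorname{Tr}}\le C$ and uses density for uniqueness; your remarks on completeness of $Y$, linearity, and pairing $\mathcal X_{\mu,L}$ with the core $\mathcal D_L$ are routine details the paper leaves implicit.
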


\begin{proof}
  Applying \eqref{eq:abstract-trace-completion-assumption} to \(f_n-f_m\) shows that the restrictions are Cauchy in \(Y\). Applying the same
  estimate to the difference of two smooth approximating sequences shows that their limits agree. Passing to the limit gives the norm bound, and
  density gives uniqueness.
\end{proof}

\begin{proposition}[\(\omega_p\)-trace operators on the half-space]
  \label{cor:flat-p}
  Let \(d\ge1\), \(\mu\in\{\mathcal L^d,\gamma_d\}\), and \(1\le p<\infty\). Then boundary restriction extends uniquely to the bounded
  \(\omega_p\)-trace operator
  \[
    \operatorname{Tr}^{(p)}_{\mu,\mathbb H}\colon
    \mathcal X_\mu(\mathbb H^d)\longrightarrow
    \L^2\!\left(\R^{d-1}\times\R^d,
    \omega_p(|w|)\dd\mu(v)\dd z\right).
  \]
  For \(p=1\), we write
  \[
    \operatorname{Tr}_{\mu,\mathbb H}
    :=\operatorname{Tr}^{(1)}_{\mu,\mathbb H}.
  \]
  This is the natural trace operator for the unrestricted Lebesgue and unrestricted Gaussian models.
\end{proposition}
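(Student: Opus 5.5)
The plan is to combine three earlier results: the half-space trace estimate of Theorem~\ref{thm:flat}, the density result of Proposition~\ref{prop:flat-intrinsic-density}, and the abstract completion result of Proposition~\ref{prop:trace-completion}. Take \(X=\mathcal X_\mu(\mathbb H^d)\) with dense core \(\mathcal D=\C_c^\infty(\overline{\mathbb H^d}\times\R^d)\). Take
\[
  Y=\L^2\!\left(\R^{d-1}\times\R^d,\omega_p(|w|)\dd\mu(v)\dd z\right).
\]
On the half-space the outward normal is \(-e_y\) and \(\dd S=\dd z\), so \(|v\cdot n|=|w|\). Hence \(Y\) is exactly the weighted boundary space in the statement, and the weighted boundary functional of \(f\in\mathcal D\) equals \(\norm{f|_{y=0}}_Y^2\).

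Next, verify hypothesis \eqref{eq:abstract-trace-completion-assumption}. For \(f\in\mathcal D\), Theorem~\ref{thm:flat} gives \eqref{eq:flat-main}, whose right-hand side is \(C_d\norm f_X^2\) by the graph norm \eqref{eq:graph-norm}. Since \(\omega_p(s)\le s\) for all \(s\ge0\) and \(p\ge1\), we obtain
\[
  \norm{f|_{y=0}}_Y^2\le\int|f(z,0,v)|^2|w|\dd\mu\dd z\le C_d\norm f_X^2 .
\]
This holds for both \(\mu=\mathcal L^d\) and \(\mu=\gamma_d\), in every dimension \(d\ge1\); for \(d=1\) the \(z\)-variable is absent.

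Proposition~\ref{prop:flat-intrinsic-density} supplies density of \(\mathcal D\) in \(X\). Proposition~\ref{prop:trace-completion} then yields a unique bounded extension \(\operatorname{Tr}^{(p)}_{\mu,\mathbb H}\colon X\to Y\) with norm at most \(C_d^{1/2}\), independent of the approximating sequence. Uniqueness means it is the unique continuous operator agreeing with boundary restriction on \(\mathcal D\). The case \(p=1\) is simply the definition of the natural trace \(\operatorname{Tr}_{\mu,\mathbb H}\).

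Almost nothing here is an obstacle: all the analytic content sits in Theorem~\ref{thm:flat} and in the density proposition (the appendix). The one point to check is that the core used in Theorem~\ref{thm:flat} is the same class \(\C_c^\infty(\overline{\mathbb H^d}\times\R^d)\) that Proposition~\ref{prop:flat-intrinsic-density} shows to be dense. It is, so no intermediate approximation or class-matching step is needed.
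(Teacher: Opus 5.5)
Your proposal is correct and matches the paper's proof: it combines the smooth estimate of Theorem~\ref{thm:flat} (with \(\omega_p(s)\le s\)), the density in Proposition~\ref{prop:flat-intrinsic-density}, and the abstract extension in Proposition~\ref{prop:trace-completion}. Your check that both results use the same core \(\C_c^\infty(\overline{\mathbb H^d}\times\R^d)\), and the resulting bound \(C_d^{1/2}\) on the operator norm, are also right.
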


\begin{proof}
  Combine the smooth estimate of Theorem~\ref{thm:flat}, the density statement in Proposition~\ref{prop:flat-intrinsic-density}, and
  Proposition~\ref{prop:trace-completion}.
\end{proof}

\begin{proposition}[\(\omega_p\)-trace operators in the unrestricted
    Gaussian model]
  \label{cor:unrestricted-gaussian-p-trace}
  Let \(d\ge2\), let \(\Omega\subset\R^d\) be a bounded \(\C^{1,1}\) domain, and let \(2\le p<\infty\). Then boundary restriction extends
  uniquely to the bounded \(\omega_p\)-trace operator
  \[
    \operatorname{Tr}^{(p)}_\gamma\colon
    \mathcal X_\gamma(\Omega)\longrightarrow
    \L^2\!\left(\partial\Omega\times\R^d,
    \omega_p(|v\cdot n_\Omega|)\dd S\dd\gamma_d\right).
  \]
\end{proposition}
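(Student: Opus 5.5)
The proof is a direct application of the abstract completion principle, Proposition~\ref{prop:trace-completion}. Take \(X=\mathcal X_\gamma(\Omega)\) with the dense core \(\mathcal D=\mathcal D_c(\overline\Omega)\), which is dense by Proposition~\ref{prop:bounded-gaussian-density}; that density result needs only a bounded \(\C^1\) domain, so a bounded \(\C^{1,1}\) domain qualifies. Take \(Y=\L^2(\partial\Omega\times\R^d,\omega_p(|v\cdot n_\Omega|)\dd S\dd\gamma_d)\).

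It remains to verify the smooth estimate \eqref{eq:abstract-trace-completion-assumption} on \(\mathcal D\). Every element of \(\mathcal D_c(\overline\Omega)\) is the restriction of a function in \(\C_c^\infty(\R^d_x\times\R^d_v)\), so it lies in \(\C_c^\infty(\overline\Omega\times\R^d)\). Proposition~\ref{prop:quadratic-gaussian} therefore gives
\[
\mathcal T^\gamma_{2,\Omega}(f)\le C_\Omega\,\mathcal E^\gamma_\Omega(f),\qquad f\in\mathcal D.
\]
For \(p\ge2\) we have the pointwise bound \(\omega_p(s)\le\omega_2(s)\) for all \(s\ge0\). Indeed, for \(s\ge1\) both sides equal \(s\), and for \(0\le s<1\) we have \(s^p\le s^2\). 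Hence \(\mathcal T^\gamma_{p,\Omega}(f)\le\mathcal T^\gamma_{2,\Omega}(f)\), which gives
\[
\norm{f|_{\partial\Omega}}_Y^2\le C_\Omega\norm f_{\mathcal X_\gamma(\Omega)}^2,\qquad f\in\mathcal D.
\]

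Proposition~\ref{prop:trace-completion} now produces the unique bounded extension \(\operatorname{Tr}^{(p)}_\gamma\colon\mathcal X_\gamma(\Omega)\to Y\), with \(\norm{\operatorname{Tr}^{(p)}_\gamma}\le C_\Omega^{1/2}\). It agrees with restriction on the core, and for any sequence in \(\mathcal D\) converging in \(\mathcal X_\gamma(\Omega)\) the restrictions converge in \(Y\) to a limit independent of the sequence.

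There is no real obstacle. The only points to check are that the test class of Proposition~\ref{prop:quadratic-gaussian} contains the core \(\mathcal D_c(\overline\Omega)\), and that the density statement is formulated for the same graph norm; both are immediate from the definitions.
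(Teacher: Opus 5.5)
Your proposal is correct and follows the paper's proof: you combine the \(\omega_2\) estimate of Proposition~\ref{prop:quadratic-gaussian} on \(\mathcal D_c(\overline\Omega)\), the pointwise bound \(\omega_p\le\omega_2\) for \(p\ge2\), the density result of Proposition~\ref{prop:bounded-gaussian-density}, and the completion principle of Proposition~\ref{prop:trace-completion}. Your checks that the core lies in the test class and that the pointwise weight comparison holds are exactly what is needed.
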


\begin{proof}
  Proposition~\ref{prop:quadratic-gaussian} gives the estimate with weight \(\omega_2\) on \(\mathcal D_c(\overline\Omega)\), with a constant
  independent of the compact velocity support. For \(p\ge2\), \(\omega_p(s)\le\omega_2(s)\) for every \(s\ge0\), so the same estimate holds with
  \(\omega_p\). Proposition~\ref{prop:bounded-gaussian-density} and Proposition~\ref{prop:trace-completion} now give the unique extension.
\end{proof}

\begin{proposition}[Bounded-velocity
    \(\omega_p\)-trace operators]
  \label{cor:intrinsic-p-trace}
  Let \(d\ge2\), \(1\le p<\infty\), and \(\alpha_p\le\alpha\le1\). Let \(\Omega\subset\R^d\) be a bounded \(\C^{1,\alpha}\) domain. In the
  Euclidean statement below, let \(\mu\) be either the Lebesgue or the standard Gaussian measure.
  \begin{enumerate}
    \item For the bounded-support Euclidean model and every \(L>0\), boundary restriction extends uniquely to the bounded \(\omega_p\)-trace
          operator
          \[
            \operatorname{Tr}^{(p)}_{\mu,L}\colon
            \mathcal X_{\mu,L}(\Omega)\longrightarrow
            \L^2\!\left(\partial\Omega\times\R^d,
            \omega_p(|v\cdot n_\Omega|)\dd S\dd\mu\right),
          \]
          with \(\|\operatorname{Tr}^{(p)}_{\mu,L}\|\le C_{\mathcal A}(p,d,L)\). If \(0<L_1<L_2\), then \(\operatorname{Tr}^{(p)}_{\mu,L_2}f
          =\operatorname{Tr}^{(p)}_{\mu,L_1}f\) for every \(f\in\mathcal X_{\mu,L_1}(\Omega)\).
    \item For the spherical velocity model, boundary restriction extends uniquely to the bounded \(\omega_p\)-trace operator
          \[
            \operatorname{Tr}^{(p)}_\sigma\colon
            \mathcal X_\sigma(\Omega)\longrightarrow
            \L^2\!\left(\partial\Omega\times\Sph^{d-1},
            \omega_p(|v\cdot n_\Omega|)\dd S\dd\sigma\right),
          \]
          with \(\|\operatorname{Tr}_\sigma^{(p)}\|\le C_{\mathcal A}(p,d)\).
  \end{enumerate}
  The same conclusions hold with \(|v\cdot n_\Omega|^p\), rather than
  \(\omega_p(|v\cdot n_\Omega|)\), as the boundary weight. Here \(\mathcal A\) is a fixed quantitative boundary atlas,
  with the dependence specified in Remark~\ref{rem:quantitative-atlas}.

  In particular, in either bounded-velocity model, the natural trace operator exists on bounded \(\C^{1,1/2}\) domains.
\end{proposition}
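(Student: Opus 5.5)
The plan is to combine the smooth estimates already proved with the density results and the abstract completion statement, Proposition~\ref{prop:trace-completion}. One structural point needs attention: that proposition is applied with exactly the dense core in which the estimate is verified.

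\emph{Bounded-support Euclidean model.} For part~(1), I take \(X=\mathcal X_{\mu,L}(\Omega)\) with core \(\mathcal D_L(\overline\Omega)\). This core is dense by Proposition~\ref{prop:bounded-support-density}, since a bounded \(\C^{1,\alpha}\) domain is in particular \(\C^1\). Every element of \(\mathcal D_L(\overline\Omega)\) is the restriction of some \(F\in\C_c^\infty\) with \(\supp_vF\Subset B_L\), so it lies in the test class of Theorem~\ref{thm:bounded-positive}. That theorem gives
\[
\int_{\partial\Omega}\int|f|^2|v\cdot n_\Omega|^p\dd\mu\dd S\le C_{\mathcal A}(p,d,L)\norm f_{\mathcal X_\mu(\Omega)}^2 .
\]
Because \(\omega_p(s)\le s^p\), the same bound holds with weight \(\omega_p\). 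Proposition~\ref{prop:trace-completion} then yields both bounded operators and their uniqueness.

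For the compatibility statement with \(L_1<L_2\), I use \(\mathcal D_{L_1}\subset\mathcal D_{L_2}\) and \(\mathcal X_{\mu,L_1}\subset\mathcal X_{\mu,L_2}\) with the same norm. Given \(f\in\mathcal X_{\mu,L_1}\), I approximate it by \(f_n\in\mathcal D_{L_1}\). Both operators are then the \(Y\)-limit of the same restrictions \(f_n|_{\partial\Omega}\), and the independence clause of Proposition~\ref{prop:trace-completion} shows they agree.

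\emph{Spherical model.} For part~(2), I use the core \(\C^\infty(\overline\Omega\times\Sph^{d-1})\), which is dense by Proposition~\ref{prop:spherical-density}. Corollary~\ref{cor:spherical-positive} gives the \(|v\cdot n_\Omega|^p\) estimate on this core. On the sphere \(|v\cdot n_\Omega|\le1\), so \(\omega_p(|v\cdot n_\Omega|)=|v\cdot n_\Omega|^p\) there, and the two weights coincide. Proposition~\ref{prop:trace-completion} applies again.

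The constant dependence stated in the proposition is inherited directly from Remark~\ref{rem:quantitative-atlas}. The reduction to \(\alpha=\alpha_p\) through \eqref{eq:bounded-positive-holder-reduction} is already built into Theorem~\ref{thm:bounded-positive}.

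\emph{Alternative weight and the natural trace.} For the variant weight \(|v\cdot n_\Omega|^p\) in the Euclidean case, the smooth estimate \eqref{eq:bounded-positive} is already stated with \(|v\cdot n_\Omega|^p\), so the argument above carries over verbatim. Taking \(p=1\) gives \(\alpha_1=1/2\), which yields the natural trace on bounded \(\C^{1,1/2}\) domains in both models.

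\emph{Main obstacle.} The only genuine issue is making sure the core used in Proposition~\ref{prop:trace-completion} matches the class where the smooth estimate was proved. In particular, the elements of \(\mathcal D_L(\overline\Omega)\) must have velocity support inside \(\overline B_L\), so that the constant does not depend on the particular approximant. Since \(\mathcal D_L\) is defined with \(\supp_vF\Subset B_L\), this holds automatically.
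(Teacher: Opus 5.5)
Your proposal is correct and follows the paper's proof. Like the paper, you combine the smooth estimates of Theorem~\ref{thm:bounded-positive} and Corollary~\ref{cor:spherical-positive} with the density results and Proposition~\ref{prop:trace-completion}, and you obtain compatibility for $L_1<L_2$ from uniqueness of the continuous extension from $\mathcal D_{L_1}(\overline\Omega)$. Your remarks that $\omega_p(s)\le s^p$ and that the two weights coincide on the sphere only spell out what the paper leaves implicit.
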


\begin{proof}
  Apply the smooth trace estimates from Theorem~\ref{thm:bounded-positive} and Corollary~\ref{cor:spherical-positive}, and
  Proposition~\ref{prop:trace-completion} to the dense cores from Propositions~\ref{prop:bounded-support-density}
  and~\ref{prop:spherical-density}. For \(L_1<L_2\), both Euclidean \(\omega_p\)-trace operators are continuous extensions of boundary
  restriction from \(\mathcal D_{L_1}(\overline\Omega)\); uniqueness in Proposition~\ref{prop:trace-completion} proves their compatibility.
\end{proof}

For \(p=1\), we write
\[
  \operatorname{Tr}_{\mu,L}
  :=\operatorname{Tr}^{(1)}_{\mu,L},
  \qquad
  \operatorname{Tr}_\sigma
  :=\operatorname{Tr}^{(1)}_\sigma.
\]
These are the natural trace operators used in Green's formula in Proposition~\ref{prop:intrinsic-green}.

\begin{remark}[\(\omega_p\)-trace operators and the natural trace
    operator] For \(p>1\), the target spaces of the \(\omega_p\)-trace operators in the preceding two propositions have weights that are
  strictly weaker than the natural weight near grazing. Elements of these target spaces therefore need not belong to \(\L^2(|v\cdot
  n_\Omega|\dd S\dd\mu)\), so these trace operators cannot by themselves justify the signed boundary product in Green's formula. In particular,
  Proposition~\ref{cor:unrestricted-gaussian-p-trace} does not provide the natural-flux integrability required for Green's formula.
\end{remark}

\subsection{Green's formulae}

The passage from smooth functions to graph-space functions in Green's identities, using density and trace continuity, is standard and has also
been used in
\cite[Lemma~4.5]{Silvestre2022}, \cite[Lemma~3.8]{OuyangSilvestre2024}, and \cite[Lemma~5.3]{AvelinHou2026}.

We first record the half-space identity.

\begin{proposition}[Green's formula on the half-space]
  \label{prop:flat-intrinsic-green}
  Let \(d\ge1\) and \(\mu\in\{\mathcal L^d,\gamma_d\}\). For every \(f,g\in\mathcal X_\mu(\mathbb H^d)\),
  \begin{align}
     & -\int_{\R^{d-1}}\int_{\R^d}
    (\operatorname{Tr}_{\mu,\mathbb H}f)
    (\operatorname{Tr}_{\mu,\mathbb H}g)
    \,w\dd\mu(v)\dd z
    \notag                         \\
     & \quad=
    \int_{\mathbb H^d}\left(
    \pair{v\cdot\nabla_xf}{g}_{\H^{-1}_\mu,\H^1_\mu}
    +\pair{v\cdot\nabla_xg}{f}_{\H^{-1}_\mu,\H^1_\mu}
    \right)\dd x.
    \label{eq:flat-intrinsic-green}
  \end{align}
  Since \(n_{\mathbb H^d}=-e_d\), the left-hand side is the usual signed boundary integral with factor \(v\cdot n_{\mathbb H^d}\).
\end{proposition}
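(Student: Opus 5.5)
The plan is the standard density argument. First I would prove the identity for smooth functions, and then pass to the limit using continuity of the natural trace operator \(\operatorname{Tr}_{\mu,\mathbb H}\) from Proposition~\ref{cor:flat-p}.

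For \(f,g\in\C_c^\infty(\overline{\mathbb H^d}\times\R^d)\), fix \(v\) and apply the divergence theorem in \(x\) to the vector field \(v\,fg\). The outward normal on \(\{y=0\}\) is \(-e_d\), and compact support removes all other boundary contributions, so
\[
  -\int_{\R^{d-1}}\int_{\R^d} f(z,0,v)g(z,0,v)\,w\dd\mu(v)\dd z
  =\int_{\mathbb H^d}\int_{\R^d}\bigl((v\cdot\nabla_xf)g+f(v\cdot\nabla_xg)\bigr)\dd\mu\dd x .
\]
For smooth functions the \(\L^2(\mu)\) pairing coincides with the \(\H^{-1}_\mu,\H^1_\mu\) duality, since duality is defined through the \(\L^2(\mu)\) pivot. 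Hence the right-hand side is exactly the right-hand side of \eqref{eq:flat-intrinsic-green}. No integration by parts in \(v\) is used, so the argument is identical for Lebesgue and Gaussian measure.

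For general \(f,g\in\mathcal X_\mu(\mathbb H^d)\), Proposition~\ref{prop:flat-intrinsic-density} supplies smooth \(f_n\to f\) and \(g_n\to g\) in the graph norm. On the right-hand side, the bilinear form
\[
  (f,g)\mapsto\int\pair{v\cdot\nabla_xf}{g}\dd x
\]
is bounded on \(\mathcal X_\mu\times\mathcal X_\mu\) by \(\norm{v\cdot\nabla_xf}_{\L^2_x\H^{-1}_\mu}\norm{g}_{\L^2_x\H^1_\mu}\). It is therefore jointly continuous, and the right-hand side converges.

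On the left-hand side, I use that \(\omega_1(|w|)=|w|\). Theorem~\ref{thm:flat} and Proposition~\ref{prop:trace-completion} give \(f_n|_{y=0}\to\operatorname{Tr}_{\mu,\mathbb H}f\) in \(\L^2(|w|\dd\mu\dd z)\), and similarly for \(g_n\). The boundary form
\[
  (\phi,\psi)\mapsto-\int\phi\psi\,w\dd\mu\dd z
\]
is bounded on \(\L^2(|w|\dd\mu\dd z)\) by Cauchy--Schwarz, since \(|w|\) is precisely the weight. So the left-hand side also converges, and the identity follows in the limit.

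I expect no real obstacle here: the difficult input is the natural weight trace estimate, which is already available from Theorem~\ref{thm:flat}. The only points to check are that the natural trace (\(p=1\)) exactly controls the signed flux density \(w\) in absolute value, and that the smooth test class matches the dense core used in defining \(\operatorname{Tr}_{\mu,\mathbb H}\), so that the limit is independent of the approximating sequence.
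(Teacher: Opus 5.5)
Your proposal is correct and follows the paper's proof essentially step for step. The paper likewise integrates by parts for smooth approximants from Proposition~\ref{prop:flat-intrinsic-density}, passes to the limit in the bulk pairings by \(\H^{-1}_\mu\)--\(\H^1_\mu\) duality, and handles the boundary form by Cauchy--Schwarz with the natural weight \(|w|\) together with boundedness of \(\operatorname{Tr}_{\mu,\mathbb H}\).
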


\begin{proof}
  By Proposition~\ref{prop:flat-intrinsic-density}, choose
  \[
    f_n,g_n\in\C_c^\infty(\overline{\mathbb H^d}\times\R^d),
    \qquad
    f_n\to f,\quad g_n\to g
    \quad\text{in the graph norm}.
  \]
  Ordinary integration by parts gives
  \begin{align*}
     & -\int_{\R^{d-1}}\int_{\R^d}
    f_n(z,0,v)g_n(z,0,v)w\dd\mu(v)\dd z \\
     & \quad=
    \int_{\mathbb H^d}\left(
    \pair{v\cdot\nabla_xf_n}{g_n}_{\H^{-1}_\mu,\H^1_\mu}
    +\pair{v\cdot\nabla_xg_n}{f_n}_{\H^{-1}_\mu,\H^1_\mu}
    \right)\dd x.
  \end{align*}
  The bulk pairings converge by \(\H^{-1}_\mu,\H^1_\mu\) duality and graph-norm convergence. On the boundary, define
  \[
    \mathcal B_{\mathbb H}(a,b)
    :=-\int_{\R^{d-1}}\int_{\R^d}
    a(z,v)b(z,v)w\dd\mu(v)\dd z.
  \]
  Cauchy--Schwarz with the natural absolute weight gives
  \[
    |\mathcal B_{\mathbb H}(a,b)|
    \le
    \norm{a}_{\L^2(|w|\dd\mu\dd z)}
    \norm{b}_{\L^2(|w|\dd\mu\dd z)}.
  \]
  Boundedness of \(\operatorname{Tr}_{\mu,\mathbb H}\) and the preceding estimate therefore give
  \[
    \mathcal B_{\mathbb H}
    (f_n(\cdot,0,\cdot),g_n(\cdot,0,\cdot))
    \longrightarrow
    \mathcal B_{\mathbb H}
    (\operatorname{Tr}_{\mu,\mathbb H}f,
    \operatorname{Tr}_{\mu,\mathbb H}g).
  \]
  Passing to the limit proves \eqref{eq:flat-intrinsic-green}.
\end{proof}

We next turn to the bounded-velocity models on bounded domains.

\begin{proposition}[Green's formula for bounded-velocity models on
    bounded domains]
  \label{prop:intrinsic-green}
  Let \(d\ge2\), let \(\Omega\subset\R^d\) be a bounded \(\C^{1,1/2}\) domain, and let \(L>0\). For the bounded-support Euclidean model, let
  \(\mu\) be either the Lebesgue or the standard Gaussian measure and let \(f,g\in\mathcal X_{\mu,L}(\Omega)\). Then
  \begin{align}
     & \int_{\partial\Omega}\int_{\R^d}
    (\operatorname{Tr}_{\mu,L}f)(\operatorname{Tr}_{\mu,L}g)
    \,v\cdot n_\Omega\dd\mu\dd S
    \notag                              \\
     & \quad=
    \int_\Omega\left(
    \pair{v\cdot\nabla_xf}{g}_{\H^{-1}_\mu,\H^1_\mu}
    +\pair{v\cdot\nabla_xg}{f}_{\H^{-1}_\mu,\H^1_\mu}
    \right)\dd x.
    \label{eq:intrinsic-green}
  \end{align}
  For the spherical velocity model, the same identity holds for \(f,g\in\mathcal X_\sigma(\Omega)\), with \(\R^d,\mu\) replaced by
  \(\Sph^{d-1},\sigma\).
\end{proposition}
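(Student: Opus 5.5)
We argue by density, following the half-space proof of Proposition~\ref{prop:flat-intrinsic-green}. First consider the bounded-support Euclidean model. Given \(f,g\in\mathcal X_{\mu,L}(\Omega)\), use Proposition~\ref{prop:bounded-support-density} to choose \(f_n,g_n\in\mathcal D_L(\overline\Omega)\) with \(f_n\to f\) and \(g_n\to g\) in the graph norm. Each \(f_n,g_n\) is smooth up to the boundary and has compact velocity support in \(B_L\).

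For fixed \(v\), the divergence theorem applied to \(v f_ng_n\) on the bounded \(\C^1\) domain \(\Omega\) gives the classical identity \eqref{eq:intro-green}. Integrating in \(v\) against \(\mu\) and writing the bulk terms as pairings gives \eqref{eq:intrinsic-green} for \((f_n,g_n)\). This step is legitimate because the transport derivative of a smooth compactly supported function is an \(\L^2(\mu)\) function, and \(\L^2(\mu)\) functions act on \(\H^1_\mu\) by the pivot duality. No integration by parts in \(v\) is involved, so the Lebesgue and Gaussian cases are treated identically.

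Next we pass to the limit.

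\emph{Bulk terms.} We write \(\pair{v\cdot\nabla_xf_n}{g_n}-\pair{v\cdot\nabla_xf}{g}\) as a sum of two differences and bound each by \(\H^{-1}_\mu\)--\(\H^1_\mu\) duality followed by Cauchy--Schwarz in \(x\). Graph-norm convergence then shows that both bulk integrals converge.

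\emph{Boundary term.} Here we use Corollary~\ref{cor:bounded-positive-natural}, that is, Proposition~\ref{cor:intrinsic-p-trace} with \(p=1\) and \(\alpha=1/2\). This is exactly where the hypothesis \(\C^{1,1/2}\) is needed. It gives that \(f_n|_{\partial\Omega}\to\operatorname{Tr}_{\mu,L}f\) in \(\L^2(\partial\Omega\times\R^d,\omega_1(|v\cdot n_\Omega|)\dd S\dd\mu)\), and likewise for \(g_n\). Since \(\omega_1(s)=s\), this is precisely the natural weight \(|v\cdot n_\Omega|\). The bilinear form
\[
\mathcal B(a,b):=\int_{\partial\Omega}\int_{\R^d} ab\,v\cdot n_\Omega\dd\mu\dd S
\]
satisfies \(|\mathcal B(a,b)|\le\norm a_{\L^2(|v\cdot n_\Omega|)}\norm b_{\L^2(|v\cdot n_\Omega|)}\) by Cauchy--Schwarz. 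The bilinear splitting used for the bulk terms then gives \(\mathcal B(f_n|,g_n|)\to\mathcal B(\operatorname{Tr}f,\operatorname{Tr}g)\), which proves the identity.

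For the spherical model the same argument applies. We take smooth approximants from Proposition~\ref{prop:spherical-density}, use the classical identity on \(\Omega\times\Sph^{d-1}\), and use the natural trace \(\operatorname{Tr}_\sigma\) from Corollary~\ref{cor:spherical-positive} together with Proposition~\ref{cor:intrinsic-p-trace}. Since \(|v\cdot n_\Omega|\le1\) on the sphere, \(\omega_1(|v\cdot n_\Omega|)=|v\cdot n_\Omega|\) holds everywhere.

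There is no serious obstacle; the only point requiring care is the limit in the boundary term. It needs the natural weight \(p=1\), not a weaker \(\omega_p\) weight, so that the signed flux is dominated by the trace norms. That is exactly what the \(\C^{1,1/2}\) threshold provides.
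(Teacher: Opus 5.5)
Your proposal is correct and matches the paper's proof: approximate by the dense smooth cores (Propositions~\ref{prop:bounded-support-density} and~\ref{prop:spherical-density}) and apply classical integration by parts to the approximants. Then pass to the limit in the bulk terms by \(\H^{-1}_\mu\)--\(\H^1_\mu\) duality, and in the boundary term by Cauchy--Schwarz with the natural weight \(|v\cdot n_\Omega|\), using continuity of the \(p=1\) trace operators from Proposition~\ref{cor:intrinsic-p-trace}.
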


\begin{proof}
  For smooth functions defined on the ambient space, the identity follows from ordinary integration by parts. The passage to graph-space
  functions is the same as in
  the proof of Proposition~\ref{prop:flat-intrinsic-green}, using Propositions~\ref{prop:bounded-support-density}
  and~\ref{prop:spherical-density} for the smooth-core approximations and Proposition~\ref{cor:intrinsic-p-trace} with \(p=1\) for continuity of
  the natural trace operators. The bulk terms pass to the limit by \(\H^{-1}\)-\(\H^1\) duality, and the boundary term by the same
  Cauchy--Schwarz argument, now with weight \(|v\cdot n_\Omega|\). This proves both the Euclidean and spherical cases.
\end{proof}

\section{The one-dimensional case}
\label{sec:dimension-one}

In one spatial dimension, the positive Euclidean statements follow from the half-space theorem by localisation at the two endpoints. The
bounded-support Lebesgue estimate can alternatively be obtained from Baouendi and Grisvard after choosing a bounded velocity interval containing
the velocity support \cite[Theorem~1 and Corollary~1]{BaouendiGrisvard1968}.

Let \(I=(0,\ell)\).

\begin{lemma}[Velocity multiplication and spatial cut-offs]
  \label{lem:interval-multiplication}
  For the standard Gaussian measure on \(\R\),
  \begin{equation}
    \norm{vh}_{\H^{-1}_\gamma}\le C\norm{h}_{\H^1_\gamma}.
    \label{eq:interval-gaussian-multiplication}
  \end{equation}
  For either the Lebesgue or the Gaussian measure, if \(\supp h\subset[-L,L]\), then
  \begin{equation}
    \norm{vh}_{\H^{-1}_\mu}
    \le\norm{vh}_{\L^2_\mu}
    \le L\norm{h}_{\L^2_\mu}.
    \label{eq:interval-bounded-multiplication}
  \end{equation}
  Consequently, for every \(\chi\in \W^{1,\infty}(I)\),
  \[
    \norm{\chi f}_{\mathcal X_\gamma(I)}
    \le C_\chi\norm{f}_{\mathcal X_\gamma(I)}.
  \]
  If \(\operatorname*{ess\,supp}_vf\subset[-L,L]\), then also
  \[
    \norm{\chi f}_{\mathcal X_{\mu,L}(I)}
    \le C_{\chi,L}\norm{f}_{\mathcal X_{\mu,L}(I)}.
  \]
\end{lemma}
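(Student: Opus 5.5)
The plan is to prove the two multiplication estimates by duality in velocity, and then obtain the cut-off bounds from the product rule for the transport derivative.

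For \eqref{eq:interval-gaussian-multiplication}, take \(\varphi\in\C_c^\infty(\R)\) and \(\varrho(v)=(2\pi)^{-1/2}e^{-v^2/2}\). Since \(v\varrho=-\varrho'\), Gaussian integration by parts gives
\[
  \int vh\varphi\dd\gamma_1
  =\int (h\varphi)'\dd\gamma_1
  =\int(h'\varphi+h\varphi')\dd\gamma_1 .
\]
The absolute value of the right-hand side is at most \(\norm{h}_{\H^1_\gamma}\norm{\varphi}_{\H^1_\gamma}\), up to a factor \(2\). Density of \(\C_c^\infty\) in \(\H^1_\gamma\) then yields the \(\H^{-1}_\gamma\) bound. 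For \(h\in\H^1_\gamma\) this pairing is well defined, because \(vh\in\L^2_\gamma\) by the moment bound already used in Proposition~\ref{prop:quadratic-gaussian}, namely \(\int|v|^2|g|^2\dd\gamma_d\le C\norm{g}_{\H^1_\gamma}^2\). Alternatively, one can simply invoke that moment bound together with \(\norm{\cdot}_{\H^{-1}}\le\norm{\cdot}_{\L^2}\).

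For \eqref{eq:interval-bounded-multiplication}, the first inequality is the pivot embedding \(\L^2_\mu\hookrightarrow\H^{-1}_\mu\), which holds because \(|\int q\varphi\dd\mu|\le\norm q_{\L^2_\mu}\norm\varphi_{\L^2_\mu}\le\norm q_{\L^2_\mu}\norm\varphi_{\H^1_\mu}\). The second inequality is immediate since \(|v|\le L\) on the support of \(h\).

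For the cut-off bounds, let \(\chi\in\W^{1,\infty}(I)\). Clearly \(\norm{\chi f}_{\L^2_x\H^1_\mu}\le\norm\chi_\infty\norm f_{\L^2_x\H^1_\mu}\). Next I would verify the distributional product rule
\[
  v\partial_x(\chi f)=\chi\,v\partial_xf+\chi'\,vf
\]
in \(\mathcal D'(I;\H^{-1}_\mu)\). To do so, test against \(\phi(x)\psi(v)\), note that \(\chi\phi\) is a Lipschitz compactly supported test function in \(x\), and approximate \(\chi\) by mollification if needed. This step is the only mild technical point, since \(\chi\) is merely Lipschitz. One can handle it by noting that the weak derivative of \(\chi\phi\) is \(\chi'\phi+\chi\phi'\), and that distributions of order zero in \(x\) with values in \(\H^{-1}\) may be tested against \(\W^{1,\infty}_c\) functions through a mollification limit.

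With the product rule in hand, \(\norm{\chi\,v\partial_xf}_{\L^2_x\H^{-1}_\mu}\le\norm\chi_\infty\norm{v\partial_xf}_{\L^2_x\H^{-1}_\mu}\) holds because multiplication by an \(x\)-dependent scalar commutes with the velocity duality. The remaining term is \(\norm{\chi'vf}_{\L^2_x\H^{-1}_\mu}\). In the Gaussian case it is at most \(C\norm{\chi'}_\infty\norm f_{\L^2_x\H^1_\gamma}\) by \eqref{eq:interval-gaussian-multiplication} applied pointwise in \(x\). In the bounded-support case it is at most \(L\norm{\chi'}_\infty\norm f_{\L^2_x\L^2_\mu}\) by \eqref{eq:interval-bounded-multiplication}.

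Finally, \(\chi f\) keeps its essential velocity support inside \([-L,L]\), so it lies in \(\mathcal X_{\mu,L}(I)\). Combining the three bounds gives both stated graph-norm estimates.
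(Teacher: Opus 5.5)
Your proposal is correct and follows essentially the same route as the paper. Both use Gaussian integration by parts and density for \eqref{eq:interval-gaussian-multiplication}, the pivot embedding and the support bound for \eqref{eq:interval-bounded-multiplication}, and the product rule \(v\partial_x(\chi f)=\chi\,v\partial_xf+\chi'vf\) for the cut-off estimates. Your additional remarks fill in details the paper leaves implicit: you note that \(vh\in\L^2_\gamma\) by the moment bound, so the pairing is defined, and you justify the product rule for merely Lipschitz \(\chi\) by mollification.
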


\begin{proof}
  Gaussian integration by parts gives, initially for compactly supported smooth functions,
  \[
    \int_\R vh\phi\dd\gamma
    =\int_\R(h'\phi+h\phi')\dd\gamma.
  \]

  Taking the supremum over \(\norm\phi_{\H^1_\gamma}\le1\) proves \eqref{eq:interval-gaussian-multiplication}; density gives the general case.
  Estimate \eqref{eq:interval-bounded-multiplication} is immediate. Finally,

  \[
    v\partial_x(\chi f)
    =\chi\,v\partial_xf+\chi'vf,
  \]
  and the preceding estimates control the second term in \(\L^2(I;\H^{-1}_\mu)\).

\end{proof}

\begin{corollary}[Interval traces and unique continuous extensions]
  \label{cor:interval-intrinsic}
  Let \(I=(0,\ell)\). For the standard Gaussian velocity measure, there is \(C_\ell<\infty\) such that every \(f\in\C_c^\infty(\overline
  I\times\R)\) satisfies
  \begin{equation}
    \int_\R |v|
    \bigl(|f(0,v)|^2+|f(\ell,v)|^2\bigr)\dd\gamma(v)
    \le C_\ell\norm{f}_{\mathcal X_\gamma(I)}^2.
    \label{eq:interval-gaussian-trace}
  \end{equation}
  The constant is independent of the velocity support.

  For the Lebesgue velocity measure and every \(L>0\), there is \(C_{\ell,L}<\infty\) such that every
  \(f\in\C_c^\infty(\overline I\times\R)\) with
  \(\supp_vf\subset[-L,L]\) satisfies
  \begin{equation}
    \int_\R |v|
    \bigl(|f(0,v)|^2+|f(\ell,v)|^2\bigr)\dd v
    \le C_{\ell,L}
    \norm{f}_{\mathcal X_{\mathcal L^1}(I)}^2.
    \label{eq:interval-bounded-trace}
  \end{equation}

  With \(\operatorname{Tr}f=(f(0,\cdot),f(\ell,\cdot))\) on smooth functions, restriction extends uniquely to bounded maps
  \begin{align*}
    \operatorname{Tr}_\gamma\colon
    \mathcal X_\gamma(I)
     & \longrightarrow
    \L^2(\R,|v|\dd\gamma)
    \oplus\L^2(\R,|v|\dd\gamma), \\
    \operatorname{Tr}_{\mathcal L^1,L}\colon
    \mathcal X_{\mathcal L^1,L}(I)
     & \longrightarrow
    \L^2(\R,|v|\dd v)
    \oplus\L^2(\R,|v|\dd v).
  \end{align*}
  For every \(1\le p<\infty\), both estimates remain valid with \(|v|\) replaced throughout by \(\omega_p(|v|)\), and the corresponding boundary
  restrictions extend uniquely to bounded \(\omega_p\)-trace operators.
\end{corollary}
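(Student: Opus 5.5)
The plan is to localise at the two endpoints of \(I=(0,\ell)\) and apply the half-space estimate, Theorem~\ref{thm:flat} with \(d=1\), at each endpoint. There the tangential variables are absent, so \(x=y\) and \(v=w\).

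\emph{Cut-offs.} Fix \(\chi_0,\chi_\ell\in\C^\infty(\overline I)\) with \(\chi_0=1\) near \(0\), \(\chi_0=0\) on \([2\ell/3,\ell]\), and \(\chi_\ell(x)=\chi_0(\ell-x)\). For smooth \(f\), the function \(\chi_0f\), extended by zero to \([0,\infty)\), lies in \(\C_c^\infty(\overline{\mathbb H^1}\times\R)\). This uses the velocity-support assumption in the Lebesgue case and the class \(\C_c^\infty(\overline I\times\R)\) in the Gaussian case. Its boundary value at \(y=0\) is \(f(0,\cdot)\). Similarly, \(x\mapsto(\chi_\ell f)(\ell-x,-v)\) is a half-space function. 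The reflection \(x\mapsto\ell-x,\ v\mapsto -v\) preserves \(v\partial_x\), and both measures are invariant under \(v\mapsto-v\), so Lemma~\ref{lem:velocity-rotation} with \(Q=-1\) gives that the graph norms are preserved. Its boundary value is \(f(\ell,-v)\), and the weight \(|v|\) is even.

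\emph{Applying the half-space estimate.} Theorem~\ref{thm:flat} then bounds the left-hand sides of \eqref{eq:interval-gaussian-trace} and \eqref{eq:interval-bounded-trace} by \(C\bigl(\norm{\chi_0f}_{\mathcal X_\mu(\mathbb H^1)}^2+\norm{\chi_\ell f}_{\mathcal X_\mu(\mathbb H^1)}^2\bigr)\). The zero extension does not change the graph norm, since the function vanishes near \(x=2\ell/3\) and the distributional transport derivative has no jump.

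\emph{Commutator cost.} This is the only real step. We have \(v\partial_x(\chi f)=\chi v\partial_xf+\chi'vf\), and Lemma~\ref{lem:interval-multiplication} bounds \(\norm{\chi' vf}_{\L^2_x\H^{-1}_\mu}\). In the Gaussian case this uses \eqref{eq:interval-gaussian-multiplication}, with a constant depending only on \(\chi'\), hence only on \(\ell\) and not on the velocity support. In the Lebesgue case it uses \eqref{eq:interval-bounded-multiplication}, with a constant depending on \(L\). This gives both estimates, and it shows why only the Gaussian constant is support-independent.

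\emph{Extension and the \(\omega_p\) weights.} Uniqueness of the continuous extensions follows from Proposition~\ref{prop:trace-completion}. For \(\mathcal X_\gamma(I)\) the density is Proposition~\ref{prop:bounded-gaussian-density}, in the one-dimensional case of \cite[Proposition~2.2]{AAMN2024}. For \(\mathcal X_{\mathcal L^1,L}(I)\) the density is Proposition~\ref{prop:bounded-support-density}. I expect the main obstacle to be confirming that these density results, stated for bounded \(\C^1\) domains, apply when \(d=1\), where an interval is trivially such a domain. The dense core for \(\mathcal X_{\mathcal L^1,L}\) has velocity support compactly inside \(B_L\subset[-L,L]\), so the Lebesgue estimate applies to it. Finally, \(\omega_p(s)\le s\) gives the \(\omega_p\) versions of the estimates and of the trace operators.
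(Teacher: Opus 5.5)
Your proposal is correct and follows the paper's proof. It uses a cut-off at each endpoint and Theorem~\ref{thm:flat} with \(d=1\). It controls the commutator \(\chi'vf\) by Lemma~\ref{lem:interval-multiplication}, with a support-independent constant in the Gaussian case and an \(L\)-dependent one in the Lebesgue case, and finishes with density, Proposition~\ref{prop:trace-completion}, and \(\omega_p(s)\le s\). The only cosmetic difference is at the right endpoint: you also reflect \(v\mapsto-v\) so that the transport derivative is preserved exactly, whereas the paper takes \(F_\ell(y,v)=\chi(y)f(\ell-y,v)\) and absorbs the sign in \(v\partial_yF_\ell=\chi'vf(\ell-y,v)-\chi\,(v\partial_xf)(\ell-y,v)\).
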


\begin{proof}
  In the Gaussian case, set \(\mu=\gamma\); in the bounded-support case, set \(\mu=\mathcal L^1\). Choose
  \(\chi\in\C^\infty([0,\ell])\) such that
  \(\chi=1\) near \(0\) and \(\supp\chi\subset[0,2\ell/3)\). Extend
  \[
    F_0(y,v):=\chi(y)f(y,v),\qquad 0<y<\ell,
  \]
  by zero to \(y\ge\ell\). Then \(F_0\in\C_c^\infty(\overline{\mathbb H^1}\times\R)\), and Theorem~\ref{thm:flat} gives
  \[
    \int_\R |v||f(0,v)|^2\dd\mu(v)
    \lesssim
    \norm{F_0}_{\L^2_y\H^1_\mu}^2
    +\norm{v\partial_yF_0}_{\L^2_y\H^{-1}_\mu}^2.
  \]
  Since
  \[
    v\partial_yF_0
    =\chi\,v\partial_xf+\chi'vf,
  \]
  Lemma~\ref{lem:interval-multiplication} bounds the right-hand side by \(C_\ell\norm{f}_{\mathcal X_\gamma(I)}^2\) in the Gaussian case and by
  \(C_{\ell,L}\norm{f}_{\mathcal X_{\mathcal L^1}(I)}^2\) in the bounded-support case.

  For the right endpoint, extend \(F_\ell(y,v):=\chi(y)f(\ell-y,v)\) by zero to \(y\ge\ell\). Here
  \[
    v\partial_yF_\ell
    =\chi'vf(\ell-y,v)
    -\chi\,(v\partial_xf)(\ell-y,v),
  \]
  so the same argument applies. Adding the endpoint estimates proves \eqref{eq:interval-gaussian-trace} and \eqref{eq:interval-bounded-trace}.
  Since \(\omega_p(s)\le s\), the corresponding smooth \(\omega_p\)-estimates follow for every \(1\le p<\infty\).

  Propositions~\ref{prop:bounded-gaussian-density} and~\ref{prop:bounded-support-density}, followed by Proposition~\ref{prop:trace-completion},
  give the stated unique continuous extensions.
\end{proof}

For the unrestricted Lebesgue model, Proposition~\ref{prop:interval-lebesgue-failure}, applied with \(d=1\) and \(\Omega=I\), shows that the
\(\omega_p\)-trace estimate---and hence any bounded \(\omega_p\)-trace operator agreeing with smooth boundary restriction---fails for every
\(1\le p<\infty\).

For \(\Sph^0=\{-1,+1\}\), one has \(\mathcal X_\sigma(I)=\H^1(I)\oplus\H^1(I)\) and \(\omega_p(1)=1\), so the ordinary one-dimensional Sobolev
trace theorem gives unique continuous endpoint trace operators for every \(1\le p<\infty\).

\appendix

\section{Density on the half-space}
\label{app:flat-density}

We use the following two elementary approximation facts. The first is also used in Proposition~\ref{prop:bounded-lebesgue-density}.

\begin{lemma}[Velocity cut-offs]\label{lem:density-velocity-cutoff}
  Let \(\mu\in\{\mathcal L^d,\gamma_d\}\), choose \(\chi\in\C_c^\infty(B_2)\) with \(\chi=1\) on \(B_1\), and set \(\chi_R(v)=\chi(v/R)\) for
  \(R\ge1\). As \(R\to\infty\), multiplication by \(\chi_R\) is uniformly bounded and converges strongly to the identity on \(\H^1_\mu\). On
  \(\H^{-1}_\mu\), define it by
  \[
    \pair{\chi_Rq}{\phi}:=\pair q{\chi_R\phi}.
  \]
  These dual operators have the same boundedness and convergence properties.
\end{lemma}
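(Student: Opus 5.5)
The plan is to check three things directly from the definitions: uniform boundedness of \(\chi_R\) on \(\H^1_\mu\), strong convergence to the identity there, and the transfer of both properties to the dual operators on \(\H^{-1}_\mu\).

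\emph{Boundedness on \(\H^1_\mu\).} By the product rule,
\[
  \nabla_v(\chi_R g)=\chi_R\nabla_v g+R^{-1}(\nabla\chi)(v/R)\,g .
\]
Since \(|\chi_R|\le\norm{\chi}_\infty\) and \(|\nabla\chi_R|\le\norm{\nabla\chi}_\infty/R\le\norm{\nabla\chi}_\infty\) for \(R\ge1\), this gives
\[
  \norm{\chi_Rg}_{\H^1_\mu}\le C_\chi\norm g_{\H^1_\mu},
\]
with \(C_\chi\) independent of \(R\). This works equally for \(\mu=\mathcal L^d\) and \(\mu=\gamma_d\), because only pointwise bounds are used.

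\emph{Strong convergence on \(\H^1_\mu\).} Write
\[
  g-\chi_Rg=(1-\chi_R)g,\qquad \nabla_v(g-\chi_Rg)=(1-\chi_R)\nabla_vg-(\nabla\chi_R)g .
\]
The first two terms tend to zero in \(\L^2(\mu)\) by dominated convergence, since \(1-\chi_R\to0\) pointwise and stays bounded. The last term satisfies \(\norm{(\nabla\chi_R)g}_{\L^2_\mu}\le R^{-1}\norm{\nabla\chi}_\infty\norm g_{\L^2_\mu}\to0\). Hence \(\chi_R g\to g\) in \(\H^1_\mu\) for every fixed \(g\).

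\emph{The dual operators on \(\H^{-1}_\mu\).} Define \(\chi_Rq\) by duality as in the statement; this is consistent with pointwise multiplication on \(\L^2(\mu)\) functions, because the pivot pairing is \(\int q\phi\,\dd\mu\). Boundedness is immediate, \(\norm{\chi_Rq}_{\H^{-1}_\mu}\le C_\chi\norm q_{\H^{-1}_\mu}\), since these are the adjoints of uniformly bounded operators.

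Strong convergence on \(\H^{-1}_\mu\) is the one step needing care, because adjoints of strongly convergent operators converge only weakly-\(*\) in general. I would argue by density. The space \(\L^2(\mu)\) is dense in \(\H^{-1}_\mu\): if \(\phi\in\H^1_\mu\) is annihilated by all of \(\L^2(\mu)\), then \(\phi=0\). For \(q\in\L^2(\mu)\),
\[
  \norm{\chi_Rq-q}_{\H^{-1}_\mu}\le\norm{(\chi_R-1)q}_{\L^2_\mu}\to0
\]
by dominated convergence. A standard \(\varepsilon/3\) argument, using the uniform bound \(C_\chi\), then extends the convergence to all \(q\in\H^{-1}_\mu\).

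Alternatively, one can use reflexivity: \(\H^1_\mu\) is a Hilbert space, and multiplication by \(\chi_R\) is symmetric with respect to \(\L^2(\mu)\). Through the Riesz isomorphism, however, the adjoint is not literally the same operator, so I prefer the density argument above.
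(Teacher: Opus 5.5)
Your proposal is correct and follows essentially the paper's route: product rule and dominated convergence on \(\H^1_\mu\), uniform boundedness of the adjoints, convergence first on the dense subspace \(\L^2_\mu\subset\H^{-1}_\mu\), and extension to all of \(\H^{-1}_\mu\) using the uniform bound. Your annihilator argument for the density of \(\L^2_\mu\), via reflexivity of \(\H^1_\mu\), makes explicit a step the paper leaves implicit.
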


\begin{proof}
  The \(\H^1_\mu\) assertion follows from the product rule and dominated convergence. The dual operators are uniformly bounded on
  \(\H^{-1}_\mu\); they converge first on the dense subspace \(\L^2_\mu\subset\H^{-1}_\mu\), again by dominated convergence, and hence on all of
  \(\H^{-1}_\mu\).
\end{proof}

\begin{lemma}[Inward and velocity mollification]
  \label{lem:density-regularisation}
  Fix \(\mu\in\{\mathcal L^d,\gamma_d\}\) and \(M>0\). Let \(\rho,j\in\C_c^\infty(B_1)\) be nonnegative, with \(\int\rho=\int j=1\), and use the
  usual rescalings \(\rho_\delta\) and \(j_\eta\).

  \begin{enumerate}[label=\textup{(\roman*)},leftmargin=*]
    \item Let
          \[
            \mathcal O_\varphi
            =\{(x',x_d):x_d>\varphi(x')\},
            \qquad \operatorname{Lip}\varphi=K,
          \]
          and suppose that \(g\in\L^2_x\H^1_\mu\) and \(h\in\L^2_x\H^{-1}_\mu\) have compact spatial support, \(\supp_vg,\supp_vh\subset\overline
          B_M\), and \(v\cdot\nabla_xg=h\) in \(\mathcal O_\varphi\). Extend \(g,h\) by zero and, for \(\varepsilon>0\) and
          \(0<\kappa<(1+K)^{-1}\), set
          \[
            S_\varepsilon g(x)
            =\int\rho_{\kappa\varepsilon}(y)
            \widetilde g(x+\varepsilon e_d-y)\dd y,
            \qquad
            S_\varepsilon h(x)
            =\int\rho_{\kappa\varepsilon}(y)
            \widetilde h(x+\varepsilon e_d-y)\dd y.
          \]
          Then, in \(\mathcal O_\varphi\),
          \[
            v\cdot\nabla_x(S_\varepsilon g)=S_\varepsilon h,
          \]
          and \(S_\varepsilon g\to g\) in \(\L^2_x\H^1_\mu\) while \(S_\varepsilon h\to h\) in \(\L^2_x\H^{-1}_\mu\) as $\varepsilon \downarrow
            0$. The regularisations are smooth in \(x\), remain compactly supported in \(x\), and preserve velocity support.

    \item Let \(G\in\C_c^\infty(\R_x^d;\H^1_\mu)\) have velocity support in \(\overline B_M\), and let \(J_\eta G=j_\eta*_vG\), \(0<\eta\le1\).
          Then \(J_\eta G\to G\) in the whole-space graph norm as \(\eta\downarrow0\), and \(J_\eta G\) is jointly smooth and compactly
          supported,
          with \(\supp_vJ_\eta G\subset\overline B_{M+\eta}\).
  \end{enumerate}
\end{lemma}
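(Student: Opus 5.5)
We prove the two parts separately. Both rest on one observation: the velocity-side operations commute with \(v\cdot\nabla_x\) and act on \(\H^{\pm1}_\mu\) by bounded operators, while the spatial operations act only in \(x\).

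For \textup{(i)}, we first verify the commutation identity. Write \(\widetilde g,\widetilde h\) for the zero extensions and put \(T:=v\cdot\nabla_x\). For a test function \(\phi\) supported in \(\mathcal O_\varphi\), we have
\[
  \pair{T S_\varepsilon g}{\phi}
  =\pair{g}{-T(\check S_\varepsilon\phi)},
\]
where \(\check S_\varepsilon\) is the adjoint shift-and-mollify operator. The key geometric fact concerns the support of \(\check S_\varepsilon\phi\). Since \(\supp\rho_{\kappa\varepsilon}\subset B_{\kappa\varepsilon}\), this support lies in \(\supp\phi+\varepsilon e_d+B_{\kappa\varepsilon}\). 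Because \(\kappa(1+K)<1\), the Lipschitz cone condition shows that this set stays inside \(\mathcal O_\varphi\). Indeed, a point \(x\) with \(x_d>\varphi(x')\) shifted by \(\varepsilon e_d-y\), \(|y|<\kappa\varepsilon\), satisfies
\[
  x_d+\varepsilon-y_d>\varphi(x'-y')
  \quad\text{since}\quad
  \varepsilon-\kappa\varepsilon>K\kappa\varepsilon.
\]
The distributional equation \(Tg=h\) in \(\mathcal O_\varphi\) can therefore be tested against \(\check S_\varepsilon\phi\), and this yields \(TS_\varepsilon g=S_\varepsilon h\). Convergence then follows from the standard facts about Bochner-valued convolutions. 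Translations are strongly continuous on \(\L^2(\R^d;Y)\) for any Banach space \(Y\), and mollification is a contraction there. We apply this with \(Y=\H^1_\mu\) and \(Y=\H^{-1}_\mu\), using the extended functions; the limits are \(\widetilde g,\widetilde h\), which restrict to \(g,h\). Smoothness in \(x\), compact spatial support and preservation of velocity support are immediate from the formulas.

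For \textup{(ii)}, convolution in \(v\) commutes with \(T\) only up to a commutator. In the Lebesgue case,
\[
  T(J_\eta G)-J_\eta(TG)=\int j_\eta(v-v')\,(v-v')\cdot\nabla_xG(x,v')\dd v',
\]
which is bounded pointwise by \(\eta\,|\nabla_xG|*j_\eta\). Since \(G\) is smooth in \(x\) with values in \(\H^1_\mu\), this commutator tends to zero in \(\L^2_x\L^2_\mu\), and hence in \(\L^2_x\H^{-1}_\mu\). It then remains to show \(J_\eta\to I\) strongly on \(\H^1_\mu\), with \(J_\eta(TG)\to TG\) in \(\L^2_x\H^{-1}_\mu\).

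\textbf{Main obstacle: the Gaussian measure.} Convolution in \(v\) is not an isometry of \(\H^{\pm1}_\gamma\) in general. We avoid this using the velocity-support bound \(\overline B_M\), which is why the lemma assumes it. On \(B_{M+1}\), \(\gamma_d\) and Lebesgue measure are comparable, so for functions supported in \(B_{M+1}\) the \(\H^1_\gamma\) and \(\H^1\) norms are equivalent. We also need \(TG\) to be controlled, which holds because \(TG\in\C_c^\infty(\R^d_x;\L^2_\mu)\) with bounded velocity support. The plan is therefore to work entirely with Lebesgue \(\H^1\) and \(\L^2\) norms on the ball \(B_{M+1}\), where \(J_\eta\) is a contraction converging strongly. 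We then use the embedding \(\L^2_\mu\hookrightarrow\H^{-1}_\mu\) to pass to the graph norm. Dominated convergence in \(x\), using compact spatial support and uniform bounds, concludes. Finally, \(J_\eta G\) is jointly smooth and compactly supported because \(G\) is smooth in \(x\) and \(j_\eta\) is smooth in \(v\), and \(\supp_vJ_\eta G\subset\overline B_{M+\eta}\) because \(\supp j_\eta\subset B_\eta\).
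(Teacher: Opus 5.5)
Your proposal is correct and follows essentially the same route as the paper. In (i), the inward shift with $\kappa(1+K)<1$ keeps the adjoint-mollified test functions inside $\mathcal O_\varphi$, so the equation can be tested with no boundary term; convergence then follows from strong continuity of translations and approximate identities in Bochner $\L^2$. In (ii), you use the same $O(\eta)$ commutator $[v_i,J_\eta]$ and Gaussian--Lebesgue norm equivalence on $B_{M+1}$, though you should state explicitly, as the paper does, that $\nabla_xG$ keeps velocity support in $\overline B_M$ because the support-restricted subspace of $\H^1_\mu$ is closed.
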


\begin{proof}
  For (i), if \(x\in\overline{\mathcal O_\varphi}\) and \(y\in\supp\rho_{\kappa\varepsilon}\), then
  \[
    (x_d+\varepsilon-y_d)-\varphi(x'-y')
    \ge x_d-\varphi(x')
    +\bigl(1-\kappa(1+K)\bigr)\varepsilon>0.
  \]
  Thus the convolution samples only the interior of the epigraph. The same inequality keeps the translated mollification of every compactly
  supported test function inside \(\mathcal O_\varphi\). Since multiplication by \(v_i\) is bounded from the support-restricted \(\H^1_\mu\)
  space into \(\H^{-1}_\mu\), testing the equation for \(g\) gives the transport identity, with no boundary term from the zero extension. The
  convergence statements follow from the standard strong continuity of translations and the convergence of approximate identities in Bochner
  \(\L^2\).

  For (ii), the support-restricted subspace of \(\H^1_\mu\) is closed, so all \(x\)-derivatives of \(G\) retain the same velocity support. Since
  \(\norm{v_i a}_{\H^1_\mu}\le C_M\norm a_{\H^1_\mu}\) on \(\overline B_M\), it follows that \(v\cdot\nabla_xG\in\L^2_x\H^1_\mu\). On the fixed
  input and output balls \(B_M\) and \(B_{M+1}\), the Lebesgue and Gaussian Sobolev norms are equivalent. Moreover,
  \[
    v\cdot\nabla_x(J_\eta G)
    =J_\eta(v\cdot\nabla_xG)
    +\sum_{i=1}^d[v_i,J_\eta]\partial_{x_i}G,
    \qquad
    \norm{[v_i,J_\eta]a}_{\L^2_\mu}
    \le C_{M,j}\eta\norm a_{\L^2_\mu}.
  \]
  The estimate follows directly from the kernel \((v_i-w_i)j_\eta(v-w)\). Standard mollifier convergence now proves graph convergence. Finally,
  \[
    \partial_x^\alpha\partial_v^\beta(J_\eta G)
    =(\partial_v^\beta j_\eta)*_v(\partial_x^\alpha G),
  \]
  which gives joint smoothness.
\end{proof}

\begin{proof}[Proof of Proposition~\ref{prop:flat-intrinsic-density}]
  Let \(f\in\mathcal X_\mu(\mathbb H^d)\), and put \(q=v\cdot\nabla_xf\).

  \emph{Localisation.} With the cut-offs from Lemma~\ref{lem:density-velocity-cutoff},
  \[
    \chi_Rf\longrightarrow f\quad\text{in }\L^2_x\H^1_\mu,
    \qquad
    \chi_Rq\longrightarrow q\quad\text{in }\L^2_x\H^{-1}_\mu,
  \]
  and \(v\cdot\nabla_x(\chi_Rf)=\chi_Rq\). Thus we may first reduce to a common compact velocity support. If \(\zeta_K(x)=\zeta(x/K)\), with
  \(\zeta\in\C_c^\infty(\R^d)\) equal to one near the origin, then
  \[
    v\cdot\nabla_x(\zeta_Kf)
    =\zeta_Kq+(v\cdot\nabla_x\zeta_K)f.
  \]
  On a fixed velocity ball \(B_M\),
  \[
    \norm{(v\cdot\nabla_x\zeta_K)f}_{\L^2_x\H^{-1}_\mu}
    \le \frac{C_{\zeta}M}{K}\norm f_{\L^2_x\L^2_\mu}.
  \]
  Standard cut-off convergence also gives \(\zeta_Kf\to f\) in \(\L^2_x\H^1_\mu\) and \(\zeta_Kq\to q\) in \(\L^2_x\H^{-1}_\mu\). Hence a second
  approximation reduces the proof to \(f\) and \(q\) compactly supported in \(x\) and with velocity support in \(B_M\).

  \emph{Regularisation.} Apply Lemma~\ref{lem:density-regularisation}(i) with \(\varphi=0\) and \(\kappa=1/2\). It gives \(x\)-smooth, compactly
  supported pairs
  \[
    f_\varepsilon=S_\varepsilon f,\qquad
    q_\varepsilon=S_\varepsilon q,\qquad
    v\cdot\nabla_xf_\varepsilon=q_\varepsilon,
  \]
  converging to \(f\) and \(q\) in their respective Bochner norms and retaining the fixed velocity support.
  Lemma~\ref{lem:density-regularisation}(ii)
  then gives
  \[
    J_\eta f_\varepsilon\longrightarrow f_\varepsilon
    \quad\text{in }\mathcal X_\mu(\mathbb H^d),
  \]
  with \(J_\eta f_\varepsilon \in\C_c^\infty(\overline{\mathbb H^d}\times\R^d)\).

  For each \(n\), choose \(R\), then \(K\), then \(\varepsilon\), and finally \(\eta\), so that the four successive graph-norm errors are each
  less than \(1/(4n)\). The resulting diagonal sequence converges to \(f\), as required.
\end{proof}

\section{\texorpdfstring{Density in the bounded-support Euclidean
    model} {Density in the bounded-support Euclidean model}}
\label{app:density}

\begin{proof}[Proof of Proposition~\ref{prop:bounded-support-density}]
  Fix \(\mu\in\{\mathcal L^d,\gamma_d\}\), \(L>0\), and \(f\in\mathcal X_{\mu,L}(\Omega)\). Set \(q=v\cdot\nabla_xf\). Then
  \(\supp_vq\subset\overline B_L\): testing \(q\) against velocity functions supported outside \(\overline B_L\) gives zero because transport
  differentiates only in \(x\).

  \emph{Spatial regularisation.} Choose a finite smooth partition of unity \(\sum_{j=0}^N\zeta_j=1\) near \(\overline\Omega\), with the interior
  patch \(\zeta_0\) compactly supported in \(\Omega\) and every remaining patch supported compactly in a \(\C^1\) graph chart. Set
  \[
    f_j=\zeta_jf,\qquad
    q_j=\zeta_jq+(v\cdot\nabla_x\zeta_j)f.
  \]
  Then \(v\cdot\nabla_xf_j=q_j\), both \(f_j\) and \(q_j\) retain velocity support in \(\overline B_L\), and
  \[
    \norm{(v\cdot\nabla_x\zeta_j)f(x,\cdot)}_{\H^{-1}_\mu}
    \le L\norm{\nabla_x\zeta_j}_{\infty}
    \norm{f(x,\cdot)}_{\L^2_\mu}.
  \]
  Moreover, \(\sum_jq_j=q\), because \(\sum_j\nabla\zeta_j=0\).

  The interior patch is regularised by ordinary spatial convolution.
  When \(d=1\), the required rotation invariance follows directly from
  \(O(1)=\{-1,+1\}\); simultaneous reflection of \(x\) and \(v\)
  preserves the graph norm, the transport relation, and
  \(\overline B_L\).
  For a boundary patch, apply a rigid motion in \(x\) and the corresponding
  orthogonal map in \(v\). Lemma~\ref{lem:velocity-rotation} preserves the velocity norms and \(\overline B_L\), while the simultaneous change of
  variables preserves the transport relation and the graph norm. In the new coordinates write
  \[
    \Omega\cap Q=\{(x',x_d)\in Q:x_d>\varphi(x')\},
    \qquad \operatorname{Lip}\varphi=K,
  \]
  with \(\supp\zeta_j\Subset Q'\Subset Q\). Extend \(\varphi\), without changing it on the chart base, to a global \(K\)-Lipschitz function, and
  extend \(f_j,q_j\) by zero in the resulting epigraph. Because the patch is supported in \(Q'\), this creates no distributional term on an
  artificial face of the chart, so the zero extensions still satisfy \(v\cdot\nabla_xf_j=q_j\) throughout the epigraph. For
  \(0<\kappa<(1+K)^{-1}\),
  Lemma~\ref{lem:density-regularisation}(i) applies to these extensions and gives an inward, \(x\)-smooth approximation of the patch in the graph
  norm, still supported in \(\overline B_L\).

  Summing the finitely many regularised patches and rotating back yields
  \[
    F_n\in\C_c^\infty(\R_x^d;\H^1_\mu),\qquad
    \supp_vF_n\subset\overline B_L,\qquad
    F_n|_\Omega\longrightarrow f
    \quad\text{in }\mathcal X_\mu(\Omega).
  \]
  All spatial derivatives of \(F_n\) lie in \(\L^2_x\H^1_\mu\). In particular, \(v\cdot\nabla_xF_n\in\L^2_x\H^1_\mu\), since multiplication by
  \(v\) defines a bounded operator on \(\H^1_\mu\) for functions supported in the fixed ball.

  \emph{Velocity regularisation with exact support.} For \(0<\lambda<1\), put \(D_\lambda F(x,v)=F(x,v/\lambda)\). Then
  \[
    \supp_vD_\lambda F\subset\overline B_{\lambda L},
    \qquad
    v\cdot\nabla_x(D_\lambda F)
    =\lambda D_\lambda(v\cdot\nabla_xF).
  \]
  For each fixed \(n\), both \(F_n\) and \(v\cdot\nabla_xF_n\) belong to \(\L^2_x\H^1_\mu\) and have fixed compact velocity support. Strong
  continuity of dilations therefore holds in \(\H^1_\mu\); in the Gaussian case this follows from the Lebesgue statement by fixed-ball norm
  equivalence. The embedding \(\H^1_\mu\hookrightarrow\H^{-1}_\mu\) then gives the required graph-norm convergence as \(\lambda\uparrow1\).

  Choose \(\lambda_n\in(1-1/n,1)\) so that
  \[
    \norm{D_{\lambda_n}F_n-F_n}_{\L^2_x\H^1_\mu}
    +\norm{\lambda_nD_{\lambda_n}(v\cdot\nabla_xF_n)
      -v\cdot\nabla_xF_n}
    _{\L^2_x\H^{-1}_\mu}<\frac1n,
  \]
  and set \(G_n=D_{\lambda_n}F_n\). By Lemma~\ref{lem:density-regularisation}(ii), we may then choose
  \[
    0<\eta_n<\frac12(1-\lambda_n)L
  \]
  so that
  \[
    \norm{J_{\eta_n}G_n-G_n}_{\mathcal X_\mu(\R^d)}<\frac1n.
  \]
  The functions \(u_n=J_{\eta_n}G_n\) are jointly smooth and compactly supported, and
  \[
    \supp_vu_n
    \subset\overline B_{\lambda_nL+\eta_n}\Subset B_L.
  \]
  Thus \(u_n|_\Omega\in\mathcal D_L(\overline\Omega)\), while
  \[
    \norm{u_n|_\Omega-f}_{\mathcal X_\mu(\Omega)}
    \le \frac2n+
    \norm{F_n|_\Omega-f}_{\mathcal X_\mu(\Omega)}
    \longrightarrow0.
  \]
\end{proof}

\vfill
\end{document}